\theoremstyle{definition} 
\newtheorem{remark}{Remark}[section]
\newtheorem{remarks}[remark]{Remarks}
\newtheorem{example}[remark]{Example}
 \theoremstyle{plain}
\newtheorem{definition}[remark]{Definition}
\newtheorem{theorem}[remark]{Theorem}
\newtheorem{proposition}[remark]{Proposition}
\newtheorem{corollary}[remark]{Corollary}
\newtheorem{lemma}[remark]{Lemma}
\newtheorem*{assumption*}{Assumption}
\newcommand{\intd}{\,\mathrm{d}}
\newcommand{\complex}{\mathbb{C}}
\newcommand{\reals}{\mathbb{R}}
\DeclareMathOperator{\Id}{id}
\DeclareMathOperator{\supp}{supp}
\DeclareMathOperator{\Ad}{Ad}
\DeclareMathOperator{\Ind}{Ind}
\newcommand{\smalldiagram}{}
\newcommand{\bffell}{\mathbf{Fell}}
\newcommand{\bfcoact}{\mathbf{Coact}}
\newcommand{\bfFhA}{\mathbf{\hat{F}}}
\newcommand{\bfGhA}{\mathbf{\hat G}}
\newcommand{\bfFA}{\mathbf{\check F}}
\newcommand{\bfGA}{\mathbf{\check G}}
\newcommand{\bfF}{\mathbf{F}}
\newcommand{\bfG}{\mathbf{G}}
\newcommand{\bfC}{\mathbf{C}}
\newcommand{\bfD}{\mathbf{D}}
\newcommand{\bfCG}{\mathbf{C_{0}(G^{0})\textrm{-}alg}}
\newcommand{\bfb}{\mathbf{C^{*}\textrm{-}\frakb\textrm{-}alg}}
\newcommand{\bfact}{\mathbf{G\textrm{-}act}}
\newcommand{\hDelta}{\widehat{\Delta}}
\newcommand{\hhdelta}{\widehat{\widehat{\delta}}}
\newcommand{\frakA}{\mathfrak{A}}
\newcommand{\frakB}{\mathfrak{B}}
\newcommand{\frakC}{\mathfrak{C}}
\newcommand{\frakAo}{\mathfrak{A}^{\dag}}
\newcommand{\frakBo}{\mathfrak{B}^{\dag}}
\newcommand{\frakCo}{\mathfrak{C}^{\dag}}
\newcommand{\fraka}{\mathfrak{a}}
\newcommand{\frakb}{\mathfrak{b}}
\newcommand{\frakc}{\mathfrak{c}}
\newcommand{\frakd}{\mathfrak{d}}
\newcommand{\frake}{\mathfrak{e}}
\newcommand{\frakbo}{\mathfrak{b}^{\dag}}
\newcommand{\frakdo}{\mathfrak{d}^{\dag}}
\newcommand{\frakH}{\mathfrak{H}}
\newcommand{\frakK}{\mathfrak{K}}
\newcommand{\frakL}{\mathfrak{L}}
\newcommand{\cbasel}[2]{(\mathfrak{#2},\mathfrak{#1},
\mathfrak{#1}^{\dag})}
\newcommand{\cbases}[2]{\cbasel{#1}{#2}}
\newcommand{\cbaseos}[2]{(\mathfrak{#2},\mathfrak{#1}^{\dag},
\mathfrak{#1})} 
\newcommand{\cbasesb}{\cbases{B}{K}}
\newcommand{\cbaseosb}{\cbaseos{B}{K}}
\newcommand{\hdelta}{\widehat\delta}
\newcommand{\aHb}{{_{\alpha}H_{\beta}}}
\newcommand{\cKd}{{_{\gamma}K_{\delta}}}
\newcommand{\cKhd}{{_{\gamma}K_{\hdelta}}}
\newcommand{\lt}{\smalltriangleleft}
\newcommand{\rt}{\smalltriangleright}
\newcommand{\halpha}{\widehat{\alpha}}
\newcommand{\hbeta}{\widehat{\beta}}
\newcommand{\hcA}{\widehat{\mathcal{ A}}}
 \newcommand{\cA}{\mathcal{ A}}
 \newcommand{\cC}{\mathcal{ C}}
\newcommand{\hA}{\widehat{A}} \newcommand{\ha}{\widehat{a}}
\newcommand{\mycong}{\xrightarrow{\cong}}
\newcommand{\coact}{\mathbf{Coact}}
\newcommand{\rtensor}[3]{ {_{#1}\! \underset{#2}{\otimes}\!
{}_{#3}}}
\newcommand{\cgtensor}[2]{{}_{#1}\boxtimes_{#2}}
\newcommand{\rtensorab}{\rtensor{\alpha}{\frakb}{\beta}}
\newcommand{\rtensorcb}{\rtensor{\gamma}{\frakb}{\beta}}
\newcommand{\tensor}[1]{\underset{#1}{\otimes}}
\newcommand{\btensor}{\underset{\frakb}{\otimes}}
\newcommand{\botensor}{\underset{\frakbo}{\otimes}}
\newcommand{\rtensorh}{\underset{\frakb}{\otimes}}
\newcommand{\fibre}[3]{ {_{#1}\! \underset{#2}{\ast}\!
{}_{#3}}} 
\newcommand{\fib}[3]{ {_{#1}\! \underset{#2}{\ast}\!
{}^{#3}}}
\newcommand{\bfibre}{\underset{\frakb}{\ast}}
\newcommand{\fsource}{\rtensor{\hbeta}{\frakbo}{\alpha}}
\newcommand{\frange}{\rtensor{\alpha}{\frakb}{\beta}}
\newcommand{\tl}{\ensuremath \olessthan}
\newcommand{\tr}{\ensuremath \ogreaterthan}
\newcommand{\Hsource}{H \fsource H} \newcommand{\Hrange}{H
\frange H}
\newcommand{\Hone}{H \fsource H \fsource H}
\newcommand{\Htwo}{H \frange H \fsource H}
\newcommand{\Hthree}{H \frange H \frange H}
\newcommand{\Hfour}{(\Hsource) \rtensor{\alpha \lt
\alpha}{\frakb}{\beta} H} \newcommand{\Hfive}{H
\rtensor{\hbeta}{\frakbo}{\alpha \rt \alpha} (\Hrange)}
\newcommand{\HtensorK}{H \rtensor{\beta}{\frakb}{\gamma} K}
\newcommand{\KtensorH}{K \rtensor{\gamma}{\frakbo}{\beta} H}
\newcommand{\rfsource}{\rtensor{\hdelta}{\frakbo}{\alpha}}
\newcommand{\rfrange}{\rtensor{\gamma}{\frakb}{\beta}}
\newcommand{\rHsource}{K \rfsource H}
\newcommand{\rHrange}{K \rfrange H}
\newcommand{\checkHsource}{H
  \rtensor{\halpha}{\frakb}{\hbeta} H}
\newcommand{\checkHrange}{H
  \rtensor{\hbeta}{\frakbo}{\alpha} H}
\newcommand{\hatHsource}{H \rtensor{\alpha}{\frakb}{\beta}
  H}
\newcommand{\hatHrange}{H \rtensor{\beta}{\frakbo}{\halpha}
  H}
\newcommand{\checkV}{\widecheck{V}}
\newcommand{\hatV}{\widehat{V}}
\newcommand{\fibreab}{\fibre{\alpha}{\frakb}{\beta}}
\newcommand{\fibrebc}{\fibre{\beta}{\frakb}{\gamma}}
\newcommand{\AfibreA}{A \fibreab A}
\newcommand{\AfibreB}{A \fibrebc B}
\newcommand{\AfibB}{A \fibre{\beta}{\frakb}{\gamma} B}
 \newcommand{\Cl}{C_{\lambda}}
\newcommand{\gB}{{_{\gamma}B}}
\newcommand{\kalpha}[1]{|\alpha\rangle_{{#1}}}
\newcommand{\balpha}[1]{\langle\alpha|_{{#1}}}
\newcommand{\kbeta}[1]{|\beta{}\rangle_{{#1}}}
\newcommand{\bbeta}[1]{\langle\beta|_{{#1}}}
\newcommand{\khbeta}[1]{|\hbeta{}\rangle_{{#1}}}
\newcommand{\bhbeta}[1]{\langle\hbeta|_{{#1}}}
\newcommand{\khalpha}[1]{|\halpha{}\rangle_{{#1}}}
\newcommand{\kgamma}[1]{|\gamma\rangle_{{#1}}}
\newcommand{\bgamma}[1]{\langle\gamma|_{{#1}}}
\newcommand{\GrrG}{G {_{r}\times_{r}} G}
\newcommand{\GsrG}{G {_{s}\times_{r}} G}
\newcommand{\FsrF}{{\cal F} {_{sp}\times_{rp}} {\cal F}}
 \newcommand{\GtrG}{G {_{t}\times_{r}} G}
\newcommand{\leg}[1]{#1} \title{Coactions of Hopf
  $C^{*}$-bimodules}
\author{Thomas Timmermann\\[1ex]
\texttt{timmermt@math.uni-muenster.de}\\ 
Universit\"at M\"unster, \\
Einsteinstrasse 62, 48149 M\"unster, Germany, \\
phone: ++49 251 8332724, fax: ++49 251 8332708 \\
email: \texttt{timmermt@math.uni-muenster.de}
}
\date{\today}
\begin{document} \xyrequire{matrix} \xyrequire{arrow}
\xyrequire{curve} \CompileMatrices

\maketitle

\abstract{Coactions of Hopf $C^{*}$-bimodules simultaneously
  generalize coactions of Hopf $C^{*}$-algebras and actions
  of groupoids. Following an approach of Baaj and Skandalis,
  we construct reduced crossed products and establish a
  duality for fine coactions. Examples of coactions arise
  from Fell bundles on groupoids and actions of a groupoid
  on bundles of $C^{*}$-algebras.  Continuous
  Fell bundles on an \'etale groupoid  correspond to
  coactions of the reduced groupoid algebra, and actions of
  a groupoid on a continuous bundle of $C^{*}$-algebras
  correspond to coactions of the function algebra.

 \textbf{Keywords:} quantum groupoid, groupoid, duality

 \textbf{MSC 2000:} primary 46L55; secondary 22A22, 20G42

 }


\section{Introduction and preliminaries}
\label{section:introduction}

Actions of quantum groupoids that simultaneously generalize
actions of quantum groups and actions of groupoids have been
studied in various settings, including that of weak Hopf
algebras or finite quantum groupoids
\cite{kornel:weak-hopf,kornel:galois}, Hopf algebroids or
algebraic quantum groupoids
\cite{bohm:actions,kadison:actions}, and Hopf-von Neumann
bimodules or measured quantum groupoids
\cite{enock:action,enock:2,vallin:actions-finite}.  In this
article, we introduce and investigate coactions of Hopf
$C^{*}$-bimodules or reduced locally compact quantum
groupoids within the framework  developed in
\cite{timmermann:fiber, timmermann:cpmu-hopf}.

In the first part of this article, we construct reduced
crossed products and dual coactions, and show that the
bidual of a fine coaction is Morita equivalent to the
initial coaction. These constructions apply to pairs of Hopf
$C^{*}$-bimodules that appear as the left and the right leg
of a (weak) $C^{*}$-pseudo-Kac system, which consists of a
$C^{*}$-pseudo-multiplicative unitary
\cite{timmermann:cpmu-hopf} and an additional symmetry.  We
associate such a $C^{*}$-pseudo-Kac system to every groupoid
and to every compact $C^{*}$-quantum groupoid and expect
that the same can be done for every reduced locally compact
quantum groupoid once this concept has been defined
properly.  The constructions in this part generalize
corresponding constructions of Baaj and Skandalis
\cite{baaj:2} for coactions of Hopf $C^{*}$-algebras.

Coactions of the two Hopf $C^{*}$-bimodules associated to a
locally compact Hausdorff groupoid --- the function algebra
on one side and the reduced groupoid algebra on the other
--- are studied in detail in the second part of this
article. We show that actions of the groupoid on continuous
bundles of $C^{*}$-algebras correspond to coactions of the
first Hopf $C^{*}$-bimodule, and that continuous Fell
bundles on $G$  naturally yield coactions of the second Hopf
$C^{*}$-bimodule. Generalizing results of Quigg \cite{quigg}
and Baaj and Skandalis \cite{baaj:1} from groups to
groupoids, we show that if the groupoid is \'etale, every
coaction of the reduced groupoid algebra arises from a Fell
bundle.

This work was supported by the SFB 478 ``Geometrische
Strukturen in der Mathematik''\footnote{funded by the
  Deutsche Forschungsgemeinschaft (DFG)}.

This article is organized as follows.  The first part is
concerned with coactions of Hopf $C^{*}$-bimodules and
associated reduced crossed products. Section
\ref{section:rtp} summarizes the relative tensor product of
$C^{*}$-modules and the fiber product of $C^{*}$-algebras
over $C^{*}$-bases \cite{timmermann:fiber} which are
fundamental to everything that follows, and introduces
coactions of Hopf $C^{*}$-bimodules.  Section
\ref{section:kac} is concerned with $C^{*}$-pseudo-Kac
systems.  Every $C^{*}$-pseudo-Kac system gives rise to two
Hopf $C^{*}$-bimodules, called the legs of the system, which
are dual to each other in a suitable sense. Coactions of
these legs on $C^{*}$-algebras, associated reduced crossed
products, dual coactions and a duality theorem concerning
iterated crossed products are discussed in Section
\ref{section:coactions}.  Section \ref{section:kac-groupoid}
gives the construction of the $C^{*}$-pseudo-Kac system of a
locally compact Hausdorff groupoid $G$. The associated Hopf
$C^{*}$-bimodules are the function algebra on one side and
the reduced groupoid $C^{*}$-algebra of $G$ on the other
side.  The second part of the article relates coactions of
these Hopf $C^{*}$-bimodules to well-known
notions. Section \ref{section:actions} shows that actions of
a  groupoid $G$ on continuous bundles of $C^{*}$-algebras
correspond to certain fine coactions of the function algebra
of $G$.  Section \ref{section:prelim} contains
preliminaries on Fell bundles, their morphisms and
multipliers. Section \ref{section:fell} shows that
continuous Fell bundles on $G$ give rise to coactions
of the reduced groupoid $C^{*}$-algebra of $G$, and section
\ref{section:etale} gives a reverse construction that
associates to every sufficiently nice coaction of the
groupoid algebra a Fell bundle provided that the groupoid $G$ is
\'etale.

\paragraph{Preliminaries} \label{section:preliminaries}

We use the following
notation.  Given a subset $Y$ of a normed space $X$, we
denote by $[Y] \subset X$ the closed linear span of $Y$.
All sesquilinear maps like inner products of Hilbert spaces
are assumed to be conjugate-linear in the first component
and linear in the second one.  Given a Hilbert space $H$, we
use the ket-bra notation and define for each $\xi \in H$
operators $|\xi\rangle \colon \complex \to H$, $\lambda
\mapsto \lambda \xi$, and $\langle\xi|=|\xi\rangle^{*}
\colon H \to \complex$, $\xi' \mapsto \langle
\xi|\xi'\rangle$.  Given a $C^{*}$-algebra $A$ and a
subspace $B \subset A$, we denote by $A \cap B'$ the
relative commutant $\{a \in A \mid [a,B]=0\}$.

We shall make extensive use of (right) Hilbert
$C^{*}$-modules; see \cite{lance}.  In particular, we use
the internal tensor product and the KSGNS-construction. Let
$E$ be a Hilbert $C^{*}$-module over a $C^{*}$-algebra $A$,
let $F$ be a Hilbert $C^{*}$-module over a $C^{*}$-algebra
$B$, and let $\phi \colon A \to {\cal L}(F)$ be a completely
positive map. We denote by $E \tr_{\phi} F$ the Hilbert
$C^{*}$-module over $B$ which is the closed linear span of
elements $\eta \tr_{\phi} \xi$, where $\eta \in E$ and $\xi
\in F$ are arbitrary, and $\langle \eta \tr_{\phi} \xi|\eta'
\tr_{\phi} \xi'\rangle = \langle
\xi|\phi(\langle\eta|\eta'\rangle)\xi'\rangle$ and $(\eta
\tr_{\phi} \xi)b=\eta \tr_{\phi} \xi b$ for all $\eta,\eta'
\in E$, $\xi,\xi' \in F$, and $b \in B$. If $\phi$ is a
$*$-homomorphism, this is the usual internal tensor product;
if $F=B$, this is the KSGNS-construction.  If $S\in {\cal
  L}(E)$ and $T \in {\cal L}(F) \cap \phi(A)'$, then there
exists a unique operator $S \tr_{\phi} T \in {\cal L}(E
\tr_{\phi} E)$ such that $(S \tr_{\phi} T)(\eta \tr_{\phi}
\xi)=S\eta \tr_{\phi} T\xi$ for all $\eta \in E$, $\xi \in
F$; see \cite[Proposition 1.34]{echter}.  We sloppily write
``$\tr_{A}$'' or ``$\tr$'' instead of ``$\tr_{\phi}$'' if no
confusion may arise.  We also define a flipped product $F
{_{\phi}\tl} E$ as follows. We equip the algebraic tensor
product $F \odot E$ with the structure maps $\langle \xi
\odot \eta | \xi' \odot \eta'\rangle := \langle \xi|
\phi(\langle \eta|\eta'\rangle) \xi'\rangle$, $(\xi \odot
\eta) b := \xi b \odot \eta$, form the separated completion,
and obtain a Hilbert $C^{*}$-module $F {_{\phi}\tl} E$ over
$B$ which is the closed linear span of elements $\xi
{_{\phi}\tl} \eta$, where $\eta \in E$ and $\xi \in F$ are
arbitrary, and $\langle \xi {_{\phi}\tl} \eta|\xi'
{_{\phi}\tl} \eta'\rangle = \langle
\xi|\phi(\langle\eta|\eta'\rangle)\xi'\rangle$ and $(\xi
{_{\phi}\tl} \eta)b=\xi b {_{\phi}\tl} \eta$ for all
$\eta,\eta' \in E$, $\xi,\xi' \in F$, and $b\in B$. Again,
we sloppily write ``${_{A}}\tl$'' or ``$\tl$'' instead of
``${_{\phi}\tl}$'' if no confusion may arise.  Evidently,
there exists a unitary $\Sigma \colon F \tr E \mycong E \tl
F$, $\eta \tr \xi \mapsto \xi \tl \eta$.

\section{Hopf $C^{*}$-bimodules and  coactions}

\label{section:rtp}

A groupoid differs from a group in the non-triviality of its
unit space. In almost every approach to quantum groupoids,
the unit space is replaced by a nontrivial algebra, and a
relative tensor product of modules and a fiber product of
algebras over that algebra become fundamentally important. We
shall use the corresponding constructions for
$C^{*}$-algebras introduced in \cite{timmermann:fiber} and
briefly summarize the main definitions and results below.
For additional details and motivation, see
\cite{timmermann:fiber,timmermann:cpmu-hopf}.

\paragraph{The relative tensor product}
A {\em $C^{*}$-base} is a triple $\cbasesb$ consisting of a
Hilbert space $\frakK$ and two commuting nondegenerate
$C^{*}$-algebras $\frakB,\frakBo \subseteq
\mathcal{L}(\frakK)$.  It should be thought of
as a $C^{*}$-algebraic counterpart to pairs consisting of a
von Neumann algebra and its commutant.  Let
$\frakb=\cbasesb$ be a $C^{*}$-base. Its {\em opposite} is
the $C^{*}$-base $\frakbo:=\cbaseosb$.

A {\em $C^{*}$-$\frakb$-module} is a pair
$H_{\alpha}=(H,\alpha)$, where $H$ is a Hilbert space and
$\alpha \subseteq \mathcal{L}(\frakK,H)$ is a closed
subspace satisfying $[\alpha \frakK]=H$, $[\alpha \frakB]=
\alpha$, and $[\alpha^{*}\alpha] = \frakB \subseteq
\mathcal{L}(\frakK)$.  If $H_{\alpha}$ is a
$C^{*}$-$\frakb$-module, then $\alpha$ is a Hilbert
$C^{*}$-module over $B$ with inner product $(\xi,\xi')
\mapsto \xi^{*}\xi'$ and there exist isomorphisms
\begin{align} \label{eq:rtp-iso}
  \alpha \tr \frakK &\to H, \ \xi \tr \zeta
  \mapsto \xi \zeta, & \frakK \tl \alpha &\to H, \
  \zeta \tl \xi \mapsto \xi\zeta,
\end{align}
and a nondegenerate representation
\begin{align*}
  \rho_{\alpha} \colon \frakBo \to \mathcal{L}(H), \quad
  \rho_{\alpha}(b^{\dagger})(\xi \zeta)= \xi b^{\dagger} \zeta
  \quad \text{for all } b^{\dagger} \in \frakBo, \xi \in
  \alpha, \zeta \in \frakK.
\end{align*}
A {\em semi-morphism} between $C^{*}$-$\frakb$-modules
$H_{\alpha}$ and $K_{\beta}$ is an operator $T \in
\mathcal{L}(H,K)$ satisfying $T\alpha \subseteq \beta$. If
additionally $T^{*}\beta \subseteq \alpha$, we call $T$ a
{\em morphism}. We denote the set of all (semi-)morphisms by
$\mathcal{ L}_{(s)}(H_{\alpha},K_{\beta})$. If $T \in
\mathcal{ L}_{s}(H_{\alpha},K_{\beta})$, then
$T\rho_{\alpha}(b^{\dagger}) = \rho_{\beta}(b^{\dagger})T$
for all $b^{\dagger} \in \frakBo$, and if additionally $T
\in \mathcal{ L}(H_{\alpha},K_{\beta})$, then left
multiplication by $T$ defines an operator in $\mathcal{
  L}(\alpha,\beta)$ which we again denote by $T$.

We shall use the following notion of $C^{*}$-bi- and
$C^{*}$-$n$-modules.  Let $\frakb_{1},\ldots,\frakb_{n}$ be
$C^{*}$-bases, where
$\frakb_{i}=(\frakK_{i},\frakB_{i},\frakBo_{i})$ for each
$i$.  A {\em
  $C^{*}$-$(\frakb_{1},\ldots,\frakb_{n})$-module} is a
tuple $(H,\alpha_{1},\ldots,\alpha_{n})$, where $H$ is a
Hilbert space and $(H,\alpha_{i})$ is a
$C^{*}$-$\frakb_{i}$-module for each $i$ such that
$[\rho_{\alpha_{i}}(\frakBo_{i})\alpha_{j}]=\alpha_{j}$
whenever $i\neq j$.  In the case $n=2$, we abbreviate
$\aHb:=(H,\alpha,\beta)$.  If
$(H,\alpha_{1},\ldots,\alpha_{n})$ is a
$C^{*}$-$(\frakb_{1},\ldots,\frakb_{n})$-module, then
$[\rho_{\alpha_{i}}(\frakBo_{i}),\rho_{\alpha_{j}}(\frakBo_{j})]=0$
whenever $i \neq j$.  The set of {\em (semi-)morphisms}
between $C^{*}$-$(\frakb_{1},\ldots,\frakb_{n})$-modules
$\mathcal{H}=(H,\alpha_{1},\ldots,\alpha_{n})$ and
$\mathcal{K}=(K,\beta_{1},\ldots,\beta_{n})$ is $\mathcal{
  L}_{(s)}(\mathcal{H}, \mathcal{K}) := \bigcap_{i=1}^{n}
\mathcal{ L}_{(s)}(H_{\alpha_{i}},K_{\beta_{i}}) \subseteq
\mathcal{L}(H,K)$.

Let $\frakb=\cbasesb$ be a $C^{*}$-base, $H_{\beta}$
$C^{*}$-$\frakb$-module, and $K_{\gamma}$ a
$C^{*}$-$\frakbo$-module. The {\em relative tensor product}
of $H_{\beta}$ and $K_{\gamma}$ is the Hilbert space
\begin{align*}
  \HtensorK:=\beta \tr \frakK \tl \gamma.
\end{align*}
It is spanned by elements $\xi \tr \zeta \tl \eta$, where
$\xi \in \beta$, $\zeta \in \frakK$, $\eta \in \gamma$, and
the inner product is given by
$\langle \xi \tr \zeta \tl \eta|\xi' \tr \zeta' \tl
\eta'\rangle = \langle \zeta | \xi^{*}\xi' \eta^{*}\eta'
\zeta'\rangle = \langle \zeta|\eta^{*}\eta' \xi^{*}\xi'
\zeta'\rangle$ for all $\xi,\xi' \in \beta$, $\zeta,\zeta'
\in \frakK$, $\eta,\eta' \in \gamma$.  Obviously, there
exists a unitary {\em flip}
\begin{align*}
  \Sigma \colon \HtensorK \to \KtensorH, \quad \xi \tr \zeta
  \tl \eta \mapsto \eta \tr \zeta \tl \xi.
\end{align*}
Using the unitaries in \eqref{eq:rtp-iso} on $H_{\beta}$ and
$K_{\gamma}$, respectively, we shall make the following
identifications without further notice:
\begin{gather*}
  H {_{\rho_{\beta}} \tl} \gamma \cong \HtensorK
  \cong \beta \tr_{\rho_{\gamma}} K, \quad \xi \zeta \tl \eta
  \equiv \xi \tr \zeta \tl \eta \equiv \xi \tr \eta\zeta.
\end{gather*}
For all $S \in \rho_{\beta}(\frakBo)'$ and $T \in
\rho_{\gamma}(\frakB)'$, we have
operators
\begin{align*}
  S \tl \Id &\in \mathcal{L}(H
  {_{\rho_{\beta}}\tl} \gamma) = \mathcal{L}(\HtensorK), &
  \Id \tr T &\in \mathcal{L}(\beta
  \tr_{\rho_{\gamma}} K) = \mathcal{L}(\HtensorK).
\end{align*}
If $S \in \mathcal{ L}_{s}(H_{\beta})$ or $T \in \mathcal{
  L}_{s}(K_{\gamma})$, then $(S \tl \Id)(\xi
\tr \eta\zeta) = S\xi \tr
\eta\zeta$ or $(\Id \tr T)(\xi\zeta \tl
\eta) = \xi\zeta \tl T\eta$, respectively,
for all $\xi \in \beta$, $\zeta \in
\frakK$, $\eta \in \gamma$, so that we can define
\begin{multline*}
  S \btensor T :=(S \tl \Id) (\Id \tr T) = (\Id \tr T)(S \tl
  \Id) \in \mathcal{ L}(\HtensorK)  \\
  \text{ for all } (S,T) \in \left(\mathcal{ L}_{s}(H_{\beta})
    \times \rho_{\gamma}(\frakB)'\right) \cup
  \left(\rho_{\beta}(\frakBo)' \times \mathcal{
      L}_{s}(K_{\gamma})\right).
\end{multline*}

For each $\xi \in \beta$ and $\eta \in \gamma$, there exist
bounded linear operators
\begin{align*}
  |\xi\rangle_{{1}} \colon K &\to \HtensorK, \ \omega
  \mapsto \xi \tr \omega, & 
  |\eta\rangle_{{2}} \colon H &\to \HtensorK, \ \omega
  \mapsto \omega \tl \eta,
\end{align*}
whose adjoints $ \langle \xi|_{1}:=|\xi\rangle_{1}^{*}$ and
$\langle\eta|_{{2}} := |\eta\rangle_{{2}}^{*}$ are given by
\begin{align*}
  \langle \xi|_{1}\colon \xi' \tr
  \omega &\mapsto \rho_{\gamma}(\xi^{*}\xi')\omega, &
  \langle\eta|_{{2}} \colon \omega
  \tl\eta' &\mapsto \rho_{\beta}( \eta^{*}\eta')\omega.
\end{align*}
We write $\kbeta{1} := \{ |\xi\rangle_{{1}} \,|\, \xi \in
\beta\} \subseteq \mathcal{ L}(K,\HtensorK)$ and similarly
define $\bbeta{1}$, $\kgamma{2}$, $\bgamma{2}$.

Let $\mathcal{H}=(H,\alpha_{1},\ldots,\alpha_{m},\beta)$ be
a $C^{*}$-$(\fraka_{1},\ldots,\fraka_{m},\frakb)$-module and let
$\mathcal{K}=(K,\gamma,\delta_{1}$, $\ldots,\delta_{n})$ be a
$C^{*}$-$(\frakbo,\frakc_{1},\ldots,\frakc_{n})$-module,
where $\fraka_{i}=(\frakH_{i},\frakA_{i},\frakAo_{i})$ and
$\frakc_{j}=(\frakL_{j},\frakC_{j},\frakCo_{j})$ are
$C^{*}$-bases for all $i,j$. We define
\begin{align*}
  \alpha_{i} \lt \gamma &:= [\kgamma{2}\alpha_{i}] \subseteq
  \mathcal{ L}(\frakH_{i}, \HtensorK), & \beta \rt
  \delta_{j} &:= [\kbeta{1}\delta_{j}] \subseteq \mathcal{
    L}(\frakL_{j}, \HtensorK)
\end{align*}
for all $i,j$.  Then $(\HtensorK,
\alpha_{1} \lt \gamma, \ldots, \alpha_{m} \lt \gamma, \beta
\rt \delta_{1},\ldots, \beta \rt \delta_{n})$ is a
$C^{*}$-$(\fraka_{1},\ldots,\fraka_{m}$, $
\frakc_{1},\ldots,\frakc_{n})$-module, called the {\em
  relative tensor product} of $\mathcal{H}$ and
$\mathcal{K}$ and denoted by $ \mathcal{H} \btensor
\mathcal{K}$. For all $i,j$ and  $a^{\dagger} \in \frakAo_{i}$, $c^{\dagger} \in \frakCo_{j}$,
\begin{align*}
  \rho_{(\alpha_{i} \lt \gamma)}(a^{\dagger}) &=
  \rho_{\alpha_{i}}(a^{\dagger}) \btensor \Id, &
  \rho_{(\beta \rt \delta_{j})}(c^{\dagger}) &= \Id \btensor
  \rho_{\delta_{j}}(c^{\dagger}).
\end{align*}

The relative tensor product is functorial in the following sense. Let
$\tilde{\mathcal{H}}=(\tilde H,\tilde
\alpha_{1},\ldots,\tilde \alpha_{m},\tilde \beta)$ be a
$C^{*}$-$(\fraka_{1},\ldots,\fraka_{m},\frakb)$-module,
$\tilde{\mathcal{K}}=(\tilde K,\tilde \gamma$, $\tilde
\delta_{1},\ldots,\tilde \delta_{n})$ a
$C^{*}$-$(\frakbo,\frakc_{1},\ldots,\frakc_{n})$-module, and
$S \in \mathcal{L}_{(s)}(\mathcal{H},\tilde{\mathcal{H}})$,
$T \in
\mathcal{L}_{(s)}(\mathcal{K},\tilde{\mathcal{K}})$. Then
there exists a unique operator $S \btensor T \in
\mathcal{L}_{(s)}(\mathcal{H} \btensor \mathcal{K},
\tilde{\mathcal{H}} \btensor \tilde{\mathcal{K}})$
satisfying $(S \btensor T)(\xi \tr \zeta \tl \eta) = S\xi
\tr \zeta \tl T\eta$ for all $\xi \in \beta, \, \zeta \in
\frakK, \, \eta \in \gamma$.

Finally, the relative tensor product is associative in the following sense.
Let $\frakd,\frake_{1},\ldots,\frake_{l}$ be $C^{*}$-bases,
$\hat{\mathcal{K}}
=(K,\gamma,\delta_{1},\ldots,\delta_{n},\epsilon)$ a
$C^{*}$-$(\frakbo,\frakc_{1},\ldots,\frakc_{n}$,
$\frakd)$-module and
$\mathcal{L}=(L,\phi$, $\psi_{1},\ldots,\psi_{l})$ a
$C^{*}$-$(\frakdo,\frake_{1},\ldots,\frake_{l})$-module. Then
there exists a canonical isomorphism
  \begin{align} \label{eq:rtp-associative}
    a_{\mathcal{ H},\mathcal{ K},\mathcal{ L}} \colon (\HtensorK) \rtensor{\beta \rt
    \epsilon}{\frakd}{\phi} L \to \beta
  \tr_{\rho_{\gamma}} K {_{\rho_{\epsilon}}\tl} \, \phi
  \to H \rtensor{\beta}{\frakb}{\gamma \lt \phi} (K
  \rtensor{\epsilon}{\frakd}{\phi} L)
\end{align}
which is an isomorphism of
$C^{*}$-$(\fraka_{1},\ldots,\fraka_{m},\frakc_{1},\ldots,\frakc_{n},\frake_{1},\ldots,\frake_{l})$-modules
$(\mathcal{H} \btensor \hat{\mathcal{K}}) \tensor{\frakd}
\mathcal{L} \to \mathcal{H} \btensor (\hat{\mathcal{K}}
\tensor{\frakd} \mathcal{L})$. From now on, we identify the
Hilbert spaces in \eqref{eq:rtp-associative} and denote them
by $\HtensorK \rtensor{\epsilon}{\frakd}{\phi} L$.

\paragraph{The fiber product of $C^{*}$-algebras}
Let $\frakb_{1},\ldots,\frakb_{n}$ be $C^{*}$-bases, where
$\frakb_{i}=(\frakK_{i},\frakB_{i},\frakBo_{i})$ for each $i$.  A {\em
  (nondegenerate) $C^{*}$-$(\frakb_{1},\ldots,\frakb_{n})$-algebra} consists
of a $C^{*}$-$(\frakb_{1},\ldots,\frakb_{n})$-mod\-ule
$(H,\alpha_{1},\ldots,\alpha_{n})$ and a (nondegenerate) $C^{*}$-algebra $A
\subseteq \mathcal{L}(H)$ such that $\rho_{\alpha_{i}}(\frakBo_{i})A$ is
contained in $A$ for each $i$. We shall only be interested in the cases
$n=1,2$, where we abbreviate $A^{\alpha}_{H}:=(H_{\alpha},A)$,
$A^{\alpha,\beta}_{H}:=(\aHb,A)$.  Given a
$C^{*}$-$(b_{1},\ldots,b_{n})$-algebra ${\cal
  A}=((H,\alpha_{1},\ldots,\alpha_{n}),A)$, we identify $M(A)$ with a
$C^{*}$-subalgebra of ${\cal L}([AH]) \subseteq {\cal L}(H)$ and obtain
$C^{*}$-$(b_{1},\ldots,b_{n})$-algebra $M({\cal
  A})=((H,\alpha_{1},\ldots,\alpha_{n}),M(A))$.

We need several natural notions of a morphism. Let $\mathcal{ A}=(\mathcal{
  H},A)$ and $\mathcal{ C}=(\mathcal{ K},C)$ be
$C^{*}$-$(\frakb_{1},\ldots,\frakb_{n})$-algebras, where $\mathcal{
  H}=(H,\alpha_{1},\ldots,\alpha_{n})$ and $\mathcal{
  K}=(K,\gamma_{1},\ldots,\gamma_{n})$.  A $*$-homo\-morphism $\pi \colon A
\to C$ is called a {\em jointly (semi-)normal morphism} or briefly {\em
  (semi-)morphism} from $\mathcal{ A}$ to $\mathcal{ C}$ if $[\mathcal{
  L}^{\pi}_{(s)}(\mathcal{ H},\mathcal{ K})\alpha_{i}] = \gamma_{i}$ for each
$i$, where
\begin{align*}
  \mathcal{ L}^{\pi}_{(s)}(\mathcal{ H},\mathcal{ K}) &= \{
  T \in \mathcal{ L}_{(s)}(\mathcal{ H},\mathcal{ K}) \mid
  Ta=\pi(a)T \text{ for all } a \in A\}.
\end{align*}
One easily verifies that every (semi-)morphism $\pi$ between
$C^{*}$-$\frakb$-algebras $A_{H}^{\alpha}$ and
$C_{K}^{\gamma}$ satisfies
$\pi(\rho_{\alpha}(b^{\dagger}))=\rho_{\gamma}(b^{\dagger})$
for all $b^{\dagger} \in \frakBo$.

We construct a fiber product of $C^{*}$-algebras over
$C^{*}$-bases as follows. Given Hilbert spaces $H,K$, a
closed subspace $E \subseteq {\cal L}(H,K)$, and a
$C^{*}$-algebra $A \subseteq {\cal L}(H)$, we define a
$C^{*}$-algebra
\begin{align*}
  \Ind_{E}(A) &:= \{ T \in {\cal L}(K) \mid TE \subseteq
  [EA] \text{ and } T^{*}E
  \subseteq [EA]\} \subseteq {\cal L}(K).
\end{align*}
Let $\frakb$ be a $C^{*}$-base, $A_{H}^{\beta}$ a
$C^{*}$-$\frakb$-algebra, and $B_{K}^{\gamma}$ a
$C^{*}$-$\frakbo$-algebra. The {\em fiber product} of
$A_{H}^{\beta}$ and $B_{K}^{\gamma}$ is the $C^{*}$-algebra
\begin{align*}
  A \fibre{\beta}{\frakb}{\gamma} B := \Ind_{\kbeta{1}}(B)
  \cap \Ind_{\kgamma{2}}(A) \subseteq
  \mathcal{L}(\HtensorK).
\end{align*}
To define coactions, we also need to consider the 
$C^{*}$-algebra
\begin{align*}
  A \fib{\beta}{\frakb}{\gamma} B :=
  \Ind_{[\kbeta{1}B]}(B) \cap \Ind_{\kgamma{2}}(A) \subseteq
  \mathcal{L}(\HtensorK),
\end{align*}
which evidently contains $A \fibre{\beta}{\frakb}{\gamma}
B$.  If $A$ and $B$ are unital, so is $A
\fibre{\beta}{\frakb}{\gamma} B$, but otherwise, $A
\fibre{\beta}{\frakb}{\gamma} B$ and $A
\fib{\beta}{\frakb}{\gamma} B$ may be degenerate.  Clearly,
conjugation by the flip $\Sigma \colon \HtensorK \to
\KtensorH$ yields an isomorphism
  \begin{align*}
    \Ad_{\Sigma} \colon A \fibre{\beta}{\frakb}{\gamma} B
    \to B \fibre{\gamma}{\frakbo}{\beta} A.
  \end{align*}
  If $\fraka,\frakc$ are $C^{*}$-bases,
  $A^{\alpha,\beta}_{H}$ is a
  $C^{*}$-$(\fraka,\frakb)$-algebra and
  $B^{\gamma,\delta}_{K}$ a
  $C^{*}$-$(\frakbo,\frakc)$-algebra, then
  \begin{align*}
    A^{\alpha,\beta}_{H} \bfibre B^{\gamma,\delta}_{K} :=
    ({_{\alpha}H_{\beta}} \btensor {_{\gamma}K_{\delta}},\,
    A \fibre{\beta}{\frakb}{\gamma} B)
\end{align*} is a
  $C^{*}$-$(\fraka,\frakc)$-algebra, called the {\em fiber
    product} of $A^{\alpha,\beta}_{H}$ and
  $B^{\gamma,\delta}_{K}$ \cite[Proposition
  3.18]{timmermann:fiber}, and likewise
  $({_{\alpha}H_{\beta}} \btensor {_{\gamma}K_{\delta}},\, A
  \fib{\beta}{\frakb}{\gamma} B)$ is a
  $C^{*}$-$(\fraka,\frakc)$-algebra.

The construction of the fiber product and of the algebra
above is functorial with respect to (semi-)morphisms
\cite[Theorem 3.2]{timmermann:cpmu-hopf} in the following
sense.

\begin{lemma}
 \label{lemma:fp-c-morphism}
 Let $\frakc$ be a $C^{*}$-base, $\pi$ a semi-morphism of
 $C^{*}$-$\frakb$-algebras $A_{H}^{\beta}$,
 $C^{\lambda}_{L}$, and $\cKd$ a
 $C^{*}$-$(\frakbo,\frakc)$-module. Let 
$   I:=\mathcal{ L}_{s}^{\pi}(H_{\beta},L_{\lambda}) \btensor
   \Id \subseteq {\cal L}(\HtensorK, L
   \rtensor{\lambda}{\frakb}{\gamma} K)$ and
 \begin{align*}
     X&:=(I^{*}I)' \subseteq {\cal L}(\HtensorK), &
     Y&:=(II^{*})' \subseteq {\cal L}(L
   \rtensor{\lambda}{\frakb}{\gamma} K).
   \end{align*}
   \begin{enumerate}
 \item ${\cal X}:=(\HtensorK_{\delta},X)$ and ${\cal Y}:=(L
   \rtensor{\lambda}{\frakb}{\gamma} K_{\delta}, Y)$ are
   $C^{*}$-$\frakc$-algebras.
 \item There exists a semi-morphism $\Ind_{\kgamma{2}}(\pi)
   \colon {\cal X} \to {\cal Y}$ such that
   $(\Ind_{\kgamma{2}}(\pi))(x)z=zx$ for all $x \in X$ and
   $z \in I$.
 \item Assume that $B^{\gamma}_{K}$ is a $C^{*}$-$\frakbo$-algebra.
   Then we have $\AfibreB \subseteq A \fib{\beta}{\frakb}{\gamma} B
   \subseteq X$,
   $(\Ind_{\kgamma{2}}(\pi))(\AfibreB) \subseteq \Cl \bfibre
   \gB$ and $(\Ind_{\kgamma{2}}(\pi))(A
   \fib{\beta}{\frakb}{\gamma} B) \subseteq C
   \fib{\lambda}{\frakb} {\gamma} B$.
 \item $[\kgamma{2}A\bgamma{2}] \subseteq X$ and
   $(\Ind_{\kgamma{2}}(\pi))([\kgamma{2}A\bgamma{2}]) =
   [\kgamma{2}\pi(A)\bgamma{2}]$.
 \end{enumerate}
\end{lemma}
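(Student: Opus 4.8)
The plan is to reduce all four assertions to a single construction --- a transport $*$-homomorphism $\Phi\colon X\to Y$ implementing $\pi$ through $I$ --- together with the elementary identities relating the legs of $\HtensorK$ and those of $L\rtensor{\lambda}{\frakb}{\gamma}K$. Write $J:=\mathcal{L}_{s}^{\pi}(H_{\beta},L_{\lambda})$, so $I=\{T\btensor\Id\colon T\in J\}$; that $\pi$ is a semi-morphism means $[J\beta]=\lambda$, hence also $[JH]=[J\beta\frakK]=[\lambda\frakK]=L$. From $(T\btensor\Id)(\xi\tr\zeta\tl\eta)=T\xi\tr\zeta\tl\eta$ one reads off, for $z=T\btensor\Id\in I$, the identities $z|\xi\rangle_{1}=|T\xi\rangle_{1}$ for $\xi\in\beta$ and $z|\eta\rangle_{2}=|\eta\rangle_{2}T$, $\langle\eta|_{2}z=T\langle\eta|_{2}$ for $\eta\in\gamma$ --- where a ket on the left of an equality is formed in $\HtensorK$, one on the right in $L\rtensor{\lambda}{\frakb}{\gamma}K$ --- and their adjoints. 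In particular $[I(\HtensorK)]=[I\kgamma{2}H]=[\kgamma{2}JH]=[\kgamma{2}L]=L\rtensor{\lambda}{\frakb}{\gamma}K$, so $I$ has dense range.

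For $x\in X=(I^{*}I)'$ I put $\Phi(x)(z\xi):=zx\xi$ on $I(\HtensorK)$. This is well defined and bounded: given $z_{1},\dots,z_{n}\in I$ and $\xi_{1},\dots,\xi_{n}\in\HtensorK$, the matrix $M:=(z_{j}^{*}z_{k})=[z_{j}]^{*}[z_{j}]\in M_{n}(I^{*}I)$ is positive and commutes with $x\otimes\Id_{n}$, so $\|\sum_{k}z_{k}x\xi_{k}\|^{2}=\langle(x\xi_{k})|M(x\xi_{k})\rangle=\langle(\xi_{k})|(x^{*}x\otimes\Id_{n})M(\xi_{k})\rangle\le\|x\|^{2}\|\sum_{k}z_{k}\xi_{k}\|^{2}$. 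Since $I$ has dense range, $\Phi(x)$ extends to an operator in $\mathcal{L}(L\rtensor{\lambda}{\frakb}{\gamma}K)$ of norm $\le\|x\|$, characterized by $\Phi(x)z=zx$ for all $z\in I$. Linearity and multiplicativity of $\Phi$ are immediate; $\langle\Phi(x)z\xi|z'\xi'\rangle=\langle\xi|z^{*}z'x^{*}\xi'\rangle=\langle z\xi|z'x^{*}\xi'\rangle$ gives $\Phi(x)^{*}=\Phi(x^{*})$; and $\Phi(x)(ww'^{*}\zeta)=wxw'^{*}\zeta=ww'^{*}\Phi(x)\zeta$ (using $x\in(I^{*}I)'$ and dense range once more) shows $\Phi(x)\in(II^{*})'=Y$. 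Thus $\Phi\colon X\to Y$ is a $*$-homomorphism. Now $X$ and $Y$ are relative commutants, hence $C^{*}$-algebras, and $\rho_{(\beta\rt\delta)}(\frakCo)=\Id\btensor\rho_{\delta}(\frakCo)$ commutes with $I$, hence with $I^{*}I$, so lies in $X$; as $X$ is an algebra, $\rho_{(\beta\rt\delta)}(\frakCo)X\subseteq X$, and likewise for $Y$ --- this proves (i). Setting $\Ind_{\kgamma{2}}(\pi):=\Phi$, each $z=T\btensor\Id\in I$ maps $\beta\rt\delta=[\kbeta{1}\delta]$ into $[|T\beta\rangle_{1}\delta]\subseteq\lambda\rt\delta$ and satisfies $\Phi(x)z=zx$, so $z\in\mathcal{L}_{s}^{\Phi}(\cdots)$; and $[I(\beta\rt\delta)]=[|[J\beta]\rangle_{1}\delta]=\lambda\rt\delta$, so $[\mathcal{L}_{s}^{\Phi}(\cdots)(\beta\rt\delta)]=\lambda\rt\delta$: $\Phi$ is a semi-morphism $\mathcal{X}\to\mathcal{Y}$ with the stated property, which is (ii).

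For (iii) first note that for $S,T\in J$, $a\in A$, the relations $Ta=\pi(a)T$ and $S^{*}\pi(a)=aS^{*}$ force $S^{*}T\in A'$, and similarly $S^{*}T\in\rho_{\beta}(\frakBo)'$; hence $I^{*}I\subseteq\{x\btensor\Id\colon x\in A'\cap\rho_{\beta}(\frakBo)'\}$. Every $m\in\Ind_{\kgamma{2}}(A)$ commutes with such an $x\btensor\Id$, since $(x\btensor\Id)|\eta\rangle_{2}=|\eta\rangle_{2}x$, $m|\eta\rangle_{2}\in[\kgamma{2}A]$ and $xa=ax$ give $(x\btensor\Id)m|\eta\rangle_{2}=(m|\eta\rangle_{2})x=m(x\btensor\Id)|\eta\rangle_{2}$, and $[\kgamma{2}H]=\HtensorK$; thus $\AfibreB\subseteq A\fib{\beta}{\frakb}{\gamma}B\subseteq\Ind_{\kgamma{2}}(A)\subseteq(I^{*}I)'=X$. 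For the image statements, fix $m\in A\fib{\beta}{\frakb}{\gamma}B$ and $y:=\Phi(m)$; $m^{*}$ lies in the same algebra. For $\eta,\eta'\in\gamma$, $T\in J$, $z:=T\btensor\Id$, the column identities and $\Phi(m)z=zm$ give $\langle\eta'|_{2}y|\eta\rangle_{2}T=\langle\eta'|_{2}(zm|\eta\rangle_{2})=T\langle\eta'|_{2}m|\eta\rangle_{2}$ with $\langle\eta'|_{2}m|\eta\rangle_{2}\in[\rho_{\beta}(\frakBo)A]\subseteq A$; since $T(\cdot)=\pi(\cdot)T$ and $[JH]=L$, this forces $\langle\eta'|_{2}y|\eta\rangle_{2}=\pi(\langle\eta'|_{2}m|\eta\rangle_{2})\in C$. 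Likewise, for $\xi\in\beta$, $b\in B$, $y|T\xi\rangle_{1}b=z(m|\xi\rangle_{1}b)\in[z|\beta\rangle_{1}B]=[|T\beta\rangle_{1}B]\subseteq[|\lambda\rangle_{1}B]$, using $m\in\Ind_{[\kbeta{1}B]}(B)$. Feeding these (and their $m^{*}$-versions) into the definitions of $\Ind_{\kgamma{2}}(C)$ and $\Ind_{[|\lambda\rangle_{1}B]}(B)$, and using $[J\beta]=\lambda$, $[JH]=L$ and the non-degeneracy $[\kgamma{2}L]=L\rtensor{\lambda}{\frakb}{\gamma}K$, yields $y\in C\fib{\lambda}{\frakb}{\gamma}B$; if moreover $m\in\AfibreB$ then $m|\xi\rangle_{1}\in[\kbeta{1}B]$, so $y|T\xi\rangle_{1}\in[|T\beta\rangle_{1}B]$ and $y\in\Cl\bfibre\gB$. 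Finally (iv): $|\eta\rangle_{2}a\langle\eta'|_{2}$ commutes with every $x\btensor\Id$ ($x\in A'\cap\rho_{\beta}(\frakBo)'$) because $(x\btensor\Id)|\eta\rangle_{2}=|\eta\rangle_{2}x$, $\langle\eta'|_{2}(x\btensor\Id)=x\langle\eta'|_{2}$ and $xa=ax$; so $[\kgamma{2}A\bgamma{2}]\subseteq X$. Moreover $|\eta\rangle_{2}\pi(a)\langle\eta'|_{2}$ satisfies the characterizing relation $(|\eta\rangle_{2}\pi(a)\langle\eta'|_{2})z=|\eta\rangle_{2}\pi(a)T\langle\eta'|_{2}=|\eta\rangle_{2}Ta\langle\eta'|_{2}=z(|\eta\rangle_{2}a\langle\eta'|_{2})$ for $z=T\btensor\Id$, so $\Ind_{\kgamma{2}}(\pi)(|\eta\rangle_{2}a\langle\eta'|_{2})=|\eta\rangle_{2}\pi(a)\langle\eta'|_{2}$; since $[\kgamma{2}A\bgamma{2}]$ and $[\kgamma{2}\pi(A)\bgamma{2}]$ are $C^{*}$-algebras and $\Phi$ is a $*$-homomorphism, passing to closed linear spans gives the claimed equality.

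The two steps that I expect to require genuine work are the construction of $\Phi$ --- the positivity estimate securing its well-definedness, and the verification $\Phi(x)\in(II^{*})'$ --- and, inside (iii), the passage from the componentwise information on $y=\Phi(m)$ (that $\langle\gamma|_{2}y|\gamma\rangle_{2}\subseteq C$ and $y[|\lambda\rangle_{1}B]\subseteq[|\lambda\rangle_{1}B]$) to the honest membership $y\in C\fib{\lambda}{\frakb}{\gamma}B$, which is exactly where the density $[J\beta]=\lambda$ and the non-degeneracy of the $\gamma$-leg must be used carefully; everything else is routine bookkeeping with the column identities of the first paragraph.
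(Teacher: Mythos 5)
Your overall strategy --- building the transport homomorphism $\Phi$ on the commutant $X=(I^{*}I)'$ via the matrix positivity estimate, checking $\Phi(X)\subseteq(II^{*})'$, and reading (i), (ii), (iv) off the leg identities $z|\xi\rangle_{1}=|T\xi\rangle_{1}$, $z|\eta\rangle_{2}=|\eta\rangle_{2}T$, $\langle\eta|_{2}z=T\langle\eta|_{2}$ --- is exactly the standard construction that the paper's two-line proof defers to its references for, and those parts of your argument are correct and complete (including the observation $I^{*}I\subseteq\{x\btensor\Id: x\in A'\cap\rho_{\beta}(\frakBo)'\}$, which cleanly gives $\Ind_{\kgamma{2}}(A)\subseteq X$ and $[\kgamma{2}A\bgamma{2}]\subseteq X$).

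There is, however, one genuine gap, at precisely the point you flag in your closing paragraph: the passage in (iii) from $\langle\eta'|_{2}\,y\,|\eta\rangle_{2}=\pi(\langle\eta'|_{2}m|\eta\rangle_{2})\in C$ to the membership $y|\eta\rangle_{2}\in[\kgamma{2}C]$ demanded by $\Ind_{\kgamma{2}}(C)$. The tools you propose there --- $[J\beta]=\lambda$, $[JH]=L$ and nondegeneracy of the $\gamma$-leg --- do not close it. From $yz=zm$ you control $y|\eta\rangle_{2}T$ for each individual $T\in J$ with an error bounded by $\|T\|$ times the approximation error of $m|\eta\rangle_{2}$ in $[\kgamma{2}A]$; that does not give norm control of the operator $y|\eta\rangle_{2}$ on $L=[JH]$, since a vector of $L$ is only a limit of sums $\sum_{j}T_{j}h_{j}$ with no bound on $\sum_{j}\|T_{j}\|\|h_{j}\|$. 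Likewise, an approximate unit $u_{\iota}$ of the nondegenerate algebra $[\kgamma{2}\bgamma{2}]$ satisfies $u_{\iota}v\to v$ only strongly, not in norm (compare $\mathcal{K}(\mathcal{H})\subseteq\mathcal{L}(\mathcal{H})$ acting on $v=1$), so "matrix coefficients in $C$" plus nondegeneracy is strictly weaker than $v\in[\kgamma{2}C]$. What does close the gap is a $C^{*}$-identity you already have all ingredients for: set $v:=y|\eta\rangle_{2}$, pick $w_{0}:=\sum_{i}|\eta_{i}\rangle_{2}a_{i}$ approximating $m|\eta\rangle_{2}$ in $[\kgamma{2}A]$, and set $w:=\sum_{i}|\eta_{i}\rangle_{2}\pi(a_{i})$. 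Your coefficient formula, applied to $m$ and also to $m^{*}m\in\Ind_{\kgamma{2}}(A)$ (so that $v^{*}v=\pi\big((m|\eta\rangle_{2})^{*}(m|\eta\rangle_{2})\big)$), together with $\pi(a^{*})\rho_{\lambda}(b^{\dagger})\pi(a')=\pi(a^{*}\rho_{\beta}(b^{\dagger})a')$ (proved by intertwining with $T\in J$ and using $[JH]=L$), gives $(v-w)^{*}(v-w)=\pi\big((m|\eta\rangle_{2}-w_{0})^{*}(m|\eta\rangle_{2}-w_{0})\big)$, hence $\|v-w\|\le\|m|\eta\rangle_{2}-w_{0}\|$ and $v\in[\kgamma{2}\pi(A)]\subseteq[\kgamma{2}C]$. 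The first-leg condition is unproblematic exactly as you argue, because there $\lambda\rt\delta$ and $[\kbeta{1}B]$ are generated by elements of the form $z\cdot(\,\cdot\,)$, so $y$ can be applied to a generating set.
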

\begin{proof} 
  The existence of the $*$-homomorphism
  $\Ind_{\kgamma{2}}(\pi)$ follows as in \cite[Lemma
  3.1]{timmermann:cpmu-hopf}. The proof of the remaining
  assertions is straightforward; compare \cite[\S
  3.4]{timmermann:fiber} and \cite[\S 3.1]{timmermann:cpmu-hopf}.
\end{proof}
\begin{theorem}
  Let $\fraka,\frakb,\frakc$ be $C^{*}$-bases, $\phi$ a
  (semi-)morphism of $C^{*}$-$(\fraka,\frakb)$-algebras
  $\mathcal{ A}=A^{\alpha,\beta}_{H}$ and $\mathcal{
    C}=C^{\kappa,\lambda}_{L}$, and $\psi$ a (semi-)morphism
  of $C^{*}$-$(\frakbo,\frakc)$-algebras $\mathcal{
    B}=B_{K}^{\gamma,\delta}$ and $\mathcal{
    D}=D^{\mu,\nu}_{M}$. Then there exists a unique
  (semi-)morphism of $C^{*}$-$(\fraka,\frakc)$-algebras
  $\phi \ast \psi$ from $ ({_{\alpha}H_{\beta}} \btensor
  {_{\gamma}K_{\delta}},\, A \fib{\beta}{\frakb}{\gamma}
  B)$ to $ ({_{\kappa}L_{\lambda}} \btensor
  {_{\mu}M_{\nu}},\, C \fib{\lambda}{\frakb}{\mu} D)$ such
  that
  \begin{align*}
    (\phi \ast
    \psi)(x)R = R x \quad \text{for all } x \in \AfibB
    \text{ and }R \in I_{M}J_{H} + J_{L}I_{K},
  \end{align*}
  where $I_{X}=\mathcal{ L}^{\phi}(H,L) \btensor \Id_{X}$,
  $J_{Y}= \Id_{Y} \btensor \mathcal{ L}^{\psi}(K,M)$ for
  $X\in\{K,M\},Y\in\{H,L\}$, and $\phi \ast \psi$
  restricts to a (semi-)morphism from $ A^{\alpha,\beta}_{H}
  \bfibre B^{\gamma,\delta}_{K}$ to $C^{\kappa,\lambda}_{L}
  \bfibre D^{\mu,\nu}_{M}$.
\end{theorem}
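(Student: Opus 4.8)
The plan is to produce $\phi \ast \psi$ as the composite of two semi-morphisms, each furnished by Lemma \ref{lemma:fp-c-morphism}: one handling $\phi$ on the leg of the relative tensor product coming from $H$, the other handling $\psi$ on the leg coming from $K$. Write $\mathcal{ L}^{\phi}(H,L)$ for $\mathcal{ L}^{\phi}_{s}({_{\alpha}H_{\beta}},{_{\kappa}L_{\lambda}}) \subseteq \mathcal{ L}^{\phi}_{s}(H_{\beta},L_{\lambda})$ and similarly for $\mathcal{ L}^{\psi}(K,M)$. Applying Lemma \ref{lemma:fp-c-morphism} to $\phi$, regarded as a semi-morphism of the $C^{*}$-$\frakb$-algebras $A^{\beta}_{H}$, $C^{\lambda}_{L}$, and to the $C^{*}$-$(\frakbo,\frakc)$-module $\cKd$, yields a semi-morphism $\Phi := \Ind_{\kgamma{2}}(\phi)$ whose domain is a $C^{*}$-algebra containing $A \fib{\beta}{\frakb}{\gamma} B$ and whose codomain contains $C \fib{\lambda}{\frakb}{\gamma} B$, such that $\Phi(x)z = zx$ for all $z \in I_{K}$, such that $\Phi(A \fib{\beta}{\frakb}{\gamma} B) \subseteq C \fib{\lambda}{\frakb}{\gamma} B$, and such that $\Phi$ carries $A^{\alpha,\beta}_{H} \bfibre B^{\gamma,\delta}_{K}$ into $C^{\kappa,\lambda}_{L} \bfibre B^{\gamma,\delta}_{K}$ (parts (ii)--(iv) of the lemma).

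Next, conjugating the relevant relative tensor products by the flip $\Sigma$, one applies Lemma \ref{lemma:fp-c-morphism} to $\psi$, regarded as a semi-morphism of the $C^{*}$-$\frakbo$-algebras $B^{\gamma}_{K}$, $D^{\mu}_{M}$, with the $C^{*}$-$(\frakb,\fraka)$-module ${_{\lambda}L_{\kappa}}$ and auxiliary algebra $C^{\lambda}_{L}$; transporting the result back by $\Sigma$ yields a semi-morphism $\Psi$ whose domain contains $C \fib{\lambda}{\frakb}{\gamma} B$ and whose codomain contains $C \fib{\lambda}{\frakb}{\mu} D$, such that $\Psi(y)w = wy$ for all $w \in J_{L}$, such that $\Psi(C \fib{\lambda}{\frakb}{\gamma} B) \subseteq C \fib{\lambda}{\frakb}{\mu} D$, and such that $\Psi$ carries $C^{\kappa,\lambda}_{L} \bfibre B^{\gamma,\delta}_{K}$ into $C^{\kappa,\lambda}_{L} \bfibre D^{\mu,\nu}_{M}$.

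One then sets $\phi \ast \psi := \Psi \circ \Phi$, a $*$-homomorphism from $A \fib{\beta}{\frakb}{\gamma} B$ to $C \fib{\lambda}{\frakb}{\mu} D$ restricting to one from $A^{\alpha,\beta}_{H} \bfibre B^{\gamma,\delta}_{K}$ to $C^{\kappa,\lambda}_{L} \bfibre D^{\mu,\nu}_{M}$. By functoriality of the relative tensor product, $S \btensor T = (S \btensor \Id_{M})(\Id_{H} \btensor T) = (\Id_{L} \btensor T)(S \btensor \Id_{K})$ for $S \in \mathcal{ L}^{\phi}(H,L)$, $T \in \mathcal{ L}^{\psi}(K,M)$, so $I_{M}J_{H} = J_{L}I_{K} = \{S \btensor T\}$, and for $x \in A \fib{\beta}{\frakb}{\gamma} B$,
\begin{align*}
  (\phi \ast \psi)(x)(S \btensor T) &= \Psi(\Phi(x))(\Id_{L} \btensor T)(S \btensor \Id_{K}) \\
  &= (\Id_{L} \btensor T)\Phi(x)(S \btensor \Id_{K}) = (\Id_{L} \btensor T)(S \btensor \Id_{K})x = (S \btensor T)x
\end{align*}
using the defining properties of $\Psi$ and $\Phi$; by linearity, $(\phi \ast \psi)(x)R = Rx$ for all $R \in I_{M}J_{H} + J_{L}I_{K}$. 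Since $\phi, \psi$ are semi-morphisms, $[\mathcal{ L}^{\phi}(H,L)\alpha] = \kappa$, $[\mathcal{ L}^{\phi}(H,L)\beta] = \lambda$, $[\mathcal{ L}^{\psi}(K,M)\gamma] = \mu$, $[\mathcal{ L}^{\psi}(K,M)\delta] = \nu$, so the $S \btensor T$ recover both legs of ${_{\kappa}L_{\lambda}} \btensor {_{\mu}M_{\nu}}$ --- whence $\phi \ast \psi$ is a semi-morphism of $C^{*}$-$(\fraka,\frakc)$-algebras, a morphism if $\phi, \psi$ are --- and span $L \rtensor{\lambda}{\frakb}{\mu} M$, which gives the uniqueness.

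The main obstacle I anticipate is the bookkeeping in the second step: one must check that after conjugation by $\Sigma$ the data $\psi$, ${_{\lambda}L_{\kappa}}$, $C^{\lambda}_{L}$ indeed satisfy the hypotheses of Lemma \ref{lemma:fp-c-morphism}, and --- crucially --- that the codomain $C \fib{\lambda}{\frakb}{\gamma} B$ of $\Phi$ lies in the domain of $\Psi$ and is sent by $\Psi$ into $C \fib{\lambda}{\frakb}{\mu} D$, so that $\Psi \circ \Phi$ is well defined with the asserted target. This would be handled via parts (iii)--(iv) of the lemma, the explicit form of the algebras $\Ind_{E}(\cdot)$, and the flip isomorphisms $\Ad_{\Sigma} \colon C \fibre{\lambda}{\frakb}{\gamma} B \to B \fibre{\gamma}{\frakbo}{\lambda} C$ together with their $\fib$-analogues.
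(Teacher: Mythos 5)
Your proposal is correct and follows essentially the same route as the paper: the paper's own proof is a two-line reference to Lemma \ref{lemma:fp-c-morphism} together with the argument of \cite[Theorem 3.13]{timmermann:fiber}, which is precisely the composition of the two one-sided induced morphisms $\Ind_{\kgamma{2}}(\phi)$ and (after conjugating by the flip) the analogue for $\psi$, followed by the intertwining and density arguments you give. The domain-matching issue for $\Psi\circ\Phi$ arising from the asymmetry of $\fib{}{}{}$ under $\Ad_{\Sigma}$, which you flag as the main remaining bookkeeping, is exactly the technical content handled in the cited reference, so nothing essential is missing.
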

\begin{proof}
  This follows from Lemma \ref{lemma:fp-c-morphism} and
  a similar argument as in the proof of \cite[Theorem
  3.13]{timmermann:fiber}.
\end{proof}

Unfortunately, the fiber product need not be associative,
but in our applications, it will only appear as the target
of a comultiplication or coaction whose coassociativity will compensate
the non-associativity of the fiber product.

\paragraph{Hopf $C^{*}$-bimodules and coactions}
The notion of a Hopf $C^{*}$-bimodule was introduced in
\cite{timmermann:cpmu-hopf}.
\begin{definition}
  Let $\frakb=\cbasesb$ be a $C^{*}$-base.  A {\em Hopf
    $C^{*}$-bimodule over $\frakb$} is a
  $C^{*}$-$(\frakbo,\frakb)$-algebra $A^{\beta,\alpha}_{H}$
  together with a morphism $\Delta$ from
  $A^{\beta,\alpha}_{H}$ to $A^{\beta,\alpha}_{H} \bfibre
  A^{\beta,\alpha}_{H}$ that is coassociative in the sense
  that $(\delta \ast \Id) \circ \delta=(\Id \ast
  \Delta) \circ \delta$ as maps from $A$ to $\mathcal{L}(H
  \rtensor{\alpha}{\frakb}{\beta} H
  \rtensor{\alpha}{\frakb}{\beta} H)$.

    Let $(\cA,\Delta)$ be a Hopf $C^{*}$-bimodule, where
    $\cA=A_{H}^{\beta,\alpha}$.  A {\em coaction} of
    $(\cA,\Delta)$ consists of a $C^{*}$-$\frakb$-algebra
    $C_{K}^{\gamma}$ and a semi-morphism $\delta$ from
    $(K_{\gamma},C)$ to $(K_{\gamma} \btensor
    {_{\beta}H_{\alpha}}, C \fib{\gamma}{\frakb}{\beta} A)$
    such that $(\delta \ast \Id) \circ \delta=(\Id
    \ast \Delta) \circ \delta$ as maps from $C$ to $
    \mathcal{L}(K \rtensor{\gamma}{\frakb}{\beta} \Hrange)$.
    We call such a coaction $(C^{\gamma}_{K},\delta)$
    \begin{itemize}
  \item {\em left-full} if
    $[\delta(C)\kgamma{1}A]=[\kgamma{1}A]$, and {\em
      right-full} if   $[ \delta(C)\kbeta{2}] = [
    \kbeta{2}C]$;
  \item {\em fine} if $\delta$ is injective, a morphism, and
    right-full, and if $[\rho_{\gamma}(\frakBo)C]=C$;
  \item {\em very fine} if it is fine and if $\delta^{-1}\colon
    \delta(C) \to C$ is a morphism of
    $C^{*}$-$\frakb$-algebras from $(\rHrange_{\alpha},
    \delta(C))$ to $(K_{\gamma},C)$.
    \end{itemize}
    A {\em morphism} between coactions
    $(C^{\gamma}_{K},\delta_{C})$ and
    $(D^{\epsilon}_{L},\delta_{D})$ is a semi-morphism $\rho$
    of $C^{*}$-$\frakb$-algebras from $C^{\gamma}_{K}$ to
    $M(D)^{\epsilon}_{L}$ that is nondegenerate in the sense
    that $[\rho(C)D]=D$ and equivariant in the sense that
    $\delta_{D}(d) \cdot (\rho \ast \Id)(\delta_{C}(c)) =
    \delta_{D}(d\rho(c))$ for all $d \in D$, $c \in C$.  We
    denote the category of all coactions of $(\cA,\Delta)$
    by $\coact_{(\cA,\Delta)}$.
\end{definition}
Examples of Hopf $C^{*}$-bimodules and coactions will be
discussed in detail in Sections \ref{section:kac-groupoid},
\ref{section:actions}, and \ref{section:fell}.

\section{Weak $C^{*}$-pseudo-Kac systems}

\label{section:kac}

To form a reduced crossed product for a coaction of a Hopf
$C^{*}$-bimodule $(\cA,\Delta)$ and to equip this reduced
crossed product with a dual coaction, one needs a second
Hopf $C^{*}$-bimodule $(\hcA,\hDelta)$ that is dual to
$(\cA,\Delta)$ in a suitable sense. We shall see that a good
notion of duality is that $(\cA,\Delta)$ and
$(\hcA,\hDelta)$ are the legs of a weak $C^{*}$-pseudo-Kac
system, which is a generalization of the balanced
multiplicative unitaries and Kac systems introduced by Baaj
and Skandalis \cite{baaj:10,baaj:2}.

\paragraph{$C^{*}$-pseudo-multiplicative unitaries} 
A weak $C^{*}$-pseudo-Kac system consists of a well-behaved
$C^{*}$-pseudo-multiplicative unitary $V$ and a
symmetry $U$ satisfying a number of axioms. Before we state
these axioms, we recall from \cite{timmermann:cpmu-hopf} the
notion of a $C^{*}$-pseudo-multiplicative unitary and the
construction of the associated Hopf $C^{*}$-bimodules.

Let $\frakb$ be a $C^{*}$-base.  A {\em
  $C^{*}$-pseudo-multiplicative unitary} over $\frakb$
consists of a $C^{*}$-$(\frakbo,\frakb,\frakbo)$-module
$(H,\hbeta,\alpha,\beta)$ and a unitary $V \colon \Hsource
\to \Hrange$ such that
  \begin{align} \label{eq:pmu-intertwine} 
 V(\alpha \lt \alpha) &= \alpha \rt
\alpha, & V(\hbeta \rt \beta) &= \hbeta \lt \beta, &
V(\hbeta \rt \hbeta) &= \alpha \rt \hbeta, & V(\beta \lt
\alpha) &= \beta \lt \beta
  \end{align}
  in ${\cal L}(\frakK,\Hrange)$ and 
  $V_{12}V_{13}V_{23}=V_{23}V_{12}$ in the sense that the
  following diagram commutes,
  \begin{gather} \label{eq:pmu-pentagon}
     \xymatrix@R=12pt@C=20pt{ {\Hone}
        \ar[r]^{V_{12}} \ar[d]^{V_{23}} & {\Htwo}
        \ar[r]^{V_{23}} & {\Hthree,} \\ {\Hfive}
        \ar[rr]^{V_{13}} & & {\Hfour} \ar[u]^{V_{12}}
        } 
  \end{gather} 
  where $V_{ij}$ is the leg notation for the operator that
  acts like $V$ on the $i$th and $j$th factor in the
  relative tensor product; see \cite{timmermann:cpmu-hopf}.

  Let $V$ be a $C^{*}$-pseudo-multiplicative unitary as above, let
  \begin{align*}
    \hA= \hA_{V} &= [\bbeta{2}V\kalpha{2}] \subseteq {\cal
      L}(H), & \hDelta = \hDelta_{V} &\colon \hA \to
    {\cal
      L}(\Hsource), \ \ha \mapsto V^{*}(1 \btensor \ha)V, \\
    A= A_{V} &= [\balpha{1}V\khbeta{1}] \subseteq {\cal
      L}(H), & \Delta= \Delta_{V} &\colon A \to {\cal
      L}(\Hrange), \ a\mapsto V(a \botensor 1)V^{*},
  \end{align*}
  and let $\hcA=\hA^{\alpha,\hbeta}_{H}$,
  $\cA=A^{\beta,\alpha}_{H}$. We say that $V$ is {\em
    well-behaved} if $(\hcA,\hDelta)$ and $(\cA,\Delta)$ are
  Hopf $C^{*}$-bimodules.  This happens for example if $V$
  is {\em regular} in the sense that
  $[\balpha{1}V\kalpha{2}]=[\alpha\alpha^{*}] \subseteq
  {\cal L}(H)$ \cite[Theorem 5.7]{timmermann:cpmu-hopf}.

The {\em opposite} of $V$ is the
  $C^{*}$-pseudo-multi\-plicative unitary
 \begin{align*}
   V^{op}:=\Sigma V^{*} \Sigma
    \colon H \rtensor{\beta}{\frakbo}{\alpha} H
    \xrightarrow{\Sigma} \Hrange \xrightarrow{V^{*}}
    \Hsource \xrightarrow{\Sigma} H
    \rtensor{\alpha}{\frakb}{\hbeta} H.
 \end{align*}
 If $V$ is well-behaved or regular, then the same is true
 for $V^{op}$, and then
\begin{align} \label{eq:pmu-op} \hA_{V^{op}} &= A_{V}, &
  \hDelta_{V^{op}} &= \Ad_{\Sigma} \circ \Delta_{V}, &
  A_{V^{op}} &= \hA_{V}, & \Delta_{V^{op}} &=
  \Ad_{\Sigma} \circ \hDelta_{V}.
\end{align}

\paragraph{Balanced $C^{*}$-pseudo-multiplicative unitaries} 
Let $(H,\halpha,\hbeta,\alpha,\beta)$ be a
$C^{*}$-$(\frakb,\frakbo,\frakb,\frakbo)$-module and $U \in
{\cal L}({_{\halpha}H_{\hbeta}},{_{\alpha}H_{\beta}})$ a
symmetry, that is, $U=U^{*}=U^{-1}$. Then
$U\halpha=\alpha$, $U\hbeta=\beta$, $U\alpha=\halpha$,
$U\beta=\hbeta$, and the following diagram commutes,
\begin{align*}
  \xymatrix@R=15pt@C=60pt{ {\checkHsource} \ar@{<->}[r]^{(1
      \rtensor{}{\frakb^{(\dagger)}}{} U)\Sigma}
    \ar@{<->}[d]_{(U \rtensor{}{\frakb^{(\dagger)}}{}
      1)\Sigma} \ar@{<->}[rd] & {\Hsource} \ar@{<->}[d]^{(1
      \rtensor{}{\frakb^{(\dagger)}}{}
      U)\Sigma} \ar@{<->}[ld] \\
    {\hatHrange} \ar@{<->}[r]_{(U
      \rtensor{}{\frakb^{(\dagger)}}{} 1)\Sigma} & {\Hrange}
  }
\end{align*}
where each arrow can be read in both directions and the
diagonal maps are $U \rtensor{}{\frakb^{(\dagger)}}{} U$. We
adopt the leg notation and write $U_{1}$ for $U
\rtensor{}{\frakb^{(\dagger)}}{} 1$ and $U_{2}$ for $1
\rtensor{}{\frakb^{(\dagger)}}{} U$.  For each $T \in
\mathcal{ L}(\Hsource, \Hrange)$, let
\begin{align*} 
  \widecheck{T} &:=\Sigma (1 \btensor U)T(1 \botensor
  U)\Sigma \colon \checkHsource \to \checkHrange, \\
  \widehat{T} &:= \Sigma (U \btensor 1)T(U \botensor
  1)\Sigma \colon \hatHsource \to \hatHrange.
\end{align*}
Switching from $(\frakb,H,\halpha,\hbeta,\alpha,\beta)$ to
$(\frakbo,H,\beta,\halpha,\hbeta,\alpha)$ or
$(\frakbo,H,\hbeta,\alpha,\beta,\halpha)$, respectively, we
can iterate the assignments $T \mapsto \widecheck{T}$ and $T
\mapsto \widehat{T}$, and obtain
\begin{align} \label{eq:balanced-iterate}
  \widecheck{\widecheck{\widecheck{T}}} &= \widehat{T}, &
  \widecheck{\widecheck{T}} &= (U \btensor U)T(U \botensor
  U) = \widehat{\widehat{T}}, & \widecheck{T} &=
  \widehat{\widehat{\widehat{T}}}.
\end{align}
\begin{definition} A {\em balanced
    $C^{*}$-pseudo-multiplicative unitary $(V,U)$} on a
  $C^{*}$-$(\frakb,\frakbo,\frakb,\frakbo)$-module
  $(H,\halpha,\hbeta,\alpha,\beta)$ consists of a symmetry
  $U \in {\cal
    L}({_{\halpha}H_{\hbeta}},{_{\alpha}H_{\beta}})$ and a
  $C^{*}$-pseudo-multiplicative unitary $V \colon \Hsource
  \to \Hrange$ such that $\checkV$ and $\hatV$ are
  $C^{*}$-pseudo-multiplicative unitaries.
\end{definition}
\begin{remark}
   In the definition above, $(\checkV,U)$ is a
    $C^{*}$-pseudo-multiplicative unitary if and only if
    $(\hatV,U)$ is one because $\checkV=(U \botensor U)\hatV(U
    \btensor U)$.
\end{remark}
Let $(V,U)$ be a balanced $C^{*}$-pseudo-multiplicative
unitary as above.
\begin{remarks}\label{remarks:balanced}
  \begin{enumerate}
  \item One easily verifies that  $(\checkV,U)$,
    $(\hatV,U)$, $(V^{op},U)$ are balanced
    $C^{*}$-pseudo-multiplica\-tive unitaries again. We call
    them the {\em predual}, {\em dual}, and {\em opposite}
    of $(V,U)$, respectively.
\item The relations \eqref{eq:pmu-intertwine} for the
  unitaries $\checkV$, $\hatV$ read as follows:
  \begin{align*}
    \hbeta \lt \hbeta &\xrightarrow{\checkV} \hbeta \rt
    \hbeta, & \halpha \rt \alpha &\xrightarrow{\checkV}
    \halpha \lt \alpha, & \halpha \rt \halpha
    &\xrightarrow{\checkV} \hbeta \rt \halpha, &
    \alpha \lt \hbeta &\xrightarrow{\checkV}\alpha \lt \alpha, \\
    \beta \lt \beta &\xrightarrow{\hatV} \beta \rt \beta, &
    \alpha \rt \halpha &\xrightarrow{\hatV} \alpha \lt
    \halpha, & \alpha \rt \alpha &\xrightarrow{\hatV} \beta
    \rt \alpha, & \halpha \lt \beta &\xrightarrow{\hatV}
    \halpha \lt \halpha,
\end{align*}
where $X \xrightarrow{W} Y$ means $WX=Y$.  They furthermore
imply
     \begin{align*}
       \hbeta \rt \halpha &\xrightarrow{V} \alpha \rt
       \halpha, & \halpha \rt \beta &\xrightarrow{\checkV}
       \hbeta \rt \beta, &
       \alpha \rt \hbeta &\xrightarrow{\hatV} \beta \rt \hbeta, \\
       \halpha \lt \alpha &\xrightarrow{V} \halpha \lt
       \beta, & \beta \lt \hbeta &\xrightarrow{\checkV}
       \beta \lt \alpha, & \hbeta \lt \beta
       &\xrightarrow{\hatV} \hbeta \lt \halpha.
   \end{align*}
 \item The spaces $\hA$ and $A$ are
   contained in $\mathcal{ L}(H_{\halpha})$ since $[
   \hA \halpha ] = [ \bbeta{2}V\kalpha{2}\halpha] = [
   \bbeta{2} \kbeta{2} \halpha] = [
   \rho_{\alpha}(\frakBo)\halpha] = \halpha$ and similarly
   $[ A\halpha]= [ \balpha{1}V\khbeta{1}\halpha] =
   \halpha$.
  \end{enumerate}
\end{remarks} 
\begin{lemma} \label{lemma:balanced}
  $V_{13}V_{23}\checkV_{12}=\checkV_{12}V_{13}$ and
  $\hatV_{23}V_{12}V_{13}=V_{13}\hatV_{23}$, that is,
  the following diagrams commute:
  \begin{gather} \label{eq:balanced-rel1} \smalldiagram
    \xymatrix@C=25pt@R=10pt{ {(\checkHsource)
        \rtensor{\hbeta \lt \hbeta}{\frakbo}{\alpha} H}
      \ar[r]^{V_{\leg{13}}} \ar[d]_{\checkV_{\leg{12}}} &
      {(\checkHsource) \rtensor{\alpha \lt
          \hbeta}{\frakb}{\beta} H}
      \ar[r]^{\checkV_{\leg{12}}} & {(\Hsource)
        \rtensor{\alpha \lt \alpha}{\frakb}{\beta} H,} \\
      {\Hone} \ar[rr]^{V_{\leg{23}}} && {H
        \rtensor{\hbeta}{\frakbo}{\alpha\rt \alpha}
        (\Hrange)} \ar[u]_{V_{\leg{13}}} } \\
 \label{eq:balanced-rel2}
    \xymatrix@C=25pt@R=15pt{ {H
        \rtensor{\hbeta}{\frakbo}{\alpha\rt \alpha}
        (\hatHsource)} \ar[r]^{\hatV_{\leg{23}}}
      \ar[d]_{V_{\leg{13}}} & {H
        \rtensor{\hbeta}{\frakbo}{\beta \rt \alpha}
        (\hatHrange)} \ar[r]^{V_{\leg{13}}} & {H
        \rtensor{\alpha}{\frakb}{\beta \rt \beta} (\hatHrange).} \\
      {(\Hsource) \rtensor{\alpha \lt \alpha}{\frakb}{\beta} H}
      \ar[rr]^{V_{\leg{12}}} && {\Hthree}
      \ar[u]_{\hatV_{\leg{23}}} }
  \end{gather}
\end{lemma}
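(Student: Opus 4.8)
The plan is to reduce both identities to the pentagon equation \eqref{eq:pmu-pentagon} for $V$ (and, where convenient, for $\checkV$ and $\hatV$, which are at our disposal since $(V,U)$ is balanced). The key structural facts are the defining formulas $\checkV=\Sigma(1\btensor U)V(1\botensor U)\Sigma$ and $\hatV=\Sigma(U\btensor 1)V(U\botensor 1)\Sigma$: conjugating $V$ by the symmetry $U$ in one of its two legs produces $\hatV$ or $\checkV$ up to the flip $\Sigma$. Since $(\checkV,U)$, $(\hatV,U)$, $(V^{op},U)$ are again balanced (Remark \ref{remarks:balanced}(1)) and the predual and dual are mutually inverse operations in view of \eqref{eq:balanced-iterate}, it is enough to prove the first identity \eqref{eq:balanced-rel1}; the second \eqref{eq:balanced-rel2} follows by running the same argument with $\hatV$ (and its pentagon equation) in place of $V$, or by a duality argument based on Remark \ref{remarks:balanced}(1) and \eqref{eq:balanced-iterate}.

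I would begin by fixing the leg notation on the triple relative tensor products in \eqref{eq:balanced-rel1}, using the associativity isomorphism \eqref{eq:rtp-associative} to identify the source and range of each composite, and by checking once and for all that every operator written below is defined on the space to which it is applied. This is bookkeeping with the intertwining relations \eqref{eq:pmu-intertwine} for the legs $\alpha,\beta,\hbeta$ of $V$, their analogues in Remark \ref{remarks:balanced}(2) for $\checkV$ and $\hatV$, the identities $U\halpha=\alpha$, $U\hbeta=\beta$, $U\alpha=\halpha$, $U\beta=\hbeta$, and the commutation of the representations $\rho$ on distinct legs; no algebraic content enters here.

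The substantive step is to substitute $\checkV_{12}=\Sigma_{12}\,U^{(2)}\,V_{12}\,U^{(2)}\,\Sigma_{12}$, where $U^{(2)}$ is the symmetry $U$ acting in the second leg and $\Sigma_{12}$ the flip of the first two legs, into $V_{13}V_{23}\checkV_{12}=\checkV_{12}V_{13}$, and to conjugate the equation so as to replace $\checkV_{12}$ by $V_{12}$. Under this conjugation the outer factors $V_{13}$ and $V_{23}$ get transformed by the flip $\Sigma_{12}$ and by $U$ in one leg; by the relations of Remark \ref{remarks:balanced}(2) the transformed operators are again expressed through $V$, $\checkV$, $\hatV$ in appropriate leg positions, and once the flips are absorbed the identity collapses, after a relabelling of legs, to the pentagon equation for $V$ (together, if needed, with that of $\checkV$). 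The same manipulation starting from the definition of $\hatV$ gives \eqref{eq:balanced-rel2}.

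The real difficulty is not the algebra, which reduces to the pentagon equation, but the handling of the $C^{*}$-base and module decorations across the triple relative tensor products: the flips and the operators ``$U$ in a single leg'' are well-defined only because of the intertwining relations, so at each intermediate expression one must verify that the module structures match, which amounts to tracking many applications of \eqref{eq:pmu-intertwine} and Remark \ref{remarks:balanced}(2) together with the compatibility of the identifications in \eqref{eq:rtp-associative} with the various flips. This is routine but lengthy.
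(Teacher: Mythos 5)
Your proposal is correct and takes essentially the same route as the paper: there one writes $\checkV = U_{1}WU_{1}$ with $W=\Sigma V\Sigma$, inserts this into a pentagon equation, cancels the $U_{1}$'s, and conjugates by $\Sigma_{23}\Sigma_{12}$ to renumber the legs, which yields \eqref{eq:balanced-rel1}; \eqref{eq:balanced-rel2} is then ``a similar calculation,'' just as you propose. The one point to pin down is that the pentagon equation doing the work is the one for $\checkV$ (respectively $\hatV$) --- precisely what the balancedness hypothesis supplies beyond the pentagon for $V$ --- so your hedge ``together, if needed, with that of $\checkV$'' should be resolved affirmatively.
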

\begin{proof} 
  Let $W:= \Sigma V \Sigma$. We insert the relation $\checkV
  = U_{1} W U_{1}$ into the pentagon equation
  $\checkV_{\leg{12}} \checkV_{\leg{13}} \checkV_{\leg{23}}
  = \checkV_{\leg{23}} \checkV_{\leg{12}}$ and obtain
  $U_{\leg{1}} W_{\leg{12}} U_{1} \cdot U_{1} W_{\leg{13}}
  U_{1} \cdot \checkV_{\leg{23}} = \checkV_{\leg{23}} \cdot
  U_{\leg{1}} W_{\leg{12}} U_{\leg{1}}$ and hence
  $W_{\leg{12}} W_{\leg{13}} \checkV_{\leg{23}} =
  \checkV_{\leg{23}} W_{\leg{12}}$.  We conjugate both sides
  of this equation by the automorphism
  $\Sigma_{\leg{23}}\Sigma_{\leg{12}}$, which amounts to
  renumbering the legs of the operators according to the
  permutation $(1,2,3) \mapsto (2,3,1)$, and obtain
  $V_{\leg{13}}V_{\leg{23}} \checkV_{\leg{12}} =
  \checkV_{\leg{12}} V_{\leg{13}}$.  A similar calculation
  shows that $\hatV_{23}V_{12}V_{13}=V_{13}\hatV_{23}$.
\end{proof}
\begin{proposition} \label{proposition:balanced-legs}
 $\hA_{\checkV} = U A_{V}U$,
  $\hDelta_{\checkV} = \Ad_{(U \btensor U)} \circ
    \Delta_{V} \circ \Ad_{U}$, $A_{\checkV} = \hA_{V}$, $
    \Delta_{\checkV} = \hDelta_{V}$ and
 $A_{\hatV} =
U \hA_{V}U$, $\Delta_{\hatV} = \Ad_{(U
      \botensor U)} \circ \hDelta_{V} \circ \Ad_{U}$,
    $\hA_{\hatV} = A_{V}$, $\hDelta_{\hatV} = \Delta_{V}$.
\end{proposition}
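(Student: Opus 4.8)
The eight assertions split into a group of four for $\checkV$ and a group of four for $\hatV$, and my plan is to prove the $\checkV$-group for an arbitrary balanced $C^{*}$-pseudo-multiplicative unitary and then deduce the $\hatV$-group by applying the $\checkV$-group to the pair $(\widecheck{\widecheck V},U)$, which is balanced by Remark \ref{remarks:balanced}. Indeed, by \eqref{eq:balanced-iterate} one has $\widecheck{\widecheck{\widecheck V}}=\hatV$ and $\widecheck{\widecheck V}=(U\btensor U)V(U\botensor U)$, so iterating the $\checkV$-identities (or a one-line computation with the latter formula) expresses the legs of $\widecheck{\widecheck V}$ through those of $V$, and the two resulting copies of $\Ad_{U}$ (respectively of $\Ad_{(U\btensor U)}$) cancel since $U^{2}=1$. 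Below I therefore treat only $\checkV$; recall that it is a $C^{*}$-pseudo-multiplicative unitary over $\frakbo$ on $(H,\halpha,\hbeta,\alpha)$, so that $A_{\checkV}=[\bhbeta{1}\checkV\khalpha{1}]$ and $\hA_{\checkV}=[\balpha{2}\checkV\khbeta{2}]$, the ket--bra operators being formed for $\checkHsource$ and $\checkHrange$.

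For the identities $A_{\checkV}=\hA_V$ and $\hA_{\checkV}=UA_VU$ I would substitute $\checkV=\Sigma(1\btensor U)V(1\botensor U)\Sigma$ into these slice expressions and commute the flips and symmetries past the ket--bra operators, reducing everything to slices of $V$. Two facts are used repeatedly: the flip $\Sigma$ turns a $|\cdot\rangle_{1}$ into a $|\cdot\rangle_{2}$ and swaps the base of the relative tensor product; and a symmetry acting on one leg either transforms the distinguished vector of a ket--bra operator sitting in that leg --- in which case, by $U\halpha=\alpha$, $U\hbeta=\beta$, $U\alpha=\halpha$, $U\beta=\hbeta$, it is absorbed into a relabelling of the closed span --- or transforms the input vector, which sits in the other leg, in which case it survives as a composition $|\xi\rangle_{i}\mapsto|\xi\rangle_{i}U$ (and dually for the adjoints). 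Keeping track of which symmetries are absorbed and which survive, the $U$'s in $A_{\checkV}$ all disappear and one arrives at $[\balpha{1}V\khbeta{1}]=\hA_V$, whereas in $\hA_{\checkV}$ two survive and one arrives at $[U\balpha{1}V\khbeta{1}U]=UA_VU$.

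For the comultiplications the essential input is Lemma \ref{lemma:balanced}. Its first relation gives $\checkV_{12}V_{13}\checkV_{12}^{*}=V_{13}V_{23}$, while the pentagon equation for $V$ gives $V_{12}^{*}V_{23}V_{12}=V_{13}V_{23}$, so $\checkV_{12}V_{13}\checkV_{12}^{*}=V_{12}^{*}V_{23}V_{12}$. Since $\Delta_{\checkV}$ and $\hDelta_V$ are $*$-homomorphisms, it suffices to check $\Delta_{\checkV}=\hDelta_V$ on the dense set of elements $a=\langle\xi|_{2}V|\eta\rangle_{2}\in\hA_V=A_{\checkV}$ with $\xi\in\beta$, $\eta\in\alpha$. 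Applying the leg-$3$ slice $\langle\xi|_{3}(\,\cdot\,)|\eta\rangle_{3}$ to this identity and using that $\checkV_{12}$ and $V_{12}$ commute with it, the left-hand side becomes $\checkV(a\btensor 1)\checkV^{*}=\Delta_{\checkV}(a)$ and the right-hand side becomes $V^{*}(1\btensor a)V=\hDelta_V(a)$; this proves $\Delta_{\checkV}=\hDelta_V$. The remaining identity $\hDelta_{\checkV}=\Ad_{(U\btensor U)}\circ\Delta_V\circ\Ad_U$ is obtained in the same way from the second relation $\hatV_{23}V_{12}V_{13}=V_{13}\hatV_{23}$ of Lemma \ref{lemma:balanced}, after writing a general element of $\hA_{\checkV}=UA_VU$ as $UbU$ with $b$ a leg slice of $V$.

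The step I expect to be delicate is the leg-and-base bookkeeping underlying the second and third paragraphs: in the pseudo-multiplicative setting each leg of a multiple relative tensor product carries its own $C^{*}$-base and module structure, so at every rewriting one must verify that the operators $\checkV_{12}$, $V_{13}$, $V_{23}$, $a\btensor 1$, $1\btensor a$ really act between the modules intended, and it is precisely this accounting that decides whether a symmetry collapses into a relabelling (yielding $\hA_V$) or survives as a conjugation (yielding $UA_VU$). The pentagon/Lemma \ref{lemma:balanced} argument is otherwise clean, the point being that Lemma \ref{lemma:balanced} delivers exactly the product $V_{13}V_{23}$ that the pentagon also extracts from $V^{*}(1\btensor\,\cdot\,)V$.
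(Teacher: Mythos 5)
Your proposal is correct and takes essentially the same route as the paper: the algebra identities follow from the same substitution-and-absorption computation with $\checkV=\Sigma(1\btensor U)V(1\botensor U)\Sigma$ using $U\halpha=\alpha$, $U\hbeta=\beta$, and the comultiplication identities from combining Lemma \ref{lemma:balanced} with the pentagon equation and slicing the outer leg on the dense set of generators $\langle\xi'|_{2}V|\xi\rangle_{2}$ (the paper packages this as a commuting diagram and dismisses the remaining cases as ``similar''). Your explicit reduction of the $\hatV$-assertions to the $\checkV$-assertions via \eqref{eq:balanced-iterate} is a reasonable way to organize what the paper leaves implicit, with the small caveat that the second relation of Lemma \ref{lemma:balanced} most directly yields $\hDelta_{\hatV}=\Delta_{V}$, from which the stated formula for $\hDelta_{\checkV}$ then follows by conjugating with $\checkV=(U\botensor U)\hatV(U\btensor U)$.
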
 
\begin{proof} By definition, $A_{\checkV} = [ \bhbeta{1}
  \Sigma U_{\leg{2}} V U_{\leg{2}} \Sigma \khalpha{1} ] = [
  \langle U \hbeta|_{\leg{2}} V | U\halpha\rangle_{\leg{2}}
  ] = [ \bbeta{2} V \kalpha{2}] = \hA_{V}$.  Next, let
  $\ha=\langle \xi'|_{\leg{2}}V|\xi\rangle_{\leg{2}} \in
  \hA_{V}$, where $\xi' \in \beta$, $\xi \in \alpha$.   The
  following diagram
 \begin{gather*}\smalldiagram \hspace{-3pt}
   \xymatrix@R=10pt@C=10pt{ {\checkHrange}
     \ar[r]_{\checkV^{*}} \ar[dd]^{|\xi\rangle_{\leg{3}}} &
     {\checkHsource} \ar[r]_{\ha \btensor 1}
     \ar[d]^{|\xi\rangle_{\leg{3}}} &
     {\checkHsource} \ar[r]_{\checkV} & {\checkHrange} \\
 &
     {(\checkHsource) \rtensor{\hbeta \lt
         \hbeta}{\frakbo}{\alpha} H} \ar[r]_{V_{\leg{13}}} &
     {(\checkHsource) \rtensor{\alpha \lt
         \hbeta}{\frakb}{\beta} H}
     \ar[rd]_{\checkV_{\leg{12}}}
     \ar[u]^{\langle\xi'|_{\leg{3}}} & 
  \\
     {\Hone} \ar[rd]^{V_{\leg{12}}}  \ar[rrr]|{V_{13}V_{23}}
     \ar[ru]_(0.47){\checkV^{*}_{\leg{12}}} &
& &  {\Hfour}      \ar[dd]^{\langle\xi'|_{\leg{3}}}  
     \ar[uu]^{\langle\xi'|_{\leg{3}}} \\
 & {\Htwo}
     \ar[r]^{V_{\leg{23}}} & {\Hthree}
     \ar[ru]^(0.47){V_{\leg{12}}^{*}}
     \ar[d]^{\langle\xi'|_{\leg{3}}}&
\\
     {\Hsource} \ar[r]^{V} \ar[uu]^{|\xi\rangle_{\leg{3}}} &
     {\Hrange} \ar[r]^{1 \rtensorh \ha}
     \ar[u]^{|\xi\rangle_{\leg{3}}} & {\Hrange}
     \ar[r]^{V^{*}} & {\Hsource}}
  \end{gather*} 
  commutes  because diagrams \eqref{eq:balanced-rel1} and
  \eqref{eq:pmu-pentagon} commute. Therefore,
  $\Delta_{\checkV}(\ha) = \checkV(\ha \btensor
  1)\checkV^{*} = V^{*}(1\btensor \ha)V=\hDelta_{V}(\ha)$.
 Since elements of the form like $\ha$ are
dense in $\hA_{V}$, we can conclude
$\Delta_{\checkV}=\hDelta_{\checkV}$.  
The proof of the remaining assertions is similar.
\end{proof} 
\begin{corollary}
If $V$
is well-behaved, then also
 $\checkV$ and $\hatV$ are well-behaved.
\end{corollary}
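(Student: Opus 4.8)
The plan is to read the corollary off Proposition~\ref{proposition:balanced-legs}, the only genuine point being that conjugation by the symmetry $U$ preserves the property of being a Hopf $C^{*}$-bimodule. Recall that $V$ is well-behaved precisely when the pairs $(\cA,\Delta)=(A^{\beta,\alpha}_{H},\Delta_{V})$ and $(\hcA,\hDelta)=(\hA^{\alpha,\hbeta}_{H},\hDelta_{V})$ are Hopf $C^{*}$-bimodules. Write $\cA_{\checkV},\hcA_{\checkV}$ (resp.\ $\cA_{\hatV},\hcA_{\hatV}$) for the corresponding $C^{*}$-bimodule algebras built from $A_{\checkV},\hA_{\checkV}$ (resp.\ $A_{\hatV},\hA_{\hatV}$) with the legs among $\halpha,\hbeta,\alpha$ (resp.\ $\alpha,\beta,\halpha$) prescribed by the intertwining relations in Remarks~\ref{remarks:balanced}(2); then $\checkV$ (resp.\ $\hatV$) is well-behaved precisely when $(\cA_{\checkV},\Delta_{\checkV})$ and $(\hcA_{\checkV},\hDelta_{\checkV})$ (resp.\ $(\cA_{\hatV},\Delta_{\hatV})$ and $(\hcA_{\hatV},\hDelta_{\hatV})$) are Hopf $C^{*}$-bimodules.

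First I would dispose of the two legs that Proposition~\ref{proposition:balanced-legs} identifies on the nose with legs of $V$: tracing through the definitions one sees that the bimodule legs attached to $A_{\checkV}$ are exactly those of $\hcA$, so $A_{\checkV}=\hA_{V}$ and $\Delta_{\checkV}=\hDelta_{V}$ give $(\cA_{\checkV},\Delta_{\checkV})=(\hcA,\hDelta)$, a Hopf $C^{*}$-bimodule by hypothesis; similarly $\hA_{\hatV}=A_{V}$ and $\hDelta_{\hatV}=\Delta_{V}$ give $(\hcA_{\hatV},\hDelta_{\hatV})=(\cA,\Delta)$.

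Next I would treat the remaining two legs, which Proposition~\ref{proposition:balanced-legs} presents as $\Ad_{U}$-conjugates of legs of $V$: $\hA_{\checkV}=UA_{V}U$ with $\hDelta_{\checkV}=\Ad_{(U\btensor U)}\circ\Delta_{V}\circ\Ad_{U}$, and $A_{\hatV}=U\hA_{V}U$ with $\Delta_{\hatV}=\Ad_{(U\botensor U)}\circ\hDelta_{V}\circ\Ad_{U}$. Since $U$ is a symmetry with $U\halpha=\alpha$, $U\hbeta=\beta$, $U\alpha=\halpha$, $U\beta=\hbeta$, it is in particular a morphism of the relevant $C^{*}$-$\frakb$- and $C^{*}$-$\frakbo$-modules, so $\Ad_{U}$ restricts to isomorphisms of $C^{*}$-bimodule algebras $\cA\to\hcA_{\checkV}$ and $\hcA\to\cA_{\hatV}$; moreover the operators $U\btensor U$ and $U\botensor U$ are defined and implement isomorphisms of the corresponding fiber products, by the functoriality of the fiber product recorded before Lemma~\ref{lemma:fp-c-morphism}. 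Transporting $\Delta_{V}$, resp.\ $\hDelta_{V}$, along these isomorphisms then shows that $\hDelta_{\checkV}$ and $\Delta_{\hatV}$ take values in the appropriate fiber products, are morphisms, and are coassociative (coassociativity being inherited from that of $\Delta_{V}$ and $\hDelta_{V}$). Hence $(\hcA_{\checkV},\hDelta_{\checkV})$ and $(\cA_{\hatV},\Delta_{\hatV})$ are Hopf $C^{*}$-bimodules, and together with the previous paragraph this gives that $\checkV$ and $\hatV$ are well-behaved.

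The step I expect to be the only real (though purely bookkeeping) obstacle is checking that $\Ad_{U}$, $\Ad_{U\btensor U}$ and $\Ad_{U\botensor U}$ respect not merely the underlying $C^{*}$-algebras but the full $C^{*}$-bimodule and fiber-product data --- that is, that $U$ carries each relevant range leg ($\alpha\rt\beta$, $\hbeta\lt\beta$, and so on) to the corresponding leg of the conjugated object. This amounts to unwinding the definitions of the legs of a relative tensor product and invoking the intertwining relations for $\checkV$ and $\hatV$ from Remarks~\ref{remarks:balanced}(2) together with the identities $U\halpha=\alpha$, $U\hbeta=\beta$, $U\alpha=\halpha$, $U\beta=\hbeta$; once these identifications are in place, the morphism property and coassociativity follow formally by transport along an isomorphism.
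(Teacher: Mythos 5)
Your proposal is correct and follows the same route as the paper, which states the corollary as an immediate consequence of Proposition~\ref{proposition:balanced-legs}: two of the four legs of $\checkV$ and $\hatV$ coincide on the nose with legs of $V$, and the other two are obtained by conjugating with the symmetry $U$, which transports the Hopf $C^{*}$-bimodule structure. Your write-up merely makes explicit the bookkeeping (that $\Ad_{U}$ and $\Ad_{U\btensor U}$, $\Ad_{U\botensor U}$ respect the module legs and fiber products) that the paper leaves to the reader.
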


\paragraph{Weak $C^{*}$-pseudo-Kac systems}
Let $(V,U)$ as above.
\begin{lemma} \label{lemma:weak-kac} 
 For each $\ha \in \hA$ and $a \in A$, we have equivalences
    \begin{gather*}
      \begin{aligned}
        (1 \botensor \ha)\hatV=\hatV(1 \btensor \ha) &&
        \Leftrightarrow && (U\ha U \btensor 1)V = V(U \ha U
        \botensor 1) && \Leftrightarrow && [U \ha U,\hA]=0, \\
        (a \botensor 1)\checkV=\checkV(a \btensor 1) 
        &&\Leftrightarrow && (1 \btensor Ua U)V = V(1
        \botensor U a U) && \Leftrightarrow && [Ua U,A]=0.
      \end{aligned}
    \end{gather*}
    These equivalent conditions hold for all $\ha \in \hA$ and $a \in A$ if and
    only if $V_{23}\hatV_{12}=\hatV_{12}V_{23}$ and
    $\checkV_{23}V_{12}=V_{12}\checkV_{23}$  in
    the sense that the following diagrams commute:
      \begin{align*}
        \smalldiagram\xymatrix@R=10pt{ {\hatHsource
            \rtensor{\hbeta}{\frakbo}{\alpha} H}
          \ar[r]_{\hatV_{\leg{12}}} \ar[d]^{V_{\leg{23}}} &
          {\hatHrange \rtensor{\hbeta}{\frakbo}{\alpha} H}
          \ar[d]^{V_{\leg{23}}} \\ {\hatHsource
            \rtensor{\alpha}{\frakb}{\beta} H}
          \ar[r]^{\hatV_{\leg{12}}} & {\hatHrange
            \rtensor{\alpha}{\frakb}{\beta} H,} }
 &&
        \smalldiagram\xymatrix@R=10pt{ {\Hsource
            \rtensor{\halpha}{\frakb}{\hbeta} H}
          \ar[r]_{V_{\leg{12}}} \ar[d]^{\checkV_{\leg{23}}}
          & {\Hrange \rtensor{\halpha}{\frakb}{\hbeta} H}
          \ar[d]^{\checkV_{\leg{23}}} \\
          {\Hsource \rtensor{\hbeta}{\frakbo}{\alpha} H}
          \ar[r]^{V_{\leg{12}}} & {\Hrange
            \rtensor{\hbeta}{\frakbo}{\alpha} H.}  }
      \end{align*}
  \end{lemma}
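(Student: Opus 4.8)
The plan is to reduce everything to the first chain of equivalences and the first diagram, since the second chain and the second diagram follow by the same arguments applied to the opposite unitary $V^{op}$, using $\hA_{V^{op}}=A_{V}$ from \eqref{eq:pmu-op}, the identity $V^{op}=\Sigma V^{*}\Sigma$ to translate the resulting relations, and Proposition \ref{proposition:balanced-legs}. Write $b:=U\ha U$. First I would record the module-theoretic facts that make the operators $b\btensor 1$, $b\botensor 1$, $1\btensor\ha$, $1\botensor\ha$ well defined: $\hA\subseteq\rho_{\beta}(\frakB)'$ (because $\hDelta$ is defined on $\hA$) and $\hA\subseteq\mathcal{L}(H_{\halpha})\subseteq\rho_{\halpha}(\frakBo)'$ by Remark \ref{remarks:balanced}(3); since $U$ interchanges $\alpha\leftrightarrow\halpha$, $\beta\leftrightarrow\hbeta$ and conjugates $\rho_{\hbeta}(\cdot)$ into $\rho_{\beta}(\cdot)$, this yields $b\in\mathcal{L}(H_{\alpha})\cap\rho_{\hbeta}(\frakB)'$. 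For the first equivalence I would substitute $\hatV=\Sigma(U\btensor 1)V(U\botensor 1)\Sigma$ into $(1\botensor\ha)\hatV=\hatV(1\btensor\ha)$ and conjugate away the four outer unitaries: conjugation by the flip $\Sigma$ moves $\ha$ from the second leg to the first, conjugation by $U\btensor 1$ (resp.\ $U\botensor 1$) replaces $\ha$ by $b$, and after cancelling the leading $U\btensor 1$ and the trailing $U\botensor 1$ the relation becomes $(b\btensor 1)V=V(b\botensor 1)$. This is a purely formal manipulation once the tensor products are known to be defined.

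For the second equivalence I would use $b\in\mathcal{L}(H_{\alpha})\cap\rho_{\hbeta}(\frakB)'$ to slide $b$ past the ket--bras of the second leg, namely $b\langle\xi'|_{2}=\langle\xi'|_{2}(b\btensor 1)$ on $\Hrange$ and $(b\botensor 1)|\xi\rangle_{2}=|\xi\rangle_{2}b$ on $\Hsource$ for $\xi\in\alpha$, $\xi'\in\beta$. Then
\[
  \langle\xi'|_{2}\bigl((b\btensor 1)V-V(b\botensor 1)\bigr)|\xi\rangle_{2}=b\,\widehat{c}-\widehat{c}\,b,\qquad \widehat{c}:=\langle\xi'|_{2}V|\xi\rangle_{2}\in\hA .
\]
Since $\hA=[\bbeta{2}V\kalpha{2}]$, the right-hand side vanishes for all $\xi,\xi'$ if and only if $[b,\hA]=0$; and since $[\kalpha{2}H]=\Hsource$ and $[\kbeta{2}H]=\Hrange$, the slice maps $X\mapsto\langle\xi'|_{2}X|\xi\rangle_{2}$ separate the operators $\Hsource\to\Hrange$, so the left-hand side vanishes for all $\xi,\xi'$ if and only if $(b\btensor 1)V=V(b\botensor 1)$. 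This gives $(b\btensor 1)V=V(b\botensor 1)\Leftrightarrow[b,\hA]=0$.

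For the global statement I would slice the third leg, which $\hatV_{12}$ leaves untouched, so that $\hatV_{12}$ commutes with $|\xi\rangle_{3}$ and $\langle\xi'|_{3}$, while slicing $V_{23}$ in the third leg produces a first-leg-trivial operator, $\langle\xi'|_{3}V_{23}|\xi\rangle_{3}=1\btensor\widehat{c}$ on $\hatHsource$ and $=1\botensor\widehat{c}$ on $\hatHrange$, with $\widehat{c}=\langle\xi'|_{2}V|\xi\rangle_{2}$. Hence
\[
  \langle\xi'|_{3}\bigl(V_{23}\hatV_{12}-\hatV_{12}V_{23}\bigr)|\xi\rangle_{3}=(1\botensor\widehat{c})\hatV-\hatV(1\btensor\widehat{c}).
\]
As $\xi,\xi'$ vary, $\widehat{c}$ runs over a norm-total subset of $\hA$ and the expression is bounded and linear in $\widehat{c}$, so it vanishes for all $\xi,\xi'$ exactly when $(1\botensor\ha)\hatV=\hatV(1\btensor\ha)$ holds for every $\ha\in\hA$; and, as in the previous paragraph, the left-hand side vanishes for all $\xi,\xi'$ exactly when $V_{23}\hatV_{12}=\hatV_{12}V_{23}$, because the third-leg ket/bra operators span the ambient spaces and hence the third-leg slice maps separate operators. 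This proves the equivalence for the first diagram, and the second diagram is handled symmetrically.

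I expect the main obstacle to be purely the bookkeeping of legs and $C^{*}$-bases: checking that $\hatV_{12}$, $V_{23}$ and the operators $x\btensor 1$, $1\botensor x$ really live on the correct relative tensor products (with the right choice of $\frakb$ versus $\frakbo$ and the right slot on the middle copy of $H$), and that the commutations and slicings above are legitimate at that level of precision. The conceptual content is just ``substitute and conjugate'' for the first equivalence and ``slice and separate'' for the other two.
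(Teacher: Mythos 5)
Your proof is correct and takes essentially the same approach as the paper, whose own proof is a one-line sketch: it records only the third-leg slicing argument for the global equivalence (exactly your third paragraph) and declares the pointwise equivalences straightforward. Your conjugation-by-$U$ computation and the second-leg slice-and-separate argument are precisely the intended way to fill in those omitted steps, and the module-theoretic bookkeeping you flag ($b=U\ha U\in\rho_{\alpha}(\frakBo)'\cap\rho_{\hbeta}(\frakB)'$) is the right thing to check.
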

  \begin{proof} 
    This is straightforward, for example, $V_{23}\hatV_{12}
    = \hatV_{12}V_{23}$ holds if and only if $\langle
    \xi'|_{\leg{3}}
    V_{\leg{23}}\hatV_{\leg{12}}|\xi\rangle_{\leg{3}}=
    \langle \xi'|_{\leg{3}} \hatV_{\leg{12}}
    V_{\leg{23}}|\xi\rangle_{\leg{3}}$ for all $\xi \in
    \alpha,\xi' \in \beta$ and hence if and only if $(1
    \botensor \ha) \hatV = \hatV (1 \btensor \ha)$ for all
    $\ha \in \hA$.
\end{proof}
\begin{definition} \label{definition:kac}
  We call $(V,U)$ a {\em weak $C^{*}$-pseudo-Kac system} if
  $V$ is well-behaved and if the equivalent conditions in
  Lemma \ref{lemma:weak-kac} hold, and a {\em
    $C^{*}$-pseudo-Kac-system} if $V,\checkV,\hatV$ are
  regular and  $(\Sigma(1 \btensor
  U)V)^{3}=\Id$, where $\Sigma(1 \btensor U)V \colon \Hsource
  \to \Hsource $.
\end{definition} 
\begin{remark} \label{remark:kac} In leg notation, the
  equation $\big(\Sigma(1 \rtensorh U)V\big)^{3}=1$ reads $(\Sigma U_{\leg{2}} V)^{3} = 1$.  Conjugating by
  $\Sigma$ or $V$, we see that this condition is equivalent
  to the relation $( U_{\leg{2}}V\Sigma)^{3}=1$ and to the
  relation $(V\Sigma U_{\leg{2}})^{3}=1$.
\end{remark}
\begin{lemma} \label{lemma:kac-condition} 
  $(\Sigma U_{\leg{2}}V)^{3}=1$ if and only if $\hatV
  V\checkV = U_{\leg{1}}\Sigma$.
\end{lemma}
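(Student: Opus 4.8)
The plan is to reduce the equivalence to purely formal manipulations with the flip $\Sigma$ and the symmetry $U$, without invoking the pentagon equation at all.

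First I would write out $\checkV$ and $\hatV$ explicitly from the definitions of $\widecheck{\;\cdot\;}$ and $\widehat{\;\cdot\;}$ applied to $T=V$, namely $\checkV = \Sigma U_{2}VU_{2}\Sigma$ and $\hatV = \Sigma U_{1}VU_{1}\Sigma$ in the leg notation introduced for balanced $C^{*}$-pseudo-multiplicative unitaries. Alongside these I would record the elementary relations $\Sigma^{2}=1$, $U_{1}^{2}=U_{2}^{2}=1$, $U_{1}U_{2}=U_{2}U_{1}$ and $\Sigma U_{1}\Sigma=U_{2}$ (equivalently $\Sigma U_{1}=U_{2}\Sigma$ and $\Sigma U_{2}=U_{1}\Sigma$), all immediate from the facts that $\Sigma$ interchanges the two legs and that $U$ is a symmetry with $U\halpha=\alpha$, $U\hbeta=\beta$, $U\alpha=\halpha$, $U\beta=\hbeta$. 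The one point requiring care is to check that every operator product written below is composable for the relevant $C^{*}$-$\frakb$-module structures; that is exactly what those intertwining relations for $U$ guarantee.

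Next I would abbreviate $Z:=\Sigma U_{2}V\colon\Hsource\to\Hsource$, so that $(\Sigma U_{2}V)^{3}=Z^{3}$, and prepare two auxiliary identities. Since $\Sigma U_{2}$ is unitary with inverse $U_{2}\Sigma$, we have $V=U_{2}\Sigma Z$; and applying $\Sigma U_{2}=U_{1}\Sigma$ twice gives $Z^{2}=U_{1}WU_{2}V$, where $W:=\Sigma V\Sigma$. These are the only preliminary computations needed.

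The heart of the argument is then the single identity
\[
  \hatV\,V\,\checkV \;=\; U_{2}U_{1}\,Z^{3}\,U_{2}\Sigma .
\]
I would obtain it by substituting the formulas for $\hatV$ and $\checkV$, collapsing the middle block $U_{1}\Sigma V\Sigma U_{2}$ to $U_{1}WU_{2}$, recognising that the resulting block $V\cdot U_{1}WU_{2}\cdot V$ equals $VZ^{2}$, rewriting $VZ^{2}=U_{2}\Sigma Z^{3}$ via $V=U_{2}\Sigma Z$, and simplifying $\Sigma U_{1}U_{2}\Sigma=U_{2}U_{1}$. Since $U_{2}U_{1}\cdot\Id\cdot U_{2}\Sigma=U_{2}U_{1}U_{2}\Sigma=U_{1}\Sigma$, the displayed identity shows at once that $\hatV V\checkV=U_{1}\Sigma$ holds if and only if $Z^{3}=\Id$, that is, $(\Sigma U_{2}V)^{3}=\Id$, which is the assertion. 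I expect the only real obstacle to be the leg-position bookkeeping that justifies treating $\Sigma$, $U_{1}$, $U_{2}$, $V$ as a formal algebra; once that is in place the computation is mechanical.
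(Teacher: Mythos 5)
Your proof is correct and is essentially the paper's own argument: the paper's entire proof consists of the single identity $U_{\leg{1}}U_{\leg{2}}(\Sigma U_{\leg{2}}V)^{3}U_{\leg{2}}\Sigma=\hatV V\checkV$, which (since $U_{\leg{1}}$ and $U_{\leg{2}}$ commute) is exactly the identity $\hatV V\checkV=U_{\leg{2}}U_{\leg{1}}Z^{3}U_{\leg{2}}\Sigma$ at the heart of your argument, and in both versions the equivalence follows by cancelling the unitaries $U_{\leg{1}}U_{\leg{2}}$ and $U_{\leg{2}}\Sigma$. Your write-up merely supplies the intermediate leg-notation manipulations that the paper leaves implicit.
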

\begin{proof} 
Use the relation $U_{\leg{1}} U_{\leg{2}}
  (\Sigma U_{\leg{2}}V)^{3} U_{\leg{2}} \Sigma = \Sigma
  U_{\leg{1}} V U_{\leg{1}} \Sigma \cdot V \cdot \Sigma
  U_{\leg{2}} V U_{\leg{2}} \Sigma = \hatV \cdot V \cdot
  \checkV$.
\end{proof} 

\begin{proposition} \label{proposition:kac-weak} Every
  $C^{*}$-pseudo-Kac system is a weak $C^{*}$-pseudo-Kac
  system.
\end{proposition}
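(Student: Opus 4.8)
The plan is to check the two defining requirements of a weak $C^{*}$-pseudo-Kac system (Definition~\ref{definition:kac}) separately: that $V$ is well-behaved, and that the equivalent conditions of Lemma~\ref{lemma:weak-kac} hold. The first comes for free: by hypothesis $V$ is regular, and a regular $C^{*}$-pseudo-multiplicative unitary is well-behaved by \cite[Theorem 5.7]{timmermann:cpmu-hopf}. So the real content is to extract the commutation relations of Lemma~\ref{lemma:weak-kac} from the cubic relation $(\Sigma(1\btensor U)V)^{3}=\Id$.

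By Lemma~\ref{lemma:weak-kac} it suffices to prove the operator identities $V_{23}\hatV_{12}=\hatV_{12}V_{23}$ and $\checkV_{23}V_{12}=V_{12}\checkV_{23}$ on the relevant threefold relative tensor products. I would feed in two inputs. First, Lemma~\ref{lemma:kac-condition} rephrases the cubic relation as $\hatV V\checkV=U_{\leg{1}}\Sigma$, hence $\hatV=U_{\leg{1}}\Sigma(V\checkV)^{-1}$ and $\checkV=(\hatV V)^{-1}U_{\leg{1}}\Sigma$. Second, combining the pentagon equation~\eqref{eq:pmu-pentagon} with Lemma~\ref{lemma:balanced}: substituting $V_{13}V_{23}=V_{12}^{*}V_{23}V_{12}$ into \eqref{eq:balanced-rel1} gives $V_{23}(V\checkV)_{12}=(V\checkV)_{12}V_{13}$, and substituting $V_{12}V_{13}=V_{23}V_{12}V_{23}^{*}$ into \eqref{eq:balanced-rel2} gives $(\hatV V)_{23}V_{12}=V_{13}(\hatV V)_{23}$.

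Given these, the first identity follows by relocating legs. Write $\hatV_{12}=U_{\leg{1}}\Sigma_{12}(V\checkV)_{12}^{-1}$; then move $V_{23}$ successively past $U_{\leg{1}}$ (disjoint legs), past $\Sigma_{12}$ (which turns $V_{23}$ into $V_{13}$), and past $(V\checkV)_{12}^{-1}$ (which, by the relation just derived, turns $V_{13}$ back into $V_{23}$). The outcome is exactly $\hatV_{12}V_{23}$. The second identity comes out symmetrically from $\checkV_{23}=(\hatV V)_{23}^{-1}U_{\leg{2}}\Sigma_{23}$, the flip carrying $V_{12}$ to $V_{13}$, and $(\hatV V)_{23}V_{12}=V_{13}(\hatV V)_{23}$. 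This parallels the argument of Baaj and Skandalis for ordinary Kac systems~\cite{baaj:10}.

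The step I expect to be the genuine obstacle is not this algebra but the bookkeeping supporting it: one has to verify that $(V\checkV)_{12}$, $(\hatV V)_{23}$, $\Sigma_{12}$, $\Sigma_{23}$ are honestly defined on the intended threefold relative tensor products over the various $C^{*}$-bases --- i.e.\ track every module structure through the intertwining relations~\eqref{eq:pmu-intertwine} and their $\checkV$- and $\hatV$-analogues in Remarks~\ref{remarks:balanced} --- and that the flip identities relocating $V_{23}$ and $V_{12}$ to the $(1,3)$-legs actually hold on these concrete spaces. Conceptually none of this is deep, but almost all of the labor lives there.
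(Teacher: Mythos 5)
Your proof is correct and follows essentially the same route as the paper: regularity of $V,\checkV,\hatV$ gives well-behavedness, and the commutation relations of Lemma~\ref{lemma:weak-kac} are extracted by combining the pentagon equation with Lemma~\ref{lemma:balanced} and the reformulation $\hatV V\checkV=U_{\leg{1}}\Sigma$ of Lemma~\ref{lemma:kac-condition}. The only cosmetic difference is that you isolate the intertwining identities $V_{23}(V\checkV)_{12}=(V\checkV)_{12}V_{13}$ and $(\hatV V)_{23}V_{12}=V_{13}(\hatV V)_{23}$ as named intermediate steps, whereas the paper runs the same manipulations as a single chain of equalities ending in $\hatV_{12}^{*}U_{\leg{1}}V_{23}=V_{23}\hatV_{12}^{*}U_{\leg{1}}$.
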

\begin{proof} 
  Let $(V,U)$ be a $C^{*}$-pseudo-Kac system. Then
  $V,\checkV,\hatV$ are regular and therefore
  well-behaved. Using diagrams \eqref{eq:pmu-pentagon} and
  \eqref{eq:balanced-rel1}, we find
  \begin{align*}
   V_{\leg{12}}\checkV_{\leg{12}}\Sigma_{\leg{12}}
  V_{\leg{23}} = V_{\leg{12}} \checkV_{\leg{12}}
  V_{\leg{13}} \Sigma_{\leg{12}} = V_{\leg{12}}V_{\leg{13}}
  V_{\leg{23}} \checkV_{\leg{12}}\Sigma_{\leg{12}} =
  V_{\leg{23}} V_{\leg{12}} \checkV_{\leg{12}}
  \Sigma_{\leg{12}}.
  \end{align*}
  By Lemma \ref{lemma:kac-condition},
  $V_{\leg{12}}\checkV_{\leg{12}}\Sigma_{\leg{12}}=\hatV_{\leg{12}}^{*}U_{\leg{1}}$
  and hence $\hatV_{\leg{12}}^{*}U_{\leg{1}}V_{\leg{23}} =
  V_{\leg{23}}\hatV_{\leg{12}}^{*}U_{\leg{1}}$.  Since
  $\hatV_{\leg{12}}$ is unitary and
  $U_{\leg{1}}V_{\leg{23}}=V_{\leg{23}}U_{\leg{1}}$, we can
  conclude
  $\hatV_{\leg{12}}V_{\leg{23}}=V_{\leg{23}}\hatV_{\leg{12}}$.
  A similar argument shows that
  $\checkV_{23}V_{12}=V_{12}\checkV_{23}$.
\end{proof}
The following result is crucial for the duality presented in
the next section.
\begin{proposition} \label{proposition:kac-compact}
Let $(V,U)$ be a $C^{*}$-pseudo-Kac system. Then $[
  A\hA] = [ \halpha\halpha^{*} ]$.
\end{proposition}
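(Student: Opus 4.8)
The plan is to prove the equality $[A\hA] = [\halpha\halpha^{*}]$ by reformulating the left-hand side as a double slice of two copies of $V$ over a triple relative tensor product and then reducing to the regularity of $V$. Since $(V,U)$ is a $C^{*}$-pseudo-Kac system, $V$ is regular, so $[\balpha{1}V\kalpha{2}] = [\alpha\alpha^{*}]$; and because $U = U^{*} = U^{-1}$ satisfies $U\alpha = \halpha$, conjugating this identity by $U$ yields $[\halpha\halpha^{*}] = U[\alpha\alpha^{*}]U = U[\balpha{1}V\kalpha{2}]U$. So it suffices to identify $[A\hA]$ with $U[\balpha{1}V\kalpha{2}]U$.

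The first step is to unfold the product. For $a = \balpha{1}V\khbeta{1} \in A$ and $\hat a = \bbeta{2}V\kalpha{2} \in \hA$, one realizes the copy of $V$ coming from $a$ on the first two legs of a triple relative tensor product and the copy coming from $\hat a$ on the last two legs, and commutes the slicing maps, which act only on the first and third legs, past the copies of $V$ and past each other. This gives $a\hat a = \balpha{1}\bbeta{3}\,(V_{\leg{12}}V_{\leg{23}})\,\khbeta{1}\kalpha{3}$, so that $[A\hA]$ is the closed linear span of such double slices of $V_{\leg{12}}V_{\leg{23}}$, with leg-$1$ slices over $\khbeta{1}$, $\balpha{1}$ and leg-$3$ slices over $\kalpha{3}$, $\bbeta{3}$. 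Already here one has to verify, using the intertwining relations \eqref{eq:pmu-intertwine}, that the subspaces on the middle leg are compatible so that $V_{\leg{12}}V_{\leg{23}}$ is defined.

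The core of the proof --- following the corresponding argument of Baaj and Skandalis \cite{baaj:2} for Kac systems --- is to transform $V_{\leg{12}}V_{\leg{23}}$ so that the two outer slices collapse to a single slice of one copy of $V$ conjugated by $U$. For this one uses the cubic relation $\hatV V\checkV = U_{\leg{1}}\Sigma$ of Lemma \ref{lemma:kac-condition} (equivalently $(\Sigma U_{\leg{2}}V)^{3} = 1$), the pentagon equation \eqref{eq:pmu-pentagon}, the commutation relations $V_{\leg{23}}\hatV_{\leg{12}} = \hatV_{\leg{12}}V_{\leg{23}}$ and $\checkV_{\leg{23}}V_{\leg{12}} = V_{\leg{12}}\checkV_{\leg{23}}$ of Lemma \ref{lemma:weak-kac}, and the relations $V_{\leg{13}}V_{\leg{23}}\checkV_{\leg{12}} = \checkV_{\leg{12}}V_{\leg{13}}$ and $\hatV_{\leg{23}}V_{\leg{12}}V_{\leg{13}} = V_{\leg{13}}\hatV_{\leg{23}}$ of Lemma \ref{lemma:balanced}; Proposition \ref{proposition:balanced-legs} is used to re-express the slices of $\checkV$ and $\hatV$ that appear along the way in terms of $\hA$ and $A$. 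After this rearrangement the two occurrences of $U$ are absorbed into the outer slices, and the intertwining relations \eqref{eq:pmu-intertwine} together with Remark \ref{remarks:balanced} show that what is left equals $U[\balpha{1}V\kalpha{2}]U$, which by the first paragraph is $[\halpha\halpha^{*}]$.

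I expect the main obstacle to lie in this last step, and specifically not in the algebra of moving the unitaries around but in the bookkeeping that accompanies it: unlike in the Hilbert-space setting of Baaj and Skandalis, every application of a slice, of the pentagon, or of the Kac relations must be matched with a check that the subspaces $\alpha,\beta,\hbeta,\halpha$ and the representations $\rho_{\alpha},\rho_{\beta},\dots$ sitting on the various legs of the relative tensor products are compatible, so that each composition is defined and so that the operators $U_{\leg{1}}$ and $U_{\leg{2}}$ carry the outer legs exactly onto the subspaces needed to bring in the regularity of $V$.
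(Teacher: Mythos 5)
Your first two paragraphs are sound: reducing the claim to $[A\hA]=U[\balpha{1}V\kalpha{2}]U$ via regularity and $U\alpha=\halpha$ is a correct reformulation of the target, and the double-slice formula $a\ha=\balpha{1}\bbeta{3}V_{\leg{12}}V_{\leg{23}}\khbeta{1}\kalpha{3}$ is right modulo the compatibility checks you flag. The gap is the third paragraph, which is where the entire content of the proposition lives: you list a toolbox (pentagon, the cubic relation, Lemmas \ref{lemma:weak-kac} and \ref{lemma:balanced}, Proposition \ref{proposition:balanced-legs}) and assert that ``after this rearrangement'' the double slice of $V_{\leg{12}}V_{\leg{23}}$ collapses to $U[\balpha{1}V\kalpha{2}]U$, but you never exhibit the chain of identities that performs the collapse. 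That chain is the proof, not bookkeeping around it, and it is not evident that the relations you name combine in the advertised way without a further input.

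The paper's own proof is shorter, runs in the opposite direction, and never forms a triple relative tensor product or invokes the pentagon. It starts from $[\halpha\halpha^{*}]=[\halpha\halpha^{*}\hA]=[U\alpha\alpha^{*}U\hA]$, using $[\halpha^{*}\hA]=\halpha^{*}$ from Remark \ref{remarks:balanced} iii); regularity gives $[\alpha\alpha^{*}]=[\balpha{2}V^{*}\kalpha{1}]$; substituting the cubic expansion $V^{*}=\Sigma U_{\leg{2}}V\Sigma U_{\leg{2}}V\Sigma U_{\leg{2}}$ and absorbing the outer $\Sigma$'s and $U$'s into the slices yields $[\balpha{1}V\Sigma U_{\leg{2}}V\kalpha{2}\hA]$; and then the absorption relation $[V\kalpha{2}\hA]=[\kbeta{2}\hA]$, quoted from \cite[Proposition 5.8]{timmermann:cpmu-hopf}, swallows the inner copy of $V$, leaving $[\balpha{1}V\Sigma U_{\leg{2}}\kbeta{2}\hA]=[\balpha{1}V\khbeta{1}\hA]=[A\hA]$. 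Note that the work you delegate to the pentagon and the commutation relations is done there by that single absorption relation, which is not among the facts you propose to use; if you want to keep your three-leg Baaj--Skandalis route, you would first have to derive it (or an equivalent consequence of the pentagon) and then actually write out the reduction.
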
 
\begin{proof} 
 The relation $[\halpha^{*}\hA]=\halpha^{*}$ (Remark
  \ref{remarks:balanced} iii)), regularity of $V$, and the
  relations $V^{*}=\Sigma U_{\leg{2}} V\Sigma U_{\leg{2}}
  V\Sigma U_{\leg{2}}$ and
  $[V\kalpha{2}\hA]=[\kbeta{2}\hA]$ 
  \cite[Proposition 5.8]{timmermann:cpmu-hopf} imply
  \begin{align*} 
    [ \halpha \halpha^{*}] 
    = [ U\alpha\alpha^{*}U\hA]  & = [ U \balpha{2}
V^{*}\kalpha{1} U\hA] \\ &= [ U \balpha{2}\Sigma
U_{\leg{2}} V\Sigma U_{\leg{2}} V\Sigma U_{\leg{2}}
\kalpha{1} U \hA] \\ &= [ \balpha{1} V\Sigma
U_{\leg{2}}V\kalpha{2}\hA] \\
&= [ \balpha{1}
V\Sigma U_{\leg{2}}\kbeta{2} \hA] = [ \balpha{1}
V\khbeta{1} \hA] = [ A
\hA]. \qedhere
 \end{align*}
\end{proof}
\begin{lemma}
  Let $(V,U)$ be a (weak) $C^{*}$-pseudo-Kac system.  Then
  also $(\checkV,U)$, $(\hatV,U)$, and $(V^{op},U)$ are (weak)
  $C^{*}$-pseudo-Kac systems.
\end{lemma}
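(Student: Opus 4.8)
The plan is to check, one clause at a time, each condition in the definition of a (weak) $C^{*}$-pseudo-Kac system for $(\checkV,U)$, $(\hatV,U)$ and $(V^{op},U)$, exploiting that these are already known to be balanced $C^{*}$-pseudo-multiplicative unitaries (Remarks \ref{remarks:balanced} i)) and that their associated Hopf $C^{*}$-bimodules are computed in Proposition \ref{proposition:balanced-legs} and in \eqref{eq:pmu-op}. I would treat the three cases essentially in parallel, the point being that in each of them the pair of legs $(\hA,A)$ is obtained from $(\hA_{V},A_{V})$ by interchanging the two entries and/or conjugating by the symmetry $U$.

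Well-behavedness of $\checkV$ and $\hatV$ is the Corollary following Proposition \ref{proposition:balanced-legs}, and that of $V^{op}$ was recorded after the definition of the opposite; in the full Kac case it is anyway a consequence of regularity. For the conditions of Lemma \ref{lemma:weak-kac} I would use the first characterization stated there: $(W,U)$ satisfies them exactly when $[U\hA_{W}U,\hA_{W}]=0$ and $[UA_{W}U,A_{W}]=0$. For $W=\checkV$ one has $\hA_{\checkV}=UA_{V}U$ and $A_{\checkV}=\hA_{V}$ by Proposition \ref{proposition:balanced-legs}, so these become $[A_{V},UA_{V}U]=0$ and $[U\hA_{V}U,\hA_{V}]=0$, i.e.\ exactly the hypotheses on $(V,U)$; for $W=\hatV$ one uses instead $A_{\hatV}=U\hA_{V}U$, $\hA_{\hatV}=A_{V}$ and is led to the same two conditions; and for $W=V^{op}$ one uses $\hA_{V^{op}}=A_{V}$, $A_{V^{op}}=\hA_{V}$ from \eqref{eq:pmu-op}, once more recovering the hypotheses. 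Throughout I use that $[x,y]=0$ if and only if $[UxU,UyU]=0$. This settles the ``weak'' assertion.

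In the full Kac case it remains, for $(\checkV,U)$, to verify that $\checkV$, $\widecheck{\widecheck{V}}$ and $\widehat{\widecheck{V}}$ are regular and that $\big(\Sigma(1\btensor U)\checkV\big)^{3}=\Id$, and likewise for $\hatV$ and $V^{op}$. For regularity I would use \eqref{eq:balanced-iterate}: the operations $\,\widecheck{\cdot}\,$ and $\,\widehat{\cdot}\,$ are mutually inverse there, so $\widehat{\widecheck{V}}=V$, while $\widecheck{\widecheck{V}}=\widehat{\widehat{V}}=(U\btensor U)V(U\botensor U)$; hence, after the usual identifications, all three unitaries are, up to conjugation by the symmetries $U_{1},U_{2}$, among $V,\checkV,\hatV$, which are regular by hypothesis, and the equality $[\balpha{1}V\kalpha{2}]=[\alpha\alpha^{*}]$ defining regularity is preserved under such conjugation (and for $V^{op}=\Sigma V^{*}\Sigma$ also under passing to adjoints). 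For the cubic relation I would invoke Lemma \ref{lemma:kac-condition}, which rewrites $\big(\Sigma(1\btensor U)\checkV\big)^{3}=\Id$ as an identity of the shape $\widehat{\widecheck{V}}\cdot\checkV\cdot\widecheck{\widecheck{V}}=U_{1}\Sigma$; by \eqref{eq:balanced-iterate} this reads $V\cdot\checkV\cdot\widehat{\widehat{V}}=U_{1}\Sigma$, which one deduces from the hypothesis $\hatV\,V\,\checkV=U_{1}\Sigma$ after substituting $\widehat{\widehat{V}}=\Sigma U_{1}\hatV U_{1}\Sigma$ and cancelling. For $V^{op}$ one computes $\Sigma U_{2}V^{op}=\Sigma U_{2}\Sigma V^{*}\Sigma=U_{1}V^{*}\Sigma$ (using $\Sigma U_{2}\Sigma=U_{1}$) and obtains $(U_{1}V^{*}\Sigma)^{3}=1$ from the equivalent form $(V\Sigma U_{2})^{3}=1$ of the cubic relation (Remark \ref{remark:kac}) by taking adjoints; the case of $\hatV$ can be done the same way, or obtained by applying the already-proved predual case successively to $\checkV$ and to $\widecheck{\widecheck{V}}$, since $\hatV=\widecheck{\widecheck{\widecheck{V}}}$.

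The computations in the first two steps are routine once Proposition \ref{proposition:balanced-legs} and \eqref{eq:pmu-op} are in hand; the genuinely delicate part — and the main obstacle — is the third step, namely keeping track of the ``switching of the underlying $C^{*}$-module data'' that accompanies each application of $\,\widecheck{\cdot}\,$ and $\,\widehat{\cdot}\,$: one must follow carefully which of $\halpha,\hbeta,\alpha,\beta$ and which of $U_{1},U_{2}$ plays which role at each stage, so that the regularity conditions and the cubic identity for the transformed unitaries really do collapse to the hypotheses on $V$.
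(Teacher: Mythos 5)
Your proposal is correct and follows essentially the same route as the paper: the weak case via Remarks \ref{remarks:balanced} i), Proposition \ref{proposition:balanced-legs} and \eqref{eq:pmu-op}, and the full case by using \eqref{eq:balanced-iterate} to reduce regularity of the transformed unitaries to that of $V,\checkV,\hatV$ (and $V^{op}$), then verifying the cubic relation through the equivalent reformulations of Remark \ref{remark:kac}. The only cosmetic difference is that for $\checkV$ you route the cubic identity through Lemma \ref{lemma:kac-condition} and a substitution, where the paper computes directly that $\Sigma U_{\leg{2}}\hatV=V\Sigma U_{\leg{2}}$, $\checkV\Sigma U_{\leg{2}}=\Sigma U_{\leg{2}}V$, and $U_{\leg{2}}V^{op}\Sigma=(V\Sigma U_{\leg{2}})^{*}$; both verifications go through.
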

\begin{proof}
  If $(V,U)$ is a weak $C^{*}$-pseudo-Kac system, then the
  tuples above are balanced $C^{*}$-pseudo-multiplica\-tive
  unitaries by Remark \ref{remarks:balanced} i), and the
  remaining necessary conditions follow easily from Proposition
  \ref{proposition:balanced-legs} and equation \eqref{eq:pmu-op}.

  If $(V,U)$ is a $C^{*}$-pseudo-Kac system, then equation
  \eqref{eq:balanced-iterate}, the relation
  $\widecheck{(V^{op})} = U_{1} V^{*} U_{1} = (\hatV)^{op}$,
  and the fact that $V^{op}$ is regular, imply that the
  tuples above satisfy the regularity condition in
  Definition \ref{definition:kac}.  To check that they also
  satisfy the second condition, we use Remark
  \ref{remark:kac} and calculate $(\Sigma U_{\leg{2}}
  \hatV)^{3} = (V \Sigma U_{\leg{2}})^{3} = 1$,
  $(\checkV\Sigma U_{\leg{2}} )^{3} =(\Sigma
  U_{\leg{2}}V)^{3} = 1$, $ (U_{2}V^{op}\Sigma)^{3} =
  (U_{\leg{2}}\Sigma V^{*})^{3} = ((V\Sigma
  U_{\leg{2}})^{3})^{*}= 1$.
\end{proof}

\paragraph{The $C^{*}$-pseudo-Kac system of a compact
  $C^{*}$-quantum groupoid}
In \cite{timmermann:leiden}, we introduced 
compact $C^{*}$-quantum groupoids and  associated to each
such object a regular $C^{*}$-pseudo-multiplicative unitary
$V$. We now recall this construction and define a symmetry
$U$ such that $(V,U)$ is a $C^{*}$-pseudo-Kac system.

A {\em compact $C^{*}$-quantum graph} consists of a unital
$C^{*}$-algebra $B$ with a faithful KMS-state $\mu$, a
unital $C^{*}$-algebra $A$ with unital embeddings $r \colon
B \to A$ and $s \colon B^{op} \to A$ such that
$[r(B),s(B^{op})]=0$, and faithful conditional expectations
$\phi \colon A \to r(B) \cong B$ and $\psi \colon A \to
s(B^{op}) \cong B^{op}$ such that the compositions $\nu:=\mu
\circ \phi$ and $\nu^{-1}:=\mu^{op} \circ \psi$ are
KMS-states related by some positive invertible element
$\delta \in A \cap r(B)' \cap s(B^{op})'$ via the formula
$\nu^{-1}(a)=\nu(\delta^{1/2}a\delta^{1/2})$, valid for all $a \in
A$.  An {\em involution} for such a compact $C^{*}$-quantum
graph is a $*$-antiisomorphism $R \colon A \to A$ such that
$R\circ R=\Id_{A}$, $R(r(b))=s(b^{op})$ and
$\phi(R(a))=\psi(a)^{op}$ for all $b \in B$, $a \in A$.

Let $(B,\mu,A,r,s,\phi,\psi)$ be a compact $C^{*}$-quantum
graph with involution $R$. We denote by
$(H_{\mu},\zeta_{\mu},J_{\mu})$ and
$(H_{\nu},\zeta_{\nu},J_{\nu})$ the GNS-spaces, canonical
cyclic vectors, and modular conjugations for the KMS-states
$\mu$ and $\nu$, respectively, and let
$\zeta_{\nu^{-1}}=\delta^{1/2}\zeta_{\nu}$.  As usual, we
have representations $B^{op} \to \mathcal{ L}(H_{\mu})$,
$b^{op} \mapsto J_{\mu}b^{*}J_{\mu}$, and $A^{op} \to
\mathcal{ L}(H_{\nu})$, $a^{op} \mapsto
J_{\nu}a^{*}J_{\nu}$. Using the isometries
 \begin{align*}
  \zeta_{\phi} &\colon H_{\mu}  \to H_{\nu}, \ b\zeta_{\mu}
  \mapsto r(b)\zeta_{\nu}, & \zeta_{\psi} &\colon
  H_{\mu^{op}} \to H_{\nu}, \ b^{op}\zeta_{\mu^{op}}
  \mapsto s(b^{op}) \zeta_{\nu^{-1}},
 \end{align*}
 we define subspaces $\halpha,\hbeta,\alpha,\beta \subseteq
 \mathcal{ L}(H_{\mu},H_{\nu})$ by
 $\halpha:=[A\zeta_{\phi}]$, $\hbeta:=[A\zeta_{\psi}]$,
 $\beta :=[A^{op}\zeta_{\phi}]$,
 $\beta:=[A^{op}\zeta_{\psi}]$. Let $H=H_{\nu}$ and
 $\frakb=\cbasesb$, where $\frakK=H_{\mu}$, $\frakB=B
 \subseteq \mathcal{ L}(H_{\mu})$, $\frakBo=B^{op} \subseteq
 \mathcal{ L}(H_{\mu})$.  Then
 $(H,\halpha,\hbeta,\alpha,\beta)$ is a
 $C^{*}$-$(\frakb,\frakbo,\frakb,\frakbo)$-module and
 $\cA:=A^{\beta,\alpha}_{H}$  a
 $C^{*}$-$(\frakbo,\frakb)$-algebra
 \cite{timmermann:leiden}.  

 A {\em compact $C^{*}$-quantum groupoid} consists of a
 compact $C^{*}$-quantum graph with involution as above and
 a morphism $\cA \to \cA \bfibre \cA$ of
 $C^{*}$-$(\frakbo,\frakb)$-algebras such that
 \begin{enumerate}
 \item $(\Delta \ast \Id) \circ \Delta = (\Id \ast \Delta)
   \circ \Delta$ as maps from $A$ to ${\cal L}(\Hrange
   \frange H)$;
 \item $\langle
   \zeta_{\phi}|_{2}\Delta(a)|\zeta_{\phi}\rangle_{2} =
   \rho_{\beta}(\phi(a)) $ and $\langle
   \zeta_{\psi}|_{1}\Delta(a)|\zeta_{\psi}\rangle_{1} =
   \rho_{\alpha}(\psi(a))$ for all $a \in A$;
    \item $[\Delta(A)\kalpha{1}]=[\kalpha{1}A] =
      [\Delta(A)|\zeta_{\psi}\rangle_{1}A]$,
      $[\Delta(A)\kbeta{2}]=[\kbeta{2}A] =
      [\Delta(A)|\zeta_{\phi}\rangle_{2}A]$;
    \item $R(\langle\zeta_{\psi}|_{1}\Delta(a)(d^{op}
      \btensor 1)|\zeta_{\psi}\rangle_{1}) =
      \langle\zeta_{\psi}|_{1}(a^{op} \btensor
      1)\Delta(d)|\zeta_{\psi}\rangle_{1}$ for all $a,d \in
      A$.
 \end{enumerate}
 By \cite[Theorem 5.4]{timmermann:leiden}, there exists a
 unique regular $C^{*}$-pseudo-multiplicative unitary $V
 \colon \Hsource \to \Hrange$ such that
 $V|a\zeta_{\psi}\rangle_{1}=\Delta(a)|\zeta_{\psi}\rangle_{1}$
 for all $a \in A$.  Denote by $J=J_{\nu}$ the modular
 conjugation for $\nu$ as above, by $I \colon H \to H$ the
 antiunitary given by $Ia\zeta_{\nu^{-1}} =
 R(a)^{*}\zeta_{\nu}$ for all $a \in A$, and let $U=IJ \in
 {\cal L}(H)$.
\begin{proposition}
  $(V,U)$ is a $C^{*}$-pseudo-Kac system.
\end{proposition}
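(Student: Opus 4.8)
The plan is to verify the three requirements of Definition~\ref{definition:kac} for $(V,U)$: that $(V,U)$ is a balanced $C^{*}$-pseudo-multiplicative unitary whose underlying unitary $V$ is well-behaved, that $V$, $\checkV$ and $\hatV$ are regular, and that $(\Sigma(1\btensor U)V)^{3}=\Id$. Regularity of $V$, hence well-behavedness, is part of the construction of $V$ recalled from \cite{timmermann:leiden}, so essentially all of the work concerns the symmetry $U=IJ$ and the operators $\checkV$ and $\hatV$.

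First I would check that $U=IJ$ is a symmetry in $\mathcal{L}({_{\halpha}H_{\hbeta}},{_{\alpha}H_{\beta}})$. As a product of the antiunitaries $I$ and $J=J_{\nu}$ it is unitary, and $U^{2}=\Id$ (equivalently $U=U^{*}$) is a short computation from $J=J^{*}=J^{-1}$, the formula $Ia\zeta_{\nu^{-1}}=R(a)^{*}\zeta_{\nu}$, the identities $R\circ R=\Id_{A}$ and $R(r(b))=s(b^{op})$, and the relation $\zeta_{\nu^{-1}}=\delta^{1/2}\zeta_{\nu}$ with $\delta$ central in $A\cap r(B)'\cap s(B^{op})'$; centrality of $\delta$ is exactly what lets the non-tracial modular data cancel. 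The intertwining relations $U\halpha=\alpha$, $U\hbeta=\beta$, $U\alpha=\halpha$, $U\beta=\hbeta$ then follow by applying $U$ to the generating sets $A\zeta_{\phi}$, $A\zeta_{\psi}$, $A^{op}\zeta_{\phi}$, $A^{op}\zeta_{\psi}$, using that $J_{\nu}$ intertwines $A$ with its commutant on $H_{\nu}$ and that $R(r(b))=s(b^{op})$, $\phi(R(a))=\psi(a)^{op}$.

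Next, since $\checkV=\Sigma U_{\leg{2}}VU_{\leg{2}}\Sigma$ and $\hatV=\Sigma U_{\leg{1}}VU_{\leg{1}}\Sigma$, the intertwining relations for $\checkV$ and $\hatV$ recorded in Remark~\ref{remarks:balanced}~ii) are immediate from \eqref{eq:pmu-intertwine} and the relations for $U$, and a direct check --- using the explicit formula $V|a\zeta_{\psi}\rangle_{\leg{1}}=\Delta(a)|\zeta_{\psi}\rangle_{\leg{1}}$ of \cite{timmermann:leiden} and coassociativity of $\Delta$ --- shows that $\checkV$ and $\hatV$ satisfy the pentagon equation; hence $(V,U)$ is balanced. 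Regularity of $\checkV$ and $\hatV$ is verified by a computation analogous to the one for $V$ in \cite{timmermann:leiden}, transported through the $U$-conjugations via $U\alpha=\halpha$ and $U\beta=\hbeta$.

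The heart of the proof is the cubic relation. By Remark~\ref{remark:kac} it suffices to prove $(\Sigma U_{\leg{2}}V)^{3}=1$, and by Lemma~\ref{lemma:kac-condition} this is equivalent to $\hatV V\checkV=U_{\leg{1}}\Sigma$. I would prove the latter by evaluating both operators on the generating vectors $|a\zeta_{\psi}\rangle_{\leg{1}}\colon H\to\Hsource$, $a\in A$, using $V|a\zeta_{\psi}\rangle_{\leg{1}}=\Delta(a)|\zeta_{\psi}\rangle_{\leg{1}}$ and the descriptions of $\checkV$, $\hatV$ as conjugates of $V$ by $U_{\leg{1}}$ and $U_{\leg{2}}$. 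Pushing the operators $U$ through $V$ turns $\hatV V\checkV$, applied to such a vector, into an expression built from $\Delta$, from the involution $R$ (which enters through $I$), and from $J$; coassociativity (axiom~(i)) together with the pentagon for $V$ collapses the iterated coproducts, and the $R$-invariance axiom~(iv), $R(\langle\zeta_{\psi}|_{\leg{1}}\Delta(a)(d^{op}\btensor 1)|\zeta_{\psi}\rangle_{\leg{1}})=\langle\zeta_{\psi}|_{\leg{1}}(a^{op}\btensor 1)\Delta(d)|\zeta_{\psi}\rangle_{\leg{1}}$, is exactly the identity needed to recognise the outcome as $U_{\leg{1}}\Sigma|a\zeta_{\psi}\rangle_{\leg{1}}$. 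In effect this reproves, within the $C^{*}$-bimodule framework, the classical fact that an involutive unitary antipode upgrades a (pseudo-)multiplicative unitary to a Kac system, in the spirit of Baaj--Skandalis \cite{baaj:10,baaj:2}. The main obstacle is the bookkeeping in this last step --- keeping the subspaces $\halpha,\hbeta,\alpha,\beta$, the relative tensor products and the leg-numbering straight while transporting $U_{\leg{1}}$ and $U_{\leg{2}}$ through $V$ --- and confirming that axiom~(iv) together with the compatibility of $I$, $J$ and $\delta$, rather than any stronger hypothesis, is precisely what makes the cubic identity close; the symmetry, balancedness and regularity steps are by comparison routine given the machinery of \cite{timmermann:cpmu-hopf,timmermann:leiden}.
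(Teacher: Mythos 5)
Your overall skeleton matches the paper's: check that $U$ is a symmetry exchanging $\halpha,\hbeta$ with $\alpha,\beta$, establish that $(V,U)$ is balanced with $V,\checkV,\hatV$ regular, and then prove the cubic relation by reducing it via Lemma~\ref{lemma:kac-condition} to an identity of the form $\hatV V\checkV=U_{\leg{1}}\Sigma$ evaluated on generating vectors. The cubic-relation part of your plan is essentially the paper's (the actual computation there runs through $\Delta(a)=\hatV^{*}(1\botensor a)\hatV$, the identity $UaU=R(a)^{op}$ together with the vanishing of $[A^{op}\btensor 1,\AfibreA]$, the formula $\checkV^{*}(1\botensor UbU)\checkV=(U\btensor U)\Delta(b)(U\btensor U)$, and the fixed-vector property $V(\zeta_{\nu}\tl UaU\zeta_{\psi})=\zeta_{\nu}\tl UaU\zeta_{\phi}$ from \cite{timmermann:leiden}, rather than a direct appeal to axiom~(iv), but the spirit is the same).

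The genuine gap is in your balancedness step. You assert that the pentagon equation for $\checkV$ and $\hatV$ follows from ``a direct check using the explicit formula $V|a\zeta_{\psi}\rangle_{\leg{1}}=\Delta(a)|\zeta_{\psi}\rangle_{\leg{1}}$ and coassociativity of $\Delta$.'' Those two inputs only characterize $V$ and yield the pentagon for $V$ itself; they contain no information about how $U$ interacts with $V$, and the pentagon for $\checkV=\Sigma(1\btensor U)V(1\botensor U)\Sigma$ is emphatically not a formal consequence of the pentagon for $V$ --- otherwise every $C^{*}$-pseudo-multiplicative unitary equipped with any symmetry would be balanced. The missing substantive ingredient is the antipode--modular-conjugation relation $(J \rtensor{\alpha}{J_{\mu}}{\beta} I)V(J \rtensor{\alpha}{J_{\mu}}{\beta} I)=V^{*}$ from \cite[Theorem 5.6]{timmermann:leiden}. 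The paper uses it to rewrite $\checkV$ as $(J \rtensor{\alpha}{J_{\mu}}{\hbeta} J)\,V^{op}\,(J\rtensor{\halpha}{J_{\mu}}{\hbeta} J)$, which in one stroke shows that $\checkV$ is a regular $C^{*}$-pseudo-multiplicative unitary (so $(V,U)$ is balanced and the regularity hypotheses of Definition~\ref{definition:kac} hold), whereas your ``computation analogous to the one for $V$, transported through the $U$-conjugations'' for regularity likewise needs this relation to get off the ground. Without identifying this input, both the pentagon for $\checkV,\hatV$ and their regularity are unjustified.
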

\begin{proof}
  First, $U^{2}=IJIJ=IJJI=II=\Id_{H}$ because $IJ=JI$, and
  $U\zeta_{\phi}=\zeta_{\psi}$,
  $U\zeta_{\nu}=\zeta_{\nu^{-1}}$, $U\halpha=I\beta=\alpha$,
  $U\hbeta=I\alpha=\beta$ by \cite[Lemma 2.7, Proposition
  3.8]{timmermann:leiden}.  The relation $(J
  \rtensor{\alpha}{J_{\mu}}{\beta} I)V(J
  \rtensor{\alpha}{J_{\mu}}{\beta} I)=V^{*}$ \cite[Theorem
  5.6]{timmermann:leiden} implies
  \begin{align*}
    \checkV = \Sigma(1 \btensor JI)V(1 \botensor JI) \Sigma
    &=  (J \rtensor{\alpha}{J_{\mu}}{\hbeta} J) \Sigma
    (J
  \rtensor{\alpha}{J_{\mu}}{\beta} I)V(J
  \rtensor{\alpha}{J_{\mu}}{\beta} I) \Sigma(J
   \rtensor{\halpha}{J_{\mu}}{\hbeta} J)  \\ &=
  (J \rtensor{\alpha}{J_{\mu}}{\hbeta} J) \Sigma V^{*}\Sigma(J
   \rtensor{\halpha}{J_{\mu}}{\hbeta} J)  =
  (J \rtensor{\alpha}{J_{\mu}}{\hbeta} J) V^{op}(J
   \rtensor{\halpha}{J_{\mu}}{\hbeta} J).
  \end{align*}
  Since $V^{op}$ is a regular $C^{*}$-pseudo-multiplicative
  unitary, so is $\checkV$. In particular, $(V,U)$ is a
  balanced $C^{*}$-pseudo-multiplicative unitary. We show
  that $\hatV V=U_{1}\Sigma \checkV^{*}$, and then the claim
  follows from Lemma \ref{lemma:kac-condition}. Let $a,b \in
  A$ and $\omega= \hatV V (a\zeta_{\psi} \tr
  Ub\zeta_{\nu^{-1}})$. Since $\Delta(a)=\hatV^{*}(1
  \botensor a) \hatV$ by Proposition
  \ref{proposition:balanced-legs},
  \begin{align*}
\omega = \hatV
    \Delta(a)(\zeta_{\psi} \tr Ub\zeta_{\nu^{-1}})  &= (1
    \botensor a) \hatV(\zeta_{\psi} \tr Ub\zeta_{\nu^{-1}})
    \\ &= \Sigma ( U \btensor 1) (UaU \btensor 1)V (b
    \zeta_{\nu^{-1}} \tl \zeta_{\psi}) \\
    &= \Sigma(U \btensor 1)(UaU \btensor
    1)\Delta(b)(\zeta_{\psi} \tr \zeta_{\nu^{-1}}).
\end{align*}
Using the relations $UaU=JIaIJ=R(a)^{op}$ and $[UaU \btensor
1,\Delta(b)] \in [A^{op} \btensor 1, \AfibreA]=0$, we find
\begin{align*}
  \omega &= \Sigma(U
  \btensor 1)\Delta(b)(UaU\zeta_{\psi} \tr \zeta_{\nu^{-1}})
= (U \botensor 1) \Sigma (U \btensor U)\Delta(b)(U
  \btensor U)(aU\zeta_{\psi} \tr \zeta_{\nu}).
\end{align*}
Since $\checkV^{*}(1 \botensor UbU)\checkV = (U
  \btensor U)\Delta(b)(U \btensor U)$ by Proposition
  \ref{proposition:balanced-legs}, we obtain
\begin{align*}
  \omega &= (U \botensor 1) \Sigma \checkV^{*} (1 \botensor
  UbU)
  \checkV (aU\zeta_{\psi} \tr \zeta_{\nu}) \\
  &= (U \botensor 1) \Sigma \checkV^{*} (1 \botensor UbU)
  \Sigma (1 \btensor U)V (\zeta_{\nu} \tl UaU\zeta_{\psi}).
\end{align*}
By Proposition \cite[Proposition 5.5]{timmermann:leiden}, $V
(\zeta_{\nu} \tl UaU\zeta_{\psi}) = \zeta_{\nu }\tl
UaU\zeta_{\phi}$, whence
\begin{align*}
  \omega
    &=(U \botensor 1) \Sigma \checkV^{*} (1 \botensor UbU)
    (aU\zeta_{\phi} \tr \zeta_{\nu})  
    =(U \botensor 1) \Sigma \checkV^{*} (a\zeta_{\psi} \tr
    Ub\zeta_{\nu^{-1}}). \qedhere
\end{align*}
\end{proof}

\section{Reduced crossed products and duality}

\label{section:coactions}

Let $(V,U)$ be a weak $C^{*}$-pseudo-Kac system and let
$(\cA,\Delta)$, $(\hcA,\hDelta)$ be the Hopf
$C^{*}$-bimodules associated to $V$ as in the preceding
section.  Generalizing the corresponding constructions and
results for coactions of Hopf $C^{*}$-algebras
\cite{baaj:2}, we now associate to every coaction of one of
these Hopf $C^{*}$-bimodules a reduced crossed product that
carries a dual coaction of the other Hopf $C^{*}$-bimodule,
and prove a duality theorem concerning the iteration of
this construction.

\paragraph{Reduced crossed products for coactions of $(\cA,\Delta)$}
Let $\delta$ be a coaction of  the Hopf $C^{*}$-bimodule $(\cA,\Delta)$ on a
$C^{*}$-$\frakb$-algebra ${\cal C}=C^{\gamma}_{K}$ and
let\footnote{The notation $C \rtimes_{r} \hA$ is consistent
  with \cite{baaj:2} but not with \cite{enock:action}, where
  $C \rtimes_{r} A$ is used instead.}
\begin{align*}
  C \rtimes_{r} \hA &:= [\delta(C)(1 \btensor \hA)]
  \subseteq {\cal L}(\rHrange), & \cC \rtimes_{r} \hcA &:=
  (\rHrange_{\hbeta}, C \rtimes_{r} \hA).
\end{align*}
\begin{proposition} \label{proposition:rcp}
\begin{enumerate}
\item $[\delta(C)(\gamma \rt \hbeta)] \subseteq \gamma \rt
  \hbeta$ with equality if $\delta$ is left-full.
\item $C \rtimes_{r} \hA$ is a $C^{*}$-algebra and $\cC
  \rtimes_{r} \hcA$ is a $C^{*}$-$\frakbo$-algebra.
\item There exist nondegenerate $*$-homomorphisms $C \to M(C
  \rtimes_{r} \hA)$ and $\hA \to M(C \rtimes_{r} \hA)$,
  given by $c \mapsto \delta(c)$ and $\ha \mapsto 1 \btensor
  \ha$, respectively.
  \end{enumerate}
\end{proposition}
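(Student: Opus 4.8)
The plan is to follow the template of Baaj and Skandalis \cite{baaj:2}. Write $L:=\rHrange=K\rtensor{\gamma}{\frakb}{\beta}H$ for the underlying Hilbert space, so that $C\rtimes_{r}\hA=[\delta(C)(1\btensor\hA)]\subseteq\mathcal{L}(L)$, and recall that the coaction condition gives $\delta(C)\subseteq C\fib{\gamma}{\frakb}{\beta}A=\Ind_{[\kgamma{1}A]}(A)\cap\Ind_{\kbeta{2}}(C)$, that is, $\delta(c)[\kgamma{1}A]\subseteq[\kgamma{1}A]$ and $\delta(c)\kbeta{2}\subseteq[\kbeta{2}C]$ together with the adjoint relations, for all $c\in C$. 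The engine behind everything is the \emph{absorption relation}
\begin{align} \label{eq:plan-absorb}
  [\delta(C)(1\btensor\hA)]=[(1\btensor\hA)\delta(C)];
\end{align}
once this is available, parts (ii) and (iii) become formal, so I would establish \eqref{eq:plan-absorb} first and deduce the rest.

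\emph{Part (i).} I would use the fullness relation $[A\hbeta]=\hbeta$, which holds by the same computation as in Remark \ref{remarks:balanced} iii) (it uses the intertwining relation $V(\hbeta\rt\hbeta)=\alpha\rt\hbeta$ from \eqref{eq:pmu-intertwine} together with $[\rho_{\beta}(\frakB)\hbeta]=\hbeta$). This yields $\gamma\rt\hbeta=[\kgamma{1}\hbeta]=[\kgamma{1}A\hbeta]=[[\kgamma{1}A]\hbeta]$. Since $\delta(c)$ maps $[\kgamma{1}A]$ into itself, composing on the right with $\hbeta$ gives $\delta(c)(\gamma\rt\hbeta)\subseteq[[\kgamma{1}A]\hbeta]=\gamma\rt\hbeta$, the claimed inclusion. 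If $\delta$ is left-full, then $[\delta(C)[\kgamma{1}A]]=[\kgamma{1}A]$ by definition, and composing on the right with $\hbeta$ upgrades the inclusion to the equality $[\delta(C)(\gamma\rt\hbeta)]=\gamma\rt\hbeta$.

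\emph{The absorption relation --- the main obstacle.} This is the one genuinely technical point; the rest is bookkeeping on top of it. Taking adjoints turns one inclusion in \eqref{eq:plan-absorb} into the other (using that $\hA$ and $\delta(C)$ are $*$-stable), so it suffices to prove $(1\btensor\ha)\delta(c)\in[\delta(C)(1\btensor\hA)]$ for all $\ha\in\hA$, $c\in C$. The strategy is to rewrite the left-hand side through the identities linking $\delta$, $\Delta$, $\hDelta$ and $V$: write $\ha=\langle\xi'|_{\leg{2}}V|\xi\rangle_{\leg{2}}$ with $\xi'\in\beta$, $\xi\in\alpha$ (possible since $\hA=[\bbeta{2}V\kalpha{2}]$), so that on an auxiliary $H$-leg one has $V^{*}(1\btensor\ha)V=\hDelta(\ha)$ and $V(a\botensor 1)V^{*}=\Delta(a)$; then feed in the coaction identity $(\delta\ast\Id)\circ\delta=(\Id\ast\Delta)\circ\delta$ and use the relation $[V\kalpha{2}\hA]=[\kbeta{2}\hA]$ from \cite[Proposition 5.8]{timmermann:cpmu-hopf}, together with the fullness relations for $\hA$ (cf.\ Remark \ref{remarks:balanced} iii)), to move $1\btensor\hA$ past $\delta(C)$ at the cost of an extra factor of $\delta(C)$ on the left. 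The point where the weak $C^{*}$-pseudo-Kac axiom enters is exactly this rearrangement: the commutation $\checkV_{\leg{23}}V_{\leg{12}}=V_{\leg{12}}\checkV_{\leg{23}}$, equivalently $[Ua U,A]=0$ for all $a\in A$ (Lemma \ref{lemma:weak-kac}), is what makes the manipulation of the three relative tensor legs legitimate. I expect the careful tracking of which operator acts on which leg, and of the various identifications of relative tensor products, to be the real labour here.

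\emph{Parts (ii) and (iii).} Granting \eqref{eq:plan-absorb}, the remaining assertions are formal. The subspace $[\delta(C)(1\btensor\hA)]$ is self-adjoint because its adjoint is $[(1\btensor\hA)\delta(C)]$, and it is closed under multiplication because $[\delta(C)(1\btensor\hA)\delta(C)(1\btensor\hA)]\subseteq[\delta(C)\delta(C)(1\btensor\hA)(1\btensor\hA)]=[\delta(C)(1\btensor\hA)]$ after applying \eqref{eq:plan-absorb} to the inner $(1\btensor\hA)\delta(C)$; so $C\rtimes_{r}\hA$ is a $C^{*}$-algebra, and $\cC\rtimes_{r}\hcA$ is a $C^{*}$-$\frakbo$-algebra once one checks $\rho_{\gamma\rt\hbeta}(\frakB)(C\rtimes_{r}\hA)\subseteq C\rtimes_{r}\hA$, which follows from $\rho_{\hbeta}(\frakB)\hA\subseteq\hA$ (part of $\hcA=\hA^{\alpha,\hbeta}_{H}$ being a $C^{*}$-$(\frakb,\frakbo)$-algebra) and the fact that $\rho_{\gamma\rt\hbeta}(b)=\Id\btensor\rho_{\hbeta}(b)$ commutes with $\delta(C)$, which is read off the module relations for $(H,\hbeta,\alpha,\beta)$. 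For (iii) both maps are obviously $*$-homomorphisms; that they take values in $M(C\rtimes_{r}\hA)$ and are nondegenerate is again a matter of shuffling factors with \eqref{eq:plan-absorb}: one has $(C\rtimes_{r}\hA)\delta(C)=[\delta(C)(1\btensor\hA)\delta(C)]\subseteq C\rtimes_{r}\hA$, $[\delta(C)(C\rtimes_{r}\hA)]=[\delta(C)(1\btensor\hA)]=C\rtimes_{r}\hA$ (using an approximate unit of $C$), and $(1\btensor\hA)(C\rtimes_{r}\hA)=[(1\btensor\hA)\delta(C)(1\btensor\hA)]=[\delta(C)(1\btensor\hA)]=C\rtimes_{r}\hA$, with $(C\rtimes_{r}\hA)(1\btensor\hA)\subseteq C\rtimes_{r}\hA$ immediate.
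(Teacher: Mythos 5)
Your overall architecture --- establish the absorption relation $[(1\btensor\hA)\delta(C)]\subseteq[\delta(C)(1\btensor\hA)]$ first and then read off (ii) and (iii) formally --- is exactly the paper's, and your part (i) coincides with the paper's argument: the paper quotes $[A\hbeta]=\hbeta$ from an external reference rather than rederiving it, but your derivation via $V(\hbeta\rt\hbeta)=\alpha\rt\hbeta$ and $[\rho_{\beta}(\frakB)\hbeta]=\hbeta$ is the right one, and the deductions of (ii) and (iii) from the absorption relation are fine.

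The gap is the one step you defer. The absorption relation is the entire content of part (ii), it is not proved in your proposal, and the mechanism you point to is not the one that makes it work: the weak $C^{*}$-pseudo-Kac axiom $[UaU,A]=0$, equivalently $\checkV_{23}V_{12}=V_{12}\checkV_{23}$ from Lemma \ref{lemma:weak-kac}, plays no role in Proposition \ref{proposition:rcp} (it is needed later, for the bidual and the duality theorem), and $[V\kalpha{2}\hA]=[\kbeta{2}\hA]$ is the ingredient of Proposition \ref{proposition:kac-compact}, not of this one. The actual argument is shorter and uses only the definition of $\hA$ and coassociativity: writing $1\btensor\hA=[\bbeta{3}(1\btensor V)\kalpha{3}]$ on $\rHrange\rtensor{\alpha}{\frakb}{\beta}H$, one computes
\begin{align*}
[(1\btensor\hA)\delta(C)]
&=[\bbeta{3}(1\btensor V)\kalpha{3}\delta(C)]
=[\bbeta{3}(1\btensor V)(\delta(C)\botensor 1)\kalpha{3}]\\
&=[\bbeta{3}\,\delta^{(2)}(C)\,(1\btensor V)\kalpha{3}]
\subseteq[\delta(C)\bbeta{3}(1\btensor V)\kalpha{3}]
=[\delta(C)(1\btensor\hA)],
\end{align*}
where the first equality moves $\kalpha{3}$ past $\delta(C)$ (disjoint legs), the second uses $\Delta=\Ad_{V}\circ(\cdot\botensor 1)$ so that $(1\btensor V)(\delta(c)\botensor 1)(1\btensor V^{*})=(\Id\ast\Delta)(\delta(c))=\delta^{(2)}(c)$, and the inclusion uses $\delta^{(2)}=(\delta\ast\Id)\circ\delta$ together with $\delta(C)\subseteq\Ind_{\kbeta{2}}(C)$ to obtain $\bbeta{3}\delta^{(2)}(C)\subseteq\delta(C)\bbeta{3}$; one inclusion suffices, since a closed subspace containing its adjoint set is self-adjoint. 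Without this computation your (ii) and (iii) have nothing to stand on, and the route you sketch would send you hunting for a commutation relation that is neither available at this point of the development nor needed.
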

\begin{proof}
  i)  The relation $\hbeta=[A\hbeta]$ \cite[Proposition
  3.11]{timmermann:cpmu-hopf} implies that $[\delta
  (C)\kgamma{1}\hbeta] = [\delta(C)\kgamma{1}A\hbeta]
  \subseteq [\kgamma{1}A\hbeta] = [\kgamma{1}\hbeta]$. 

  ii) We first show that $[(1 \btensor \hA) \delta(C) ]
  \subseteq [ \delta(C)(1 \btensor \hA)]$.  Let
  $\delta^{(2)}:=(\Id \ast \Delta)\circ \delta = (\delta
  \ast \Id) \circ \delta \colon C \to {\cal L}(\rHrange
  \rfrange H)$. By definition of $\hA$ and $\Delta$,
  \begin{align*} [ (1 \btensor \hA) \delta(C) ] = [
    \bbeta{3}(1 \btensor V)\kalpha{3}\delta(C)] &= [
    \bbeta{3}(1 \btensor V)(\delta(C) \botensor
    1)\kalpha{3}] \\
    &=[ \bbeta{3}\delta^{(2)}(C)(1
    \btensor V)\kalpha{3}] \\
    &\subseteq [ \delta(C)\bbeta{3}(1 \btensor
    V)\kalpha{3}] = [ \delta(C)(1 \btensor
    \hA)]. 
  \end{align*}
  Consequently, $C \rtimes_{r} \hA$ is a $C^{*}$-algebra.
  By \cite[Proposition 3.11]{timmermann:cpmu-hopf},
  $[\hA\rho_{\hbeta}(\frakB)]=\hA$, and hence $[(C
  \rtimes_{r} \hA)\rho_{(\gamma \rt \hbeta)}(\frakB)] =
  [\delta(C)(1 \btensor \hA \rho_{\hbeta}(\frakB))] =
  [\delta(C)(1 \btensor \hA)] = C \rtimes_{r} \hA$.

  iii) Immediate.
\end{proof}
\begin{theorem} 
  There exists a unique coaction $\hdelta$ of
  $(\hcA,\hDelta)$ on  $\cC \rtimes_{r} \hcA$ such that
  $\hdelta(\delta(c)(1 \btensor \ha)) = (\delta(c) \botensor
  1) (1 \btensor \hDelta(\ha))$ for all $c\in C$, $ \ha \in
  \hA$. If $\hDelta$ is a fine coaction, then
  $\hdelta$ is a very fine coaction. If $\delta$ is
  left-full, then $\hdelta$ is left-full.
\end{theorem}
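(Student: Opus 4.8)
\emph{Strategy and construction.} The plan is to realise $\hdelta$ as conjugation by a single unitary, so that the $*$-homomorphism property and well-definedness come for free, and then to read off all remaining assertions from the defining identity and the properties of $V$, $\delta$, $\hDelta$. Using associativity of the relative tensor product, I would identify the target space with the threefold product $\rHrange \rtensor{\gamma \rt \hbeta}{\frakbo}{\alpha} H \cong K \rtensor{\gamma}{\frakb}{\beta} \Hsource$, whose legs I label $1$ ($=K$), $2$, $3$ ($=H$ each); then $C \rtimes_{r} \hA$ (extended by the identity on the new leg) lives on the first two legs and $1 \btensor \hA$ on the middle one. For the implementing operator I would take $\checkV_{\leg{23}}$, i.e.\ $\checkV$ acting on the last two legs — which, by the very definition of $\checkV$, passes between the relevant configurations of those legs, since $\checkHrange = \Hsource$ — and set $\hdelta := \Ad_{\checkV_{\leg{23}}}$ restricted to (the canonical extension of) $C \rtimes_{r} \hA$. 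Uniqueness of $\hdelta$ subject to the stated identity is then automatic, since $\delta(C)(1 \btensor \hA)$ spans a dense subspace of $C \rtimes_{r} \hA$ by Proposition \ref{proposition:rcp}.

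\emph{The formula.} One evaluates $\Ad_{\checkV_{\leg{23}}}$ on the two generating families. On $1 \btensor \ha$ it gives $1 \btensor \checkV(\ha \botensor 1)\checkV^{*} = 1 \btensor \Delta_{\checkV}(\ha) = 1 \btensor \hDelta(\ha)$, the last two equalities being Proposition \ref{proposition:balanced-legs}, which identifies $A_{\checkV}$ with $\hA$ and $\Delta_{\checkV}$ with $\hDelta = \hDelta_{V}$. On $\delta(c) \botensor 1$, an operator on the first two legs, one must check that conjugation by $\checkV_{\leg{23}}$ carries it again to $\delta(c) \botensor 1$, now across the change of configuration of legs $2,3$; this is where the weak-Kac hypothesis enters, through the relation $(a \botensor 1)\checkV = \checkV(a \btensor 1)$ for $a \in A$ of Lemma \ref{lemma:weak-kac}. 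Indeed, $\delta$ being a coaction of $(\cA,\Delta)$ forces $\delta(C) \subseteq C \fib{\gamma}{\frakb}{\beta} A$, so $\delta(C)$ is covered by $A$ on the middle leg, and the said relation propagates to the required intertwining. Multiplying the two computations and invoking density, $\hdelta$ satisfies the stated identity, is well defined, and is a $*$-homomorphism.

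\emph{$\hdelta$ is a coaction.} One checks $\hdelta(C \rtimes_{r} \hA) \subseteq (\cC \rtimes_{r} \hcA) \fib{\gamma \rt \hbeta}{\frakbo}{\alpha} \hA$: the factor $\delta(c) \botensor 1$ satisfies both $\Ind$-conditions trivially on the new leg, while $1 \btensor \hDelta(\ha)$ lies in the fiber product because $\hDelta(\ha) \in \hA \fibre{\hbeta}{\frakbo}{\alpha} \hA$, $\hDelta$ being a morphism into $\hcA \bofibre \hcA$; the same bookkeeping shows $\hdelta$ is a semi-morphism of $C^{*}$-$\frakbo$-algebras. For coassociativity one tests $(\hdelta \ast \Id) \circ \hdelta = (\Id \ast \hDelta) \circ \hdelta$ on $\delta(c)(1 \btensor \ha)$: passing to multipliers (via approximate units of $C$ and of $\hA$) gives $\hdelta(\delta(c)) = \delta(c) \botensor 1$ and $\hdelta(1 \btensor \ha) = 1 \btensor \hDelta(\ha)$, so both sides reduce to $(\delta(c) \botensor 1 \botensor 1)\bigl(1 \btensor (\hDelta \ast \Id)(\hDelta(\ha))\bigr)$, respectively the analogue with $\Id \ast \hDelta$, and the whole identity becomes coassociativity of $\hDelta$, which holds since $(\hcA,\hDelta)$ is a Hopf $C^{*}$-bimodule.

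\emph{Left-fullness, very-fineness, and the main obstacle.} If $\delta$ is left-full, so $[\delta(C)\kgamma{1}A] = [\kgamma{1}A]$, then expanding $[\hdelta(C \rtimes_{r} \hA)\,|\gamma \rt \alpha\rangle_{\leg{1}}\hA]$ by the formula and feeding in this equality together with the covering relations for $(\hcA,\hDelta)$ yields $[\hdelta(C \rtimes_{r} \hA)\,|\gamma \rt \alpha\rangle_{\leg{1}}\hA] = [\,|\gamma \rt \alpha\rangle_{\leg{1}}\hA]$, so $\hdelta$ is left-full. If $\hDelta$ is fine, then $\hdelta$ is injective (being $\Ad$ of a unitary), a morphism and right-full (using that $\checkV_{\leg{23}}$ is a module morphism together with the corresponding properties of $\hDelta$), and satisfies $[\rho_{(\gamma \rt \hbeta)}(\frakB)(C \rtimes_{r} \hA)] = C \rtimes_{r} \hA$ (cf.\ the proof of Proposition \ref{proposition:rcp}); moreover $\hdelta^{-1} = \Ad_{\checkV_{\leg{23}}^{*}}$ on $\hdelta(C \rtimes_{r} \hA)$ is again a morphism of $C^{*}$-$\frakbo$-algebras, which gives the ``very fine'' clause. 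The genuinely delicate point is in the second step: proving that conjugation by $\checkV_{\leg{23}}$ fixes $\delta(c) \botensor 1$ requires selecting the correct member of the list of equivalent weak-Kac relations and tracking carefully which module structures the middle $H$-leg carries in the threefold product. A pervasive secondary nuisance — as the authors note — is the non-associativity of the fiber product, so the identifications above must be made by hand, with coassociativity of $\hDelta$ and of $\hdelta$ doing the compensating.
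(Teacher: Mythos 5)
Your proposal is correct and follows essentially the same route as the paper: the paper also defines $\hdelta$ as $x \mapsto (1 \btensor \checkV)(x \btensor 1)(1 \btensor \checkV^{*})$, derives the formula on generators from Proposition \ref{proposition:balanced-legs} together with the weak-Kac relation $\checkV(a \btensor 1)\checkV^{*}=a\botensor 1$ of Lemma \ref{lemma:weak-kac}, and obtains the coaction, left-fullness and very-fineness claims from the same two covering inclusions and the reduction of coassociativity to that of $\hDelta$. The only slip is cosmetic: in the left-fullness check the distinguished subspace of $\cC \rtimes_{r}\hcA$ is $\gamma \rt \hbeta$, not $\gamma \rt \alpha$.
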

 \begin{proof}
   Define $\hdelta \colon C \rtimes_{r} \hA \to \mathcal{
     L}(K \rtensorcb \Hsource)$ by $ x \mapsto (1 \btensor
   \checkV)(x \btensor 1)(1 \btensor \checkV^{*})$. Then
   $\hdelta$ is injective and satisfies $\hdelta(\delta(c)(1
   \btensor \ha)) = (\delta(c) \botensor 1) (1 \btensor
   \hDelta(\ha))$ for all $c\in C$, $ \ha \in \hA$ because
   $\checkV(\ha \rtensorh 1)\checkV^{*} = \hDelta(\ha)$ by
   Proposition \ref{proposition:balanced-legs} and $(1
   \btensor \checkV)\delta(c)(1 \btensor \checkV^{*}) =
   \delta(c)$ as a consequence of the relation
   $\checkV(a\btensor 1)\checkV^{*}=a \botensor 1$. 
   We show that $\hdelta$ is a coaction of
   $(\hcA,\hDelta)$.  First, $[\hdelta(C \rtimes_{r}
   \hA)\kalpha{3}] \subseteq [\kalpha{3}(C \rtimes_{r}
   \hA)]$ because
   \begin{align}
     [ (\delta(C) \botensor 1)(1 \btensor
     \hDelta(\hA))\kalpha{3} ] &\subseteq [ (\delta(C)
     \botensor 1)\kalpha{3} (1 \btensor \hA) ] = [
     \kalpha{3}\delta(C)(1 \btensor
     \hA)]. \label{eq:rcp-fine-1}
   \end{align}
   Next, $[\hdelta(C \rtimes_{r} \hA) |\gamma \rt
   \hbeta\rangle_{\leg{1}}\hA] \subseteq [ |\gamma \rt
   \hbeta\rangle_{\leg{1}} \hA]$ because by Proposition
   \ref{proposition:rcp} i),
  \begin{align}
    [ (1 \btensor \hDelta(\hA))(\delta(C)
    \botensor 1) |\gamma \rt \hbeta\rangle_{\leg{1}}\hA]
    \nonumber &\subseteq [ (1 \btensor \hDelta(\hA))
    |\gamma \rt \hbeta\rangle_{\leg{1}}\hA] \nonumber \\ & =
    [ |\gamma\rangle_{\leg{1}}
    \hDelta(\hA)|\hbeta\rangle_{\leg{1}}\hA] 
    \subseteq [ |\gamma\rangle_{\leg{1}}
    |\hbeta\rangle_{\leg{1}} \hA]. \label{eq:rcp-fine-2}
  \end{align}
Furthermore, $\hdelta(x) (1 \btensor
  \checkV)|\xi\rangle_{\leg{3}} = (1 \btensor
  \checkV)|\xi\rangle_{\leg{3}}x$ for each $x \in C
  \rtimes_{r} \hA$, $\xi \in \hbeta$, and by Remark
  \ref{remarks:balanced} ii), $[(1 \btensor
  \checkV)\khbeta{3}(\gamma \rt \hbeta)]= \gamma \rt \hbeta
  \rt \hbeta$ and $[\bhbeta{3}(1 \btensor
  \checkV)^{*}(\gamma \rt \hbeta \rt \hbeta)]=\gamma \rt
  \hbeta$.  The maps $(\hdelta \ast \Id) \circ \hdelta$ and
  $(\Id \ast \hDelta) \circ \hdelta$ from $C \rtimes_{r}
  \hA$ to $\mathcal{ L}\big(K \rtensorcb \Hsource
  \rtensor{\hbeta}{\frakbo}{\alpha} H\big)$ are given by
  $\delta(c)(1 \btensor \ha) \mapsto \big(\delta(c)
  \botensor 1 \botensor 1\big)\big(1 \btensor
  \hDelta^{(2)}(\ha)\big)$ for all $ c \in C, \ha \in \hA$,
  where $\hDelta^{(2)}:=(\hDelta \ast \Id) \circ \hDelta =
  (\Id \ast \hDelta) \circ \hDelta$. Thus, $(\cC \rtimes_{r}
  \hcA, \hdelta)$ is a coaction of $(\hcA,\hDelta)$. If the
  coactions $\hDelta$ is fine, then the inclusion
  \eqref{eq:rcp-fine-1} is an equality and in any case
  $[\bhbeta{3}(1 \btensor \checkV)^{*}(\gamma \rt \hbeta \rt
  \hbeta)]=\gamma \rt \hbeta$, whence $\hdelta$ will be very
  fine. If $\delta$ is left-full, then the inclusion
  \eqref{eq:rcp-fine-2} is an equality by Proposition
  \ref{proposition:rcp} i) and hence $\hdelta$ is left-full.
\end{proof}
\begin{definition} \label{definition:rcp} We call $\cC \rtimes_{r}
  \hcA$  the {\em reduced crossed product} and $(\cC
  \rtimes_{r} \hcA,\hdelta)$ the {\em reduced dual coaction}
  of $(\cC,\delta)$.
\end{definition}
The construction of reduced  dual coactions is functorial in
the following sense:
\begin{proposition} 
  Let $\rho$ be a morphism between coactions $({\cal
    C},\delta_{{\cal C}})$ and $({\cal D},\delta_{{\cal
      D}})$ of $(\cA,\Delta)$. Then there exists a unique
  morphism $\rho \rtimes_{r} \Id$ from $(\cC \rtimes_{r}
  \hcA,\hdelta_{{\cal C}})$ to $({\cal D} \rtimes_{r}
  \hcA,\hdelta_{{\cal D}})$ such that $(\rho \rtimes_{r}
  \Id)\big((1 \btensor \ha)\delta_{{\cal C}}(c)\big) \cdot
  \delta_{{\cal D}}(d)(1 \btensor \ha') = (1 \btensor \ha)
  \delta_{{\cal D}}(\rho(c)d)(1 \btensor \ha')$ for all $c
  \in C$, $d \in D$, $\ha,\ha' \in \hA$.
\end{proposition}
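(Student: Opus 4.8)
The plan is to build $\rho \rtimes_{r} \Id$ from the intertwiner space of $\rho$, in the same spirit as the functoriality of the fiber product proved above. Write ${\cal C}=C^{\gamma}_{K}$ and ${\cal D}=D^{\epsilon}_{L}$, and set $J:=\mathcal{L}^{\rho}_{s}(K_{\gamma},L_{\epsilon})$. Since $\rho$ is a semi-morphism of $C^{*}$-$\frakb$-algebras, $[J\gamma]=\epsilon$, hence $[JK]=[J\gamma\frakK]=[\epsilon\frakK]=L$, and every $T\in J$ satisfies $Tc=\rho(c)T$ ($c\in C$) and $T\rho_{\gamma}(b^{\dagger})=\rho_{\epsilon}(b^{\dagger})T$ ($b^{\dagger}\in\frakBo$). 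Hence each $T\in J$ induces an operator $T\btensor\Id_{H}\in\mathcal{L}\bigl(\rHrange,\,L\rtensor{\epsilon}{\frakb}{\beta}H\bigr)$; by $[JK]=L$ these operators have dense range; each of them commutes with $1\btensor\ha$ for all $\ha\in\hA$; and, by the theorem on functoriality of the fiber product following Lemma~\ref{lemma:fp-c-morphism}, $(\rho\ast\Id)(y)(T\btensor\Id_{H})=(T\btensor\Id_{H})y$ for all $y\in C\fib{\gamma}{\frakb}{\beta}A$.

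Next I would define $\rho\rtimes_{r}\Id$ by requiring
\[
(\rho\rtimes_{r}\Id)(x)(T\btensor\Id_{H})=(T\btensor\Id_{H})x\qquad(x\in C\rtimes_{r}\hA,\ T\in J).
\]
Since the ranges of the $T\btensor\Id_{H}$ span a dense subspace, this determines $(\rho\rtimes_{r}\Id)(x)\in\mathcal{L}\bigl(L\rtensor{\epsilon}{\frakb}{\beta}H\bigr)$ uniquely, and the assignment is a contractive $*$-homomorphism into $M(D\rtimes_{r}\hA)$; this is verified as in the proof of Lemma~\ref{lemma:fp-c-morphism}, the point being that $(S\btensor\Id_{H})^{*}(T\btensor\Id_{H})=(S^{*}T)\btensor\Id_{H}$ with $S^{*}T\in C'\cap\rho_{\gamma}(\frakBo)'$, so that $(S^{*}T)\btensor\Id_{H}$ commutes with $\Ind_{\kbeta{2}}(C)\supseteq C\fib{\gamma}{\frakb}{\beta}A\ni\delta_{{\cal C}}(c)$ and with $1\btensor\hA$, hence with $C\rtimes_{r}\hA$. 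By construction $(\rho\rtimes_{r}\Id)(\delta_{{\cal C}}(c))$ obeys the same intertwining relation as $(\rho\ast\Id)(\delta_{{\cal C}}(c))$, so the two agree, and $(\rho\rtimes_{r}\Id)(1\btensor\ha)=1\btensor\ha$. Feeding in the equivariance of $\rho$ --- which after taking adjoints reads $(\rho\ast\Id)(\delta_{{\cal C}}(c))\,\delta_{{\cal D}}(d)=\delta_{{\cal D}}(\rho(c)d)$ --- together with $[(1\btensor\hA)\delta_{{\cal C}}(C)]=C\rtimes_{r}\hA$, I obtain the displayed formula of the statement, which in turn forces uniqueness.

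It then remains to check that $\rho\rtimes_{r}\Id$ is a morphism of coactions from $({\cal C}\rtimes_{r}\hcA,\hdelta_{{\cal C}})$ to $({\cal D}\rtimes_{r}\hcA,\hdelta_{{\cal D}})$. It is a semi-morphism of $C^{*}$-$\frakbo$-algebras, since $J\btensor\Id_{H}\subseteq\mathcal{L}^{\rho\rtimes_{r}\Id}_{s}$ and $[(J\btensor\Id_{H})(\gamma\rt\hbeta)]=\epsilon\rt\hbeta$ (using $[J\gamma]=\epsilon$); it is nondegenerate, because $[\rho(C)D]=D$ gives $[(\rho\rtimes_{r}\Id)(C\rtimes_{r}\hA)(D\rtimes_{r}\hA)]=D\rtimes_{r}\hA$; and it is equivariant: $\hdelta_{{\cal C}}$ and $\hdelta_{{\cal D}}$ are conjugation by $1\btensor\checkV$, the operators $T\btensor\Id_{H}\btensor\Id_{H}$ commute with $1\btensor\checkV$ (they act on the first leg only), and $\bigl((\rho\rtimes_{r}\Id)\ast\Id\bigr)(\hdelta_{{\cal C}}(w))$ obeys the corresponding intertwining relation, so that, testing on vectors $(T\btensor\Id_{H}\btensor\Id_{H})\xi$, the identity $\hdelta_{{\cal D}}(z)\bigl((\rho\rtimes_{r}\Id)\ast\Id\bigr)(\hdelta_{{\cal C}}(w))=\hdelta_{{\cal D}}\bigl(z\,(\rho\rtimes_{r}\Id)(w)\bigr)$ follows for all $w\in C\rtimes_{r}\hA$ and $z\in D\rtimes_{r}\hA$. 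I expect the main obstacle to be the second step --- upgrading the defining relation to a genuine contractive $*$-homomorphism with values in $M(D\rtimes_{r}\hA)$; this is the standard intertwiner argument, and once it is in place the remaining verifications (fullness of the intertwiner space, nondegeneracy, $\hdelta$-equivariance) are routine bookkeeping.
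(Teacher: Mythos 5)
Your proof is correct and follows essentially the same route as the paper: the paper simply invokes the semi-morphism $\Ind_{\kbeta{2}}(\rho)$ of Lemma \ref{lemma:fp-c-morphism}, restricts it to $C \rtimes_{r}\hA$, and notes that the defining intertwining relation together with equivariance of $\rho$ yields the stated formula and the coaction-morphism property. You have merely unwound the construction of $\Ind_{\kbeta{2}}(\rho)$ via the intertwiner space $J\btensor\Id_{H}$ and spelled out the verifications the paper leaves implicit; no new idea or gap is involved.
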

\begin{proof} 
  The semi-morphism $\Ind_{\kbeta{2}}(\rho)$ of Lemma
  \ref{lemma:fp-c-morphism} restricts to a semi-morphism
  $\rho \rtimes_{r} \Id$ from ${\cal C} \rtimes_{r} \hcA$ to
  $M({\cal D} \rtimes_{r} \hcA)$ which satisfies the formula
  given above, and this formula implies that $\rho
  \rtimes_{r} \Id$ is a morphism of coactions as claimed.
\end{proof}
\begin{corollary}
  There exists a functor $- \rtimes_{r} \hcA \colon
  \coact_{(\cA,\Delta)} \to
  \coact_{(\hcA,\hDelta)}$ given by $(\cC,\delta)
  \mapsto (\cC \rtimes_{r} \hcA,\hdelta)$ and $\rho \mapsto
  \rho \rtimes_{r} \Id$. \qed
\end{corollary}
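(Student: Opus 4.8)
The plan is to assemble the functor from the two results just proved. The preceding theorem constructing the reduced dual coaction gives the object assignment $({\cal C},\delta)\mapsto(\cC\rtimes_{r}\hcA,\hdelta)$, and guarantees that the target is indeed an object of $\coact_{(\hcA,\hDelta)}$; the preceding proposition gives the arrow assignment $\rho\mapsto\rho\rtimes_{r}\Id$, and guarantees that a morphism of coactions of $(\cA,\Delta)$ is sent to a morphism of coactions of $(\hcA,\hDelta)$. What remains is to verify the two functoriality axioms, and for both I would invoke the uniqueness clause in that proposition.

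For the identity axiom, fix a coaction $({\cal C},\delta)$ and consider the identity map $\Id_{\cC\rtimes_{r}\hcA}$. It is trivially a semi-morphism of $C^{*}$-$\frakbo$-algebras from $\cC\rtimes_{r}\hcA$ to $M(\cC\rtimes_{r}\hcA)$, it is nondegenerate, and it is equivariant for $\hdelta$. Moreover, since $\delta$ is a $*$-homomorphism and $(1\btensor\ha)\delta(c)\in\cC\rtimes_{r}\hcA$ by Proposition \ref{proposition:rcp}, one has $\Id\big((1\btensor\ha)\delta(c)\big)\cdot\delta(d)(1\btensor\ha')=(1\btensor\ha)\delta(cd)(1\btensor\ha')$ for all $c,d\in C$ and $\ha,\ha'\in\hA$. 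This is exactly the relation characterizing $\Id_{{\cal C}}\rtimes_{r}\Id$, so by the uniqueness statement in the proposition $\Id_{{\cal C}}\rtimes_{r}\Id=\Id_{\cC\rtimes_{r}\hcA}$.

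For the composition axiom, let $\rho\colon({\cal C},\delta_{{\cal C}})\to({\cal D},\delta_{{\cal D}})$ and $\sigma\colon({\cal D},\delta_{{\cal D}})\to({\cal E},\delta_{{\cal E}})$ be morphisms of coactions of $(\cA,\Delta)$. Because $\rho\rtimes_{r}\Id$ and $\sigma\rtimes_{r}\Id$ are nondegenerate, the latter extends to $M({\cal D}\rtimes_{r}\hcA)$ and the composite $(\sigma\rtimes_{r}\Id)\circ(\rho\rtimes_{r}\Id)\colon\cC\rtimes_{r}\hcA\to M({\cal E}\rtimes_{r}\hcA)$ is defined; being a composite of nondegenerate, equivariant semi-morphisms of $C^{*}$-$\frakbo$-algebras, it is again one. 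To identify it with $(\sigma\circ\rho)\rtimes_{r}\Id$ I would substitute the defining relation for $\rho\rtimes_{r}\Id$ into that for $\sigma\rtimes_{r}\Id$: applying the two relations in turn to $(1\btensor\ha)\delta_{{\cal C}}(c)$ and multiplying on the right by $\delta_{{\cal E}}(e)(1\btensor\ha')$ yields $(1\btensor\ha)\delta_{{\cal E}}\big(\sigma(\rho(c))e\big)(1\btensor\ha')$, which is precisely the relation characterizing $(\sigma\circ\rho)\rtimes_{r}\Id$. Uniqueness then forces $(\sigma\circ\rho)\rtimes_{r}\Id=(\sigma\rtimes_{r}\Id)\circ(\rho\rtimes_{r}\Id)$.

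The one place where a little care is needed — and where I would spend a line or two — is the bookkeeping with multiplier algebras in the composition step: one must check that the composite of the two nondegenerate semi-morphisms is genuinely defined on all of $\cC\rtimes_{r}\hcA$ with values in $M({\cal E}\rtimes_{r}\hcA)$, and that the characterizing relations, which live in the crossed products rather than in their multiplier algebras, chain together as claimed. Everything else is purely formal and follows at once from the uniqueness assertions already in hand, so no serious obstacle is expected.
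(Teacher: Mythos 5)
Your proposal is correct and is exactly the argument the paper intends: the corollary is stated with a bare \qed because it is regarded as immediate from the preceding theorem (object assignment) and proposition (arrow assignment), with functoriality following from the uniqueness clause applied to the characterizing relation, just as you do for the identity and composition axioms. The only detail worth noting is that the relation determines $\rho\rtimes_{r}\Id$ on the spanning set $(1\btensor\hA)\delta_{\cal C}(C)$ of $\cC\rtimes_{r}\hcA$ (which equals $[\delta_{\cal C}(C)(1\btensor\hA)]$ by taking adjoints in the inclusion proved in Proposition \ref{proposition:rcp}), so the uniqueness argument you invoke does apply.
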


\paragraph{Reduced crossed products for coactions of $(\hcA,\hDelta)$}
The construction in the preceding paragraph carries over to
coactions of the Hopf $C^{*}$-bimodule $(\hcA,\hDelta)$ as
follows.  Let $\delta$ be a coaction of $(\hcA,\hDelta)$ on
a $C^{*}$-$\frakbo$-algebra $\cC = C^{\gamma}_{K}$ and let
\begin{align*}
  C \rtimes_{r} A &:= [\delta(C)(1 \botensor UAU)]
\subseteq {\cal L}(K   \rtensor{\gamma}{\frakbo}{\alpha} H), &
  \cC \rtimes_{r} \cA &= (K
  \rtensor{\gamma}{\frakbo}{\alpha} H_{\halpha},
  C \rtimes_{r} A).
\end{align*}
Using straightforward modifications of the preceding proofs,
one shows:
\begin{proposition} 
\begin{enumerate}
\item $[\delta(C)(\gamma \rt \halpha)] \subseteq \gamma \rt
  \halpha$ with equality if $\delta$ is fine.
\item $C \rtimes_{r} A$ is a $C^{*}$-algebra and $\cC
  \rtimes_{r} \cA$ is a $C^{*}$-$\frakb$-algebra.
\item There exist nondegenerate $*$-homomorphisms $C \to M(C
  \rtimes_{r} A)$ and $A \to M(C \rtimes_{r} A)$,
  given by $c \mapsto \delta(c)$ and $a \mapsto 1 \botensor
  a$, respectively. \qed
  \end{enumerate}
\end{proposition}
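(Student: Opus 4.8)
The plan is to imitate the proof of Proposition~\ref{proposition:rcp} (and, later, the accompanying dual-coaction and duality statements) after transporting all data by the symmetry $U$. The guiding observation is that a coaction $\delta$ of $(\hcA,\hDelta)$ on $\cC=C^{\gamma}_{K}$ is the same thing as a coaction of the leg $(\cA_{\checkV},\Delta_{\checkV})$ of the predual $C^{*}$-pseudo-Kac system $(\checkV,U)$, because $A_{\checkV}=\hA$ and $\Delta_{\checkV}=\hDelta$ by Proposition~\ref{proposition:balanced-legs}; moreover $\hA_{\checkV}=UAU$ by the same proposition, so that $C\rtimes_{r}A=[\delta(C)(1\botensor UAU)]$ is exactly the reduced crossed product $C\rtimes_{r}\hA_{\checkV}$ formed for $(\checkV,U)$, the passage from $\btensor$ to $\botensor$ reflecting only that $(\checkV,U)$ lives over $\frakbo$ rather than $\frakb$. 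Hence the statement can essentially be read off from what has already been proved. Because the auxiliary leg structures of $\hA^{\alpha,\hbeta}_{H}$ and $\hA^{\halpha,\hbeta}_{H}$ do not literally coincide, though, I would in practice re-run the three arguments directly, as follows.

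For (i), the residual leg in question is $\gamma\rt\halpha=[\kgamma{1}\halpha]$. Using the identity $\halpha=[\hA\halpha]$ of Remark~\ref{remarks:balanced}(iii) together with the inclusion $[\delta(C)\kgamma{1}\hA]\subseteq[\kgamma{1}\hA]$ --- which is immediate from $\delta(C)\subseteq C\fib{\gamma}{\frakbo}{\alpha}\hA\subseteq\Ind_{[\kgamma{1}\hA]}(\hA)$ --- one gets
\[
 [\delta(C)(\gamma\rt\halpha)]=[\delta(C)\kgamma{1}\hA\halpha]\subseteq[\kgamma{1}\hA\halpha]=[\kgamma{1}\halpha]=\gamma\rt\halpha.
\]
If $\delta$ is fine, the inclusion $[\delta(C)\kgamma{1}\hA]\subseteq[\kgamma{1}\hA]$ upgrades to an equality --- this is a ``left-full''-type statement, forced by fineness (injectivity, the morphism property, right-fullness, and $[\rho_{\gamma}(\frakB)C]=C$) by the same argument used for coactions of $(\cA,\Delta)$ --- whereupon the displayed chain becomes a chain of equalities.

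For (ii), imitate the computation in Proposition~\ref{proposition:rcp}(ii). Write $1\botensor UAU=1\botensor\hA_{\checkV}$ as a space of slices of $1\botensor\checkV$ along the third leg; since $\checkV$ implements $\Delta_{\checkV}=\hDelta$ and $\delta^{(2)}:=(\delta\ast\Id)\circ\delta=(\Id\ast\hDelta)\circ\delta$, one has $(1\botensor\checkV)(\delta(c)\btensor 1)(1\botensor\checkV^{*})=\delta^{(2)}(c)$, and pushing the slices past $\delta^{(2)}(C)$ --- which factors through $\delta(C)$ on the first two legs --- gives $[(1\botensor UAU)\delta(C)]\subseteq[\delta(C)(1\botensor UAU)]$, so $C\rtimes_{r}A$ is a $C^{*}$-algebra. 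That $\cC\rtimes_{r}\cA$ is a $C^{*}$-$\frakb$-algebra then follows from $[\hA_{\checkV}\rho_{\halpha}(\frakBo)]=\hA_{\checkV}$ (the analogue, for $(\checkV,U)$, of the relation $[\hA\rho_{\hbeta}(\frakB)]=\hA$ used in the cited proof), exactly as before. Finally, (iii) is immediate from (ii): taking adjoints gives $[(1\botensor UAU)\delta(C)]=[\delta(C)(1\botensor UAU)]$, from which one checks verbatim --- as in Proposition~\ref{proposition:rcp}(iii) --- that $c\mapsto\delta(c)$ and $a\mapsto 1\botensor UaU$ define nondegenerate $*$-homomorphisms into $M(C\rtimes_{r}A)$.

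The only steps that are not pure leg-bookkeeping are: first, realising that the correct residual leg is $\halpha$, not the leg $\hbeta$ one would naively expect from a $(\hcA,\hDelta)$-coaction --- this is forced by $UAU$ being compatible with $\halpha$, via $[A\halpha]=\halpha$; and second, the equality in (i), where one must verify that fineness (rather than a separate left-fullness hypothesis, as in Proposition~\ref{proposition:rcp}) is the hypothesis that does the job, which is where the relations among $\halpha$, $\hbeta$ and $\hA$ from \cite{timmermann:cpmu-hopf} and the morphism property of $\delta$ enter. Everything else is the ``straightforward modification'' the authors refer to.
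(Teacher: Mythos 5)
Your overall strategy --- reading the statement as the $(\checkV,U)$-transport of Proposition~\ref{proposition:rcp} and re-running its proof under the dictionary $\hA\mapsto UAU=\hA_{\checkV}$, $\hbeta\mapsto\halpha$, $A\mapsto\hA$ --- is exactly the ``straightforward modification'' the paper has in mind, and your treatment of the inclusion in (i), of (ii), and of (iii) is correct. (In (iii) you silently replace the stated ``$a\mapsto 1\botensor a$'' by ``$a\mapsto 1\botensor UaU$''; that is the right reading, since $1\botensor a$ need not even be defined on $K\rtensor{\gamma}{\frakbo}{\alpha}H$ for general $a\in A$, whereas $UaU\in\rho_{\alpha}(\frakBo)'$ because $A\subseteq\mathcal{L}(H_{\halpha})$ and $U\halpha=\alpha$.)

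The one genuine gap is your justification of the equality clause in (i). You assert that fineness forces the left-fullness-type identity $[\delta(C)\kgamma{1}\hA]=[\kgamma{1}\hA]$ ``by the same argument used for coactions of $(\cA,\Delta)$'' --- but there is no such argument to appeal to: in Proposition~\ref{proposition:rcp}(i) left-fullness is an explicit hypothesis, not a consequence of fineness, and the paper elsewhere treats ``fine'' and ``left-full'' as independent conditions (the theorems on dual coactions assume them separately). Fineness consists of injectivity, the morphism property (which controls the leg $\gamma\rt\hbeta$ of the target, not $\gamma\rt\halpha$), right-fullness (which controls $\kalpha{2}$), and $[\rho_{\gamma}(\frakB)C]=C$; none of these visibly yields $[\delta(C)\kgamma{1}\hA]=[\kgamma{1}\hA]$. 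Under left-fullness the equality is immediate from your displayed chain, and left-fullness is also the hypothesis actually consumed downstream (left-fullness of $\delta$ giving left-fullness of $\hdelta$). So either prove honestly that fineness implies the identity you need, or replace the hypothesis by left-fullness; as written, the appeal to a nonexistent earlier argument does not close the step.
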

\begin{theorem} 
  There exists a unique  coaction $(\cC \rtimes_{r} \cA,
  \hdelta)$ of $(\cA,\Delta)$ such that $\hdelta(\delta(c)(1
  \botensor UaU)) = (\delta(c) \btensor 1) (1 \botensor
  \Ad_{(U \btensor 1)}\Delta(a)) $ for all $c\in C$, $ a \in
  A$. If $\Delta$ is a fine coaction, then
  $\hdelta$ is a very fine coaction. If $\delta$ is
  left-full, then $\hdelta$ is left-full. \qed
\end{theorem}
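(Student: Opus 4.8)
The plan is to derive the statement from the theorem already proved for coactions of $(\cA,\Delta)$, applied to the predual $(\checkV,U)$, and then to correct the target Hopf $C^{*}$-bimodule by a single conjugation with the symmetry $U$. First I would note that by Proposition \ref{proposition:balanced-legs} one has $A_{\checkV}=\hA_{V}$ and $\Delta_{\checkV}=\hDelta_{V}$, with matching module structures, so that the ``$\cA$-leg'' of $(\checkV,U)$ is exactly $(\hcA,\hDelta)$; hence a coaction $\delta$ of $(\hcA,\hDelta)$ on the $C^{*}$-$\frakbo$-algebra $\cC=C^{\gamma}_{K}$ is literally a coaction of the ``$\cA$-leg'' of the weak $C^{*}$-pseudo-Kac system $(\checkV,U)$. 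Applying the earlier theorem to $\checkV$ then produces the reduced crossed product $[\delta(C)(1\botensor\hA_{\checkV})]=[\delta(C)(1\botensor UAU)]=C\rtimes_{r}A$ equipped with a reduced dual coaction $\hdelta'$ of the ``$\hcA$-leg'' of $\checkV$, i.e.\ of $(\hcA_{\checkV},\hDelta_{\checkV})$, where $\hcA_{\checkV}$ carries the $C^{*}$-algebra $UAU$ and $\hDelta_{\checkV}=\Ad_{(U\btensor U)}\circ\Delta\circ\Ad_{U}$, satisfying $\hdelta'\big(\delta(c)(1\botensor UaU)\big)=(\delta(c)\btensor 1)\big(1\botensor\hDelta_{\checkV}(UaU)\big)$ for all $c\in C$, $a\in A$.

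Next I would set $\hdelta:=\Ad_{(1\botensor 1\botensor U)}\circ\hdelta'$, conjugating the leg that $\hdelta'$ adjoins by the symmetry $U$. Since $U$ exchanges $\halpha\leftrightarrow\alpha$ and $\hbeta\leftrightarrow\beta$, this conjugation carries the module structures of $\hcA_{\checkV}$ to those of $\cA$ and the comultiplication $\Ad_{(U\btensor U)}\circ\Delta\circ\Ad_{U}$ back to $\Delta$, so $\hdelta$ becomes a coaction of $(\cA,\Delta)$ on $\cC\rtimes_{r}\cA=(K\rtensor{\gamma}{\frakbo}{\alpha}H_{\halpha},\,C\rtimes_{r}A)$. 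The conjugating operator commutes with $\delta(c)\btensor 1$, which acts on the first two legs, and --- because $U^{2}=1$ --- it turns $1\botensor\hDelta_{\checkV}(UaU)=1\botensor\Ad_{(U\btensor U)}\Delta(a)$ into $1\botensor\Ad_{(U\btensor 1)}\Delta(a)$; thus $\hdelta$ satisfies the asserted formula, which pins it down uniquely since the elements $\delta(c)(1\botensor UaU)$ span $C\rtimes_{r}A$. Finally, conjugation by the symmetry $U$ preserves injectivity, being a (semi-)morphism, left- and right-fullness, and the condition $[\rho_{\gamma}(\frakBo)C]=C$, and $\Ad_{(U\btensor U)}\circ\Delta\circ\Ad_{U}$ is fine exactly when $\Delta$ is fine; hence the implications ``$\hDelta_{\checkV}$ fine $\Rightarrow\hdelta'$ very fine'' and ``$\delta$ left-full $\Rightarrow\hdelta'$ left-full'' supplied by the earlier theorem become precisely ``$\Delta$ fine $\Rightarrow\hdelta$ very fine'' and ``$\delta$ left-full $\Rightarrow\hdelta$ left-full''.

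An equivalent, more self-contained route is to repeat the earlier proof verbatim with $\hdelta(x):=(1\botensor\Sigma\hatV\Sigma)(x\btensor 1)(1\botensor\Sigma\hatV^{*}\Sigma)$. The two facts needed are: conjugation by $\Sigma\hatV\Sigma$ fixes $\delta(c)$ (which involves only $\hA$ on the leg coming from $C\rtimes_{r}A$), by the relation $\hatV(1\btensor\ha)\hatV^{*}=1\botensor\ha$ for $\ha\in\hA$ valid in a weak $C^{*}$-pseudo-Kac system (Lemma \ref{lemma:weak-kac}), read in the flipped picture; and conjugation by $\Sigma\hatV\Sigma$ sends $UaU$ on that leg to $\Ad_{(U\btensor 1)}\Delta(a)$, which amounts to the identity $\widehat{\Delta(a)}=\hatV(1\btensor UaU)\hatV^{*}$ of Proposition \ref{proposition:balanced-legs} read in the flipped picture. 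Coassociativity of $\hdelta$ then follows from the relations for $\hatV$ in Lemma \ref{lemma:balanced} and Remarks \ref{remarks:balanced}(ii) together with the coassociativity of $\Delta$ and of $\delta$, exactly as in the $(\cA,\Delta)$-case. The hard part will be purely bookkeeping --- keeping the placements of $U$, the flip $\Sigma$, and the $\frakb$- versus $\frakbo$-tensor legs straight so that each intertwining relation is invoked in its correct variant --- but no new conceptual ingredient is needed beyond what was already handled for coactions of $(\cA,\Delta)$.
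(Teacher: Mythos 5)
Your proposal is correct. The paper leaves this proof as a ``straightforward modification'' of the one for coactions of $(\cA,\Delta)$, and your second, self-contained route is precisely that modification with the correct conjugating unitary identified: one checks that $\Sigma\hatV\Sigma$ fixes $\ha\btensor 1$ for $\ha\in\hA$ (the weak Kac condition of Lemma \ref{lemma:weak-kac}, which handles $\delta(c)$) and sends $UaU\btensor 1$ to $\Ad_{(U\btensor 1)}\Delta(a)$ (since $\hatV=\Sigma U_{1}VU_{1}\Sigma$ gives $\hatV(1\btensor UaU)\hatV^{*}=\Sigma U_{1}V(a\botensor 1)V^{*}U_{1}\Sigma=\widehat{\Delta(a)}$, as you say). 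Your first route is a genuinely different and arguably cleaner packaging: since $A_{\checkV}=\hA$, $\Delta_{\checkV}=\hDelta$ and $\hA_{\checkV}=UAU$ by Proposition \ref{proposition:balanced-legs}, and $(\checkV,U)$ is again a weak $C^{*}$-pseudo-Kac system, the already-proved theorem applies verbatim and yields the crossed product $[\delta(C)(1\botensor UAU)]$ on the correct underlying $C^{*}$-$\frakbo$-algebra together with a dual coaction of $(\hcA_{\checkV},\hDelta_{\checkV})$; transporting along $\Ad_{(1\botensor 1\botensor U)}$, which intertwines $((UAU)^{\hbeta,\halpha}_{H},\Ad_{(U\btensor U)}\circ\Delta\circ\Ad_{U})$ with $(\cA,\Delta)$ because $U\halpha=\alpha$ and $U\hbeta=\beta$, then delivers the stated formula and inherits coassociativity, very-fineness and left-fullness for free. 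This buys all the verifications from the established case at the cost of one bookkeeping check, whereas the paper's intended route redoes the (routine) computations.
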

\begin{definition} Let $(\cC,\delta)$
  be a coaction of $(\hcA,\hDelta)$.  Then we call $\cC \rtimes_{r}
  \cA$ the {\em reduced crossed product} and $(\cC
  \rtimes_{r} \cA,\hdelta)$ the {\em reduced dual coaction}
  of $(\cC,\delta)$.
\end{definition}
\begin{proposition} 
  Let $\rho$ be a morphism between coactions $({\cal
    C},\delta_{{\cal C}})$ and $({\cal D},\delta_{{\cal
      D}})$ of $(\hcA,\hDelta)$. Then there exists a unique
  morphism $\rho \rtimes_{r} \Id$ from $(\cC \rtimes_{r}
  \cA,\hdelta_{{\cal C}})$ to $({\cal D} \rtimes_{r}
  \cA,\hdelta_{{\cal D}})$ such that $(\rho \rtimes_{r}
  \Id)\big((1 \botensor UaU)\delta_{{\cal C}}(c)\big) \cdot
  \delta_{{\cal D}}(d)(1 \botensor Ua'U)= (1 \botensor UaU)
  \delta_{{\cal D}}(\rho(c)d)(1 \botensor Ua'U)$ for all $c
  \in C$, $d \in D$, $a,a' \in A$. \qed
\end{proposition}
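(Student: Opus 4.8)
The plan is to repeat, almost verbatim, the argument of the previous proposition, now applying Lemma~\ref{lemma:fp-c-morphism} to the semi-morphism $\rho$ regarded as a semi-morphism of $C^{*}$-$\frakbo$-algebras from $C^{\gamma}_{K}$ to $M(D)^{\epsilon}_{L}$, and with the \emph{passive} tensor factor being the $C^{*}$-$\frakb$-module $H_{\alpha}$ (so that the two factors of that lemma play exchanged roles). This yields a semi-morphism $\Phi:=\Ind_{\kalpha{2}}(\rho)$ between $C^{*}$-algebras on $K\rtensor{\gamma}{\frakbo}{\alpha}H$ and $L\rtensor{\epsilon}{\frakbo}{\alpha}H$ with the characteristic intertwining property $\Phi(x)\,T=T\,x$ for $x$ in the source and $T$ in the intertwiner space $\mathcal{L}_{s}^{\rho}(K_{\gamma},L_{\epsilon})\botensor\Id$; by parts iii) and iv) of the lemma $\Phi$ carries the fiber-product-type algebra containing $\delta_{{\cal C}}(C)$ to the one containing $\delta_{{\cal D}}(D)$ and is the identity on $1\botensor UaU$ for $a\in A$.

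Next I would check that $\Phi$ restricts to a semi-morphism $\rho\rtimes_{r}\Id$ from $\cC\rtimes_{r}\cA$ into $M({\cal D}\rtimes_{r}\cA)$. Since $\delta_{{\cal C}}$ maps $C$ into $C\fib{\gamma}{\frakbo}{\alpha}\hA$, the restriction of $\Phi$ to $\delta_{{\cal C}}(C)$ coincides with $\rho\ast\Id$, so equivariance of $\rho$ gives $\delta_{{\cal D}}(d)\,\Phi(\delta_{{\cal C}}(c))=\delta_{{\cal D}}(d\rho(c))$ for all $c\in C$, $d\in D$; together with $\Phi(1\botensor UaU)=1\botensor UaU$ and $C\rtimes_{r}A=[\delta_{{\cal C}}(C)(1\botensor UAU)]$ this shows that $\Phi$ maps $C\rtimes_{r}A$ into $M(D\rtimes_{r}A)$, that $\rho\rtimes_{r}\Id:=\Phi|_{C\rtimes_{r}A}$ is nondegenerate (using $[\rho(C)D]=D$), and that it satisfies
\[
  (\rho\rtimes_{r}\Id)\big((1\botensor UaU)\delta_{{\cal C}}(c)\big)\cdot\delta_{{\cal D}}(d)(1\botensor Ua'U)=(1\botensor UaU)\,\delta_{{\cal D}}(\rho(c)d)\,(1\botensor Ua'U).
\]
As the elements $(1\botensor UaU)\delta_{{\cal C}}(c)$ and $\delta_{{\cal D}}(d)(1\botensor Ua'U)$ span dense subspaces of $C\rtimes_{r}A$ and $D\rtimes_{r}A$ respectively, this identity determines $\rho\rtimes_{r}\Id$ uniquely.

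It then remains to verify equivariance for the dual coactions, i.e.\ $\hdelta_{{\cal D}}(y)\cdot\big((\rho\rtimes_{r}\Id)\ast\Id\big)(\hdelta_{{\cal C}}(x))=\hdelta_{{\cal D}}\big(y\,(\rho\rtimes_{r}\Id)(x)\big)$ for $x\in C\rtimes_{r}A$, $y\in D\rtimes_{r}A$. Evaluating on generators $x=(1\botensor UaU)\delta_{{\cal C}}(c)$ and $y=\delta_{{\cal D}}(d)(1\botensor Ua'U)$ and using the explicit formula $\hdelta(\delta(c)(1\botensor UaU))=(\delta(c)\btensor 1)(1\botensor\Ad_{(U\btensor 1)}\Delta(a))$ from the preceding theorem, both sides collapse to the same expression built from $\delta_{{\cal D}}(\rho(c)d)$ and $\Ad_{(U\btensor 1)}\Delta(aa')$, so the claim reduces to the multiplicativity identity for $\rho\rtimes_{r}\Id$ established in the previous step.

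The one place where care is needed is the bookkeeping: confirming that Lemma~\ref{lemma:fp-c-morphism} genuinely applies after interchanging $\frakb$ and $\frakbo$ and replacing the leg $\hbeta$ by $\halpha$, keeping track of the conjugation by the symmetry $U$ so that $UAU\subseteq\mathcal{L}(H_{\alpha})$ takes the place of $\hA\subseteq\mathcal{L}(H_{\hbeta})$ from the $(\cA,\Delta)$-case, and checking that the module-leg conditions defining $\cC\rtimes_{r}\cA$ and $M({\cal D}\rtimes_{r}\cA)$ are preserved by $\Phi$. No new idea beyond the proof of the preceding proposition is needed.
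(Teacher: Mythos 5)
Your proposal is correct and follows essentially the same route as the paper: the paper proves the analogous statement for coactions of $(\cA,\Delta)$ by restricting the induced semi-morphism $\Ind_{\kbeta{2}}(\rho)$ from Lemma \ref{lemma:fp-c-morphism} to the crossed product and reading off the defining formula, and states that the present proposition follows by straightforward modification --- which is precisely your substitution of $\Ind_{\kalpha{2}}(\rho)$ with the legs $\frakb,\frakbo$ and $\hbeta,\halpha$ interchanged and $\hA$ replaced by $UAU$. The bookkeeping you flag at the end is exactly the content of the paper's ``straightforward modifications,'' so nothing further is needed.
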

\begin{corollary} 
  There exists a functor $- \rtimes_{r} \cA \colon
  \coact_{(\hcA,\hDelta)} \to \coact_{(\cA,\Delta)}$ given
  by $(\cC,\delta) \mapsto (\cC \rtimes_{r} \cA,\hdelta)$
  and $\rho \mapsto \rho \rtimes_{r} \Id$. \qed
\end{corollary}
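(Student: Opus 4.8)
The plan is to read off the functor directly from the three results already established on the $(\hcA,\hDelta)$-side, and then to deduce functoriality from the uniqueness clauses they contain. On objects there is nothing to check: the Theorem says precisely that $(\cC \rtimes_{r} \cA,\hdelta)$ is a coaction of $(\cA,\Delta)$ whenever $(\cC,\delta)$ is a coaction of $(\hcA,\hDelta)$, so $(\cC,\delta) \mapsto (\cC \rtimes_{r} \cA,\hdelta)$ lands in $\ob\coact_{(\cA,\Delta)}$. On arrows, the Proposition says that $\rho \rtimes_{r} \Id$ is a morphism of coactions of $(\cA,\Delta)$ with source $(\cC \rtimes_{r} \cA,\hdelta_{{\cal C}})$ and target $({\cal D} \rtimes_{r} \cA,\hdelta_{{\cal D}})$, so the arrow assignment is well-defined; recall also that $\rho \rtimes_{r} \Id$ is obtained as the restriction to $\cC \rtimes_{r} \cA$ of an induced-algebra (semi-)morphism as in Lemma \ref{lemma:fp-c-morphism}.

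Then I would verify the two functoriality axioms, in each case by checking that the candidate map satisfies the formula that, by the uniqueness part of the Proposition, characterizes the relevant $-\rtimes_{r}\Id$. For identities: taking $\rho=\Id_{\cC}$, both sides of the defining formula collapse to $(1 \botensor UaU)\delta_{\cC}(c)\delta_{\cC}(d)(1 \botensor Ua'U)$ (using that $\delta_{\cC}$ is multiplicative), which is also the value of $\Id_{\cC \rtimes_{r} \cA}$ on $(1 \botensor UaU)\delta_{\cC}(c)$ multiplied on the right by $\delta_{\cC}(d)(1 \botensor Ua'U)$; hence $\Id_{\cC} \rtimes_{r} \Id = \Id_{\cC \rtimes_{r} \cA}$. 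For composition: given morphisms $\rho \colon ({\cal C},\delta_{{\cal C}}) \to ({\cal D},\delta_{{\cal D}})$ and $\sigma \colon ({\cal D},\delta_{{\cal D}}) \to ({\cal E},\delta_{{\cal E}})$ of $(\hcA,\hDelta)$, one first notes that $\sigma \circ \rho$ is again a morphism of coactions — it is nondegenerate because $[\sigma(\rho(C))E] = [\sigma(\rho(C)D)E] = [\sigma(D)E] = E$, and equivariance is preserved under composition by a direct substitution — so $(\sigma \circ \rho) \rtimes_{r} \Id$ makes sense. One then evaluates $(\sigma \rtimes_{r} \Id) \circ (\rho \rtimes_{r} \Id)$ on a typical element $(1 \botensor UaU)\delta_{{\cal C}}(c)$, multiplied on the right by the generating elements $\delta_{{\cal E}}(e)(1 \botensor Ua'U)$, and pushes $\rho$ and then $\sigma$ through using the two defining formulas together with $[\rho(C)D]=D$, arriving at $(1 \botensor UaU)\delta_{{\cal E}}\big((\sigma\circ\rho)(c)e\big)(1 \botensor Ua'U)$; this is exactly the formula characterizing $(\sigma\circ\rho)\rtimes_{r}\Id$, so the two agree.

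I do not anticipate any genuine obstacle — this is the routine ``package the constructions into a functor'' step, which is why the article records the statement with no separate proof. The only points requiring a little care are bookkeeping ones: $\rho \rtimes_{r} \Id$ is a priori only a semi-morphism into the multiplier algebra $M({\cal D} \rtimes_{r} \cA)$, so one must be sure the defining formula pins it down on all of $\cC \rtimes_{r} \cA = [\delta_{{\cal C}}(C)(1 \botensor UAU)]$, which it does because it specifies $(\rho \rtimes_{r} \Id)(x)$ after right multiplication by the dense set $\{\delta_{{\cal D}}(d)(1 \botensor Ua'U)\}$ and strict density does the rest; and one needs that composites of coaction morphisms are again coaction morphisms, i.e.\ that $\coact_{(\hcA,\hDelta)}$ is a category. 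Given these, functoriality is immediate, and one may alternatively short-circuit the whole verification by invoking the functoriality of the induced-algebra construction in Lemma \ref{lemma:fp-c-morphism}, of which $\rho \mapsto \rho \rtimes_{r} \Id$ is a restriction.
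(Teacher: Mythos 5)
Your proposal is correct and matches the paper's (implicit) argument: the corollary is recorded with no separate proof precisely because, as you observe, well-definedness on objects and arrows is the content of the preceding Theorem and Proposition, and functoriality follows from the uniqueness clause characterizing $\rho \rtimes_{r} \Id$ by its action against the generating elements $\delta_{{\cal D}}(d)(1 \botensor Ua'U)$. Your bookkeeping remarks (that the formula pins down the semi-morphism on all of $\cC \rtimes_{r} \cA$ and that composites of coaction morphisms are again morphisms) are exactly the right points to note.
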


\paragraph{The duality theorem}
 The preceding constructions yield for each
coaction $(\cC,\delta_{{\cal C}})$ of $(\cA,\Delta)$ and
each coaction $(\mathcal{D},\delta_{{\cal D}})$ of
$(\hcA,\hDelta)$ a bidual $(\cC \rtimes_{r} \hcA \rtimes_{r}
\cA, \hhdelta_{{\cal C}})$ and $({\cal D} \rtimes_{r} \cA
\rtimes_{r} \hcA, \hhdelta_{{\cal D}})$, respectively. The following
generalization of the Baaj-Skandalis duality theorem
\cite{baaj:2} identifies these biduals in the case where
$(V,U)$ is a $C^{*}$-pseudo-Kac system and the initial
coactions are fine. Morally, it says that up to Morita
equivalence, the functors $-\rtimes_{r} \hcA$ and
$-\rtimes_{r} \cA$ implement an equivalence of the
 categories $\bfcoact^{f}_{(\cA,\Delta)}$ and
$\bfcoact^{f}_{(\hcA,\hDelta)}$.
\begin{theorem} \label{theorem:duality} Assume that $(V,U)$
  is a $C^{*}$-pseudo-Kac system.
  \begin{enumerate}
  \item Let $(\cC,\delta)$ be a (very) fine coaction of
    $(\cA,\Delta)$, where $\cC=C^{\gamma}_{K}$.  Then there
    exists an isomorphism $\Phi \colon C \rtimes_{r} \hA
    \rtimes_{r} A \to [\kbeta{2}C\bbeta{2}] \subseteq {\cal
      L}(K \rtensorcb H)$ such that $\Phi^{-1}$ is an
    (iso)morphism from $(K \rtensorcb H_{
      \halpha}, [ \kbeta{2} C \bbeta{2}])$ to $\cC
    \rtimes_{r} \hcA \rtimes_{r} \cA$ and $\hhdelta \circ
    \Phi^{-1} = (\Phi^{-1} \ast \Id) \circ \Ad_{(1 \btensor
      \Sigma\hatV)} \circ \Ind_{\kbeta{2}}(\delta)$.
  \item Let $({\cal D},\delta)$ be a (very) fine coaction of
    $(\hcA,\hDelta)$, where ${\cal D}=D^{\epsilon}_{L}$.
    Then there exists an isomorphism $\Phi \colon D
    \rtimes_{r} A \rtimes_{r} \hA \cong [ \kalpha{2} D
    \balpha{2}] \subseteq \mathcal{ L}(L
    \rtensor{\epsilon}{\frakbo}{\alpha} H)$ such that
    $\Phi^{-1}$ is an (iso)morphism from $(L
    \rtensor{\epsilon}{\frakbo}{\alpha} H_{
      \hbeta}, [\kalpha{2}D\balpha{2}])$ to ${\cal D}
    \rtimes_{r} \cA \rtimes_{r} \hcA$ and $\hhdelta \circ
    \Phi^{-1} = (\Phi^{-1} \ast \Id) \circ \Ad_{(1 \botensor
      \Sigma V)} \circ \Ind_{\kalpha{2}}(\delta)$.
  \end{enumerate}
\end{theorem}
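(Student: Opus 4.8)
The proof adapts the argument Baaj and Skandalis gave for the double crossed product of a coaction of a Hopf $C^{*}$-algebra \cite{baaj:2}. It suffices to prove (i): assertion (ii) then follows by applying (i) to the dual $C^{*}$-pseudo-Kac system $(\hatV,U)$, whose legs are $(\hcA,\hDelta)$ and $(\cA,\Delta)$ with the roles of $\alpha,\beta$ and of $\halpha,\hbeta$ interchanged (Proposition \ref{proposition:balanced-legs}, the lemma preceding this theorem, and the identities \eqref{eq:pmu-op}, \eqref{eq:balanced-iterate}). So fix a (very) fine coaction $(\cC,\delta)$ of $(\cA,\Delta)$, $\cC = C^{\gamma}_{K}$. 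The plan is: (a) write the bidual $C \rtimes_{r} \hA \rtimes_{r} A$ explicitly as a $C^{*}$-subalgebra of $\mathcal{L}(K \rtensorcb H \rtensor{\hbeta}{\frakbo}{\alpha} H)$; (b) construct from $V$, $\checkV$, $\hatV$, $U$ and $\Sigma$ a unitary conjugating it onto $[\kbeta{2}\delta(C)\bbeta{2}]$, collapsing the last leg; (c) identify the target, via Lemma \ref{lemma:fp-c-morphism}(iv), with $[\kbeta{2}C\bbeta{2}]$; (d) check the module-morphism and equivariance assertions.

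For (a), inserting the defining formulas of $\hdelta$ and of the second crossed product and using $\hDelta(\ha)=V^{*}(1 \btensor \ha)V$ gives
\begin{align*}
  C \rtimes_{r} \hA \rtimes_{r} A = \big[\, \hdelta\big(\delta(c)(1 \btensor \ha)\big)\,(1 \botensor 1 \botensor UaU) \ \big|\ c \in C,\ \ha \in \hA,\ a \in A \,\big].
\end{align*}
Conjugation by $1 \btensor \checkV$ on the last two legs trivialises the inner crossed product, because $\hdelta(x) = (1 \btensor \checkV)(x \btensor 1)(1 \btensor \checkV^{*})$, and, by Proposition \ref{proposition:balanced-legs}, $\checkV^{*}(1 \botensor UaU)\checkV = (U \btensor U)\Delta(a)(U \btensor U)$. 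Thus the conjugated bidual is the closed linear span of $\big(\delta(c)(1 \btensor \ha) \botensor 1\big)\big(1 \btensor (U \btensor U)\Delta(a)(U \btensor U)\big)$, with $\ha$ on the middle leg and $\Delta(a)=V(a \botensor 1)V^{*}$ on the last two.

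For (b), the crucial input is Proposition \ref{proposition:kac-compact}: $[A\hA] = [\halpha\halpha^{*}] = \mathcal{K}(\halpha)$. After suitable $U$-conjugations on the last two legs and use of the intertwining relations \eqref{eq:pmu-intertwine} and the balanced relations of Lemma \ref{lemma:balanced}, the product of $\Delta(A)$ (on legs $2,3$) with $\hA$ (on leg $2$) becomes, on the last leg, precisely these compacts; invoking the isomorphism $\halpha \tr \frakK \cong H$ of \eqref{eq:rtp-iso} one may then absorb the last leg. Carrying out this conjugation, $C \rtimes_{r} \hA \rtimes_{r} A$ is identified with $[\kbeta{2}\,\delta(C)\,\bbeta{2}] \subseteq \mathcal{L}(K \rtensorcb \Hrange)$. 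By Lemma \ref{lemma:fp-c-morphism}(iv) the latter equals $\Ind_{\kbeta{2}}(\delta)\big([\kbeta{2}C\bbeta{2}]\big)$, and we let $\Phi$ be the inverse of the composite $[\kbeta{2}C\bbeta{2}] \xrightarrow{\Ind_{\kbeta{2}}(\delta)} [\kbeta{2}\delta(C)\bbeta{2}] \xrightarrow{\ \sim\ } C \rtimes_{r} \hA \rtimes_{r} A$. That this composite is bijective onto the full double crossed product --- not merely onto a hereditary subalgebra --- uses injectivity of $\delta$ together with right-fullness $[\delta(C)\kbeta{2}] = [\kbeta{2}C]$, both of which are part of fineness, in the same way regularity of $V$ is used in \cite{baaj:2}.

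It remains to treat the module structures and equivariance. By construction $\Phi^{-1}$ agrees, modulo the Hilbert-space identifications made in (b), with $\Ad_{(1 \btensor \Sigma\hatV)} \circ \Ind_{\kbeta{2}}(\delta)$, which is a semi-morphism of $C^{*}$-$\frakb$-algebras by Lemma \ref{lemma:fp-c-morphism}(ii) and the intertwining relations of $\hatV$; it is a morphism when $\delta$ is (the fine case) and an isomorphism when $\delta^{-1}$ is also a morphism (the very fine case). The equivariance identity $\hhdelta \circ \Phi^{-1} = (\Phi^{-1} \ast \Id) \circ \Ad_{(1 \btensor \Sigma\hatV)} \circ \Ind_{\kbeta{2}}(\delta)$ is verified on the dense set $[\kbeta{2}C\bbeta{2}]$ by comparing the formula for $\hhdelta$ read off from the two crossed-product theorems with the definition of $\Phi$; left-fullness of $\delta$, which is inherited by both crossed products, guarantees $\hhdelta$ has the asserted form. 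The main obstacle is organisational: keeping track of the leg positions, module structures and fiber products in the threefold relative tensor product, and checking in step (b) that the conjugating unitary matches $C \rtimes_{r} \hA \rtimes_{r} A$ with all of $[\kbeta{2}C\bbeta{2}]$ --- precisely the point where the $C^{*}$-pseudo-Kac hypothesis (through Proposition \ref{proposition:kac-compact}) and fineness of $\delta$ are indispensable.
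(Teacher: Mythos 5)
Your overall strategy is the paper's argument run in reverse, and the key ingredients you name (Proposition \ref{proposition:kac-compact}, right-fullness of $\delta$, the induced homomorphism $\Ind_{\kbeta{2}}(\delta)$ of Lemma \ref{lemma:fp-c-morphism}, and conjugation by a unitary built from $V$) are exactly the ones the paper uses. The paper, however, goes from the compressed algebra up to the bidual rather than the other way around, and this direction is markedly cleaner: right-fullness gives $[\kbeta{2}C\bbeta{2}]=[\delta(C)(1\btensor\beta\beta^{*})]$, and Proposition \ref{proposition:kac-compact} applied to $(\checkV,U)$ (via Proposition \ref{proposition:balanced-legs}) gives $[\hA\,\Ad_{U}(A)]=[A_{\checkV}\hA_{\checkV}]=[\beta\beta^{*}]$, so the domain is the closed span of the elements $\delta(c)(1\btensor\ha UaU)$; the map $\Psi=\Ad_{(1\btensor V^{*})}\circ\Ind_{\kbeta{2}}(\delta)$ then sends such a generator to $(\delta(c)\botensor 1)(1\btensor\hDelta(\ha))(1\btensor 1\botensor UaU)$ in one line using Lemma \ref{lemma:weak-kac}, which is visibly a generator of the bidual. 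This replaces your step (b) entirely.

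Three concrete problems with your write-up. First, step (b) is the crux and you only assert it: the claim that "suitable $U$-conjugations" convert $\hA$ on leg $2$ times $\Delta(A)$ on legs $2,3$ into compacts that can be absorbed on leg $3$ is precisely the computation that has to be done, and the instance of Proposition \ref{proposition:kac-compact} you invoke, $[A\hA]=[\halpha\halpha^{*}]$, is not the one that is relevant here --- what is needed is the $(\checkV,U)$-version $[\hA\,UAU]=[\beta\beta^{*}]$, which lives on the correct leg with the correct module. Second, your reduction of (ii) to (i) via $(\hatV,U)$ does not work as stated: by Proposition \ref{proposition:balanced-legs} the $\cA$-leg of $(\hatV,U)$ is $U\hA U$ with comultiplication $\Ad_{(U\botensor U)}\circ\hDelta\circ\Ad_{U}$, which is only $U$-conjugate to $(\hcA,\hDelta)$; the correct choice is the predual $(\checkV,U)$, for which $A_{\checkV}=\hA$, $\Delta_{\checkV}=\hDelta$ and $\cA_{\checkV}=\hA^{\alpha,\hbeta}_{H}=\hcA$ on the nose, so that part (i) applied to $(\checkV,U)$ literally is part (ii). Third, you conflate the isomorphism with the transported coaction: $\Phi^{-1}$ is $\Ad_{(1\btensor V^{*})}\circ\Ind_{\kbeta{2}}(\delta)$, whereas $\Ad_{(1\btensor\Sigma\hatV)}\circ\Ind_{\kbeta{2}}(\delta)$ is a different map (with a different target) that appears only on the right-hand side of the equivariance identity. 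With these repairs your argument collapses onto the paper's.
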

\begin{proof} 
  We only prove  i); assertion ii) follows
  similarly after replacing $(V,U)$ by $(\checkV,U)$.  By
  Proposition \ref{proposition:balanced-legs} and
  Proposition \ref{proposition:kac-compact}, applied to the
  $C^{*}$-pseudo-Kac system $(\checkV,U)$, we have $[ \hA
  \Ad_{U}(A)] = [ A_{\checkV} \hA_{\checkV}] = [ \beta
  \beta^{*}]$, and since $\delta$ is fine,
  \begin{align*}
    [\kbeta{2}C\bbeta{2}] = [\delta(C)(1 \btensor
    \beta\beta^{*})] = [\delta(C)(1 \btensor \hA
    \Ad_{U}(A))].
  \end{align*}
  One easily verifies that the $*$-homomorphism $
  \Ind_{\kbeta{2}}(\delta)$ (see Lemma
  \ref{lemma:fp-c-morphism}) yields an (iso)morphism of
  $C^{*}$-$\frakb$-algebras 
  \begin{align*}
    \Ind_{\kbeta{2}}(\delta) \colon \big(\rHrange_{\halpha},
    [\kbeta{2}C\bbeta{2}]\big) \to \big(\rHrange \rtensorab
    H_{\halpha}, [\kbeta{2}\delta(C)\bbeta{2}]\big).
  \end{align*}
  Denote by $\Psi$ the composition of this (iso)morphism
  with the isomorphism $\Ad_{(1 \btensor V^{*})}$ and let
  $\delta^{(2)} = (\delta \ast \Id) \circ \delta = (\Id \ast
  \Delta) \circ \delta$. Let $x = \delta(c)(1 \btensor \ha
  UaU) \in [\kbeta{2}C\bbeta{2}]$, where $c\in C, \ha \in
  \hA, a \in A$. By Lemma \ref{lemma:weak-kac},
  \begin{align*}
    \Psi(x) &= \Ad_{(1 \btensor V^{*})}(\delta^{(2)}(c)(1 \btensor
    1 \btensor \ha UaU))
    = (\delta(c) \botensor 1)(1 \btensor
    \hDelta(\ha))(1 \btensor 1 \botensor UaU).
  \end{align*}
  Consequently, $\Psi([\kbeta{2}C\bbeta{2}])=C \rtimes_{r}
  \hA \rtimes_{r} A$.  The relations $\cC \rtimes_{r} \hcA
  \rtimes_{r} \cA = (\rHrange
  \rtensor{\hbeta}{\frakbo}{\alpha} H_{ \halpha}, C
  \rtimes_{r} \hA \rtimes_{r} A)$ and $(1 \btensor
  V^{*})(\gamma \rt \alpha \rt \halpha)= \gamma \rt \hbeta
  \rt \halpha$ imply that $\Psi$ is a morphism of
  $C^{*}$-$\frakb$-algebras as claimed.  Using the
  definition of $\hhdelta$, Proposition
  \ref{proposition:balanced-legs}, and Lemma
  \ref{lemma:weak-kac}, we find
\begin{align*}
  \hhdelta(\Psi(x)) &= (\delta(c) \botensor 1 \btensor 1)(1
  \btensor \widehat{\Delta}(\hat{a}) \btensor 1)(1 \btensor
  1
  \botensor \Ad_{(U \btensor 1)}(\Delta(a))) \\
  &= \Ad_{(1 \btensor V^{*} \btensor
    1)}\big((\delta^{(2)}(c) \btensor 1)(1 \btensor 1
  \btensor \ha \btensor 1)(1 \btensor 1 \btensor \Ad_{(U
    \btensor 1)}(\Delta(a)))\big)  \\
  &= (\Psi \ast \Id)\big((\delta(c) \btensor 1)(1 \btensor
  \ha \btensor 1)(1 \btensor \Ad_{(U \btensor
    1)}(\Delta(a)))\big)
  \\
  &= (\Psi \ast \Id)\big((1 \btensor
  \Sigma\hatV)\delta^{(2)}(c)(1 \btensor 1 \btensor \ha UaU)
  (1 \btensor \hatV^{*}\Sigma)\big) \\ &= (\Psi \ast
  \Id)\big((1 \btensor \Sigma \hatV) (\Ind_{\kbeta{2}}(\delta)
  (x))(1 \btensor \hatV^{*}\Sigma)\big). \qedhere
\end{align*}
 \end{proof}

\section{The $C^{*}$-pseudo-Kac system of a groupoid}
\label{section:kac-groupoid}

For the remainder of this article, we fix a locally compact,
Hausdorff, second countable groupoid $G$ with a left Haar
system $\lambda$. In \cite{timmermann:cpmu-hopf}, we
associated to such a groupoid a regular
$C^{*}$-pseudo-multiplicative unitary $V$ and identified the
underlying $C^{*}$-algebras of the Hopf $C^{*}$-bimodules
$(\hcA,\hDelta)$ and $(\cA,\Delta)$ of $V$ with
the function algebra $C_{0}(G)$ and the reduced groupoid
$C^{*}$-algebra $C^{*}_{r}(G)$, respectively.  We now recall
this construction and define a symmetry $U$ such that
$(V,U)$ becomes a $C^{*}$-pseudo-Kac system.  For background
on groupoids, see \cite{paterson,renault}.

Denote by $\lambda^{-1}$ the right Haar system associated to
$\lambda$ and let $\mu$ be a measure on the unit space
$G^{0}$ with full support.  We denote the range and the
source map of $G$ by $r$ and $s$, respectively, let
$G^{u}:=r^{-1}(u)$ and $G_{u}:=s^{-1}(u)$ for each $u \in
G^{0}$, and define measures $\nu,\nu^{-1}$ on $G$ such that
 \begin{align*} \int_{G} f \intd \nu &= \int_{G^{0}}
   \int_{G^{u}} f(x) \intd\lambda^{u}(x) \intd\mu(u), &
   \int_{G} f d\nu^{-1} &= \int_{G^{0}} \int_{G_{u}} f(x)
   \intd\lambda^{-1}_{u}(x) \intd\mu(u)
\end{align*}
for all $f \in C_{c}(G)$.  We assume that $\mu$ is
quasi-invariant in the sense that $\nu$ and $\nu^{-1}$ are
equivalent, and denote by $D:=\intd\nu/\intd\nu^{-1}$ the
Radon-Nikodym derivative. One can choose $D$ such that it is a
Borel homomorphism, see \cite[p.\ 89]{paterson}, and we do so.

We identify functions in $C_{b}(G^{0})$ and $C_{b}(G)$ with
multiplication operators on the Hilbert spaces
$L^{2}(G^{0},\mu)$ and $L^{2}(G,\nu)$, respectively, and let
$\frakK=L^{2}(G^{0},\mu)$, $\frakB= \frakBo= C_{0}(G^{0})
\subseteq \mathcal{ L}(\frakK)$,
$\frakb=(\frakK,\frakB,\frakBo)=\frakb$, $H = L^{2}(G,\nu)$.

Pulling functions on $G^{0}$ back to $G$ along $r$ or $s$,
we obtain representations $r^{*} \colon C_{0}(G^{0}) \to
C_{b}(G) \hookrightarrow \mathcal{ L}(H)$ and $ s^{*} \colon
C_{0}(G^{0}) \to C_{b}(G) \hookrightarrow \mathcal{ L}(H)$.
We define Hilbert $C^{*}$-$C_{0}(G^{0})$-modules
$L^{2}(G,\lambda)$ and $L^{2}(G,\lambda^{-1})$ as the
respective completions of the pre-$C^{*}$-module $C_{c}(G)$,
the structure maps being given by
 \begin{align*} 
   \langle \xi'|\xi\rangle(u)&= \int_{G^{u}}
   \overline{\xi'(x)}\xi(x) \intd\lambda^{u}(x), & \xi f &=
   r^{*}(f)\xi &  &\text{in the case of } L^{2}(G,\lambda), \\
   \langle \xi'|\xi\rangle(u)&= \int_{G_{u}}
   \overline{\xi'(x)}\xi(x) \intd\lambda^{-1}_{u}(x), & \xi
   f &= s^{*}(f)\xi & &\text{in the case of }
   L^{2}(G,\lambda^{-1})
\end{align*} 
respectively, for all $\xi,\xi' \in C_{c}(G)$, $u \in
G^{0}$, $f \in C_{0}(G^{0})$. Then there exist isometric
embeddings $j\colon L^{2}(G,\lambda) \to \mathcal{ L}(\frakK,H)$
and $\hat j \colon L^{2}(G,\lambda^{-1}) \to \mathcal{
  L}\big(\frakK,H\big)$ such that  
  \begin{align*}
    \big(j(\xi) \zeta\big)(x) &= \xi(x)\zeta(r(x)), &
    \big(\hat j(\xi)\zeta\big)(x) &= \xi(x) D^{-1/2}(x)
    \zeta(s(x))
  \end{align*}
  for all $\xi \in C_{c}(G)$, $\zeta \in C_{c}(G^{0})$.  Let
  $\alpha=\beta:=j(L^{2}(G,\lambda))$ and $\halpha=\hbeta :=
  \hat j(L^{2}(G,\lambda^{-1}))$.  Then
  $(H,\halpha,\hbeta,\alpha,\beta)$ is a
  $C^{*}$-$(\frakb,\frakbo,\frakb,\frakbo)$-module,
  $\rho_{\alpha}=\rho_{\beta}=r^{*}$ and
  $\rho_{\halpha}=\rho_{\hbeta}=s^{*}$, and
  $j(\xi)^{*}j(\xi')=\langle \xi|\xi'\rangle$ and $\hat
  j(\eta)^{*}\hat j(\eta')=\langle\eta|\eta'\rangle$ for all $\xi,\xi' \in
  L^{2}(G,\lambda)$, $\eta,\eta' \in L^{2}(G,\lambda^{-1})$
  \cite{timmermann:cpmu-hopf}.  

The Hilbert spaces
  $\Hsource$ and $\Hrange$ can be described as follows.
  Define measures $\nu^{2}_{s,r}$ on $\GsrG$ and
  $\nu^{2}_{r,r}$ on $\GrrG$ such that
  \begin{gather} \label{eq:measures}
    \begin{aligned}
      \int_{\GsrG}  f\intd\nu^{2}_{s,r} &= \int_{G^{0}}
      \int_{G^{u}} \int_{G^{s(x)}} f(x,y)
      \intd\lambda^{s(x)}(y) \intd\lambda^{u}(x) \intd\mu(u), \\
      \int_{\GrrG}  g\intd\nu^{2}_{r,r} &= \int_{G^{0}}
      \int_{G^{u}}\int_{G^{u}}
      g(x,y)\intd\lambda^{u}(y)\intd\lambda^{u}(x)\intd\mu(u)
    \end{aligned}
  \end{gather}
 for all $f \in C_{c}(\GsrG)$, $g\in C_{c}(\GrrG)$. Then
 there exist unitaries 
  \begin{align*} 
    \Phi \colon \Hsource &\to
    L^{2}(\GsrG,\nu^{2}_{s,r})&&\text{and} & \Psi
        \colon \Hrange &\to L^{2} (\GrrG,\nu^{2}_{r,r})
  \end{align*}
  such that for all $\eta,\xi \in C_{c}(G)$, $\zeta \in
  C_{c}(G^{0})$, 
\begin{align*}
  \Phi\big(\hat j(\eta) \tr \zeta \tl
  j(\xi)\big)(x,y) &= \eta(x)D^{-1/2}(x) \zeta(s(x))
  \xi(y), \\ \Psi\big(j(\eta) \tr \zeta \tl
  j(\xi)\big)(x,y) &= \eta(x) \zeta(r(x)) \xi(y). 
\end{align*}
From now on, we use these isomorphisms without further
notice.
\begin{theorem} \label{theorem:groupoid-kac} There exists a
  $C^{*}$-pseudo-Kac system $(V,U)$ on
  $(H,\halpha,\hbeta,\alpha,\beta)$ such that for all
  $\omega \in C_{c}(\GsrG)$, $(x,y) \in \GrrG$, $\xi \in
  C_{c}(G)$, $z \in G$,
      \begin{align} \label{eq:groupoid-kac}
        \begin{aligned}
          (V\omega)(x,y) &= \omega(x,x^{-1}y) & &\text{and}
          & (U \xi)(x) &= \xi(x^{-1}) D(x)^{-1/2}.
        \end{aligned}
  \end{align}
\end{theorem}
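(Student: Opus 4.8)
The plan is to take for $V$ the regular $C^{*}$-pseudo-multiplicative unitary of \cite{timmermann:cpmu-hopf}, which already satisfies the first formula in \eqref{eq:groupoid-kac} and is in particular well-behaved, to \emph{define} $U$ by the second formula in \eqref{eq:groupoid-kac}, and then to verify the two conditions of Definition~\ref{definition:kac}: that $V$, $\checkV$, $\hatV$ are regular, and that $(\Sigma(1\btensor U)V)^{3}=\Id$. Throughout one uses that $\frakB=\frakBo=C_{0}(G^{0})$, so that $\frakbo=\frakb$, $\beta=\alpha$, $\hbeta=\halpha$; that the inversion $x\mapsto x^{-1}$ carries $G^{u}$ onto $G_{u}$, $\lambda^{u}$ onto $\lambda^{-1}_{u}$, and hence $\nu$ onto $\nu^{-1}$, with $\intd\nu/\intd\nu^{-1}=D$; and that $D$ is a Borel homomorphism with $D|_{G^{0}}\equiv 1$, so $D(x^{-1})=D(x)^{-1}$ and $D(xy)=D(x)D(y)$.

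First I would show that $(U\xi)(x):=\xi(x^{-1})D(x)^{-1/2}$ defines a symmetry $U\in{\cal L}({_{\halpha}H_{\hbeta}},{_{\alpha}H_{\beta}})$. Substituting $x\mapsto x^{-1}$ in $\int_{G}|\xi(x^{-1})|^{2}D(x)^{-1}\intd\nu(x)$ and using the facts above shows $U$ is isometric on $C_{c}(G)$, hence extends to a unitary on $H=L^{2}(G,\nu)$; the same substitution gives $U=U^{*}$, and $U^{2}=\Id$ follows from $D(x^{-1})D(x)=1$. That $U$ intertwines the module structures is checked directly from the formulas for $j$ and $\hat j$: one gets $U\hat j(\xi)=j(\tilde\xi)$ for $\xi\in C_{c}(G)$, where $\tilde\xi(x):=\xi(x^{-1})$, because the factor $D^{-1/2}(x^{-1})$ produced by $\hat j$, the factor $D^{-1/2}(x)$ produced by $U$, and the identity $s(x^{-1})=r(x)$ combine to cancel. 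Since $\xi\mapsto\tilde\xi$ is a unitary $L^{2}(G,\lambda^{-1})\to L^{2}(G,\lambda)$, this gives $U\halpha=\alpha$ and $U\hbeta=\beta$, and $U^{2}=\Id$ gives the reverse inclusions; in particular $\checkV$ and $\hatV$ are defined.

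Next I would compute $\checkV$ and $\hatV$ explicitly, pushing simple tensors $\hat j(\eta)\tr\zeta\tl\hat j(\xi)$ through $\Sigma$, $1\btensor U$, $V$ and the isomorphisms $\Phi$, $\Psi$. Keeping track of the four decorations and of the half-powers of $D$, one obtains on the appropriate $L^{2}$-spaces of composable pairs formulas of exactly the same shape as \eqref{eq:groupoid-kac} --- for instance $(\checkV\phi)(x,y)=\phi(xy,y)D(y)^{1/2}$ --- describing push-forward along a groupoid-multiplication diffeomorphism, twisted by a power of $D$. Hence $\checkV$ and $\hatV$ are $C^{*}$-pseudo-multiplicative unitaries, and they are regular by the same argument that gives regularity of $V$ in \cite{timmermann:cpmu-hopf}; so $(V,U)$ is a balanced $C^{*}$-pseudo-multiplicative unitary with $V,\checkV,\hatV$ all regular.

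For the cubic relation I would similarly compute that $(\Sigma(1\btensor U)V\omega)(x,y)=\omega(y,y^{-1}x^{-1})D(x)^{-1/2}$ for $\omega\in C_{c}(\GsrG)$ and $(x,y)\in\GsrG$. Iterating this twice and simplifying with $(xy)^{-1}=y^{-1}x^{-1}$, $yy^{-1}=r(y)=s(x)$, $x\,s(x)=x$ and the homomorphism property of $D$ --- precisely the computation that shows $(\Sigma(1\otimes U)W)^{3}=1$ for the multiplicative unitary $W$ of a locally compact group, now performed fibrewise --- one checks that all the $D$-factors cancel and $(\Sigma(1\btensor U)V)^{3}=\Id$; alternatively one verifies $\hatV V\checkV=U_{1}\Sigma$ from the formulas of the previous step and invokes Lemma~\ref{lemma:kac-condition}. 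Then Definition~\ref{definition:kac} yields that $(V,U)$ is a $C^{*}$-pseudo-Kac system. The main obstacle is not any single identity --- each is elementary --- but the bookkeeping needed to realize $\checkV$, $\hatV$ and $\Sigma(1\btensor U)V$ concretely: one must carry the four module decorations, the various fibred measures on spaces of composable pairs, and the half-integer powers of the Radon--Nikodym derivative $D$ through every relative tensor product.
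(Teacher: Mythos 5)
Your proposal is correct and follows essentially the same route as the paper: take $V$ from \cite{timmermann:cpmu-hopf}, define $U$ by the stated formula, check that $(V,U)$ is a balanced unitary with $V$, $\checkV$, $\hatV$ regular, and verify $(\Sigma U_{2}V)^{3}=\Id$ by exactly the pointwise computation you describe (the paper's $Z$ satisfies the same formula $(Z\omega)(x,y)=\omega(y,y^{-1}x^{-1})D(x)^{-1/2}$). The only cosmetic difference is that the paper obtains regularity of $\hatV$ (and hence of $\checkV$) from the one-line identity $U_{1}VU_{1}=V^{*}$, i.e.\ $\hatV=V^{op}$, rather than by recomputing $\checkV$ and $\hatV$ explicitly and rerunning the regularity argument.
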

\begin{proof} 
  By \cite[Theorem 2.7, Example 5.3
  ii)]{timmermann:cpmu-hopf}, there exists a regular
  $C^{*}$-pseudo-multiplicative unitary $V$ as claimed.  The
  second formula in \eqref{eq:groupoid-kac} defines a
  unitary $U \in \mathcal{ L}(H)$ by definition of the
  Radon-Nikodym derivative $D=d\nu/d\nu^{-1}$, and $U^{2}=\Id$
  because $(U^{2}\xi) (x) = (U\xi)(x^{-1}) D(x)^{-1/2} =
  \xi(x) D(x)^{1/2} D(x)^{-1/2} = \xi(x)$ for all $\xi \in
  C_{c}(G)$ and $x \in G$.  The unitary $\hatV=\Sigma
  U_{\leg{1}}VU_{\leg{1}}\Sigma$ is equal to $V^{op}=\Sigma
  V^{*} \Sigma$ because
  \begin{align*} 
    (U_{\leg{1}}VU_{\leg{1}} \omega)(x,y) &= (VU_{\leg{1}}
    \omega)(x^{-1},y)D(x)^{-1/2} \\ &= (U_{\leg{1}}
    \omega)(x^{-1},xy)D(x)^{-1/2} \\ &= \omega(x,xy)
    D(x^{-1})^{-1/2} D(x)^{-1/2} = \omega(x,xy)
  \end{align*}
  for all $\omega \in C_{c}(\GrrG)$, $(x,y) \in \GsrG$.  In
  particular, $\hatV$ is a regular
  $C^{*}$-pseudo-multiplicative unitary.  It remains to show
  that the map $Z:=\Sigma U_{\leg{2}}V \colon \Hsource \to
  \Hsource$ satisfies $Z^{3}=1$.  But for all $\omega \in
  C_{c}(\GsrG)$ and $(x,y) \in \GsrG$,
  \begin{align*}
    (Z\omega)(x,y) &= (V\omega)(y,x^{-1})D(x)^{-1/2} =
    \omega(y,y^{-1}x^{-1})
    D(x)^{-1/2}, \\
    (Z^{3}\omega)(x,y) &= (Z^{2}\omega) (y,y^{-1}x^{-1})
    D(x)^{-1/2} \\ &= (Z\omega)(y^{-1}x^{-1},x y y^{-1})
    \big(D(x)D(y)\big)^{-1/2} \\ &= \omega(x,x^{-1} xy)
    \big(D(x)D(y)D(y^{-1}x^{-1})\big)^{-1/2} =
    \omega(x,y). \qedhere
  \end{align*}
\end{proof}

The Hopf $C^{*}$-bimodules $(\hcA,\hDelta)$ and
$(\cA,\Delta)$ associated to $V$ can be described as follows
\cite[Theorem 3.22]{timmermann:cpmu-hopf}. Given $g\in
C_{c}(G)$, define $L(g) \in C^{*}_{r}(G) \subseteq {\cal
  L}(H)$  by
\begin{align*}
  (L(g)f)(x) = \int_{G^{r(x)}} g(z)f(z^{-1}x)
D^{-1/2}(z)\intd\lambda^{r(x)}(z)
\end{align*}
for all $x \in G$, $f\in C_{c}(G) \subseteq
L^{2}(G,\nu)=H$. Then
\begin{gather} \label{eq:groupoid-leg-ha}
    \begin{aligned}
      \hA &= C_{0}(G) \subseteq {\cal L}(H), &
      \big(\hDelta(f)\omega\big)(x,y) &= f(xy)
      \omega(x,y), \end{aligned} \\ 
    \begin{aligned}
      A &= C^{*}_{r}(G), &
      \big(\Delta(L(g))\omega'\big)(x',y') &=
      \int_{G^{u'}} g(z)D^{-1/2}(z)
      \omega'(z^{-1}x',z^{-1}y') \intd\lambda^{u'}(z)
    \end{aligned}
  \nonumber
\end{gather}
for all $f \in C_{0}(G)$, $\omega \in C_{c}(\GsrG)$, $(x,y)
\in \GsrG$ and $g \in C_{c}(G)$, $\omega' \in
C_{c}(\GrrG)$, $(x',y') \in \GrrG$, where $u'=r(x')=r(y')$.
We shall loosely refer to $C_{0}(G)$ and $C^{*}_{r}(G)$ as
Hopf $C^{*}$-bimodules, having in mind $(\hcA,\hDelta)$ and
$(\cA,\Delta)$, respectively.

\section{Actions of $G$ and coactions of $C_{0}(G)$}

\label{section:actions}

Let $G$ be a groupoid and consider $C_{0}(G)$ as a Hopf
$C^{*}$-bimodule as in the preceding section.  Then
coactions of $C_{0}(G)$ can be related to actions of $G$ as
follows.  Let us say that a tuple
$(\bfF,\bfG,\eta,\epsilon)$ is {\em an embedding of a
  category $\bfC$ into a category $\bfD$ as a full and
  coreflective subcategory} if $\bfF \colon \bfC \to \bfD$
is a full and faithful functor and $\bfG \colon \bfD \to
\bfC$ is a faithful right adjoint to $\bfF$, where $\eta
\colon \Id_{\bfC} \to \bfG\bfF$ is the unit and $\epsilon
\colon \bfF\bfG \to \Id_{\bfD}$ is the counit of the
adjunction; see also \cite[\S IV.3]{maclane}.  In this
section, we construct such an embedding of the category of
actions of $G$ on continuous $C_{0}(G^{0})$-algebras into
the category of certain admissible coactions of $C_{0}(G)$.
We keep the notation introduced in the preceding section.

\paragraph{$C_{0}(G^{0})$-algebras and
  $C^{*}$-$\frakb$-algebras}
We shall embed the category of admissible
$C_{0}(G^{0})$-algebras into the category of admissible
$C^{*}$-$\frakb$-algebras as a full and coreflective
subcategory. 

Recall that a {\em $C_{0}(X)$-algebra}, where $X$ is some
locally compact Hausdorff space, is a $C^{*}$-algebra $C$
with a fixed nondegenerate $*$-homomorphism of $C_{0}(X)$
into the center of the multiplier algebra $M(C)$
\cite{blanchard,kasparov:invent}. We denote the fiber of a
$C_{0}(X)$-algebra $C$ at a point $x \in X$ by $C_{x}$ and
write the quotient map $p_{x} \colon C \to C_{x}$ as $c
\mapsto c_{x}$. Recall that $C$ is a {\em continuous
  $C_{0}(X)$-algebra} if the map $ X \to \reals$ given by $x
\mapsto \|c_{x}\|$ is continuous for each $c \in C$. A {\em
  morphism} of $C_{0}(X)$-algebras $C, D$ is a nondegenerate
$*$-homomorphism $\pi\colon C \to M(D)$ such that
$\pi(fc)=f\pi(c)$ for all $f \in C_{0}(X)$, $c \in C$.
\begin{definition}
  We call a $C_{0}(G^{0})$-algebra $C$ {\em admissible} if
  it is continuous and if $C_{u} \neq 0$ for each $u \in
  G^{0}$, and we call a $C^{*}$-$\frakb$-algebra
  $C^{\gamma}_{K}$ {\em admissible} if
  $[\rho_{\gamma}(C_{0}(G^{0}))C]=C$ and $[C\gamma]=\gamma$.
  A {\em morphism} between admissible
  $C^{*}$-$\frakb$-algebras $C^{\gamma}_{K}$,
  $D^{\epsilon}_{L}$ is a semi-morphism $\pi$ from
  $C^{\gamma}_{K}$ to $M(D)^{\epsilon}_{L}$ that is
  nondegenerate in the sense that $[\pi(C)D]=D$.
Denote by $\bfCG^{a}$ the category of all admissible
$C_{0}(G^{0})$-algebras, and by $\bfb^{a}$ the category of all
admissible $C^{*}$-$\frakb$-algebras.
\end{definition}

\begin{lemma} \label{lemma:groupoid-cx-g}
  \begin{enumerate}
  \item Let $C^{\gamma}_{K}$ be an admissible
    $C^{*}$-$\frakb$-algebra. Then $C$ is an admissible
    $C_{0}(G^{0})$-algebra with respect to $\rho_{\gamma}$.
  \item Let $\pi$ be a morphism between admissible
    $C^{*}$-$\frakb$-algebras $C^{\gamma}_{K}$ and
    $D^{\epsilon}_{L}$. Then $\pi$ is a morphism of
    $C_{0}(G^{0})$-algebras from $(C,\rho_{\gamma})$ to
    $(D,\rho_{\epsilon})$.
  \end{enumerate}
\end{lemma}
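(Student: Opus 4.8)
The plan is to exhibit $\rho_\gamma$ as the structure map of a $C_0(G^0)$-algebra structure on $C$, and then to read off non-vanishing of the fibres and continuity from the fact that $\gamma$ is a full Hilbert module over $C_0(G^0)$ which is preserved by $C$. For i), since $[C\gamma]=\gamma$ we have $C\gamma\subseteq\gamma$, so every $c\in C$ restricts to a map $\xi\mapsto c\xi$ on the Hilbert $C_0(G^0)$-module $\gamma$; as $C=C^{*}$ this map is adjointable with adjoint $\xi\mapsto c^{*}\xi$, hence $C_0(G^0)$-linear. On the other hand $\rho_\gamma(f)$ acts on $K$ as $\xi\zeta\mapsto\xi f\zeta$, i.e.\ as right multiplication by $f$ on $\gamma$, so it commutes with each $c\in C$. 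Combined with $\rho_\gamma(C_0(G^0))C\subseteq C$ (part of $C^\gamma_K$ being a $C^{*}$-$\frakb$-algebra, since here $\frakBo=C_0(G^0)$) and $C=C^{*}$, this gives $\rho_\gamma(C_0(G^0))\subseteq ZM(C)$; together with the admissibility assumption $[\rho_\gamma(C_0(G^0))C]=C$ it follows that $\rho_\gamma\colon C_0(G^0)\to ZM(C)$ is a nondegenerate structure map, so $C$ is a $C_0(G^0)$-algebra.

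Next, the fibres are non-zero: if $C_u=0$ for some $u$, i.e.\ $C=[\rho_\gamma(I_u)C]$ where $I_u\subseteq C_0(G^0)$ is the ideal of functions vanishing at $u$, then $[C\gamma]=\gamma$ forces $\gamma=[C\gamma]=[\rho_\gamma(I_u)C\gamma]=[\rho_\gamma(I_u)\gamma]=[\gamma I_u]$, whence $C_0(G^0)=[\gamma^{*}\gamma]=[I_u\gamma^{*}\gamma I_u]\subseteq I_u$, which is absurd; so $C_u\neq 0$ for every $u\in G^0$.

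The heart of the matter is continuity, which I expect to be the main obstacle. Viewing $\gamma$ as the module of $C_0$-sections of the continuous field of Hilbert spaces $(\gamma_u)_{u\in G^0}$ it determines (each $\gamma_u\neq 0$ since $\gamma$ is full), $C$ acts on it faithfully by $C_0(G^0)$-linear adjointables, and this descends to $\ast$-representations $\pi_u\colon C_u\to\mathcal{B}(\gamma_u)$ with $(c\xi)_u=\pi_u(c_u)\xi_u$. One computes $\|\pi_u(c_u)\|^{2}=\sup\{\big((c\xi)^{*}(c\xi)\big)(u)\,/\,\big(\xi^{*}\xi\big)(u):\xi\in\gamma,\ \xi_u\neq 0\}$, a supremum of functions each continuous near $u$, so $u\mapsto\|\pi_u(c_u)\|$ is lower semicontinuous; and $u\mapsto\|c_u\|$ is upper semicontinuous for any $C_0(G^0)$-algebra. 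Hence, once one knows every $\pi_u$ is faithful, so that $\|c_u\|=\|\pi_u(c_u)\|$, the function $u\mapsto\|c_u\|$ is continuous and $C$ is a continuous $C_0(G^0)$-algebra. Proving that each $\pi_u$ is faithful is the delicate point: here I would use that $\mathcal{K}(\gamma)$ is a continuous $C_0(G^0)$-algebra with non-zero fibres $\mathcal{K}(\gamma_u)$ — it is the imprimitivity algebra of the full module $\gamma$ — on which $C$ acts nondegenerately (from $[C\gamma]=\gamma$ one gets $[C\mathcal{K}(\gamma)]=\mathcal{K}(\gamma)$) and $C_0(G^0)$-linearly, and then appeal to the characterisation of continuous $C_0(X)$-algebras in terms of such faithful, nondegenerate, $C_0(X)$-linear module actions from the theory of continuous fields (using that $\gamma$ is countably generated, $G$ being second countable); see \cite{blanchard,kasparov:invent}.

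For ii): a semi-morphism $\pi$ of $C^{*}$-$\frakb$-algebras from $C^\gamma_K$ to $M(D)^\epsilon_L$ which is nondegenerate in the sense that $[\pi(C)D]=D$ extends uniquely to the multiplier algebras and satisfies $\pi(\rho_\gamma(f))=\rho_\epsilon(f)$ for all $f\in C_0(G^0)$ (the intertwining relation valid for semi-morphisms of $C^{*}$-$\frakb$-algebras, applied with $\frakB=\frakBo=C_0(G^0)$). Consequently $\pi(fc)=\pi(\rho_\gamma(f)c)=\rho_\epsilon(f)\pi(c)=f\pi(c)$ for all $f\in C_0(G^0)$ and $c\in C$, where $fc$ and $f\pi(c)$ refer to the $C_0(G^0)$-algebra structures from part i); since moreover $[\pi(C)D]=D$, the map $\pi$ is a morphism of $C_0(G^0)$-algebras from $(C,\rho_\gamma)$ to $(D,\rho_\epsilon)$.
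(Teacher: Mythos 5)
Your proof is essentially the paper's: centrality of $\rho_{\gamma}(C_{0}(G^{0}))$ in $M(C)$ via $C\subseteq\mathcal{L}(K_{\gamma})\subseteq\rho_{\gamma}(C_{0}(G^{0}))'$, the same contradiction $[\gamma^{*}\gamma]=[\gamma^{*}C\gamma]=[\gamma^{*}\gamma I_{u}]=I_{u}\neq C_{0}(G^{0})$ for the non-vanishing of fibres, continuity by viewing $C\hookrightarrow\mathcal{L}(K_{\gamma})\cong\mathcal{L}(\gamma)$ as a field of representations and invoking Blanchard's Theorem 3.3, and, for ii), the intertwining relation $\pi(\rho_{\gamma}(f))=\rho_{\epsilon}(f)$ valid for semi-morphisms of $C^{*}$-$\frakb$-algebras (the paper cites \cite[Proposition 3.5]{timmermann:fiber} here). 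The one step you leave open --- injectivity of the fibre representations $C_{u}\to\mathcal{L}(\gamma_{u})$ --- is precisely what the paper's phrase ``faithful field of representations in the sense of \cite[Theorem 3.3]{blanchard}'' asserts without further verification, so you are no less complete than the paper; be aware, though, that your sketched justification (nondegeneracy of the induced action on $\mathcal{K}(\gamma)$ plus $C_{0}(G^{0})$-linearity) is strictly weaker than fibrewise injectivity and would not by itself close that point.
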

\begin{proof}
  i) The subalgebra $\rho_{\gamma}(C_{0}(G^{0})) \subseteq
  M(C)$ is central because $C \subseteq \mathcal{
    L}(K_{\gamma}) \subseteq
  \rho_{\gamma}(C_{0}(G^{0}))'$. The map $C \hookrightarrow
  \mathcal{ L}(K_{\gamma}) \cong \mathcal{ L}(\gamma)$ is a
  faithful field of representations in the sense of
  \cite[Theorem 3.3]{blanchard}, and therefore $C$ is a
  continous $C_{0}(G^{0})$-algebra. We have $C_{u} \neq 0$
  for each $u \in G^{0}$ because otherwise $C=[C I_{u}]$,
  where $I_{u} =C_{0}(G^{0} \setminus \{u\})$, and then
  $[\gamma^{*}\gamma]=[\gamma^{*}C\gamma] =
  [\gamma^{*}I_{u}C\gamma]=[\gamma^{*} \gamma I_{u}] = I_{u}
  \neq C_{0}(G^{0})$, contradicting the fact that
  $K_{\gamma}$ is a $C^{*}$-$\frakb$-module.

  ii) This follows from \cite[Proposition 3.5]{timmermann:fiber}.
\end{proof}
  We embed $\bfCG^{a}$ into
$\bfb^{a}$ using a  KSGNS-construction for the following kind of
weights.
\begin{definition}
  A {\em $C_{0}(G^{0})$-weight} on a $C_{0}(G^{0})$-algebra
  $C$ is a positive $C_{0}(G^{0})$-linear map $\phi \colon
  C \to C_{0}(G^{0})$. We denote the set of all such
  $C_{0}(G^{0})$-weights by ${\cal W}(C)$.
\end{definition}
Let $C$ be an admissible $C_{0}(G^{0})$-algebra.   The results
in \cite{blanchard:champs} imply:
\begin{lemma} \label{lemma:cx-weights} $\bigcap_{\phi \in
    {\cal W}(C)} \ker \phi = \{0\}$ and $ [\bigcup_{\phi \in
    {\cal W}(C)} \phi(C)]=C_{0}(G^{0})$. \qed
\end{lemma}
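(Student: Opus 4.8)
The plan is to deduce both statements from the representation theorem for continuous $C_0(X)$-algebras proved in \cite{blanchard:champs}. Since $G$ is second countable, $G^0$ is a second countable locally compact Hausdorff space, and the admissibility of $C$ --- continuity together with $C_u\neq 0$ for all $u\in G^0$ --- is precisely what is needed for that theorem to produce a faithful, nondegenerate, $C_0(G^0)$-linear $*$-representation $\pi$ of $C$ on a full Hilbert $C_0(G^0)$-module $E$ (one may take $E=C_0(G^0)\otimes\ell^2$, which is full; conversely, nondegeneracy of a representation on a full module already forces all fibres to be nonzero, which is where the hypothesis $C_u\neq 0$ enters). First I would record that for each $\xi\in E$ the map $\phi_\xi\colon C\to C_0(G^0)$, $c\mapsto\langle\xi,\pi(c)\xi\rangle$, is $C_0(G^0)$-linear because $\pi$ is, and is positive since $\phi_\xi(c^*c)=\langle\pi(c)\xi,\pi(c)\xi\rangle\geq 0$; hence $\phi_\xi\in{\cal W}(C)$.

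For the first assertion I would take $c\in\bigcap_{\phi\in{\cal W}(C)}\ker\phi$, so that $\langle\xi,\pi(c)\xi\rangle=\phi_\xi(c)=0$ for all $\xi\in E$, and then apply the polarization identity to the sesquilinear form $(\xi,\eta)\mapsto\langle\xi,\pi(c)\eta\rangle$: this expresses $\langle\xi,\pi(c)\eta\rangle$ as a linear combination of the numbers $\phi_{\xi+i^k\eta}(c)$, all of which vanish, so $\langle\xi,\pi(c)\eta\rangle=0$ for all $\xi,\eta\in E$. Choosing $\xi=\pi(c)\eta$ gives $\pi(c)\eta=0$ for every $\eta$, whence $\pi(c)=0$ and, by faithfulness, $c=0$.

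For the second assertion I would first note that $[\bigcup_{\phi\in{\cal W}(C)}\phi(C)]$ is a closed ideal of $C_0(G^0)$: it is closed by definition, and stable under multiplication by $C_0(G^0)$ because $f\phi(c)=\phi(fc)$ with $fc\in C$ and $\phi$ being $C_0(G^0)$-linear. The goal is to show this ideal is all of $C_0(G^0)$, which I would do by identifying it with $[\langle E,E\rangle]$. Polarizing in the $C^*$-variable, $\langle\pi(c_1)\xi,\pi(c_2)\xi\rangle=\phi_\xi(c_1^*c_2)$ lies in the linear span of $\{\phi_\xi(d^*d):d\in C\}$; polarizing once more, now in the Hilbert-module variable, shows $\langle\pi(c_1)\xi_1,\pi(c_2)\xi_2\rangle$ lies in the span of $\bigcup_\phi\phi(C)$ for all $c_1,c_2\in C$ and $\xi_1,\xi_2\in E$. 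Since $\pi$ is nondegenerate the set $\{\pi(c)\xi:c\in C,\xi\in E\}$ has dense linear span in $E$, so $[\langle E,E\rangle]\subseteq[\bigcup_\phi\phi(C)]$; as $E$ is full, $[\langle E,E\rangle]=C_0(G^0)$, and the reverse inclusion is trivial.

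The step that needs the most care --- and the only real obstacle --- is pinning down the exact statement to be quoted from \cite{blanchard:champs}, namely the existence of the faithful nondegenerate $C_0(G^0)$-linear representation on a full Hilbert module; everything after that is formal. An alternative, more hands-on route that avoids Hilbert modules would instead quote that every state on a fibre $C_u$ extends to some $\phi\in{\cal W}(C)$: for $c\neq 0$ one picks $u$ with $c_u\neq 0$ and a state on $C_u$ attaining the norm of the positive part of $\tfrac12(c+c^*)_u$ or of $\tfrac1{2i}(c-c^*)_u$ --- such a state annihilates the corresponding negative part, since the two parts have orthogonal support --- and thereby obtains $\phi\in{\cal W}(C)$ with $\phi(c)\neq 0$; and for the second assertion one uses $C_u\neq 0$ to produce, for each $u$, some $\phi$ and $c$ with $\phi(c)(u)\neq 0$, so that the closed ideal $[\bigcup_\phi\phi(C)]$ is not contained in $C_0(G^0\setminus\{u\})$ for any $u$, hence equals $C_0(G^0)$.
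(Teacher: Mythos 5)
Your proposal is correct and matches the paper, which offers no argument beyond the citation of \cite{blanchard:champs}: both routes you describe (a faithful $C_0(G^0)$-linear representation on a full Hilbert $C_0(G^0)$-module, or the extension of states on the fibres $C_u$ to $C_0(G^0)$-weights) are precisely the Blanchard results the author invokes, and the polarization and closed-ideal arguments you build on top of them are sound. The one point that genuinely needs the hypothesis $C_u\neq 0$ for all $u$ is that the closed ideal $[\bigcup_{\phi}\phi(C)]$ (equivalently $[\langle \pi(C)E,\pi(C)E\rangle]$) is not contained in any $C_0(G^0\setminus\{u\})$, and you correctly identify this as the place where admissibility enters.
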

Let $\phi \in {\cal W}(C)$.  Then $\phi$ is completely
positive \cite[Theorem 3.9]{paulsen} and bounded \cite[Lemma
5.1]{lance}.  Let $E_{\phi}=C \tr_{\phi} \frakK$ (see
Section \ref{section:introduction}) and define $\eta_{\phi}
\colon C \to {\cal L}(E_{\phi})$ and $l_{\phi} \colon C \to
{\cal L}(\frakK,E_{\phi})$ by $\eta_{\phi}(c)(d \tr_{\phi}
\zeta) = cd \tr_{\phi} \zeta$ and $l_{\phi}(c)\zeta = c
\tr_{\phi} \zeta$ for all $c,d \in C$, $\zeta \in
\frakK$. One easily verifies that for all $c,d \in C$, $f
\in C_{0}(G^{0})$, $\zeta \in \frakK$,
\begin{gather} \label{eq:groupoid-cx-1}
  \begin{gathered}
    \begin{aligned}
      l_{\phi}(c)^{*}l_{\phi}(d)&=\phi(c^{*}d), &
      l_{\phi}(c)f &= l_{\phi}(cf),
    \end{aligned}
    \\
    \eta_{\phi}(c)(d \tr_{\phi} f\zeta) = cdf \tr_{\phi}
    \zeta = \eta_{\phi}(cf)(d \tr_{\phi} \zeta).
  \end{gathered}
\end{gather}
The {\em universal $C_{0}(G^{0})$-representation} $\eta_{C}
\colon C \to \mathcal{ L}(E_{C})$ of $C$ is the direct sum
of the representations $\eta_{\phi} \colon C \to \mathcal{
  L}(E_{\phi})$, where $\phi \in {\cal W}(C)$.  Denote by
$l_{C} \subseteq {\cal L}(\frakK,E_{C})$ the closed linear
span of all maps $l_{\phi}(c) \colon \frakK \to E_{\phi}
\hookrightarrow E_{C}$, where $c \in C$, $\phi \in {\cal
  W}(C)$.
\begin{lemma} \label{lemma:groupoid-cx-f}
  $\eta_{C}(C)^{l_{C}}_{E_{C}}$ is an admissible
  $C^{*}$-$\frakb$-algebra and $\eta_{C}$ is an isomorphism
  of $C_{0}(G^{0})$-algebras from $C$ to
  $(\eta_{C}(C),\rho_{l_{C}})$.
\end{lemma}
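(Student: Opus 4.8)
The plan is to verify, in order, that $(E_{C},l_{C})$ is a $C^{*}$-$\frakb$-module, that $\eta_{C}$ is faithful so that $\eta_{C}(C)$ is a $C^{*}$-algebra, that $\rho_{l_{C}}(\frakBo)$ carries $\eta_{C}(C)$ into itself and the resulting $C^{*}$-$\frakb$-algebra is admissible, and finally that $\eta_{C}$ intertwines the two $C_{0}(G^{0})$-structures. The only substantial input will be Lemma~\ref{lemma:cx-weights}; everything else is formal manipulation with the relations \eqref{eq:groupoid-cx-1} and with the fact that $C_{0}(G^{0})$ maps into the centre of $M(C)$.

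First I would check the three module axioms for the closed subspace $l_{C}\subseteq\mathcal{L}(\frakK,E_{C})$. Since $l_{\phi}(c)\zeta=c\tr_{\phi}\zeta$, the vectors $l_{\phi}(c)\zeta$ span $E_{\phi}=C\tr_{\phi}\frakK$ densely, so $[l_{C}\frakK]=E_{C}$ (the Hilbert-space direct sum of the $E_{\phi}$); and $[l_{C}\frakB]=[\{l_{\phi}(cf)\}]=l_{C}$, using $l_{\phi}(c)f=l_{\phi}(cf)$ from \eqref{eq:groupoid-cx-1} together with $[C\,C_{0}(G^{0})]=C$. For the third axiom, for $\phi\neq\psi$ the summands $E_{\phi},E_{\psi}$ of $E_{C}$ are orthogonal, hence $l_{\phi}(c)^{*}l_{\psi}(d)=0$, while $l_{\phi}(c)^{*}l_{\phi}(d)=\phi(c^{*}d)$. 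Since each $\phi$ is bounded and $[C^{*}C]=C$, one has $[\{\phi(c^{*}d):c,d\in C\}]=[\phi(C)]$, so $[l_{C}^{*}l_{C}]=[\bigcup_{\phi}\phi(C)]=C_{0}(G^{0})=\frakB$ by Lemma~\ref{lemma:cx-weights}. Thus $(E_{C},l_{C})$ is a $C^{*}$-$\frakb$-module and therefore carries the canonical representation $\rho_{l_{C}}$ of $\frakBo$ with $\rho_{l_{C}}(f)(l_{\phi}(c)\zeta)=l_{\phi}(c)f\zeta=l_{\phi}(cf)\zeta$ (cf.\ \eqref{eq:rtp-iso} and the discussion following it).

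Next I would show $\eta_{C}$ is faithful. Each $\eta_{\phi}$ is a $*$-homomorphism, being the representation part of the KSGNS pair $(\eta_{\phi},l_{\phi})$ for the completely positive map $\phi$, so $\eta_{C}=\bigoplus_{\phi}\eta_{\phi}$ is a $*$-homomorphism. If $\eta_{C}(c)=0$ then $l_{\phi}(cd)=\eta_{\phi}(c)l_{\phi}(d)=0$ for all $\phi$ and all $d\in C$, hence $\phi(d^{*}c^{*}cd)=l_{\phi}(cd)^{*}l_{\phi}(cd)=0$ for all $\phi$; by Lemma~\ref{lemma:cx-weights} this forces $d^{*}c^{*}cd=0$ for all $d$, so $cd=0$ for all $d$, and therefore $c=0$. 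Hence $\eta_{C}$ is isometric and $\eta_{C}(C)$ is a $C^{*}$-subalgebra of $\mathcal{L}(E_{C})$. Using $\eta_{\phi}(c)l_{\phi}(d)=l_{\phi}(cd)$ and the centrality of $f\in C_{0}(G^{0})$ in $M(C)$, I compute for $\zeta\in\frakK$ that
\begin{align*}
  \rho_{l_{C}}(f)\,\eta_{C}(c)\,l_{\phi}(d)\zeta
  &= \rho_{l_{C}}(f)\,l_{\phi}(cd)\zeta = l_{\phi}(cdf)\zeta \\
  &= l_{\phi}((cf)d)\zeta = \eta_{C}(cf)\,l_{\phi}(d)\zeta,
\end{align*}
so $\rho_{l_{C}}(f)\eta_{C}(c)=\eta_{C}(cf)=\eta_{C}(fc)\in\eta_{C}(C)$. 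In particular $\rho_{l_{C}}(\frakBo)\eta_{C}(C)\subseteq\eta_{C}(C)$, so ${\eta_{C}(C)}^{l_{C}}_{E_{C}}$ is a $C^{*}$-$\frakb$-algebra.

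Finally I would establish admissibility and the isomorphism claim. From the identity just derived and $[C\,C_{0}(G^{0})]=C$ we get $[\rho_{l_{C}}(C_{0}(G^{0}))\eta_{C}(C)]=\eta_{C}([C\,C_{0}(G^{0})])=\eta_{C}(C)$, and $[\eta_{C}(C)l_{C}]=[\{l_{\phi}(cd)\}]=l_{C}$ since $[CC]=C$; hence ${\eta_{C}(C)}^{l_{C}}_{E_{C}}$ is admissible. By Lemma~\ref{lemma:groupoid-cx-g}~i), $(\eta_{C}(C),\rho_{l_{C}})$ is then an admissible $C_{0}(G^{0})$-algebra, and since $\eta_{C}\colon C\to\eta_{C}(C)$ is a bijective $*$-homomorphism satisfying $\eta_{C}(fc)=\rho_{l_{C}}(f)\eta_{C}(c)$, it is an isomorphism of $C_{0}(G^{0})$-algebras, as claimed. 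The main obstacle is the pair of density statements in Lemma~\ref{lemma:cx-weights} (which rests on the results of \cite{blanchard:champs}): these are precisely what is needed to identify $[l_{C}^{*}l_{C}]$ with $\frakB$ and to prove faithfulness of $\eta_{C}$; the remaining steps are routine bookkeeping.
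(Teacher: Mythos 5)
Your proof is correct and follows essentially the same route as the paper's: verify the $C^{*}$-$\frakb$-module axioms for $(E_{C},l_{C})$ and the admissibility conditions via the relations \eqref{eq:groupoid-cx-1}, deduce injectivity of $\eta_{C}$ from Lemma \ref{lemma:cx-weights}, and read off the $C_{0}(G^{0})$-linearity from $l_{\phi}(c)f=l_{\phi}(cf)$. You simply spell out in more detail several steps the paper leaves as immediate consequences.
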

\begin{proof}
  The definition of $l_{C}$, the equations
  \eqref{eq:groupoid-cx-1} and Lemma \ref{lemma:cx-weights}
  imply that $[l_{C}\frakK] = \bigoplus_{\phi} E_{\phi} =
  E_{C}$, $[l_{C}^{*}l_{C}] = [\bigcup_{\phi} \phi(C)] =
  C_{0}(G^{0})$ and $[l_{C}C_{0}(G^{0})]=l_{C}$, whence
  $(E_{C},l_{C})$ is a $C^{*}$-$\frakb$-module, and that
  $[\eta_{C}(C)\rho_{l_{C}}(C_{0}(G^{0}))]=[\eta_{C}(CC_{0}(G^{0}))]=\eta_{C}(C)$
  and $[\eta_{C}(C)l_{C}]=l_{C}$, whence
  $\eta_{C}(C)^{l_{C}}_{E_{C}}$ is an admissible
  $C^{*}$-$\frakb$-algebra. Lemma \ref{lemma:cx-weights}
  implies that $\eta_{C}$ is injective and hence an
  isomorphism of $C$ onto $\eta_{C}(C)$, and the last
  equation in \eqref{eq:groupoid-cx-1} implies that
  $\eta_{C}(c)\rho_{\gamma}(f)=\eta_{C}(cf)$ for all $c\in
  C$, $f \in C_{0}(G^{0})$.
\end{proof}
\begin{theorem} \label{theorem:groupoid-cx} There exists an
  embedding $(\bfF,\bfG,\eta,\epsilon)$ of $\bfCG^{a}$ into
  $\bfb^{a}$ as a full and coreflective subcategory such
  that
  \begin{enumerate}
  \item $\bfF$ is given by $C \mapsto
    \eta_{C}(C)^{l_{C}}_{E_{C}}$ on objects and by $\bfF \pi
    \colon \eta_{C}(c) \mapsto \eta_{D}(\pi(c))$ for each
    morphism $\pi$ between objects $C$, $D$ in $\bfCG^{a}$;
  \item $\bfG$ is given by $C^{\gamma}_{K} \mapsto
    (C,\rho_{\gamma})$ on objects and $\pi \mapsto \pi$ on
    morphisms;
  \item $\eta_{C}$ is defined as above for each object $C$
    in $\bfCG^{a}$;
  \item $\epsilon_{{\cal C}} = \eta_{\bfG {\cal C}}^{-1}$
      for each object ${\cal C}$ in $\bfb^{a}$.
  \end{enumerate}
\end{theorem}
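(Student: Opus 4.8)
The plan is to reduce the whole statement to one universal-property lemma, which I will call $(\ast)$: for every object $C$ of $\bfCG^{a}$, every admissible $C^{*}$-$\frakb$-algebra $\mathcal{D}=D^{\gamma}_{K}$, and every morphism of $C_{0}(G^{0})$-algebras $\pi\colon C\to M(D)$, there is a unique morphism $\widetilde{\pi}$ in $\bfb^{a}$ from $\eta_{C}(C)^{l_{C}}_{E_{C}}$ to $\mathcal{D}$ whose underlying $*$-homomorphism sends $\eta_{C}(c)$ to $\pi(c)$ for all $c\in C$. Uniqueness is immediate because $\eta_{C}$ is surjective onto the coefficient algebra. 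For existence I would exploit that $E_{C}$ is the direct sum of the $E_{\phi}$ over \emph{all} $\phi\in{\cal W}(C)$: for each $\xi\in\gamma$ the map $\phi_{\xi}\colon c\mapsto\xi^{*}\pi(c)\xi$ is a $C_{0}(G^{0})$-weight on $C$ (positive by $\xi^{*}\pi(c^{*}c)\xi=\langle\pi(c)\xi|\pi(c)\xi\rangle$, and $C_{0}(G^{0})$-linear since $\pi(fc)=f\pi(c)$ and $[\gamma\frakB]=\gamma$), so $E_{\phi_{\xi}}$ is one of the summands of $E_{C}$. I would let $W_{\xi}\in\mathcal{L}(E_{C},K)$ be zero off $E_{\phi_{\xi}}$ and send $c\tr_{\phi_{\xi}}\zeta$ to $\pi(c)\xi\zeta$; the identity $\langle\pi(c)\xi\zeta|\pi(c')\xi\zeta'\rangle=\langle\zeta|\phi_{\xi}(c^{*}c')\zeta'\rangle$ makes $W_{\xi}$ a well-defined partial isometry. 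One then checks $W_{\xi}\eta_{C}(c)=\pi(c)W_{\xi}$ and $W_{\xi}l_{C}\subseteq\gamma$ with $W_{\xi}l_{\phi_{\xi}}(C)=\pi(C)\xi$, so each $W_{\xi}$ is a semi-morphism intertwining $\widetilde{\pi}:=\pi\circ\eta_{C}^{-1}$, and $[\{W_{\xi}\mid\xi\in\gamma\}l_{C}]=[\pi(C)\gamma]=[\pi(C)D\gamma]=[D\gamma]=\gamma$, using $[\pi(C)D]=D$ and admissibility of $\mathcal{D}$; hence $\widetilde{\pi}$ is a semi-morphism of $C^{*}$-$\frakb$-algebras, and it is nondegenerate since $[\widetilde{\pi}(\eta_{C}(C))D]=[\pi(C)D]=D$. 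This construction of the intertwiners $W_{\xi}$ is the one genuinely non-formal step, and I expect it to be the main obstacle.

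Granting $(\ast)$, I would assemble the embedding. By Lemma \ref{lemma:groupoid-cx-g}, $\bfG\colon C^{\gamma}_{K}\mapsto(C,\rho_{\gamma})$, $\pi\mapsto\pi$ is a faithful functor $\bfb^{a}\to\bfCG^{a}$; by Lemma \ref{lemma:groupoid-cx-f}, $\bfF C:=\eta_{C}(C)^{l_{C}}_{E_{C}}$ is an admissible $C^{*}$-$\frakb$-algebra and $\eta_{C}\colon C\to\bfG\bfF C=(\eta_{C}(C),\rho_{l_{C}})$ is an isomorphism of $C_{0}(G^{0})$-algebras. For a $C_{0}(G^{0})$-morphism $\pi\colon C\to M(D)$ I would set $\bfF\pi$ to be the morphism $\widetilde{(\eta_{D}\circ\pi)}\colon\bfF C\to\bfF D$ furnished by $(\ast)$ with $\mathcal{D}=\bfF D$; by construction it sends $\eta_{C}(c)$ to $\eta_{D}(\pi(c))$, and the uniqueness clause of $(\ast)$ forces $\bfF\Id_{C}=\Id_{\bfF C}$ and $\bfF(\pi'\circ\pi)=\bfF\pi'\circ\bfF\pi$, so $\bfF$ is a functor; the same construction gives $\bfG\bfF(\pi)\circ\eta_{C}=\eta_{D}\circ\pi$, i.e.\ $\eta$ is natural.

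Finally, for the adjunction $\bfF\dashv\bfG$ I would verify that, for objects $C$ of $\bfCG^{a}$ and $\mathcal{D}=D^{\gamma}_{K}$ of $\bfb^{a}$, the maps $\sigma\mapsto\bfG(\sigma)\circ\eta_{C}$ and $\pi\mapsto\widetilde{\pi}$ are mutually inverse bijections $\Hom_{\bfb^{a}}(\bfF C,\mathcal{D})\leftrightarrow\Hom_{\bfCG^{a}}(C,\bfG\mathcal{D})$: the first is well defined by Lemma \ref{lemma:groupoid-cx-g} ii), and $(\ast)$ gives $\bfG(\widetilde{\pi})\circ\eta_{C}=\pi$ while its uniqueness clause gives $\widetilde{(\bfG(\sigma)\circ\eta_{C})}=\sigma$, since both sides send $\eta_{C}(c)$ to $\sigma(\eta_{C}(c))$; naturality in both variables is routine. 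Thus $\bfG$ is a faithful right adjoint of $\bfF$ with unit $\eta$, and the counit at $\mathcal{C}=C^{\gamma}_{K}$ is the morphism corresponding to $\Id_{\bfG\mathcal{C}}$, namely $\widetilde{\Id_{\bfG\mathcal{C}}}=\eta_{\bfG\mathcal{C}}^{-1}$, so iii)--iv) hold. Since every component $\eta_{C}$ is an isomorphism (Lemma \ref{lemma:groupoid-cx-f}), $\bfF$ is full and faithful, and it presents $\bfCG^{a}$ as a coreflective subcategory of $\bfb^{a}$ because it has the right adjoint $\bfG$; the descriptions of $\bfF$ in i) and of $\eta$ in iii) hold by construction.
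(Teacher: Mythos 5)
Your proposal is correct and follows essentially the same route as the paper: the heart of both arguments is the observation that each $\xi\in\gamma$ yields a $C_{0}(G^{0})$-weight $c\mapsto\xi^{*}\pi(c)\xi$ together with a partial isometry $E_{C}\to K$, $c\tr\zeta\mapsto\pi(c)\xi\zeta$, intertwining $\eta_{C}(c)$ with $\pi(c)$ (your $W_{\xi}$ is exactly the paper's $S\circ P$), which shows that $\pi\circ\eta_{C}^{-1}$ is a morphism of admissible $C^{*}$-$\frakb$-algebras. The only difference is cosmetic: you package this as a universal property and verify the adjunction via the hom-set bijection, whereas the paper checks the triangle identities and cites \cite[\S IV.2 Theorem 2]{maclane}.
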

\begin{proof}[Proof of Theorem \ref{theorem:groupoid-cx}]
  The functor $\bfG \colon \bfb^{a} \to \bfCG^{a}$ is well defined
  by Lemma \ref{lemma:groupoid-cx-g} and evidently faithful.

  Let $C$ be an admissible $C_{0}(G^{0})$-algebra,
  ${\cal D}=D^{\gamma}_{K}$  an admissible
  $C^{*}$-$\frakb$-algebra, and $\pi \colon C \to \bfG
  {\cal D}$  a morphism in $\bfCG^{a}$. We claim that $\pi
  \circ \eta_{C}^{-1}$ is a
  morphism from $\bfF C$ to ${\cal D}$ in $\bfb^{a}$.  Let $\xi
  \in \gamma$. Then the map $\phi \colon C \to C_{0}(G^{0})
  \subseteq {\cal L}(\frakK)$ given by $c \mapsto
  \xi^{*}\pi(c)\xi$ is a $C_{0}(G^{0})$-weight, and there
  exists an isometry $S \colon E_{\phi} \to K$ such that
  $S(c \tr_{\phi} \zeta)=\pi(c)\xi \zeta$ for all $c \in C$,
  $\zeta \in \frakK$.  Denote by $P \colon E_{C} \to
  E_{\phi}$ the natural projection. Then $[SPl_{C}] =
  [Sl_{\phi}(C)]=[\pi(C)\xi]$ lies in $\gamma$ and contains
  $\xi$, and $SP\eta_{C}(c) = S\eta_{\phi}(c) = \pi(c)$ for
  each $c \in C$.  Since $\xi \in \gamma$ was arbitrary, the
  claim follows.

  Using Lemma \ref{lemma:groupoid-cx-f}, we conclude that
  $\bfF$ is well defined and that $\eta$ is a natural
  isomorphism from $\Id$ to $\bfG\bfF$.  Indeed,
  if $\pi \colon C \to D$ is a morphism in $\bfCG^{a}$, then
  $\bfF \pi = \eta_{D} \circ \pi \circ \eta_{C}^{-1}$ is a
  morphism from $\bfF C$ to $\bfF D$ by the argument above.

  Finally, let ${\cal D}$ be an admissible
  $C^{*}$-$\frakb$-algebra. The argument above, applied to
  the identity on $\bfG {\cal D}$, yields a
  morphism $\epsilon_{{\cal D}}$ from $\bfF \bfG {\cal D}$
  to ${\cal D}$ in $\bfb^{a}$ such that the composition 
  $\bfG {\cal D}
  \xrightarrow{\eta_{\bfG {\cal D}}} \bfG \bfF \bfG {\cal D}
  \xrightarrow{\bfG \epsilon_{{\cal D}}} \bfG {\cal D}$ is
  the identity. Since $\eta$ is a natural transformation,
  also $\epsilon \colon \bfF\bfG \to \Id$ is one.
  For each admissible $C_{0}(G^{0})$-algebra $C$, the
  composition $\bfF C \xrightarrow{\bfF \eta_{C}} \bfF \bfG
  \bfF C \xrightarrow{\epsilon_{\bfF C}} \bfF C$ is the
  identity by construction. From \cite[\S IV.2 Theorem
  2]{maclane}, we can conclude that $\bfF$ is a left adjoint
  to $\bfG$ such that $\eta$ and $\epsilon$ form the unit
  and counit, respectively, of the adjunction.  Since $\eta$
  is a natural isomorphism, $\bfF$ is full and faithful
  \cite[\S IV.3 Theorem 1]{maclane}.
\end{proof}

\paragraph{Actions of $G$ and coactions of $C_{0}(G)$}
We next embed the category of admissible actions of $G$ as a full and
coreflective subcategory into the category of all admissible
coactions of $C_{0}(G)$.

The definition of an action of $G$ requires the following
preliminaries.  Given $C_{0}(G^{0})$-algebras $(C,\rho)$ and
$(D,\sigma)$, where $D$ is commutative, we denote by $C
\cgtensor{\rho}{\sigma} D$ the $C_{0}(G^{0})$-tensor product
\cite{blanchard:tensor}, and drop the subscript $\rho$ or
$\sigma$ if this map is understood.  Given a
$C_{0}(G^{0})$-algebra $C$ and a continuous surjection $t
\colon G \to G^{0}$, we consider $C_{0}(G)$ as a
$C_{0}(G^{0})$-algebra via $t^{*} \colon C_{0}(G^{0}) \to
M(C_{0}(G))$ and let $t^{*}C:=C \cgtensor{}{t^{*}}
C_{0}(G)$, which is a $C_{0}(G)$-algebra in a natural
way. Each morphism $\pi$ of $C_{0}(G^{0})$-algebras $C,D$
induces a morphism of  $t^{*}\pi$ of $C_{0}(G)$-algebras
from $t^{*}C$ to $t^{*}D$ via $c
\cgtensor{}{} f \mapsto \pi(c) \cgtensor{}{} f$.  An {\em
  action} of $G$ on a $C_{0}(G^{0})$-algebra $C$ is an
isomorphism $\sigma \colon s^{*}C \to r^{*}C$ of
$C_{0}(G)$-algebras such that the restrictions of $\sigma$
to the fibers satisfy $\sigma_{x}\circ \sigma_{y} =
\sigma_{xy}$ for all $(x,y) \in \GsrG$ \cite{legall}.  A
{\em morphism} between actions $(C,\sigma^{C})$ and
$(D,\sigma^{D})$ of $G$ is a morphism of
$C_{0}(G^{0})$-algebras $\pi$ from $C$ to $D$ satisfying
$\sigma^{D} \circ s^{*}\pi = r^{*}\pi \circ \sigma^{C}$.

\begin{definition}
  We call an action $(C,\sigma)$ of $G$ {\em admissible} if
  the $C_{0}(G^{0})$-algebra $C$ is admissible, and we call
  a coaction $(C^{\gamma}_{K},\delta)$ of $C_{0}(G)$ {\em
    admissible} if $C^{\gamma}_{K}$ is an admissible
  $C^{*}$-$\frakb$-algebra and $[\delta(C)(1 \btensor
  C_{0}(G))]= C \btensor C_{0}(G)$ in $\mathcal{ L}(K
  \rtensor{\gamma}{\frakb}{\alpha} H)$.
\end{definition}
\begin{remark}
 If $\sigma$ is an action of $G$ on a continuous
    $C_{0}(G^{0})$-algebra, then the subset $Y:=\{u \in
    G^{0} \mid C_{u} \neq 0\} \subseteq G^{0}$ is open, $C$
    is an admissible $C_{0}(Y)$-algebra, and $\sigma$
    restricts to an action of the subgroupoid $G|_{Y}:=\{ x
    \in G \mid r(x),s(x) \in Y\} \subseteq G$.
  \end{remark}

  \begin{lemma} \label{lemma:groupoid-tensors} Let
    $C^{\gamma}_{K}$ and $D^{\epsilon}_{L}$ be admissible
    $C^{*}$-$\frakb$-algebras, where $D$ is
    commutative. Then there exists an isomorphism $C
    \cgtensor{\rho_{\gamma}}{\rho_{\epsilon}} D
    \to C \rtensor{\gamma}{\frakb}{\epsilon} D$, $ c
    \cgtensor{}{} d \mapsto c \btensor d$.
\end{lemma}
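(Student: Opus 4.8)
The plan is to produce the asserted map as a surjective $*$-homomorphism and then check injectivity by identifying it with a balanced tensor product of representations over $C_{0}(G^{0})$. First note that, since $\frakB=\frakBo=C_{0}(G^{0})$, we have $\frakb=\frakbo$, so $L_{\epsilon}$ is simultaneously a $C^{*}$-$\frakb$- and a $C^{*}$-$\frakbo$-module and $K\rtensor{\gamma}{\frakb}{\epsilon}L$ is defined. Admissibility of $C^{\gamma}_{K}$ and $D^{\epsilon}_{L}$ gives $C\gamma\subseteq[C\gamma]=\gamma$ and $D\epsilon\subseteq\epsilon$, hence $C\subseteq\mathcal{ L}_{s}(K_{\gamma})$ and $D\subseteq\mathcal{ L}_{s}(L_{\epsilon})$, while Lemma \ref{lemma:groupoid-cx-g} (equivalently, $C\subseteq\mathcal{ L}(K_{\gamma})$, $D\subseteq\mathcal{ L}(L_{\epsilon})$) gives $C\subseteq\rho_{\gamma}(\frakBo)'$ and $D\subseteq\rho_{\epsilon}(\frakB)'$. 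Thus $c\btensor d\in\mathcal{ L}(K\rtensor{\gamma}{\frakb}{\epsilon}L)$ is defined for all $c\in C$, $d\in D$, and from $(c\btensor d)(c'\btensor d')=cc'\btensor dd'$ and $(c\btensor d)^{*}=c^{*}\btensor d^{*}$ one obtains a $*$-homomorphism $\Theta_{0}\colon C\odot D\to\mathcal{ L}(K\rtensor{\gamma}{\frakb}{\epsilon}L)$, $c\odot d\mapsto c\btensor d$. Since $D$ is commutative, hence nuclear, $C\odot D$ carries a unique $C^{*}$-norm, so $\Theta_{0}$ extends to a $*$-homomorphism $\Theta\colon C\otimes D\to\mathcal{ L}(K\rtensor{\gamma}{\frakb}{\epsilon}L)$, whose (closed) image is the $C^{*}$-algebra $C\rtensor{\gamma}{\frakb}{\epsilon}D$.

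Next I would check that $\Theta$ factors through the balanced tensor product. By the very construction of the relative tensor product over $\frakb$ one has, for $f\in C_{0}(G^{0})$ and $\xi\in\gamma$, $\zeta\in\frakK$, $\eta\in\epsilon$, the balancing relation $\xi\tr f\zeta\tl\eta=\xi\tr\zeta\tl\eta f$, i.e.\ $\rho_{\gamma}(f)\btensor\Id=\Id\btensor\rho_{\epsilon}(f)$ on $K\rtensor{\gamma}{\frakb}{\epsilon}L$; hence $\Theta(\rho_{\gamma}(f)c\otimes d)=(\rho_{\gamma}(f)\btensor\Id)(c\btensor d)=(\Id\btensor\rho_{\epsilon}(f))(c\btensor d)=\Theta(c\otimes\rho_{\epsilon}(f)d)$. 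So $\Theta$ annihilates the ideal of $C\otimes D$ generated by the elements $\rho_{\gamma}(f)c\otimes d-c\otimes\rho_{\epsilon}(f)d$ and descends to a surjective $*$-homomorphism $\bar\Theta\colon C\cgtensor{\rho_{\gamma}}{\rho_{\epsilon}}D\to C\rtensor{\gamma}{\frakb}{\epsilon}D$ with $\bar\Theta(c\cgtensor{}{}d)=c\btensor d$.

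It then remains to prove $\bar\Theta$ injective. Using the isomorphism $\frakK\tl\epsilon\cong L$ of \eqref{eq:rtp-iso}, I would identify $K\rtensor{\gamma}{\frakb}{\epsilon}L$ with the internal tensor product $\gamma\tr_{\rho_{\epsilon}}L$ over $C_{0}(G^{0})$ of the Hilbert $C_{0}(G^{0})$-module $\gamma$ --- which carries the faithful, nondegenerate representation of $C$ by left multiplication --- with the Hilbert space $L$ --- which carries the faithful representation of $D$ together with the representation $\rho_{\epsilon}$ of $C_{0}(G^{0})$. Under this identification $\bar\Theta$ becomes the canonical representation of the $C_{0}(G^{0})$-tensor product $C\cgtensor{}{}D$ on the balanced tensor product $\gamma\tr_{\rho_{\epsilon}}L$, which is faithful by \cite{blanchard:tensor}; here one uses admissibility in the form $[\gamma^{*}\gamma]=C_{0}(G^{0})$, $[C\gamma]=\gamma$ (so that $\gamma$ is a full, nondegenerate module) and the nonvanishing of all fibres of $C$ and $D$. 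Hence $\bar\Theta$ is an isomorphism of the stated form. I expect this last step to be the main obstacle: the fibre representations of $C$ on the fibres $\gamma_{u}$ of $\gamma$ need \emph{not} be faithful, so injectivity of $\bar\Theta$ cannot be read off fibrewise in the naive way and one genuinely has to invoke the structure of the $C_{0}(X)$-tensor product; steps one and two are routine once the bookkeeping of \cite{timmermann:fiber} is unwound.
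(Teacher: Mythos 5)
Your proof is correct and follows essentially the same route as the paper: both factor the map through the $C_{0}(G^{0})$-balanced tensor product via the relation $\rho_{\gamma}(f)\btensor\Id=\Id\btensor\rho_{\epsilon}(f)$ (the paper cites \cite[Lemma 2.7]{blanchard:tensor} for this) and then deduce injectivity from Blanchard's faithfulness result \cite[Proposition 4.1]{blanchard:tensor} applied to the field of faithful representations $C\hookrightarrow{\cal L}(\gamma)$. The only, immaterial, difference is that you realize $K\rtensor{\gamma}{\frakb}{\epsilon}L$ as $\gamma\tr_{\rho_{\epsilon}}L$, whereas the paper works with the Hilbert $C^{*}$-$D$-module $\gamma\tr_{\rho_{\epsilon}}D\cong[\kgamma{1}D]$; since $D$ acts faithfully and nondegenerately on $L$, the two representations have the same kernel.
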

\begin{proof}
  Use \cite[Lemma 2.7]{blanchard:tensor} and apply
  \cite[Proposition 4.1]{blanchard:tensor} to the field of
  representations $C \hookrightarrow {\cal
    L}(K_{\gamma})\cong {\cal L}(\gamma)$, noting that $
  \gamma \tr_{\rho_{\epsilon}} D \cong [\kgamma{1}D]$ as a
  Hilbert $C^{*}$-$D$-module via $\xi \tr d \mapsto
  |\xi\rangle_{1}d$ and that $(C
  \rtensor{\gamma}{\frakb}{\epsilon} D)[\kgamma{1}D]
  \subseteq [\kgamma{1}D]$.
\end{proof}
We use the isomorphism above without further notice.
  \begin{proposition} \label{proposition:groupoid-actions}
    \begin{enumerate}
    \item Let $(C^{\gamma}_{K},\delta)$ be an admissible
      coaction of $C_{0}(G)$.  There exists a unique
      action $\sigma_{\delta}$ of $G$ on
      $(C,\rho_{\gamma})$  given by $c
      \cgtensor{}{} f \mapsto \delta(c)(1
      \btensor f)$.
    \item Let $(C,\sigma)$ be an admissible action of $G$.
      There exists a unique admissible, injective coaction
      $\delta_{\sigma}$ of $C_{0}(G)$ on $\bfF C$
      given by $ \eta_{C}(c) \mapsto
      (r^{*} \eta_{C} )(\sigma(c \cgtensor{}{}
      1))$.
    \end{enumerate}
  \end{proposition}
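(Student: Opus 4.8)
The plan is to pass in both directions through the identification of Lemma~\ref{lemma:groupoid-tensors}. First I regard $C_{0}(G)$ as an admissible $C^{*}$-$\frakb$-algebra in two ways: with structure space $\alpha$, so that $\rho_{\alpha}=r^{*}$, and with structure space $\hbeta$, so that $\rho_{\hbeta}=s^{*}$; both are readily checked to be admissible. Then for every admissible $C^{*}$-$\frakb$-algebra $C^{\gamma}_{K}$, Lemma~\ref{lemma:groupoid-tensors} gives $C\rtensor{\gamma}{\frakb}{\alpha}C_{0}(G)=C\btensor C_{0}(G)\cong C\cgtensor{\rho_{\gamma}}{r^{*}}C_{0}(G)=r^{*}C$ and $C\rtensor{\gamma}{\frakb}{\hbeta}C_{0}(G)\cong s^{*}C$. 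The basic structural remark is that the target of a coaction $\delta$ of $C_{0}(G)$ on $C^{\gamma}_{K}$ is $\big((K\rtensor{\gamma}{\frakb}{\alpha}H)_{\gamma\rt\hbeta},\,C\fib{\gamma}{\frakb}{\alpha}C_{0}(G)\big)$, whose $C^{*}$-$\frakb$-structure map is $\rho_{\gamma\rt\hbeta}=\Id\btensor\rho_{\hbeta}=\Id\btensor s^{*}$; since $\delta$ is a semi-morphism, $\delta(\rho_{\gamma}(h))=\Id\btensor s^{*}(h)$ for all $h\in C_{0}(G^{0})$.

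For (i), admissibility of $\delta$ gives $[\delta(C)(1\btensor C_{0}(G))]=C\btensor C_{0}(G)$, hence $\delta(C)\subseteq M(C\btensor C_{0}(G))$, and since $1\btensor C_{0}(G)$ lies in the centre of $M(C\btensor C_{0}(G))$ we get $[\delta(C),1\btensor C_{0}(G)]=0$. Using this together with $\delta(\rho_{\gamma}(h))=\Id\btensor s^{*}(h)$ one checks $\delta(c\rho_{\gamma}(h))(1\btensor f)=\delta(c)(1\btensor s^{*}(h)f)$, which is precisely the balancing relation of $s^{*}C=C\cgtensor{\rho_{\gamma}}{s^{*}}C_{0}(G)$; hence $c\cgtensor{}{}f\mapsto\delta(c)(1\btensor f)$ descends to a $C_{0}(G)$-linear surjective $*$-homomorphism $\sigma_{\delta}\colon s^{*}C\to r^{*}C$, and it is the unique one with this formula because the generators $c\cgtensor{}{}f$ are dense. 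To see that $\sigma_{\delta}$ is an isomorphism and an action I would pass to the fibres $\sigma_{\delta,x}\colon C_{s(x)}\to C_{r(x)}$: the coassociativity $(\delta\ast\Id)\circ\delta=(\Id\ast\hDelta)\circ\delta$, disintegrated over $\GsrG$ using $(\hDelta(f)\omega)(x,y)=f(xy)\omega(x,y)$, becomes $\sigma_{\delta,x}\circ\sigma_{\delta,y}=\sigma_{\delta,xy}$. Taking $x=r(y)$ and using surjectivity of $\sigma_{\delta,y}$ forces $\sigma_{\delta,u}=\Id_{C_{u}}$ for every unit $u$; then $\sigma_{\delta,x}\circ\sigma_{\delta,x^{-1}}=\sigma_{\delta,r(x)}=\Id$ and $\sigma_{\delta,x^{-1}}\circ\sigma_{\delta,x}=\Id$ show each $\sigma_{\delta,x}$ is bijective, so $\sigma_{\delta}$ is an isomorphism satisfying the cocycle condition, i.e.\ an action of $G$.

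For (ii), the stated formula is the transport, through the isomorphisms $\eta_{C}\colon C\to\bfF C$ and $r^{*}(\eta_{C}(C))\cong\bfF C\btensor C_{0}(G)$ of Lemma~\ref{lemma:groupoid-tensors}, of the composite of $*$-homomorphisms $C\to M(s^{*}C)\xrightarrow{\sigma}M(r^{*}C)\xrightarrow{r^{*}\eta_{C}}M(r^{*}(\eta_{C}(C)))$, where the first map is the canonical embedding; this composite is injective since $\sigma$ is an isomorphism, $r^{*}\eta_{C}$ is injective ($\eta_{C}$ being an isomorphism), and the canonical map $C\to M(s^{*}C)$ is injective. So $\delta_{\sigma}\colon\bfF C\to M(\bfF C\btensor C_{0}(G))$ is a well-defined injective $*$-homomorphism, clearly the unique one with the stated formula. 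That it is a semi-morphism into $\bfF C\fib{l_{C}}{\frakb}{\alpha}C_{0}(G)$ with $\delta_{\sigma}(\rho_{l_{C}}(h))=\Id\btensor s^{*}(h)$ follows from the $C_{0}(G)$- and $C_{0}(G^{0})$-linearity of $\sigma$ and of $r^{*}\eta_{C}$; admissibility, $[\delta_{\sigma}(\bfF C)(1\btensor C_{0}(G))]=\bfF C\btensor C_{0}(G)$, follows from surjectivity of $\sigma$ since $r^{*}C=[(1\cgtensor{}{}C_{0}(G))\,\sigma(C\cgtensor{}{}1)]$; and coassociativity $(\delta_{\sigma}\ast\Id)\circ\delta_{\sigma}=(\Id\ast\hDelta)\circ\delta_{\sigma}$ is exactly the operator-algebraic restatement of the action identity $\sigma_{x}\circ\sigma_{y}=\sigma_{xy}$, again via $(\hDelta(f)\omega)(x,y)=f(xy)\omega(x,y)$.

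The hard part, common to both directions, is the passage between the operator-level (co)associativity $(\delta\ast\Id)\circ\delta=(\Id\ast\hDelta)\circ\delta$ and the pointwise cocycle identity $\sigma_{x}\circ\sigma_{y}=\sigma_{xy}$ on $\GsrG$: this requires disintegrating $K\rtensor{\gamma}{\frakb}{\alpha}H$ and $K\rtensor{\gamma}{\frakb}{\alpha}H\rtensor{\hbeta}{\frakb}{\alpha}H$ over $G^{0}$ and over $\GsrG$ respectively, identifying the fibres with those of $r^{*}C$, and checking that $\hDelta$ acts there by the groupoid multiplication $(x,y)\mapsto xy$. Once Lemma~\ref{lemma:groupoid-tensors} and the semi-morphism identity $\delta(\rho_{\gamma}(h))=\Id\btensor s^{*}(h)$ are in hand, the remaining points --- well-definedness, surjectivity, $C_{0}(G)$-linearity, injectivity --- are routine.
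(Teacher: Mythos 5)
Your proof of part (i) is essentially the paper's own argument: commutation of $\delta(C)$ with $1 \btensor C_{0}(G)$, descent through the balancing relation to a surjective $*$-homomorphism $s^{*}C \to r^{*}C$ (the paper justifies the descent by citing Blanchard's Lemma 2.7, which you should too, since checking the relation on generators does not by itself show that the $C_{0}(G^{0})$-balanced tensor product is presented by those relations), fibrewise disintegration of coassociativity into the cocycle identity via Lemma \ref{lemma:groupoid-tensors} and the formula for $\hDelta$, and then $\sigma_{u}=\Id$ and bijectivity exactly as in the paper.

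In part (ii) there is a genuine gap. You assert that $\delta_{\sigma}$ being a \emph{semi-morphism} of $C^{*}$-$\frakb$-algebras from $\bfF C$ to $\big((E_{C} \rtensor{l_{C}}{\frakb}{\alpha} H)_{l_{C}\rt\hbeta},\, \eta_{C}(C)\btensor C_{0}(G)\big)$ ``follows from the $C_{0}(G)$- and $C_{0}(G^{0})$-linearity of $\sigma$ and of $r^{*}\eta_{C}$.'' Linearity only gives the algebraic identity $\delta_{\sigma}(\rho_{l_{C}}(h)) = \Id \btensor s^{*}(h)$; the semi-morphism condition is the requirement $[\mathcal{L}^{\delta_{\sigma}}_{s}\big((E_{C})_{l_{C}},\,(E_{C}\btensor H)_{l_{C}\rt\hbeta}\big)\, l_{C}] = l_{C}\rt\hbeta$, i.e.\ the existence of enough intertwining operators between the underlying $C^{*}$-$\frakb$-modules carrying $l_{C}$ onto $l_{C}\rt\hbeta$, and this does not follow from linearity of the algebra maps. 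The paper closes exactly this point by first checking that $c \mapsto (r^{*}\eta_{C})(\sigma(c\cgtensor{}{}1))$ is a nondegenerate morphism of $C_{0}(G^{0})$-algebras $C \to \bfG{\cal D}$ and then setting $\delta_{\sigma}=\epsilon_{{\cal D}}\circ\bfF\delta$, where the counit $\epsilon_{{\cal D}}$ of the adjunction in Theorem \ref{theorem:groupoid-cx} is built from the KSGNS isometries $S\colon E_{\phi}\to E_{C}\btensor H$ attached to the weights $c\mapsto \xi^{*}\delta(c)\xi$, $\xi\in l_{C}\rt\hbeta$; these isometries are precisely the missing intertwiners. Your argument becomes complete if, at that point, you either invoke Theorem \ref{theorem:groupoid-cx} (whose hypotheses you have already verified) or reproduce that weight/isometry construction; as written, the step is unjustified.
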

  \begin{proof}
    i) Since $\delta(C)$ and $1 \btensor C_{0}(G)$ commute,
    there exists a unique $*$-homomorphism $\tilde \sigma$
    from the algebraic tensor product $C \odot C_{0}(G)$ to
    $r^{*}C$ such that $\tilde \sigma(c \odot f)=\delta(c)(1
    \btensor f)$ for all $c\in C$, $f \in C_{0}(G)$. Since
    $\delta$ is a coaction, $\delta(c \rho_{\gamma}(g)) =
    \delta(c) \rho_{(\gamma \rt \hbeta)}(g) = \delta(c)(1
    \btensor s^{*}(g))$ for all $g \in C_{0}(G^{0})$.  From
    \cite[Lemma 2.7]{blanchard:tensor}, we can conclude that
    $\tilde \sigma$ factorizes to a $*$-homomorphism
    $\sigma=\sigma_{\delta} \colon s^{*}C \to r^{*}C$
    satisfying the formula in i). This $\sigma$ is
    surjective because $[\delta(C)(1 \btensor C_{0}(G))]=C
    \btensor C_{0}(G)$. In particular, $\sigma_{x}$ is
    surjective for each $x \in G$.  We claim that
    $\sigma_{x} \circ \sigma_{y}=\sigma_{xy}$ for all $(x,y)
    \in \GsrG$. Define $r_{1} \colon \GsrG \to G^{0}$ by
    $(x,y) \mapsto r(x)$.  By Lemma
    \ref{lemma:groupoid-tensors}, we have isomorphisms $C
    \rtensor{\gamma}{\frakb}{\alpha} C_{0}(G)
    \rtensor{\hbeta}{\frakb}{\alpha} C_{0}(G) \cong C
    \cgtensor{}{r^{*}} C_{0}(G) \cgtensor{s^{*}}{r^{*}}
    C_{0}(G) \cong C \cgtensor{}{r_{1}^{*}} C_{0}(\GsrG)
    \cong r_{1}^{*} C$.  Using formula
    \eqref{eq:groupoid-leg-ha}, we find
    \begin{gather} \label{eq:groupoid-rels}
      \begin{aligned}
        \sigma_{x} \circ \sigma_{y} \circ p_{s(y)} &=
        \sigma_{x} \circ p_{y} \circ \delta = p_{(x,y)}
        \circ (\delta \ast \Id) \circ \delta, \\
        \sigma_{xy} \circ p_{s(y)} &= p_{xy} \circ \delta =
        p_{(x,y)} \circ (\Id \ast \hDelta) \circ \delta,
      \end{aligned}
    \end{gather}
    and the claim follows.    
    Finally, $\sigma_{u}=\Id_{C_{u}}$ for each $u \in G^{0}$
    because $\sigma_{u}$ is surjective and idempotent, and
    $\sigma_{x}$ is injective for each $x \in G$ because
    $\sigma_{s(x)} = \sigma_{x^{-1}} \circ \sigma_{x}$ is
    injective. Therefore, $\sigma$ is injective.

    ii) Let $D:=\eta_{C}(C) \rtensor{l_{C}}{\frakb}{\alpha}
    C_{0}(G)$. Then ${\cal D}:=((E_{C})
    \rtensor{l_{C}}{\frakb}{\alpha} H_{\hbeta}, D)$ is an
    admissible $C^{*}$-$\frakb$-algebra.  Define $\delta
    \colon C \to D$ by $c \mapsto (r^{*}\eta_{C})(\sigma(c
    \cgtensor{}{} 1))$.  Let $c\in C$, $g \in
    C_{0}(G^{0})$. Then $cg \cgtensor{}{} 1= c \cgtensor{}{}
    s^{*}(g)$ in $M(C \cgtensor{}{s^{*}} C_{0}(G))$ and
    therefore $\delta(cg) = \delta(c)(1 \btensor s^{*}(g)) =
    \delta(c)\rho_{(l_{C} \rt \hbeta)}(g)$. Consequently,
    $\delta$ is a morphism of $C_{0}(G^{0})$-algebras from
    $C$ to $(D, \rho_{l_{C} \rt \hbeta})=\bfG {\cal D}$.  By
    definition of $\bfF$ and $\epsilon$, the morphism
    $\delta_{\sigma}:=\epsilon_{{\cal D}} \circ \bfF \delta
    \colon \bfF C \to \bfF \bfG {\cal D}\to {\cal D}$
    satisfies $\delta_{\sigma} \circ \eta_{ C} =
    \delta$, and a similar calculation as in
    \eqref{eq:groupoid-rels} shows that $(\delta_{\sigma}
    \ast \Id) \circ \delta_{\sigma} = (\Id \ast \hDelta)
    \circ \delta_{\sigma}$. Consequently, $\delta_{\sigma}$
    is a coaction of $(\hcA,\hDelta)$. Since $\sigma$ is
    injective, so are $\delta$ and
    $\delta_{\sigma}$. Finally, $\delta_{\sigma}$ is
    admissible because $[\delta_{\sigma}(\eta_{C}(C))(1
    \btensor C_{0}(G))] = (r^{*}\eta_{C})(\sigma(s^{*}C)) =
    r^{*}\eta_{C}(C) = [\eta_{C}(C) \btensor C_{0}(G)]$.
  \end{proof}
  \begin{corollary} \label{corollary:groupoid-action} Every
    admissible coaction of $C_{0}(G)$ is injective,
    left-full, and right-full.
\end{corollary}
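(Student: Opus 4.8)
The plan is to derive all three properties from the admissibility condition $[\delta(C)(1 \btensor C_{0}(G))] = C \btensor C_{0}(G)$ together with Proposition \ref{proposition:groupoid-actions} i), using the identification of $C_{0}(G)$ with the Hopf $C^{*}$-bimodule $(\hcA,\hDelta)$. Write $\hA = C_{0}(G)$. Two standing facts will be used: $[C\gamma] = \gamma$, which holds because $C^{\gamma}_{K}$ is admissible, and $[\hA\alpha] = \alpha$, which is immediate from $\alpha = j(L^{2}(G,\lambda))$ and the nondegeneracy of the multiplication action of $C_{0}(G)$ on the Hilbert module $L^{2}(G,\lambda)$ (namely $f\cdot j(\xi) = j(f\xi)$, and each $\xi \in C_{c}(G)$ satisfies $f\xi = \xi$ for a suitable $f \in C_{c}(G)$). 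The only computations needed are the identities $(1 \btensor a)|\xi\rangle_{1} = |\xi\rangle_{1}a$, $(c \btensor a)|\xi\rangle_{1} = |c\xi\rangle_{1}a$, $(1 \btensor a)|\eta\rangle_{2} = |a\eta\rangle_{2}$ and $(c \btensor a)|\eta\rangle_{2} = |a\eta\rangle_{2}c$, valid for $c \in C$, $a \in \hA$, $\xi \in \gamma$, $\eta \in \alpha$, all of which follow directly from the definitions of $S \btensor T$ and of the maps $|\cdot\rangle_{1},|\cdot\rangle_{2}$ recalled in Section \ref{section:rtp}.

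For the Hopf $C^{*}$-bimodule $(\hcA,\hDelta)$ the left-full condition reads $[\delta(C)\kgamma{1}\hA] = [\kgamma{1}\hA]$, and I would verify it by the chain
\[
  [\delta(C)\kgamma{1}\hA] = [\delta(C)(1 \btensor \hA)\kgamma{1}] = [(C \btensor \hA)\kgamma{1}] = [\kgamma{1}\hA],
\]
where the first equality uses $\kgamma{1}\hA = (1 \btensor \hA)\kgamma{1}$, the second is admissibility, and the third uses $(c \btensor a)|\xi\rangle_{1} = |c\xi\rangle_{1}a$ together with $[C\gamma] = \gamma$. The right-full condition reads $[\delta(C)\kalpha{2}] = [\kalpha{2}C]$; here $[\kalpha{2}] = [(1 \btensor \hA)\kalpha{2}]$ by $(1 \btensor a)|\eta\rangle_{2} = |a\eta\rangle_{2}$ and $[\hA\alpha] = \alpha$, so admissibility gives $[\delta(C)\kalpha{2}] = [(C \btensor \hA)\kalpha{2}]$, and $(c \btensor a)|\eta\rangle_{2} = |a\eta\rangle_{2}c$ together with $[\hA\alpha] = \alpha$ rewrites the latter as $[\kalpha{2}C]$.

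For injectivity I would invoke Proposition \ref{proposition:groupoid-actions} i): the formula $c \cgtensor{}{} f \mapsto \delta(c)(1 \btensor f)$ defines an action $\sigma_{\delta}$ of $G$ on $(C,\rho_{\gamma})$, which by definition of an action is an isomorphism $s^{*}C \to r^{*}C$, hence injective. If $\delta(c) = 0$, then $\sigma_{\delta}(c \cgtensor{}{} f) = 0$ for all $f \in C_{0}(G)$, so $c \cgtensor{}{} f = 0$ in $s^{*}C$; evaluating at the fibre $(s^{*}C)_{x} = C_{s(x)}$ for each $x \in G$ gives $f(x)\,c_{s(x)} = 0$, and since $f$ is arbitrary and $s \colon G \to G^{0}$ is surjective this forces $c_{u} = 0$ for all $u \in G^{0}$, whence $c = 0$ because the quotient maps of the $C_{0}(G^{0})$-algebra $C$ onto its fibres have trivial common kernel.

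I do not anticipate a genuine obstacle: the whole argument is bookkeeping on top of the admissibility hypothesis. The points that require a little care are reading off the left- and right-full conditions correctly for $(\hcA,\hDelta)$ (the distinguished leg ``$\beta$'' of the abstract definition is $\alpha$ here and ``$A$'' is $\hA$), and checking the four $S \btensor T$ identities against the exact conventions of Section \ref{section:rtp}; the equality $[\hA\alpha] = \alpha$ is the only extra input, and it is elementary.
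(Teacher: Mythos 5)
Your proposal is correct and follows essentially the same route as the paper: fullness is read off from the admissibility identity $[\delta(C)(1 \btensor C_{0}(G))]=C \btensor C_{0}(G)$ combined with $[C_{0}(G)\alpha]=\alpha$ and $[C\gamma]=\gamma$, and injectivity is deduced from the injectivity of the associated action $\sigma_{\delta}$ of Proposition \ref{proposition:groupoid-actions} i). Your fibrewise argument for injectivity is just a slightly more explicit version of the paper's one-line appeal to $\delta(c)=\sigma_{\delta}(c \cgtensor{}{} 1)$.
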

\begin{proof}
  If $(C^{\gamma}_{K},\delta)$ is an admissible coaction,
  then $[\delta(C)\kalpha{2}]=[\delta(C)(1 \btensor
  C_{0}(G))\kalpha{2}] = [(C \btensor
  C_{0}(G))\kalpha{2}]=[\kalpha{2}C]$ and
  $[\delta(C)\kgamma{1}C_{0}(G)]=[\delta(C)(1 \btensor
  C_{0}(G))\kgamma{1}] = [(C \btensor
  C_{0}(G))\kgamma{1}]=[\kgamma{1} C_{0}(G)]$ because
  $[C_{0}(G) \alpha]=\alpha$ and
  $[C\gamma]=\gamma$. Finally, $\delta$ is injective because
  $\sigma_{\delta}$ is injective and $\delta(c) =
  \sigma_{\delta}(c \cgtensor{}{} 1)$ for all $c \in C$.
\end{proof}
\begin{proposition} \label{proposition:groupoid-actions-morphisms}
  Let $({\cal C},\delta^{{\cal C}})$, $({\cal
    D},\delta^{{\cal D}})$ be admissible coactions with
  associated actions
  $\sigma^{{\cal C}}=\sigma_{\delta^{{\cal C}}}$,
  $\sigma^{{\cal D}}=\sigma_{\delta^{{\cal D}}}$, and let
  $\pi \in \bfb^{a}({\cal C},{\cal D})= \bfCG^{a}(\bfG {\cal
    C}, \bfG {\cal D})$. Then $(\pi \ast \Id)\circ
  \delta^{{\cal C}} = \delta^{{\cal D}} \circ \pi$ if and
  only if $r^{*}\pi\ \circ \sigma^{{\cal C}} = \sigma^{{\cal
      D}} \circ s^{*}\pi$.
\end{proposition}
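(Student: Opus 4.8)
The plan is to transport the statement, via the identifications of Lemma~\ref{lemma:groupoid-tensors} and Proposition~\ref{proposition:groupoid-actions}, into an equality of maps between the $C_{0}(G^{0})$-tensor products. Write $K \rtensor{\gamma}{\frakb}{\alpha} C_{0}(G) \cong r^{*}C$ and $L \rtensor{\epsilon}{\frakb}{\alpha} C_{0}(G) \cong r^{*}D$ as in Lemma~\ref{lemma:groupoid-tensors}, so that $\delta^{{\cal C}}(c)(1 \btensor f) = \sigma^{{\cal C}}(c \cgtensor{}{} f)$ and $\delta^{{\cal D}}(d)(1 \btensor f) = \sigma^{{\cal D}}(d \cgtensor{}{} f)$ for all $c \in C$, $d \in D$, $f \in C_{0}(G)$ (Proposition~\ref{proposition:groupoid-actions}), and recall that $[\delta^{{\cal C}}(C)(1 \btensor C_{0}(G))] = C \btensor C_{0}(G)$ and $[\delta^{{\cal D}}(D)(1 \btensor C_{0}(G))] = D \btensor C_{0}(G)$ by admissibility. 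Thus each coaction is completely determined by the corresponding action, and the operators $1 \btensor f$, $f \in C_{0}(G)$, act nondegenerately on $K \rtensor{\gamma}{\frakb}{\alpha} H$ since $[C_{0}(G)H] = H$ in $L^{2}(G,\nu)$.

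The step I expect to be the main obstacle is to identify the restriction of the semi-morphism $\pi \ast \Id$ to the spatial tensor product. Concretely, I would first check that $C \btensor C_{0}(G) = [C \rtensor{\gamma}{\frakb}{\alpha} C_{0}(G)]$ is contained in $C \fib{\gamma}{\frakb}{\alpha} C_{0}(G)$: using $[C\gamma] = \gamma$ (admissibility) and $[C_{0}(G)\alpha] = \alpha$ (as in the proof of Corollary~\ref{corollary:groupoid-action}), one verifies $(c \btensor f)\kalpha{2} \subseteq [\kalpha{2}C]$ and $(c \btensor f)\kgamma{1} \subseteq [\kgamma{1}C_{0}(G)]$, together with the adjoint inclusions. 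Then I would feed $x = c \btensor f$ and $R = S \btensor \Id_{H}$ with $S \in \mathcal{L}_{s}^{\pi}(K_{\gamma},L_{\epsilon})$ into the characterizing relation $(\pi \ast \Id)(x)R = Rx$ of the fiber-product functoriality theorem following Lemma~\ref{lemma:fp-c-morphism}, and use $[\mathcal{L}_{s}^{\pi}(K_{\gamma},L_{\epsilon})\gamma] = \epsilon$ to conclude $(\pi \ast \Id)(c \btensor f) = \pi(c) \btensor f$. Under Lemma~\ref{lemma:groupoid-tensors} this says exactly that $\pi \ast \Id$ restricts to $r^{*}\pi$ on $r^{*}C \subseteq C \fib{\gamma}{\frakb}{\alpha} C_{0}(G)$; passing to multipliers it restricts to $M(r^{*}\pi)$, and in particular $(\pi \ast \Id)(1 \btensor f) = 1 \btensor f$.

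Granting this, the conclusion follows by a short slice computation. For $c \in C$ and $f \in C_{0}(G)$, multiplicativity of $\pi \ast \Id$ and the fact that $\delta^{{\cal C}}(c)(1 \btensor f) = \sigma^{{\cal C}}(c \cgtensor{}{} f)$ lies in $r^{*}C$ give
\begin{align*}
  (\pi \ast \Id)\big(\delta^{{\cal C}}(c)\big)(1 \btensor f) = (\pi \ast \Id)\big(\sigma^{{\cal C}}(c \cgtensor{}{} f)\big) = (r^{*}\pi)\big(\sigma^{{\cal C}}(c \cgtensor{}{} f)\big),
\end{align*}
while, using the strict extension of $\sigma^{{\cal D}}$ to multipliers,
\begin{align*}
  \delta^{{\cal D}}\big(\pi(c)\big)(1 \btensor f) = \sigma^{{\cal D}}\big(\pi(c) \cgtensor{}{} f\big) = \sigma^{{\cal D}}\big((s^{*}\pi)(c \cgtensor{}{} f)\big).
\end{align*}
Since the $1 \btensor f$ act nondegenerately, $(\pi \ast \Id) \circ \delta^{{\cal C}} = \delta^{{\cal D}} \circ \pi$ is equivalent to $(r^{*}\pi)(\sigma^{{\cal C}}(c \cgtensor{}{} f)) = \sigma^{{\cal D}}((s^{*}\pi)(c \cgtensor{}{} f))$ for all $c \in C$, $f \in C_{0}(G)$; and because $[C \cgtensor{}{} C_{0}(G)] = s^{*}C$ and all maps in sight are bounded, this is in turn equivalent to $r^{*}\pi \circ \sigma^{{\cal C}} = \sigma^{{\cal D}} \circ s^{*}\pi$, which is the assertion. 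The only nonroutine ingredient is the identification of $\pi \ast \Id$ on the spatial tensor product in the second paragraph; everything else is bookkeeping with the dictionary from Proposition~\ref{proposition:groupoid-actions}.
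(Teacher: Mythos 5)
Your proof is correct and follows essentially the same route as the paper's, which consists of exactly the two displayed identities in your final paragraph. The extra work you do in the second paragraph (verifying $C \btensor C_{0}(G) \subseteq C \fib{\gamma}{\frakb}{\alpha} C_{0}(G)$ and that $\pi \ast \Id$ restricts to $r^{*}\pi$ there) is left implicit in the paper but is a legitimate and welcome filling-in of the details.
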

\begin{proof}
  Write ${\cal C}=C^{\gamma}_{K}$. The assertion holds
  because for all $c \in C$ and $f \in C_{0}(G)$,
  \begin{align*}
    ((\pi \ast \Id)(\delta^{{\cal C}}(c)))(1 \btensor f) &=
  (\pi \ast \Id)(\delta^{{\cal C}}(c)(1 \btensor f)) =
  (r^{*}\pi \circ \sigma^{{\cal C}})(c \cgtensor{}{} f), \\
  \delta^{{\cal D}}(\pi(c))(1 \btensor f) &=
  \sigma^{{\cal D}}(\pi(c) \cgtensor{}{} f) = (\sigma^{{\cal D}}
  \circ s^{*}\pi)(c \cgtensor{}{} f). \qedhere
\end{align*}
\end{proof}
We denote by $\bfact^{a}$ and $\bfcoact^{a}_{C_{0}(G)}$ the
categories of all admissible actions of $G$ and all
admissible coactions of $C_{0}(G)$, respectively.
\begin{theorem} \label{theorem:groupoid-actions} There
  exists an embedding
  $(\bfFhA,\bfGhA,\hat\eta,\hat\epsilon)$ of $\bfact^{a}$
  into $\bfcoact^{a}_{C_{0}(G)}$ as a full and coreflective
  subcategory such that
  \begin{enumerate}
  \item $\bfFhA$ is given by $(C,\sigma) \mapsto (\bfF
    C,\delta_{\sigma})$ on objects and $\pi \mapsto \bfF
    \pi$ on morphisms;
  \item  $\bfGhA$ is given by $({\cal C},\delta) \mapsto
    (\bfG {\cal C},\sigma_{\delta})$ on objects and $\pi
    \mapsto \bfG \pi=\pi$ on morphisms;
  \item $\hat\eta_{(C,\sigma)} = \eta_{C}$ and
    $\hat\epsilon_{({\cal C},\delta)} = \epsilon_{{\cal C}}$
    for all objects $(C,\sigma)$ and $({\cal C},\delta)$.
  \end{enumerate}
\end{theorem}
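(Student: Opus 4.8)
The plan is to lift the adjunction $(\bfF,\bfG,\eta,\epsilon)$ of Theorem~\ref{theorem:groupoid-cx} from $\bfCG^{a}$ and $\bfb^{a}$ to the categories of admissible actions and coactions, using Propositions~\ref{proposition:groupoid-actions} and~\ref{proposition:groupoid-actions-morphisms} to carry along the extra structure. On objects the assignments are already available: $\bfGhA(\cC,\delta)=(\bfG\cC,\sigma_{\delta})$ is an admissible action by Proposition~\ref{proposition:groupoid-actions}~i) (together with Lemma~\ref{lemma:groupoid-cx-g}), and $\bfFhA(C,\sigma)=(\bfF C,\delta_{\sigma})$ is an admissible coaction by Proposition~\ref{proposition:groupoid-actions}~ii). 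For morphisms, recall that a morphism of admissible coactions is a morphism $\pi$ in $\bfb^{a}$ that is equivariant, and similarly a morphism of admissible actions is a morphism in $\bfCG^{a}$ that is equivariant; so I would set $\bfGhA\pi:=\pi$ and $\bfFhA\pi:=\bfF\pi$ and check equivariance is preserved. If $\pi$ is equivariant for $\delta^{\cC},\delta^{\cD}$, then by Proposition~\ref{proposition:groupoid-actions-morphisms} the underlying $\pi$ is equivariant for $\sigma_{\delta^{\cC}},\sigma_{\delta^{\cD}}$, so $\bfGhA\pi$ is a morphism in $\bfact^{a}$; conversely, if $\pi$ is a morphism of actions then $\bfF\pi$ is a morphism in $\bfb^{a}$ (Theorem~\ref{theorem:groupoid-cx}), and Proposition~\ref{proposition:groupoid-actions-morphisms} shows it is equivariant for $\delta_{\sigma^{C}},\delta_{\sigma^{D}}$ once one knows the ``round-trip'' identity below. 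Functoriality and faithfulness of $\bfGhA$ are inherited from $\bfG$ because the underlying $*$-homomorphisms are unchanged.

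The point used just now, and again for the unit and counit, is the identity: for every admissible action $(C,\sigma)$ the action $\sigma_{\delta_{\sigma}}$ on $\bfG\bfF C$ equals $\sigma$ transported along the isomorphism $\eta_{C}$, i.e. $\sigma_{\delta_{\sigma}}\circ s^{*}\eta_{C}=r^{*}\eta_{C}\circ\sigma$. I would prove this by unwinding the definitions: by Proposition~\ref{proposition:groupoid-actions}~ii) one has $\delta_{\sigma}(\eta_{C}(c))=(r^{*}\eta_{C})(\sigma(c\cgtensor{}{}1))$, while by Proposition~\ref{proposition:groupoid-actions}~i) one has $\sigma_{\delta_{\sigma}}(\eta_{C}(c)\cgtensor{}{}f)=\delta_{\sigma}(\eta_{C}(c))(1\btensor f)$; since $r^{*}\eta_{C}$ and $\sigma$ are $C_{0}(G)$-linear and $(c\cgtensor{}{}1)f=c\cgtensor{}{}f$ in $s^{*}C$, the right-hand side rewrites as $(r^{*}\eta_{C})(\sigma(c\cgtensor{}{}f))$, which is the identity. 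Consequently $\hat\eta_{(C,\sigma)}:=\eta_{C}$ is a morphism in $\bfact^{a}$ from $(C,\sigma)$ to $\bfGhA\bfFhA(C,\sigma)$; and applying the same identity with the action $\sigma_{\delta}$ in place of $\sigma$, together with $\epsilon_{\cC}=\eta_{\bfG\cC}^{-1}$ (Theorem~\ref{theorem:groupoid-cx}~iv)) and Proposition~\ref{proposition:groupoid-actions-morphisms}, shows $\hat\epsilon_{(\cC,\delta)}:=\epsilon_{\cC}$ is a morphism in $\bfcoact^{a}_{C_{0}(G)}$ from $\bfFhA\bfGhA(\cC,\delta)$ to $(\cC,\delta)$.

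It then remains to assemble the adjunction. Naturality of $\hat\eta$ and $\hat\epsilon$ and the two triangle identities $\bfGhA\hat\epsilon\circ\hat\eta\bfGhA=\Id$ and $\hat\epsilon\bfFhA\circ\bfFhA\hat\eta=\Id$ hold because the corresponding statements for $(\bfF,\bfG,\eta,\epsilon)$ hold by Theorem~\ref{theorem:groupoid-cx} and because morphisms in $\bfact^{a}$ and $\bfcoact^{a}_{C_{0}(G)}$ are determined by their underlying $*$-homomorphisms, on which $\bfFhA$ and $\bfGhA$ act exactly as $\bfF$ and $\bfG$ do. Hence $(\bfFhA,\bfGhA,\hat\eta,\hat\epsilon)$ is an adjunction with $\bfGhA$ faithful, and since each $\hat\eta_{(C,\sigma)}=\eta_{C}$ is an isomorphism, $\hat\eta$ is a natural isomorphism, so $\bfFhA$ is full and faithful \cite[\S~IV.3 Theorem~1]{maclane}; this exhibits $\bfact^{a}$ as a full and coreflective subcategory of $\bfcoact^{a}_{C_{0}(G)}$ via $\bfFhA$. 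I expect the only genuine obstacle to be the round-trip identity of the second paragraph, where the layered construction of $\delta_{\sigma}$ through $\bfF$ and the counit $\epsilon$ must be matched with the direct description of $\sigma_{\delta_{\sigma}}$; everything else is formal bookkeeping inherited from Theorem~\ref{theorem:groupoid-cx}.
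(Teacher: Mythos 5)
Your proposal is correct and follows essentially the same route as the paper: the key step in both is the identity $\sigma_{\delta_{\sigma}}(\eta_{C}(c)\cgtensor{}{}f)=\delta_{\sigma}(\eta_{C}(c))(1\btensor f)=r^{*}\eta_{C}(\sigma(c\cgtensor{}{}f))$, which shows $\eta_{C}$ is a morphism of actions, combined with Proposition~\ref{proposition:groupoid-actions-morphisms} for the counit and Theorem~\ref{theorem:groupoid-cx} for the remaining formal adjunction data. Your write-up merely spells out the naturality and triangle identities that the paper leaves as ``the assertion follows from Theorem~\ref{theorem:groupoid-cx}.''
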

\begin{proof}
  The assignments $\bfGhA$ and $\bfFhA$ are well defined on
  objects and morphisms by Proposition
  \ref{proposition:groupoid-actions} and
  \ref{proposition:groupoid-actions-morphisms}.  For each
  admissible action $(C,\sigma)$, we have that $\eta_{C} \in
  \bfact^{a}((C,\sigma),\bfGhA\bfFhA(C,\sigma))$ because
  $\sigma_{\delta_{\sigma}}(\eta_{ C}(c) \cgtensor{}{} f) =
  \delta_{\sigma}(\eta_{C}(c))(1 \btensor f) =
  r^{*}\eta_{C}(\sigma(c \cgtensor{}{} f))$ for all $c \in
  C$, $f \in C_{0}(G)$, and Proposition
  \ref{proposition:groupoid-actions-morphisms} implies that
  $\epsilon_{{\cal C}}=\eta_{\bfG {\cal C}}^{-1} \in
  \bfcoact^{a}_{C_{0}(G)}(\bfFhA\bfGhA({\cal
    C},\delta),({\cal C},\delta))$ for each admissible
  coaction $({\cal C},\delta)$.  Now, the assertion follows
  from Theorem \ref{theorem:groupoid-cx}.
\end{proof}

\paragraph{Comparison of the associated reduced crossed products}
The reduced crossed product for an action $(C,\sigma)$ of
$G$ is defined as follows \cite{legall}.  The subspace
$C_{c}(G;C,\sigma):=C_{c}(G)r^{*}C \subseteq r^{*}C$ carries
the structure of a $*$-algebra and the
structure of a pre-Hilbert $C^{*}$-module over $C$ such that
\begin{align*}
  (ab)_{x} &= \int_{G^{r(x)}} a_{y} \sigma_{y}(b_{y^{-1}x})
  \intd \lambda^{r(x)}(y),  & (a^{*})_{x} &=
  \sigma_{x}(a^{*}_{x^{-1}}), \\
  \langle a|b\rangle_{u} &= \int_{G^{u}}
  \sigma_{y}((a_{y^{-1}})^{*}b_{y^{-1}}) \intd
  \lambda^{u}(y) = (a^{*}b)_{u}, & (ac)_{x} &=
  a_{x}\sigma_{x}(c_{s(x)})
\end{align*}
for all $a,b \in C_{c}(G;C,\sigma)$, $u \in G^{0}$ and $c
\in C$, $x \in G$. Denote the completion of this pre-Hilbert
$C^{*}$-module by $L^{2}(G,\lambda^{-1};C,\sigma)$.  Using
the relation $\langle a|bd\rangle_{u}=(abd)_{u}=\langle
b^{*}a|d\rangle_{u}$, which holds for all $a,b,d \in
C_{c}(G;C,\sigma)$, $u \in G^{0}$, and a routine norm
estimate, one verifies the existence of a $*$-homomorphism
$\pi \colon C_{c}(G;C,\sigma) \to \mathcal{
  L}(L^{2}(G,\lambda^{-1};C,\sigma))$ such that $\pi(b)d =
bd$ for all $b,d \in C_{c}(G;C,\sigma)$.  Then the {\em reduced
  crossed product} of $(C,\sigma)$ is the $C^{*}$-algebra $C
\rtimes_{\sigma,r} G:= [\pi(C_{c}(G;C,\sigma))] \subseteq
\mathcal{ L}(L^{2}(G,\lambda^{-1};C,\sigma))$.
\begin{proposition} \label{proposition:groupoid-crossed} Let
  $(C^{\gamma}_{K},\delta)$ be an admissible coaction of
  $C_{0}(G)$, consider $C$ as a $C_{0}(G^{0})$-algebra via
  $\rho_{\gamma}$, and let $\sigma=\sigma_{\delta}$. Then
  there exists an isomorphism $ C
  \rtimes_{\sigma_{\delta},r} G \to C \rtimes_{r}
  C^{*}_{r}(G)$ given by $\pi(c \cgtensor{}{} f) \mapsto
  \delta(c) (\Id \btensor UL(f)U)$ for all $c \in C$, $f\in
  C_{c}(G)$.
\end{proposition}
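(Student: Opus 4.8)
The plan is to represent both crossed products faithfully and nondegenerately on the single Hilbert space $\mathcal{H}:=K\rtensor{\gamma}{\frakb}{\alpha}H$ — on which $C\rtimes_{r}C^{*}_{r}(G)$ is already defined as a concrete algebra of operators — and to conjugate one representation onto the other by an explicit unitary $W$. First I would record the relevant operators: from $(U\xi)(x)=\xi(x^{-1})D(x)^{-1/2}$ and the formula for $L(g)$ in Section~\ref{section:kac-groupoid}, a short computation using left-invariance of $\lambda$ and the homomorphism property of $D$ gives $(UL(f)Uh)(x)=\int_{G^{r(x)}}f(x^{-1}w)h(w)\intd\lambda^{r(x)}(w)$ for $f\in C_{c}(G)$. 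In particular $UL(f)U$ commutes with $r^{*}(C_{0}(G^{0}))=\rho_{\alpha}(\frakB)$, so $\Id\btensor UL(f)U\in\mathcal{L}(\mathcal{H})$ is legitimate and $[UL(C_{c}(G))U]=UC^{*}_{r}(G)U$; and since $\delta$ is left-full (Corollary~\ref{corollary:groupoid-action}) and $C^{*}_{r}(G)$ acts nondegenerately on $H$, it follows that $C\rtimes_{r}C^{*}_{r}(G)=[\delta(C)(\Id\btensor UC^{*}_{r}(G)U)]$ acts nondegenerately on $\mathcal{H}$.

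Next I would construct the unitary $W\colon L^{2}(G,\lambda^{-1};C,\sigma)\tr_{C}K\to\mathcal{H}$ (with $C$ acting on $K$ through $C\subseteq\mathcal{L}(K)$) as a composition of three identifications. First, applying the fibrewise automorphisms $\sigma_{x}^{-1}$ to the sections in $C_{c}(G;C,\sigma)=C_{c}(G)r^{*}C$ identifies Le~Gall's Hilbert $C$-module with the \emph{untwisted} $C$-module of compactly supported sections of $s^{*}C$ (inner product $\langle b\,|\,b'\rangle_{u}=\int_{G_{u}}b_{x}^{*}b'_{x}\intd\lambda^{-1}_{u}(x)$, with $C$ acting through $s$): the twist $(ac)_{x}=a_{x}\sigma_{x}(c_{s(x)})$ is exactly undone, which is what makes the composite respect the balancing of $\tr_{C}$, and the identity $\langle a\,|\,a'\rangle_{u}=\int_{G^{u}}\sigma_{y}\big((a_{y^{-1}})^{*}a'_{y^{-1}}\big)\intd\lambda^{u}(y)=\int_{G_{u}}\sigma_{x}^{-1}(a_{x}^{*}a'_{x})\intd\lambda^{-1}_{u}(x)$ (substitution $y\mapsto y^{-1}$) shows this step is isometric. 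Second, using $\halpha=\hat{j}(L^{2}(G,\lambda^{-1}))$ and the identifications of the relative tensor product from Section~\ref{section:rtp}, this module tensored over $C$ with $K$ is identified isometrically with $K\rtensor{\gamma}{\frakb}{\halpha}H$; here the $D^{-1/2}$-factor built into $\hat{j}$ is exactly what reconciles the inner products. Third, since $U\halpha=\alpha$, viewing $U$ as a module morphism $(H,\halpha)\to(H,\alpha)$ and using functoriality of the relative tensor product gives a unitary $\Id_{K}\btensor U\colon K\rtensor{\gamma}{\frakb}{\halpha}H\to K\rtensor{\gamma}{\frakb}{\alpha}H=\mathcal{H}$ — and it is precisely this last step that accounts for the conjugation $UL(f)U$ below.

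Third, I would verify the intertwining $W\big(\pi(c\cgtensor{}{}f)\tr\Id_{K}\big)=\delta(c)(\Id\btensor UL(f)U)\,W$ for $c\in C$, $f\in C_{c}(G)$: expanding the left-hand side by the convolution product $(ab)_{x}=\int_{G^{r(x)}}a_{y}\sigma_{y}(b_{y^{-1}x})\intd\lambda^{r(x)}(y)$ of $C_{c}(G;C,\sigma)$, transporting it through the three identifications making up $W$, and expanding the right-hand side with the explicit form of $UL(f)U$ from the first step together with $\delta(c)(\Id\btensor g)=\sigma(c\cgtensor{}{}g)$, both sides collapse — after the same change of variable in the Haar system — to one and the same fibered integral, the identity $\sigma_{x}\sigma_{y}=\sigma_{xy}$ entering the bookkeeping being the coassociativity of $\delta$ recorded in \eqref{eq:groupoid-rels}. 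Finally, since $L^{2}(G,\lambda^{-1};C,\sigma)$ is full over $C$ (a continuous $C_{0}(G^{0})$-algebra with all fibers nonzero, $\lambda$ of full support) and $C\subseteq\mathcal{L}(K)$ is faithful, the map $x\mapsto x\tr\Id_{K}$ is a faithful representation of $C\rtimes_{\sigma,r}G$, so $\Phi:=W(\,\cdot\,\tr\Id_{K})W^{*}$ is an injective $*$-homomorphism; its range is the closed span of the operators $\delta(c)(\Id\btensor UL(f)U)$, which is $C\rtimes_{r}C^{*}_{r}(G)$ by the first step, and $\Phi(\pi(c\cgtensor{}{}f))=\delta(c)(\Id\btensor UL(f)U)$, as required. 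The main obstacle is the construction of $W$ in the second step together with the intertwining in the third: both reduce to a careful, convention-sensitive matching of the twisted convolution algebra $C_{c}(G;C,\sigma)$ with operator products on $\mathcal{H}$, mediated by the quasi-invariant measure and by the way conjugation by $U$ turns the left regular representation of $C^{*}_{r}(G)$ into the translation part of the regular representation of the groupoid crossed product — in particular, keeping track of all the $D^{\pm1/2}$-factors correctly.
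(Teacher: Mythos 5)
Your proposal is correct and follows essentially the same route as the paper's proof: untwisting Le~Gall's module by the fibrewise $\sigma_{x}^{-1}$, identifying the untwisted module with the $\halpha$-leg of the relative tensor product via $\hat j$ (where the $D^{-1/2}$ reconciles the inner products), conjugating by $\Id \btensor U$, and checking the intertwining by the same change-of-variables integral. The only cosmetic difference is that the paper phrases the middle step as a unitary of Hilbert $C$-modules onto $[|\halpha\rangle_{2}C]$ and conjugates the coaction itself by $\Id\btensor U$ (working with $\delta_{U}$), whereas you tensor with $K$ over $C$ and therefore need --- and correctly supply --- the extra observation that $x\mapsto x\tr\Id_{K}$ is faithful.
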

\begin{proof}
  Let $\delta_{U}:=\Ad_{(\Id \btensor U)} \circ \delta
  \colon C \to \mathcal{ L}(K
  \rtensor{\gamma}{\frakb}{\halpha} H)$.  We equip
  $C_{c}(G;C,\sigma)$ with the structure of a pre-Hilbert
  $C^{*}$-module over $C$ such that $\langle a|b\rangle_{u}
  = \int_{G_{u}} (a_{x})^{*}b_{x} \intd \lambda_{u}^{-1}(x)$
  and $(ac)_{x} = a_{x}c_{s(x)}$ for all $a,b \in
  C_{c}(G;C,\sigma)$, $c \in C$, $u \in G^{0}$, and denote
  by $L^{2}(G,\lambda^{-1};C)$ the completion.  One easily
  checks that there exists a unique unitary $\Phi \colon
  L^{2}(G,\lambda^{-1};C) \to [|\halpha\rangle_{2}C] =
  [\delta_{U}(C)|\halpha\rangle_{2}\rangle]$ given by $c
  \cgtensor{}{} f \mapsto |\hat j(f)\rangle_{2}c$, and that
  for all $ c\in C, f \in C_{c}(G), y \in G$,
  \begin{align*} 
    \Phi^{-1}(\delta_{U}(c)|\hat j(f)\rangle_{2})_{y} &=
    \sigma_{y^{-1}}(c_{r(y)}) f(y).
  \end{align*}
  Hence, there exists a unitary $\Psi \colon
  L^{2}(G,\lambda^{-1};C,\sigma)\to
  [\delta_{U}(C)|\halpha\rangle_{2}\rangle]$ given by $c
  \cgtensor{}{} f \mapsto \delta_{U}(c)|\hat
j(f)\rangle_{2}$. Let $c,d\in C$, $f,g \in C_{c}(G)$ and
$\omega= \Phi^{-1}(\Psi(c \cgtensor{}{} f))$.  Then
$\delta_{U}(d)(\Id \btensor L(g))\Psi=\Psi\pi(d
\cgtensor{}{} g)$ because for all $x \in G$,
  \begin{align*}
    \Phi^{-1}(\delta_{U}(d)(\Id \btensor L(g))\Phi(\omega))_{x}
    &= \int_{G_{u}} \sigma_{x^{-1}}(d_{r(x)}) g(xy^{-1})
    \omega_{y}
    \intd \lambda^{-1}_{u}(y)   \\
    &= \int_{G_{u}} \sigma_{x^{-1}}
    \big(d_{r(xy^{-1})}g(xy^{-1})\sigma_{xy^{-1}}(c_{r(y)})f(y)
    \big)
    \intd\lambda_{u}^{-1}(y)\\
    &= \Phi^{-1}(\Psi(\pi(d \cgtensor{}{} g)(c \cgtensor{}{} f)))_{x}.
  \end{align*}
  Since $d \in C$ and $g \in C_{c}(G)$ were arbitrary, the
  assertion follows.
\end{proof}

\section{Fell bundles on groupoids}
 \label{section:prelim}

 We now gather preliminaries on Fell bundles that are needed
 in Sections \ref{section:fell} and \ref{section:etale}.  We
 use the notion of a Banach bundle and standard notation; a
 reference is \cite{dupre}.

\paragraph{Fell bundles on groupoids and their
  $C^{*}$-algebras}
We first recall the notion of a Fell bundle on $G$ and the
definition of the associated reduced $C^{*}$-algebra
\cite{kumjian}.  Given an upper semicontinuous Banach
bundle $p \colon \mathcal{ F} \to G$, denote by $\mathcal{
  F}^{0}$ the restriction of $\mathcal{ F}$ to $G^{0}$, by
$\FsrF$ the restriction of $\mathcal{ F} \times \mathcal{
  F}$ to $\GsrG$,  by ${\cal F}_{x}$ for each $x \in G$
the fiber at $x$, by $\Gamma_{c}({\cal F})$ the space of
continuous sections of ${\cal F}$ with compact support, and
by $\Gamma_{0}({\cal F}^{0})$ the space of continuous
sections of ${\cal F}^{0}$ that vanish at infinity in
norm.  
\begin{definition} \label{definition:fell-bundle} A {\em
    Fell bundle} on $G$ is an upper semicontinuous Banach
  bundle $p \colon \mathcal{ F} \to G$ with a continuous
  multiplication $\FsrF \to \mathcal{ F}$ and a
  continuous involution $*\colon \mathcal{ F} \to \mathcal{
    F}$ such that for all $e \in \mathcal{ F}$,
  $(e_{1},e_{2}) \in \FsrF$, $(x,y) \in \GsrG$,
  \begin{enumerate}
  \item $p(e_{1}e_{2}) = p(e_{1})p(e_{2})$ and $p(e^{*}) =
    p(e)^{-1}$;
  \item the  map $\mathcal{ F}_{x} \times \mathcal{
      F}_{y} \to \mathcal{ F}_{xy}, (e'_{1},e'_{2}) \mapsto
    e'_{1}e'_{2}$, is bilinear and the map $\mathcal{ F}_{x}\to 
    \mathcal{ F}_{x^{-1}}$, $e' \mapsto e'{}^{*}$, is
    conjugate linear;
  \item $(e_{1}e_{2})e_{3}=e_{1}(e_{2}e_{3}), \,
    (e_{1}e_{2})^{*}=e^{*}_{2}e^{*}_{1}$, and $(e^{*})^{*} =
    e$;
  \item $\|e_{1}e_{2}\| \leq \|e_{1}\|\|e_{2}\|$,
    $\|e^{*}e\| = \|e\|^{2}$, and $e^{*}e \geq 0$ in the
    $C^{*}$-algebra $\mathcal{ F}_{s(p(e))}$.
  \end{enumerate}
  We call $\mathcal{ F}$ {\em saturated} if $[\mathcal{
    F}_{x}\mathcal{F}_{y}]=\mathcal{F}_{xy}$ for all $(x,y) \in
  \GsrG$, and {\em admissible} if $\Gamma_{0}(\mathcal{
    F}^{0})$ is an admissible $C_{0}(G^{0})$-algebra
  with respect to the pointwise operations.
\end{definition}
Let ${\cal F}$ be a Fell bundle on $G$.  The associated
reduced $C^{*}$-algebra is defined as follows.  The space
$\Gamma_{c}(\mathcal{ F})$ is a $*$-algebra with respect to
the multiplication and involution given by
\begin{align} \label{eq:fell-algebra} (cd)(x)
  &=\int_{G^{r(x)}} c(y)d(y^{-1}x) \intd\lambda^{r(x)}(y) =
  \int_{G_{s(x)}} c(xz^{-1})d(z)\intd\lambda^{-1}_{s(x)}(z)
\end{align}
and $c^{*}(x) = c(x^{-1})^{*}$, respectively, and a
pre-Hilbert $C^{*}$-module over $\Gamma_{0}(\mathcal{
  F}^{0})$ with respect to the structure maps
\begin{align*}
  \langle c|d\rangle(u) &=  \int_{G_{u}} c(x)^{*}d(x)
  \intd \lambda^{-1}_{u}(x) = (c^{*}d)(u), &
  (c e)(x) &= c(x)e(s(x)),
\end{align*}
where $c,d \in \Gamma_{c}(\mathcal{ F})$, $e\in
\Gamma_{0}(\mathcal{ F}^{0})$, $x\in G$.  Denote by
$\Gamma^{2}(\mathcal{ F},\lambda^{-1})$ the completion of
this pre-Hilbert $C^{*}$-module. Then there exists a
$*$-homomorphism
\begin{align*}
  L_{\mathcal{ F}} \colon \Gamma_{c}(\mathcal{ F}) \to \mathcal{
    L}(\Gamma^{2}(\mathcal{
    F},\lambda^{-1})), \quad L_{\mathcal{ F}}(a)b = ab \text{
    for all } a,b \in \Gamma_{c}(\mathcal{ F}),
\end{align*}
and $C^{*}_{r}(\mathcal{ F}):=[L_{\mathcal{
    F}}(\Gamma_{c}(\mathcal{ F}))] \subseteq \mathcal{
  L}(\Gamma^{2}(\mathcal{ F},\lambda^{-1}))$ is the {\em
  reduced $C^{*}$-algebra of $\mathcal{ F}$}. We 
identify $\Gamma_{c}({\cal F})$ with $L_{{\cal
    F}}(\Gamma_{c}({\cal F})) \subseteq C^{*}_{r}({\cal F})$
via $L_{{\cal F}}$.
 
We equip $\Gamma_{c}(\mathcal{ F})$ with the {\em inductive
  limit topology}; thus, a net converges if it converges
uniformly and if the supports of its members are contained
in some compact set.  We shall frequently use the following
general result; see \cite[Proposition 2.3]{dupre}.
\begin{lemma} \label{lemma:submodule} 
 Let  $ {\cal E}$ be an upper semicontinuous
Banach bundle on a locally compact, second countable, Hausdorff
space $X$ and let $\Gamma' \subseteq
  \Gamma_{c}({\cal E})$ be a subspace such that
  \begin{enumerate}
  \item $\Gamma'$ is closed under pointwise multiplication
    with elements of $C_{c}(X)$;
  \item $\{ f(x) \mid f \in \Gamma'\} \subseteq {\cal
      E}_{x}$ is dense for each $x \in X$.
  \end{enumerate}
  Then $\Gamma'$ is dense in $\Gamma_{c}({\cal E})$. \qed
\end{lemma}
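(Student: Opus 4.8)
The plan is to run the standard partition‑of‑unity / Stone--Weierstrass argument, exactly along the lines of \cite[Proposition 2.3]{dupre}. Fix $g \in \Gamma_{c}(\mathcal{E})$, set $K := \supp g$, choose a compact neighbourhood $K'$ of $K$ (so $K \subseteq \mathrm{int}(K')$), and fix $\varepsilon > 0$. It suffices to produce, for each such $\varepsilon$, a section $f \in \Gamma'$ with $\supp f \subseteq K'$ and $\sup_{x \in X}\|f(x) - g(x)\| \leq \varepsilon$: the sections so obtained form a net in $\Gamma'$ converging to $g$ in the inductive limit topology, which is precisely the notion of convergence recorded above (uniform convergence with supports inside a fixed compact set).

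First I would carry out a pointwise approximation using hypothesis (ii) and upper semicontinuity. For each $x \in K$, pick $f_{x} \in \Gamma'$ with $\|f_{x}(x) - g(x)\| < \varepsilon$, which is possible by density of $\{h(x) : h \in \Gamma'\}$ in $\mathcal{E}_{x}$. Since $f_{x}$ and $g$ are continuous sections and the bundle operations are continuous, $f_{x} - g$ is again a continuous section, so $y \mapsto \|f_{x}(y) - g(y)\|$ is the composition of a continuous map with the upper semicontinuous norm function on the total space, hence upper semicontinuous on $X$. Therefore there is an open neighbourhood $U_{x} \ni x$, which we may take inside $\mathrm{int}(K')$, such that $\|f_{x}(y) - g(y)\| < \varepsilon$ for all $y \in U_{x}$.

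Next, compactness of $K$ yields finitely many points $x_{1},\dots,x_{n} \in K$ with $K \subseteq \bigcup_{i} U_{x_{i}}$. Since $X$ is locally compact Hausdorff, there is a partition of unity $\varphi_{1},\dots,\varphi_{n} \in C_{c}(X)$ with $0 \leq \varphi_{i}$, $\supp \varphi_{i} \subseteq U_{x_{i}}$, $\sum_{i}\varphi_{i} \leq 1$ everywhere, and $\sum_{i}\varphi_{i} \equiv 1$ on $K$. Put $f := \sum_{i}\varphi_{i} f_{x_{i}}$. By hypothesis (i) and because $\Gamma'$ is a linear subspace, $f \in \Gamma'$, and $\supp f \subseteq \bigcup_{i}\supp\varphi_{i} \subseteq \bigcup_{i} U_{x_{i}} \subseteq K'$, as required.

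Finally I would check the estimate. For any $y \in X$ we have $f(y) - g(y) = \sum_{i}\varphi_{i}(y)\bigl(f_{x_{i}}(y) - g(y)\bigr)$: for $y \in K$ this is because $\sum_{i}\varphi_{i}(y) = 1$, and for $y \notin K$ because $g(y) = 0$. Moreover, whenever $\varphi_{i}(y) \neq 0$ we have $y \in U_{x_{i}}$, so $\|f_{x_{i}}(y) - g(y)\| < \varepsilon$. Hence
\begin{align*}
  \|f(y) - g(y)\|
  &= \Bigl\|\sum_{i}\varphi_{i}(y)\bigl(f_{x_{i}}(y) - g(y)\bigr)\Bigr\| \\
  &\leq \sum_{i}\varphi_{i}(y)\,\|f_{x_{i}}(y) - g(y)\|
   \leq \varepsilon\sum_{i}\varphi_{i}(y) \leq \varepsilon
\end{align*}
for every $y \in X$. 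Letting $\varepsilon \to 0$ gives the desired net, so $\Gamma'$ is dense in $\Gamma_{c}(\mathcal{E})$. The only genuinely delicate points are the bookkeeping with supports needed to obtain convergence in the inductive limit topology rather than mere uniform convergence, and the fact that the norm of a continuous section is only \emph{upper} semicontinuous --- which is why we can only find neighbourhoods on which the norm is \emph{small}; fortunately those are exactly the neighbourhoods the argument needs.
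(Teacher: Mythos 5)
Your proof is correct and is exactly the standard partition-of-unity argument from the reference the paper cites for this lemma (Dupr\'e--Gillette, Proposition 2.3); the paper itself gives no proof beyond that citation. All the delicate points you flag --- upper semicontinuity of $y\mapsto\|f_{x}(y)-g(y)\|$ giving open sets where the norm is small, closure of $\Gamma'$ under multiplication by $C_{c}(X)$ to form $\sum_{i}\varphi_{i}f_{x_{i}}$, and confining all supports to a fixed compact $K'$ so that uniform convergence upgrades to inductive-limit convergence --- are handled properly.
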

Given $f \in \Gamma_{c}({\cal F})$ and $g \in
\Gamma_{0}({\cal F}^{0})$, define $fg, gf \in
\Gamma_{c}({\cal F})$ by $(fg)(x)=f(x)g(s(x))$,
$(gf)(x)=g(r(x))f(x)$ for all $x \in G$. Using the relation
$[{\cal F}_{x}]=[{\cal F}_{x}{\cal F}_{x}^{*}{\cal F}_{x}]$,
where $x \in G$, and Lemma \ref{lemma:submodule}, we find:
\begin{lemma} \label{lemma:gamma0}
  $\Gamma_{c}({\cal F})\Gamma_{0}({\cal F}^{0})$ and
  $\Gamma_{0}({\cal F}^{0})\Gamma_{c}({\cal F})$ are
  linearly dense
  in $\Gamma_{c}({\cal F})$. \qed
\end{lemma}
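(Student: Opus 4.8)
The plan is to apply Lemma \ref{lemma:submodule} with $X = G$, $\mathcal{E} = \mathcal{F}$, and $\Gamma'$ taken to be the linear span of $\Gamma_c(\mathcal{F})\Gamma_0(\mathcal{F}^0)$ in a first pass, and the linear span of $\Gamma_0(\mathcal{F}^0)\Gamma_c(\mathcal{F})$ in a second, symmetric pass. First I would check that these spans really do land inside $\Gamma_c(\mathcal{F})$: for $f \in \Gamma_c(\mathcal{F})$ and $g \in \Gamma_0(\mathcal{F}^0)$ the section $fg$ with $(fg)(x) = f(x)g(s(x))$ is continuous, by joint continuity of the multiplication $\FsrF \to \mathcal{F}$ and of the source map, and has support contained in $\supp f$; similarly for $gf$.

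Next I would verify the two hypotheses of Lemma \ref{lemma:submodule} for $\Gamma' = $ span of $\Gamma_c(\mathcal{F})\Gamma_0(\mathcal{F}^0)$. Closedness under pointwise multiplication by $C_c(G)$ is immediate, since $h\cdot(fg) = (hf)g$ and $hf \in \Gamma_c(\mathcal{F})$ for $h \in C_c(G)$. The substantive point is the fibrewise density. Fix $x \in G$. From the Fell bundle axioms one has $\mathcal{F}_x^*\mathcal{F}_x \subseteq \mathcal{F}_{s(x)}$ and $\mathcal{F}_x\mathcal{F}_{s(x)} \subseteq \mathcal{F}_x$, so the relation $[\mathcal{F}_x] = [\mathcal{F}_x\mathcal{F}_x^*\mathcal{F}_x]$ forces $\mathcal{F}_x = [\mathcal{F}_x\mathcal{F}_{s(x)}]$. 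Since $\mathcal{F}$ and $\mathcal{F}^0$ are Banach bundles over locally compact Hausdorff spaces they have enough continuous compactly supported sections, so every $e \in \mathcal{F}_x$ equals $f(x)$ for some $f \in \Gamma_c(\mathcal{F})$ and every $b \in \mathcal{F}_{s(x)}$ equals $g(s(x))$ for some $g \in \Gamma_0(\mathcal{F}^0)$; using joint continuity of multiplication $\mathcal{F}_x \times \mathcal{F}_{s(x)} \to \mathcal{F}_x$, the linear span of $\{f(x)g(s(x))\}$ is then dense in $[\mathcal{F}_x\mathcal{F}_{s(x)}] = \mathcal{F}_x$, so a fortiori $\{\varphi(x) \mid \varphi \in \Gamma'\}$ is dense in $\mathcal{F}_x$. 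Lemma \ref{lemma:submodule} now gives density of $\Gamma'$ in $\Gamma_c(\mathcal{F})$. For the second statement I would run the identical argument with $\mathcal{F}_x = [\mathcal{F}_{r(x)}\mathcal{F}_x]$ (obtained from the same relation via $\mathcal{F}_x\mathcal{F}_x^* \subseteq \mathcal{F}_{r(x)}$ and $\mathcal{F}_{r(x)}\mathcal{F}_x \subseteq \mathcal{F}_x$) in place of $\mathcal{F}_x = [\mathcal{F}_x\mathcal{F}_{s(x)}]$, and with $h\cdot(gf) = g(hf)$ for hypothesis (i).

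The only ingredient that is not pure bookkeeping is the relation $\mathcal{F}_x = [\mathcal{F}_x\mathcal{F}_x^*\mathcal{F}_x]$, which I expect to be the main point to justify. I would obtain it by observing that $\mathcal{F}_x$, equipped with right multiplication by $\mathcal{F}_{s(x)}$ and the inner product $(e,e') \mapsto e^*e'$, is a Hilbert $C^*$-module over $\mathcal{F}_{s(x)}$ (axiom (iv) supplying positivity and the norm identity, completeness being automatic since $\mathcal{F}_x$ is a Banach space), and then invoking the standard fact that a Hilbert module $E$ satisfies $E = [E\langle E,E\rangle]$ — proved by sandwiching an element $\xi$ between an approximate identity of the $C^*$-algebra $[\langle E,E\rangle]$ and estimating $\|\xi - \xi u_\lambda\|^2 = \|\langle \xi - \xi u_\lambda, \xi - \xi u_\lambda\rangle\|$. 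Everything else reduces to the structure identities of a Fell bundle and to \cite{dupre} for the existence of enough sections.
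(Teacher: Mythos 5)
Your proposal is correct and follows exactly the route the paper indicates: the paper derives the lemma from the relation $[\mathcal{F}_x]=[\mathcal{F}_x\mathcal{F}_x^*\mathcal{F}_x]$ (which, as you note, comes from viewing $\mathcal{F}_x$ as a Hilbert $C^*$-module over $\mathcal{F}_{s(x)}$) together with Lemma \ref{lemma:submodule}. You have simply supplied the verification of the two hypotheses of that lemma, which the paper leaves implicit.
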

\paragraph{The multiplier bundle  of a Fell bundle}
Given a Fell bundle ${\cal F}$ on $G$, we define a
multiplier bundle ${\cal M(F)}$ on $G$, extending  the
definition in \cite[\S
VIII.2.14]{fell:rep2}.  Given a subspace $C \subseteq G$, we denote
by ${\cal F}|_{C}$ the restriction of ${\cal F}$ to $C$.
\begin{definition}
  Let $x \in G$.  A {\em multiplier of $\mathcal{ F}$ of
    order $x$} is a map $T \colon \mathcal{ F}|_{G^{s(x)}}
  \to \mathcal{ F}|_{G^{r(x)}}$ such that $T\mathcal{ F}_{y}
  \subseteq \mathcal{ F}_{xy}$ for all $y \in G^{s(x)}$ and
  such that there exists a map $T^{*} \colon \mathcal{
    F}|_{G^{r(x)}} \to \mathcal{ F}|_{G^{s(x)}}$ such that
  $e^{*}Tf = (T^{*}e)^{*}f$ for all $e \in \mathcal{
    F}|_{G^{r(x)}}$, $f \in \mathcal{ F}|_{G^{s(x)}}$. We
  denote by $\mathcal{ M}(\mathcal{ F})_{x}$ the set of all
  multipliers of $\mathcal{ F}$ of order $x$.
\end{definition}
As for adjointable operators of Hilbert $C^{*}$-modules, one
deduces from the definition the following simple properties.
Let $x \in G$. Then for each $T \in \mathcal{ M}(\mathcal{
  F})_{x}$, the map $T^{*}$ is uniquely determined, $T^{*}
\in \mathcal{ M}(\mathcal{ F})_{x^{-1}}$, and $T^{**}=T$.
Moreover, each $T \in \mathcal{ M}(\mathcal{ F}_{x})$ is
fiberwise linear in the sense that $T(\kappa e+f)=\kappa
Te+Tf$ for all $\kappa \in \complex$, $e,f \in \mathcal{
  F}_{y}$, $y \in G^{s(x)}$.  The restrictions $T_{s(x)}
\colon \mathcal{ F}_{s(x)} \to \mathcal{ F}_{x}$ and
$(T^{*})_{x} \colon \mathcal{ F}_{x} \to \mathcal{
  F}_{s(x)}$ are adjoint operators of Hilbert
$C^{*}$-modules over $\mathcal{ F}_{s(x)}$, and since
$\mathcal{ F}_{y} = [\mathcal{ F}_{r(y)}\mathcal{ F}_{y}]$
for each $y \in G^{s(x)}$, the map $\mathcal{ M}(\mathcal{
  F})_{x} \to \mathcal{ L}(\mathcal{ F}_{s(x)},\mathcal{
  F}_{x})$, $T \mapsto T_{s(x)}$, is a bijection. Clearly,
we have a natural embedding ${\cal F}_{x} \hookrightarrow
{\cal M(F)}_{x}$, where each $f \in {\cal F}$ acts as a
multiplier via left multiplication. For each $y \in
G^{s(x)}$, we have $\mathcal{ M}(\mathcal{ F})_{x}\mathcal{
  M}(\mathcal{ F})_{y} \subseteq \mathcal{ M}(\mathcal{
  F})_{xy}$, and for each $f \in {\cal F}_{z}$, $z \in
G_{r(x)}$, we let $fT:=(T^{*}f^{*})^{*}$.
\begin{definition}
  For each $x \in G$, consider $\mathcal{ M}(\mathcal{
    F})_{x}$ as a Banach space via the identification with
  $\mathcal{ L}(\mathcal{ F}_{s(x)},\mathcal{ F}_{x})$.  Let
  $\mathcal{ M}(\mathcal{ F})= \coprod_{x \in G} \mathcal{
    M}(\mathcal{ F})_{x}$ and denote by $\tilde p \colon
  \mathcal{ M}(\mathcal{ F}) \to G$ the natural map. The
  {\em strict topology} on $\mathcal{ M}(\mathcal{ F})$ is
  the weakest topology that makes $\tilde p$ and the maps
  $\mathcal{ M}(\mathcal{ F}) \to \mathcal{ F}$ of the form
  $c \mapsto c\cdot d(s(\tilde p(c)))$ and $c \mapsto
  d(r(\tilde p(c))) \cdot c$ continuous for each $d \in
  \Gamma_{c}(\mathcal{ F}^{0})$.  Denote by
  $\Gamma_{c}(\mathcal{ M}(\mathcal{ F}))$ the space of all
  sections that are strictly continuous, norm-bounded, and
  compactly supported. 
\end{definition}
\begin{remark}
  The bundle ${\cal M(F)}$ satisfies all axioms of a Fell
  bundle except for the fact that it is no Banach bundle
  with respect to the strict topology unless ${\cal
    M(F)}={\cal F}$. Indeed, for each $u\in G^{0}$, the
  subspace topology on $\mathcal{ M}(\mathcal{
    F})_{u}\cong\mathcal{ L}(\mathcal{ F}_{u})\cong
  M(\mathcal{ F}_{u})$ is the strict topology and coincides
  with the norm topology only if $M(\mathcal{
    F}_{u})=\mathcal{ F}_{u}$.
\end{remark}
Given $f \in \Gamma_{c}({\cal M}({\cal F}))$ and $g \in
\Gamma_{0}({\cal F}^{0})$, define $fg, gf \in
\Gamma_{c}({\cal F})$ by $(fg)(x)=f(x)g(s(x))$,
$(gf)(x)=g(r(x))f(x)$ for all $x \in G$ again.
\begin{lemma} \label{lemma:fell-multiplier-sections}
  \begin{enumerate}
  \item Let $c \in \Gamma_{c}(\mathcal{ M}(\mathcal{ F}))$
    and $d \in \Gamma_{c}(\mathcal{ F})$. Then there exists
    a section $cd \in \Gamma_{c}(\mathcal{ F})$ such that
    $(cd)(x)=\int_{G^{r(x)}}
    c(y)d(y^{-1}x)\intd\lambda^{r(x)}(y)$ for all $x \in G$.
  \item $\Gamma_{c}(\mathcal{ M}(\mathcal{ F}))$ carries a
    structure of a $*$-algebra such that $c^{*}(x)
    =c(x^{-1})^{*}$ and $(cd)(x)e = \int_{G^{r(x)}}
    c(y)d(y^{-1}x)e \intd\lambda^{r(x)}$ for all $c,d \in
    \Gamma_{c}(\mathcal{ M}(\mathcal{ F}))$, $x \in G$, $e
    \in \mathcal{ F}_{s(x)}$.
  \item There exists a $*$-homomorphism $L_{\mathcal{
        M}(\mathcal{ F})} \colon \Gamma_{c}(\mathcal{
      M}(\mathcal{ F})) \to M(C^{*}_{r}(\mathcal{ F}))$ such
    that $L_{\mathcal{ M}(\mathcal{ F})}(c)L_{\mathcal{
        F}}(d) = L_{\mathcal{ F}}(cd)$ for
    all $c \in \Gamma_{c}(\mathcal{ M}(\mathcal{ F}))$, $d
    \in \Gamma_{c}(\mathcal{ F})$.
  \item $\Gamma_{c}({\cal M}({\cal F}))$ is closed under
    pointwise multiplication with elements of $C_{c}(G)$.
  \end{enumerate}
\end{lemma}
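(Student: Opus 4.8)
The plan is to establish (i) carefully and then to obtain (ii)--(iv) as essentially formal consequences of it. For (i), fix $c\in\Gamma_{c}(\mathcal{M}(\mathcal{F}))$ and $d\in\Gamma_{c}(\mathcal{F})$. For each $x\in G$ the integrand $y\mapsto c(y)d(y^{-1}x)$ takes values in $\mathcal{F}_{x}$ (since $c(y)\in\mathcal{M}(\mathcal{F})_{y}$ and $r(y^{-1}x)=s(y)$) and is supported on the compact set $\{y\in G^{r(x)}:y\in\supp c,\ y^{-1}x\in\supp d\}$, so the integral over $\lambda^{r(x)}$ makes sense in the Banach space $\mathcal{F}_{x}$ once the integrand is known to be continuous. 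To see continuity of the integrand and of the resulting section, I would first treat the case $d=gd'$ with $g\in\Gamma_{c}(\mathcal{F}^{0})$, $d'\in\Gamma_{c}(\mathcal{F})$: then $c(y)(gd')(y^{-1}x)=c(y)g(s(y))d'(y^{-1}x)=(cg)(y)\,d'(y^{-1}x)$, where $(cg)(y):=c(y)g(s(y))$ defines $cg\in\Gamma_{c}(\mathcal{F})$ by strict continuity of $c$, so $cd$ coincides with the ordinary convolution $(cg)*d'\in\Gamma_{c}(\mathcal{F})$ of \eqref{eq:fell-algebra}. For general $d$, Lemma~\ref{lemma:gamma0} (together with Lemma~\ref{lemma:submodule}) lets me approximate $d$ uniformly, with supports in a fixed compact neighbourhood of $\supp d$, by finite sums $\sum_{j}g_{j}d'_{j}$ of such products; the estimate $\|cd\|_{\infty}\le\|c\|_{\infty}\|d\|_{\infty}\sup_{u}\lambda^{u}(\supp c)$, the supremum being finite by continuity of the Haar system, shows that the corresponding convolutions converge uniformly, with supports in a fixed compact, to a section of $\mathcal{F}$; by completeness of $\Gamma_{c}(\mathcal{F})$ in the inductive limit topology this limit lies in $\Gamma_{c}(\mathcal{F})$, and passing to the limit inside the integral yields the asserted pointwise formula. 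The same argument, or (i) together with the involution, gives a right-handed version: $c'd\in\Gamma_{c}(\mathcal{F})$ for $c'\in\Gamma_{c}(\mathcal{F})$ and $d\in\Gamma_{c}(\mathcal{M}(\mathcal{F}))$.

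For (ii) the key observation is that for $c,d\in\Gamma_{c}(\mathcal{M}(\mathcal{F}))$, $x\in G$ and $e\in\mathcal{F}_{s(x)}$, choosing any $\tilde e\in\Gamma_{c}(\mathcal{F}^{0})$ with $\tilde e(s(x))=e$ gives $d(y^{-1}x)e=(d\tilde e)(y^{-1}x)$, so the prescribed formula becomes $(cd)(x)e=\bigl(c\,(d\tilde e)\bigr)(x)$ with $d\tilde e\in\Gamma_{c}(\mathcal{F})$ (strict continuity of $d$) and $c\,(d\tilde e)\in\Gamma_{c}(\mathcal{F})$ by (i). Hence $e\mapsto(cd)(x)e$ is a well-defined bounded linear map $\mathcal{F}_{s(x)}\to\mathcal{F}_{x}$ depending only on $e$; existence of the adjoint operator, i.e.\ that $(cd)(x)\in\mathcal{M}(\mathcal{F})_{x}$, as well as the identities $c^{*}(x)=c(x^{-1})^{*}$, $(cd)^{*}=d^{*}c^{*}$ and associativity, follow by a Fubini argument for the Haar system together with the multiplier axioms for the pointwise values of $c$ and $d$. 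Strict continuity of $cd$ reduces to (i) and its right-handed analogue applied to $c,d$ and to $d^{*}c^{*}$ (which has its own direct integral formula, so that no circularity arises); norm-boundedness and compact support of $cd$ and $c^{*}$ are immediate. Thus $\Gamma_{c}(\mathcal{M}(\mathcal{F}))$ is a $*$-algebra with the stated operations.

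For (iii), left multiplication $d\mapsto cd$ by $c\in\Gamma_{c}(\mathcal{M}(\mathcal{F}))$ on the pre-Hilbert $\Gamma_{0}(\mathcal{F}^{0})$-module $\Gamma_{c}(\mathcal{F})$ has formal adjoint $d\mapsto c^{*}d$, since $\langle cd\,|\,d'\rangle=(cd)^{*}d'=(d^{*}c^{*})d'=d^{*}(c^{*}d')=\langle d\,|\,c^{*}d'\rangle$ by (ii) and the associativity $(d^{*}c^{*})d'=d^{*}(c^{*}d')$ (Fubini again); an operator admitting an adjoint on a pre-Hilbert module extends boundedly to the completion, giving $L_{\mathcal{M}(\mathcal{F})}(c)\in\mathcal{L}(\Gamma^{2}(\mathcal{F},\lambda^{-1}))$ with adjoint $L_{\mathcal{M}(\mathcal{F})}(c^{*})$. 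The associativity relations $c(dd')=(cd)d'$ and $d(cd')=(dc)d'$ between multiplier- and ordinary convolution then give $L_{\mathcal{M}(\mathcal{F})}(c)L_{\mathcal{F}}(d)=L_{\mathcal{F}}(cd)$ and $L_{\mathcal{F}}(d)L_{\mathcal{M}(\mathcal{F})}(c)=L_{\mathcal{F}}(dc)$, so $L_{\mathcal{M}(\mathcal{F})}(c)\in M(C^{*}_{r}(\mathcal{F}))$; and $L_{\mathcal{M}(\mathcal{F})}$ is linear, multiplicative (by $c_{1}(c_{2}d)=(c_{1}c_{2})d$) and $*$-preserving, hence the required $*$-homomorphism. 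Finally, (iv) is immediate: for $h\in C_{c}(G)$ and $c\in\Gamma_{c}(\mathcal{M}(\mathcal{F}))$ the section $x\mapsto h(x)c(x)$ is strictly continuous (multiply the defining strict-topology maps by the continuous scalar function $h$), norm-bounded by $\|h\|_{\infty}\|c\|_{\infty}$, and supported in $\supp h\cap\supp c$, hence lies in $\Gamma_{c}(\mathcal{M}(\mathcal{F}))$.

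The main obstacle is the continuity assertion in (i): because $c$ is only strictly continuous and not norm-continuous, one cannot manipulate the integrand $y\mapsto c(y)d(y^{-1}x)$ directly, and the construction hinges on reducing it, via the density statement of Lemma~\ref{lemma:gamma0} together with the uniform Haar bound and completeness of $\Gamma_{c}(\mathcal{F})$ in the inductive limit topology, to ordinary convolutions of honest continuous sections of $\mathcal{F}$. Once (i) and its right-handed analogue are available, parts (ii)--(iv) are essentially bookkeeping with the multiplier axioms and Fubini.
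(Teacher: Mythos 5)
Your treatment of (i) is essentially the paper's proof: reduce via Lemma \ref{lemma:gamma0} to sections of the form $d=gd'$ with $g\in\Gamma_{0}({\cal F}^{0})$, note that $cd=(cg)\ast d'$ is then an ordinary convolution of continuous sections, and pass to the limit in the inductive limit topology using the uniform bound supplied by the compact support of $c$. Parts (ii) and (iv) also follow the paper's (much terser) line; your device of writing $d(y^{-1}x)e=(d\tilde e)(y^{-1}x)$ for a section $\tilde e$ with $\tilde e(s(x))=e$ is precisely the reduction to (i) that the paper gestures at, and your explicit remark that a right-handed analogue of (i) is needed for strict continuity of $cd$ is a point the paper leaves implicit.

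The one step I would not accept as written is in (iii): the principle that \emph{an operator admitting a formal adjoint on a pre-Hilbert module extends boundedly to the completion} is false in general --- an unbounded symmetric operator preserving a dense subspace of a Hilbert space already gives a counterexample over $A=\complex$. Adjointability forces boundedness only for operators defined on the \emph{complete} module. So the computation $\langle cd|d'\rangle=\langle d|c^{*}d'\rangle$ on $\Gamma_{c}({\cal F})$ does not by itself produce $L_{{\cal M}({\cal F})}(c)\in{\cal L}(\Gamma^{2}({\cal F},\lambda^{-1}))$; you still need the routine norm estimate, e.g.\ $\|L_{{\cal M}({\cal F})}(c)\|\leq\max\big\{\sup_{u}\int_{G^{u}}\|c(y)\|\intd\lambda^{u}(y),\ \sup_{u}\int_{G_{u}}\|c(y)\|\intd\lambda^{-1}_{u}(y)\big\}$, exactly as in the construction of $L_{{\cal F}}$ itself (the paper alludes to such a ``routine norm estimate'' in the analogous crossed-product construction). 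This is a local and easily repaired flaw; once boundedness is secured, your verification of the multiplier property and of multiplicativity is correct.
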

\begin{proof}
  i) Define $cd\colon G \to {\cal F}$ as above, and let
  $\epsilon > 0$. Using Lemma \ref{lemma:gamma0}, we find a
  sequence $(g_{n})_{n}$ in the span of $\Gamma_{0}({\cal
    F}^{0})\Gamma_{c}({\cal F})$ that converges to $d$ in
  the inductive limit topology.  Since $\Gamma_{c}({\cal
    M}({\cal F}))\Gamma_{0}(\mathcal{ F}^{0}) \subseteq
  \Gamma_{c}({\cal F})$, the map $h_{n} \colon x \mapsto
  \int_{G^{r(x)}} c(y)g_{n}(y^{-1}x) \intd\lambda^{r(x)}(y)$
  lies in $\Gamma_{c}(\mathcal{ F})$ for each $n$. Using the
  fact that $c$ has compact support and bounded norm, one
  easily concludes that $(h_{n})_{n}$ converges in the
  inductive limit topology to $cd$ which therefore is in
  $\Gamma_{c}({\cal F})$.

  ii) Note that $(cd)(x)$ is well defined
  because the map $y \mapsto d(y^{-1}x)e$ is in
  $\Gamma_{c}(\mathcal{ F})$ and thus i) applies. Now, the assertion
  follows from standard arguments.

  iii) One easily verifies that there exists a
  representation $L_{\mathcal{ M}(\mathcal{ F})} \colon
  \Gamma_{c}(\mathcal{ M}(\mathcal{ F})) \to \mathcal{
    L}(\Gamma^{2}(\mathcal{ F}))$ such that $L_{\mathcal{
      M}(\mathcal{ F})}(c)d = cd$ for all $c \in
  \Gamma_{c}(\mathcal{ M}(\mathcal{ F}))$, $d \in
  \Gamma_{c}(\mathcal{ F})$, and that $L_{\mathcal{
      M}(\mathcal{ F})}(c)L_{\mathcal{ F}}(d)e =
  cde=L_\mathcal{ F}(cd)e$ for all $c \in
  \Gamma_{c}(\mathcal{ M}(\mathcal{ F}))$, $d,e \in
  \Gamma_{c}(\mathcal{ F})$.

  iv) This follows immediately from the fact that
  $\Gamma_{c}({\cal F})$ is closed under pointwise
  multiplication by elements of $C_{c}(G)$.
\end{proof}

\paragraph{Morphisms between Fell bundles}
Let $\mathcal{ F}$ and $\mathcal{ G}$ be Fell bundles on
$G$.
\begin{definition} \label{definition:fell-morphism} A {\em
    (fibrewise nondegenerate) morphism} from ${\cal F}$ to
  ${\cal G}$ is a continuous map $T \colon \mathcal{ F} \to
  \mathcal{M}(\mathcal{ G})$ that satisfies the following
  conditions:
  \begin{enumerate}
  \item for each $x \in G$, the map $T$ restricts to a
    linear map $T_{x} \colon \mathcal{ F}_{x} \to
    \mathcal{M( G)}_{x}$;
  \item $T(e_{1})T(e_{2})=T(e_{1}e_{2})$ and
    $T(e)^{*}=T(e^{*})$ for all $(e_{1},e_{2}) \in \FsrF$, $e \in \mathcal{ F}$;
  \item ${\cal G}_{x} = [T(\mathcal{ F}_{x})\mathcal{
      G}_{s(x)}]$ for each $x \in G^{0}$.
  \end{enumerate}
\end{definition}
Let $T$ be a morphism from ${\cal F}$ to ${\cal G}$. Then
$T_{u} \colon \mathcal{ F}_{u} \to \mathcal{M(G)}_{u}$ is a
nondegenerate $*$-homomorphism for each $u \in G^{0}$; in
particular, $\|T_{u}\|\leq 1$. One easily concludes that
$\|T_{x}\| \leq 1$ for each $x \in G$. Hence, the formula $f
\mapsto T\circ f$ defines $*$-homomorphisms $T_{*} \colon
\Gamma_{c}(\mathcal{ F}) \to \Gamma_{c}(\mathcal{M(G)})$ and
$T^{0}_{*} \colon \Gamma_{0}({\cal F}^{0}) \to
M(\Gamma_{0}({\cal G}^{0}))$.
\begin{proposition} \label{proposition:fell-morphism} 
  \begin{enumerate}
  \item $T^{0}_{*} \colon \Gamma_{0}({\cal F}^{0}) \to
    M(\Gamma_{0}({\cal G}^{0}))$ is  nondegenerate.
  \item $T_{*}(\Gamma_{c}(\mathcal{ F}))\Gamma_{c}(\mathcal{
      G}^{0})$ is dense in $\Gamma_{c}(\mathcal{ F})$.
  \item $T_{*}$ extends to a nondegenerate $*$-homomorphism
    $T_{*} \colon C^{*}_{r}({\cal F}) \to M(C^{*}_{r}({\cal
      G}))$.
  \end{enumerate}
\end{proposition}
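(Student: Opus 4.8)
My plan is to prove all three assertions as density/nondegeneracy statements, obtaining (i) and (ii) from Lemma \ref{lemma:submodule} applied to the bundles $\mathcal{G}^{0}\to G^{0}$ and $\mathcal{G}\to G$, and deducing (iii) from (i) and (ii) by realising $\Phi:=L_{\mathcal{M}(\mathcal{G})}\circ T_{*}$ as an induced representation. For (i), I would put $\Gamma':=\operatorname{span}\{\,T^{0}_{*}(g)h : g\in\Gamma_{c}(\mathcal{F}^{0}),\ h\in\Gamma_{c}(\mathcal{G}^{0})\,\}\subseteq\Gamma_{c}(\mathcal{G}^{0})$. It is closed under pointwise multiplication by $C_{c}(G^{0})$ because $(T^{0}_{*}(g)h)\varphi=T^{0}_{*}(g)(h\varphi)$, and its fibre at $u$ spans $[T_{u}(\mathcal{F}_{u})\mathcal{G}_{u}]$, which equals $\mathcal{G}_{u}$ by condition (iii) of Definition \ref{definition:fell-morphism} at $u$ (using that sections of a Banach bundle realise every fibre vector). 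Lemma \ref{lemma:submodule} then makes $\Gamma'$ dense in $\Gamma_{c}(\mathcal{G}^{0})$ for the inductive limit topology, hence norm-dense in $\Gamma_{0}(\mathcal{G}^{0})$, so $[T^{0}_{*}(\Gamma_{0}(\mathcal{F}^{0}))\Gamma_{0}(\mathcal{G}^{0})]=\Gamma_{0}(\mathcal{G}^{0})$.

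For (ii) I would argue the same way with $\Gamma':=\operatorname{span}\{\,T_{*}(f)h : f\in\Gamma_{c}(\mathcal{F}),\ h\in\Gamma_{c}(\mathcal{G}^{0})\,\}\subseteq\Gamma_{c}(\mathcal{G})$ (recall $\Gamma_{c}(\mathcal{M}(\mathcal{G}))\Gamma_{0}(\mathcal{G}^{0})\subseteq\Gamma_{c}(\mathcal{G})$). Again $(T_{*}(f)h)\varphi=T_{*}(f\varphi)h$ since each $T_{x}$ is linear, so the first hypothesis of Lemma \ref{lemma:submodule} holds; the fibre of $\Gamma'$ at $x$ spans $[T_{x}(\mathcal{F}_{x})\mathcal{G}_{s(x)}]$, and the one point that is not purely formal is that this equals $\mathcal{G}_{x}$ for \emph{every} $x\in G$, not just for $x\in G^{0}$. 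This is where the (fibrewise) nondegeneracy of the morphism is used: it follows from the case $x\in G^{0}$ together with the Fell-bundle identities $\mathcal{F}_{x}=[\mathcal{F}_{r(x)}\mathcal{F}_{x}]=[\mathcal{F}_{x}\mathcal{F}_{s(x)}]$ and $\mathcal{G}_{x}=[\mathcal{G}_{r(x)}\mathcal{G}_{x}]$ and the multiplicativity of $T$. Granting it, Lemma \ref{lemma:submodule} gives that $\Gamma'$ is dense in $\Gamma_{c}(\mathcal{G})$, which is the claim (the target is $\Gamma_{c}(\mathcal{G})$, not $\Gamma_{c}(\mathcal{F})$).

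For (iii), $\Phi=L_{\mathcal{M}(\mathcal{G})}\circ T_{*}\colon\Gamma_{c}(\mathcal{F})\to\mathcal{L}(\Gamma^{2}(\mathcal{G},\lambda^{-1}))$ is a $*$-homomorphism with $\Phi(f)L_{\mathcal{G}}(g)=L_{\mathcal{G}}(T_{*}(f)g)\in C^{*}_{r}(\mathcal{G})$ by Lemma \ref{lemma:fell-multiplier-sections}, so $\Phi(\Gamma_{c}(\mathcal{F}))$ multiplies $C^{*}_{r}(\mathcal{G})$ into itself. To bound $\Phi$ by the reduced norm of $\mathcal{F}$, I would form the interior tensor product Hilbert $\Gamma_{0}(\mathcal{G}^{0})$-module $X:=\Gamma^{2}(\mathcal{F},\lambda^{-1})\otimes_{T^{0}_{*}}\Gamma_{0}(\mathcal{G}^{0})$ and define $W\colon X\to\Gamma^{2}(\mathcal{G},\lambda^{-1})$ by $a\otimes b\mapsto T_{*}(a)b$ on elementary tensors. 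The identity $T_{x}(a(x))^{*}T_{x}(a'(x))=T_{s(x)}(a(x)^{*}a'(x))$ (multiplicativity of $T$), combined with pulling $T_{s(x)}$ through the fibrewise integral defining the inner product, shows that $W$ is isometric; part (ii) shows $W$ has dense range, so $W$ is unitary; and associativity of the convolution shows that $W$ intertwines $\Phi$ with the norm-decreasing action $T\mapsto T\otimes 1$ of $C^{*}_{r}(\mathcal{F})$ on $X$. Hence $\|\Phi(f)\|\le\|f\|_{C^{*}_{r}(\mathcal{F})}$, so $\Phi$ extends to $C^{*}_{r}(\mathcal{F})$ with image in $M(C^{*}_{r}(\mathcal{G}))$; and since $W$ is unitary and $C^{*}_{r}(\mathcal{F})$ acts nondegenerately on $\Gamma^{2}(\mathcal{F},\lambda^{-1})$, we get $[\Phi(C^{*}_{r}(\mathcal{F}))\Gamma^{2}(\mathcal{G},\lambda^{-1})]=\Gamma^{2}(\mathcal{G},\lambda^{-1})$, whence $\Phi$ is nondegenerate as a map into $M(C^{*}_{r}(\mathcal{G}))$, using that $C^{*}_{r}(\mathcal{G})$ itself acts nondegenerately on $\Gamma^{2}(\mathcal{G},\lambda^{-1})$.

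The step I expect to be the main obstacle is the fibrewise surjectivity $[T_{x}(\mathcal{F}_{x})\mathcal{G}_{s(x)}]=\mathcal{G}_{x}$ for non-unit $x$ in part (ii): it is the only point where the nondegeneracy of $T$ genuinely enters, and both the rest of (ii) and the construction of the unitary $W$ in (iii) rest on it. The remaining work — verifying the hypotheses of Lemma \ref{lemma:submodule}, passing between the inductive limit and norm topologies, and the associativity identities for the module and convolution products — is routine bundle-theoretic bookkeeping.
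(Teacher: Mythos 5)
Your overall route coincides with the paper's: parts i) and ii) are obtained from Lemma \ref{lemma:submodule} (together with Lemma \ref{lemma:fell-multiplier-sections}), and part iii) from the unitary $W$ — which is exactly the paper's $\Psi \colon \Gamma^{2}(\mathcal{F},\lambda^{-1}) \tr_{T^{0}_{*}} \Gamma_{0}(\mathcal{G}^{0}) \to \Gamma^{2}(\mathcal{G},\lambda^{-1})$ — so the architecture is right, and you correctly read ``dense in $\Gamma_{c}(\mathcal{F})$'' in ii) as a typo for $\Gamma_{c}(\mathcal{G})$.

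The one step you single out as the crux is, however, not established by the argument you sketch. From the unit case of Definition \ref{definition:fell-morphism} iii) and the identities $\mathcal{G}_{x}=[\mathcal{G}_{r(x)}\mathcal{G}_{x}]$, $\mathcal{G}_{r(x)}=[T(\mathcal{F}_{r(x)})\mathcal{G}_{r(x)}]$ you only get $\mathcal{G}_{x}=[T(\mathcal{F}_{r(x)})\mathcal{G}_{x}]$, where $T(\mathcal{F}_{r(x)})$ acts by multipliers of order $r(x)$; this chain never produces an element of $T(\mathcal{F}_{x})$ for a non-unit $x$. Likewise $\mathcal{F}_{x}=[\mathcal{F}_{x}\mathcal{F}_{s(x)}]$ only yields $[T(\mathcal{F}_{x})\mathcal{G}_{s(x)}]=[T(\mathcal{F}_{x})T(\mathcal{F}_{s(x)})\mathcal{G}_{s(x)}]$, which collapses back to the same space. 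And the implication is genuinely false under the literal reading of the definition: over a discrete group, let $\mathcal{F}$ be the sub-Fell-bundle of $\mathcal{G}$ with $\mathcal{F}_{e}=\mathcal{G}_{e}$ and $\mathcal{F}_{g}=0$ for $g\neq e$; the inclusion satisfies i)--iii) of Definition \ref{definition:fell-morphism} as printed, yet $T_{*}(\Gamma_{c}(\mathcal{F}))\Gamma_{c}(\mathcal{G}^{0})$ is supported on the unit fibre. The intended reading is that condition iii) of the definition holds for \emph{every} $x\in G$ (otherwise the $\mathcal{G}_{s(x)}$ appearing there would be redundant); with that reading your crucial identity is a hypothesis, not something to be derived, and you should state it as such rather than claim a deduction. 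A second, smaller point: at the end of iii), nondegeneracy into $M(C^{*}_{r}(\mathcal{G}))$ means $[T_{*}(C^{*}_{r}(\mathcal{F}))C^{*}_{r}(\mathcal{G})]=C^{*}_{r}(\mathcal{G})$, and this does not follow formally from nondegeneracy of the induced action on the Hilbert module $\Gamma^{2}(\mathcal{G},\lambda^{-1})$ (that only gives strong, not norm, convergence of $T_{*}(e_{i})b$ for $b\in C^{*}_{r}(\mathcal{G})$). The paper instead concludes directly from $[T_{*}(\Gamma_{c}(\mathcal{F}))\Gamma_{0}(\mathcal{G}^{0})\Gamma_{c}(\mathcal{G})]=[\Gamma_{c}(\mathcal{G})\Gamma_{c}(\mathcal{G})]=C^{*}_{r}(\mathcal{G})$, i.e.\ from part ii) together with Lemma \ref{lemma:gamma0}; you should finish the same way.
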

\begin{proof}
  i), ii) This follows immediately from Lemma
  \ref{lemma:submodule} and
  \ref{lemma:fell-multiplier-sections}. 

  iii) Part ii) and a straightforward calculation show that
  there exists a unitary $\Psi \colon \Gamma^{2}({\cal
    F},\lambda^{-1}) \tr_{T^{0}_{*}} \Gamma_{0}({\cal
    G}^{0}) \to \Gamma^{2}({\cal G},\lambda^{-1})$ such that
  $(\Psi(f \tr g))(x) = T_{*}(f)g$ for all $f \in
  \Gamma_{c}({\cal F})$, $g \in \Gamma_{0}({\cal G}^{0})$.
  The map $C^{*}_{r}({\cal F}) \to {\cal L}(\Gamma^{2}({\cal
    G},\lambda^{-1}))$ given by $f \mapsto \Psi(f \tr
  \Id)\Psi^{*}$ is the desired extension. Lemma
  \ref{lemma:gamma0} and part ii)
  imply that $[T_{*}(\Gamma_{c}({\cal F}))\Gamma_{c}({\cal
    G})] = [T_{*}(\Gamma_{c}({\cal F}))\Gamma_{0}({\cal
    G}^{0})\Gamma_{c}({\cal G})]=[\Gamma_{c}({\cal
    G})\Gamma_{c}({\cal G})] = C^{*}_{r}({\cal G})$.
\end{proof}

\section{From Fell bundles on $G$ to  coactions of
  $C^{*}_{r}(G)$}

\label{section:fell}

Let $G$ be a groupoid, $V$ the associated
$C^{*}$-pseudo-multiplicative unitary, and $C_{r}^{*}(G)$
or, more precisely, $(\cA,\Delta)$ the associated Hopf
$C^{*}$-bimodule as in Section
\ref{section:kac-groupoid}. We relate Fell bundles on $G$ to
coactions of $C^{*}_{r}(G)$ as follows.  Let ${\cal F}$ be
an admissible Fell bundle ${\cal F}$ on $G$.  We shall
construct a coaction of $C^{*}_{r}(G)$ on $C^{*}_{r}({\cal
  F})$ which is unitarily implemented by a representation of
$V$, and identify the reduced crossed product of this
coaction with the reduced $C^{*}$-algebra of another Fell
bundle. Finally, we show that this construction is
functorial. 

Recall that a {\em representation of unitary $V$} is a
$C^{*}$-$(\frakb,\frakbo)$-module $\cKhd$ together with a
unitary $X \colon \rHsource \to \rHrange$ that satisfies
$X(\gamma \lt \alpha) = \gamma \rt \alpha$, $X(\hdelta \rt
\beta) = \hdelta \lt \beta$, $ X(\hdelta \rt \hbeta) =
\gamma \rt \hbeta$, and $X_{12}X_{13}V_{23}=V_{23}X_{12}$
\cite[Definition 4.1]{timmermann:cpmu-hopf}.  We construct a
coaction out of such a representation as follows.
\begin{lemma} \label{lemma:rep-coaction} Let $(\cKhd,X)$ be
  a representation of $V$, let $ C^{\gamma}_{K}$ be a
  $C^{*}$-$\frakb$-algebra such that
  $[C,\rho_{\hbeta}(\frakB)] = 0$, define $\delta \colon C
  \to {\cal L}(\rHrange)$ by $c \mapsto X(c \botensor
  \Id)X^{*}$, and assume that
  $[\delta(C)\kgamma{1}A]\subseteq [\kgamma{1}A]$ and
  $[\delta(C)\kbeta{2}]\subseteq [\kbeta{2}C]$. Then
  $\delta$ is injective, a morphism from $(K_{\gamma},C)$ to
  $(\rHrange_{\alpha}, C \fib{\gamma}{\frakb}{\beta} A)$,
  and a coaction of $(\cA,\Delta)$ on $C^{\gamma}_{K}$. If
  the inclusions above are equalities, then $\delta$ is
  left- or right-full, respectively.
\end{lemma}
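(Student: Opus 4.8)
The plan is to read off every assertion from the explicit formula $\delta(c) = X(c \botensor \Id)X^{*}$ together with the defining relations of a representation of $V$. First I would note that $c \botensor \Id \in \mathcal{L}(\rHsource)$ is well defined for every $c \in C$ by the hypothesis on $C$, and that $c \mapsto c \botensor \Id$ is an injective $*$-homomorphism: slicing the second leg gives $\langle\eta|_{2}(c \botensor \Id)|\eta'\rangle_{2} = \rho_{\hdelta}(\eta^{*}\eta')c$ for all $\eta,\eta' \in \alpha$, so $c \botensor \Id = 0$ forces $\rho_{\hdelta}(\frakB)c = [\rho_{\hdelta}(\alpha^{*}\alpha)]c = 0$ and hence $c = 0$ by nondegeneracy of $\rho_{\hdelta}$. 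Since $X$ is unitary, $\delta$ is therefore an injective $*$-homomorphism with $\delta(c)^{*} = X(c^{*} \botensor \Id)X^{*} = \delta(c^{*})$.

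Next I would check that $\delta(C) \subseteq C \fib{\gamma}{\frakb}{\beta} A = \Ind_{[\kgamma{1}A]}(A) \cap \Ind_{\kbeta{2}}(C)$: the two hypothesised inclusions say exactly that each $\delta(c)$ carries $[\kgamma{1}A]$ into itself and $\kbeta{2}$ into $[\kbeta{2}C]$, and the same applied to $\delta(c)^{*} = \delta(c^{*})$ places $\delta(c)$ in both $\Ind$-algebras. For the morphism property I would use, for each $\eta \in \alpha$, the operator $T_{\eta} := X|\eta\rangle_{2} \colon K \to \rHrange$: from $(c \botensor \Id)|\eta\rangle_{2} = |\eta\rangle_{2}c$ one gets $\delta(c)T_{\eta} = T_{\eta}c$, so $T_{\eta} \in \mathcal{L}^{\delta}$; the relation $X(\gamma \lt \alpha) = \gamma \rt \alpha$ with $\gamma \lt \alpha = [\kalpha{2}\gamma]$ gives $T_{\eta}\gamma \subseteq \gamma \rt \alpha$ and $[X\kalpha{2}\gamma] = \gamma \rt \alpha$ (the leg of the target module), while $T_{\eta}^{*}(\gamma \rt \alpha) = \langle\eta|_{2}X^{*}(\gamma \rt \alpha) = \langle\eta|_{2}(\gamma \lt \alpha) \subseteq [\rho_{\hdelta}(\frakB)\gamma] = \gamma$. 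Thus the $T_{\eta}$ lie in $\mathcal{L}^{\delta}$, are module morphisms, and span densely, so $\delta$ is a morphism of $C^{*}$-$\frakb$-algebras, hence in particular a semi-morphism.

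The heart of the matter is coassociativity, $(\delta \ast \Id) \circ \delta = (\Id \ast \Delta) \circ \delta$, which is meaningful precisely because $\delta(C) \subseteq C \fib{\gamma}{\frakb}{\beta} A$. Working on the triple relative tensor product $K \rtensor{\gamma}{\frakb}{\beta} \Hrange$ (identified via the associativity isomorphism \eqref{eq:rtp-associative}) and in leg notation, I would unwind the definitions of the slice maps $\ast$ together with $\delta(c) = X(c \botensor \Id)X^{*}$ and $\Delta(a) = V(a \botensor 1)V^{*}$ to get $(\Id \ast \Delta)(\delta(c)) = \Ad_{V_{23}X_{12}}(c \botensor \Id \botensor \Id)$ and $(\delta \ast \Id)(\delta(c)) = \Ad_{X_{12}X_{13}}(c \botensor \Id \botensor \Id)$. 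Since $c \botensor \Id \botensor \Id$ acts on the first leg alone it commutes with $V_{23}$, and the representation pentagon $X_{12}X_{13}V_{23} = V_{23}X_{12}$ gives $V_{23}X_{12}V_{23}^{*} = X_{12}X_{13}$; hence $\Ad_{V_{23}X_{12}}(c \botensor \Id \botensor \Id) = \Ad_{X_{12}X_{13}V_{23}}(c \botensor \Id \botensor \Id) = \Ad_{X_{12}X_{13}}(c \botensor \Id \botensor \Id)$, and coassociativity follows. This is the exact analogue of the Baaj--Skandalis computation in the Hopf $C^{*}$-algebra case, and I expect the only genuine difficulty to lie in the bookkeeping of which relative-tensor factor each operator acts on while unwinding the slice maps and the associativity isomorphism. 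Finally, left- and right-fullness are immediate: if the first inclusion above is an equality then $[\delta(C)\kgamma{1}A] = [\kgamma{1}A]$ is by definition left-fullness, and if the second is an equality then $[\delta(C)\kbeta{2}] = [\kbeta{2}C]$ is by definition right-fullness.
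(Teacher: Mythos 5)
Your proposal is correct and follows essentially the same route as the paper: injectivity via the unitarity of $X$ and faithfulness of $c\mapsto c\botensor\Id$, the morphism property via the intertwiners $X|\eta\rangle_{2}$ together with the relation $X(\gamma\lt\alpha)=\gamma\rt\alpha$, and coassociativity from the pentagon-type identity $X_{12}X_{13}V_{23}=V_{23}X_{12}$ combined with the commutation of the first leg with $V_{23}$ (which the paper outsources to a diagram in the reference on $C^{*}$-pseudo-multiplicative unitaries). You in fact supply more detail than the published proof, and your reading of the commutation hypothesis as referring to $\rho_{\hdelta}$ is the intended one.
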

\begin{proof}
  Evidently, $\delta$ is injective.  It is a morphism of
  $C^{*}$-$\frakb$-algebras  because
  $X|\xi\rangle_{2}c = \delta(c)X|\xi\rangle_{2}$ for each
  $\xi \in \alpha$, $c \in C$ and because $[X\kalpha{2}\gamma] =
  \gamma \rt \alpha$ and $[(X \kalpha{2})^{*}(\gamma \rt
  \alpha)] = [\balpha{2}(\gamma \lt \alpha)]=\gamma$.  A
  similar diagram as in \cite[Proof of Lemma
  3.13]{timmermann:cpmu-hopf} shows that $(\delta \ast
  \Id)(\delta(c)) = (\Id \ast \Delta)(\delta(c))$ for
  each $c \in C$.  
\end{proof}
\paragraph{The  representation of $V$ associated to ${\cal F}$}
Denote by ${\cal W}={\cal W}(\Gamma_{0}({\cal F}^{0}))$
 the set of all  $C_{0}(G^{0})$-weights on
$\Gamma_{0}(\mathcal{ F}^{0})$ and let $\phi \in {\cal W}$.
\begin{lemma} \label{lemma:groupoid-bundle-continuous} Let
  $c,d \in \Gamma_{c}(\mathcal{ F})$. Then the map $x
  \mapsto \phi_{s(x)}(c(x)^{*}d(x))$ lies in $C_{c}(G)$.
\end{lemma}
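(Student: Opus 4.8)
The plan is to handle compact support and continuity separately. For compact support, note that $c(x)^{\ast}d(x)$ vanishes whenever $c(x)=0$, so the set where the function $F\colon x\mapsto \phi_{s(x)}(c(x)^{\ast}d(x))$ is nonzero is contained in $\supp c$, which is compact; it therefore suffices to prove that $F$ is continuous, and then $F\in C_{c}(G)$. The first step toward continuity is to observe that $g\colon x\mapsto c(x)^{\ast}d(x)$ is a continuous section over $G$ of $\mathcal{F}^{0}$ pulled back along $s$: the assignments $x\mapsto c(x)$, $x\mapsto d(x)$ are continuous sections of $\mathcal{F}$ and the involution of $\mathcal{F}$ is continuous, so $x\mapsto (c(x)^{\ast},d(x))$ is a continuous map into $\FsrF$ (here $p(c(x)^{\ast})=x^{-1}$ and $p(d(x))=x$ do satisfy $s(x^{-1})=r(x)$); composing with the continuous multiplication $\FsrF\to\mathcal{F}$ gives a continuous $g$ with $g(x)\in\mathcal{F}_{s(x)}=\mathcal{F}^{0}_{s(x)}$ for all $x$.

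To prove continuity of $F$ at a point $x_{0}\in G$, I would approximate $g$ locally by a genuine section of $\mathcal{F}^{0}$. Fix $\epsilon>0$; since the evaluation $\Gamma_{0}(\mathcal{F}^{0})\to\mathcal{F}^{0}_{s(x_{0})}$, $a\mapsto a(s(x_{0}))$, is surjective, pick $a\in\Gamma_{0}(\mathcal{F}^{0})$ with $a(s(x_{0}))=g(x_{0})$. The map $x\mapsto a(s(x))-g(x)$ is a continuous section over $G$ of $\mathcal{F}^{0}$ pulled back along $s$ (using continuity of $s$, of $a$, and of the fibrewise subtraction on $\mathcal{F}^{0}$), its norm is upper semicontinuous because $\mathcal{F}^{0}$ is an upper semicontinuous Banach bundle, and it vanishes at $x_{0}$, so it is $<\epsilon$ on some neighbourhood $U$ of $x_{0}$. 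Since $\phi$ is bounded (as recalled from \cite[Lemma 5.1]{lance}) and each fibre functional satisfies $\|\phi_{u}\|\le\|\phi\|$, we get for all $x\in U$
\[
  |F(x)-\phi(a)(s(x))| = \bigl|\phi_{s(x)}\bigl(g(x)-a(s(x))\bigr)\bigr| \le \|\phi\|\,\epsilon ,
\]
where $\phi(a)(s(x))=\phi_{s(x)}(a(s(x)))$. The function $x\mapsto\phi(a)(s(x))$ is continuous, being $\phi(a)\in C_{0}(G^{0})$ composed with the continuous source map $s$, so on $U$ the function $F$ lies within $\|\phi\|\epsilon$ of a continuous function; hence $\limsup_{x\to x_{0}}|F(x)-F(x_{0})|\le 2\|\phi\|\epsilon$. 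Letting $\epsilon\to 0$ shows $F$ is continuous at $x_{0}$, and as $x_{0}$ was arbitrary, $F\in C_{c}(G)$.

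The section manipulations and the bound $\|\phi_{u}\|\le\|\phi\|$ are routine (for $b\in\mathcal{F}^{0}_{u}$ choose a section $a'$ with $a'(u)=b$ and $\|a'\|$ arbitrarily close to $\|b\|$, so $|\phi_{u}(b)|=|\phi(a')(u)|\le\|\phi\|\,\|a'\|$). The only genuine obstacle is the passage from the globally defined $C_{0}(G^{0})$-weight $\phi$ on $\Gamma_{0}(\mathcal{F}^{0})$ to the pointwise evaluations $\phi_{s(x)}$ applied to the merely locally defined section $g$, and this is exactly what the local approximation together with upper semicontinuity of the bundle norm is designed to resolve.
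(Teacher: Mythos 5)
Your proof is correct and follows essentially the same route as the paper: both reduce the problem to continuity of $x\mapsto c(x)^{*}d(x)$ as a compactly supported section of $s^{*}\mathcal{F}^{0}$ followed by continuity of the evaluation $f\mapsto\phi_{p(f)}(f)$ on $\mathcal{F}^{0}$. The paper simply asserts the latter continuity, whereas you prove it via local approximation by a global section and upper semicontinuity of the bundle norm; this is exactly the argument the paper leaves implicit.
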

\begin{proof}
  The function $G \to s^{*}\mathcal{ F}^{0}$ given by $x
  \mapsto c(x)^{*}d(x)$ is continuous and has compact
  support, and the composition $h\colon x \mapsto
  \phi_{s(x)}(c(x)^{*}d(x))$ is continuous because the map
  $\mathcal{ F}^{0} \to \complex$ given by $f \mapsto
  \phi_{p(f)}(f)$ is continuous. 
\end{proof}
Define Hilbert $C^{*}$-$C_{0}(G^{0})$-modules
$\Gamma^{2}(\mathcal{ F},\lambda;\phi)$,
$\Gamma^{2}(\mathcal{ F},\lambda^{-1};\phi)$ and a Hilbert
space $K_{\phi}=\Gamma^{2}(\mathcal{ F},\nu;\phi)$ as
the respective completions of $\Gamma_{c}(\mathcal{ F})$,
where for all $c,d \in \Gamma_{c}(\mathcal{ F})$, $f \in
C_{0}(G^{0})$, the inner product $\langle c|d\rangle$ and
the product $cf$ are given by
\begin{align*}
u &\mapsto \int_{G^{u}} \phi_{s(x)}
 (c(x)^{*}d(x)) \intd \lambda^{u}(x), & y &\mapsto
 c(y)f(r(y)) && \text{in case of } \Gamma^{2}(\mathcal{
   F},\lambda;\phi), \\
u &\mapsto \int_{G_{u}}
 \phi_{s(x)}(c(x)^{*}d(x)) \intd \lambda_{u}^{-1}(x), &
y &\mapsto c(y)f(s(y)) && \text{in case of }
 \Gamma^{2}(\mathcal{ F},\lambda^{-1};\phi), \\
 &\text{and }  \int_{G} \phi_{s(x)} (c(x)^{*}d(x))
 \intd \nu(x) &&&&\text{in case of } \Gamma^{2}(\mathcal{
   F},\nu;\phi).
\end{align*}
\begin{lemma} \label{lemma:fell-full} $[\langle
  E|E\rangle]=[\phi(\Gamma_{0}({\cal F}^{0}))]$ for $E \in
  \{\Gamma^{2}({\cal F},\lambda,\phi), \Gamma^{2}({\cal
    F},\lambda^{-1};\phi)\}$.
\end{lemma}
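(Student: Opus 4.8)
The plan is to reduce the statement, for both choices of $E$, to the single observation that (up to an application of $\phi$) the inner product of $\Gamma^{2}(\mathcal{F},\lambda^{\pm1};\phi)$ agrees with the $\Gamma_{0}(\mathcal{F}^{0})$-valued inner product of $C^{*}_{r}(\mathcal{F})$, together with the fact that the latter module is full. First I would treat $E=\Gamma^{2}(\mathcal{F},\lambda^{-1};\phi)$. For $c,d\in\Gamma_{c}(\mathcal{F})$ and $u\in G^{0}$, every $x\in G_{u}$ satisfies $s(x)=u$, so $c(x)^{*}d(x)\in\mathcal{F}_{u}=\mathcal{F}^{0}_{u}$ and $\phi_{s(x)}=\phi_{u}$; pulling the bounded functional $\phi_{u}$ through the integral,
\begin{align*}
  \langle c|d\rangle(u) &= \int_{G_{u}}\phi_{u}\big(c(x)^{*}d(x)\big)\intd\lambda^{-1}_{u}(x)\\
  &= \phi_{u}\Big(\int_{G_{u}}c(x)^{*}d(x)\intd\lambda^{-1}_{u}(x)\Big)=\phi(c^{*}d)(u),
\end{align*}
where $c^{*}d\in\Gamma_{c}(\mathcal{F}^{0})\subseteq\Gamma_{0}(\mathcal{F}^{0})$ denotes the $\Gamma_{0}(\mathcal{F}^{0})$-valued inner product of $c$ and $d$ in $C^{*}_{r}(\mathcal{F})$. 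Thus $\langle c|d\rangle=\phi(c^{*}d)$ for all $c,d$, and therefore $[\langle E|E\rangle]=[\phi(\Gamma_{c}(\mathcal{F})^{*}\Gamma_{c}(\mathcal{F}))]$.

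Next I would show that $[\Gamma_{c}(\mathcal{F})^{*}\Gamma_{c}(\mathcal{F})]=\Gamma_{0}(\mathcal{F}^{0})$, i.e. that $C^{*}_{r}(\mathcal{F})$ is full as a Hilbert $\Gamma_{0}(\mathcal{F}^{0})$-module. For this I would apply Lemma \ref{lemma:submodule} to the subspace $\Gamma':=\lspan\{c^{*}d:c,d\in\Gamma_{c}(\mathcal{F})\}\subseteq\Gamma_{c}(\mathcal{F}^{0})$. It is closed under pointwise multiplication by $C_{c}(G^{0})$, since $(c^{*}d)f=c^{*}(df)$ and $df\in\Gamma_{c}(\mathcal{F})$; and for fixed $u$ the set $\{h(u):h\in\Gamma'\}$ is dense in $\mathcal{F}^{0}_{u}$. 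Indeed, given $0\le a\in\mathcal{F}_{u}$, choose $c_{0}\in\Gamma_{c}(\mathcal{F})$ with $c_{0}(u)=a^{1/2}$ (an upper semicontinuous Banach bundle over a locally compact Hausdorff space has enough sections), pick $g\in C_{c}(G)$ supported in a small neighbourhood of the unit $u$ with $\int_{G_{u}}g^{2}\intd\lambda^{-1}_{u}=1$, and put $c:=gc_{0}$; then $(c^{*}c)(u)=\int_{G_{u}}g(x)^{2}c_{0}(x)^{*}c_{0}(x)\intd\lambda^{-1}_{u}(x)$ lies in $\mathcal{F}_{u}$ and converges to $a$ as $\supp g$ shrinks to $u$, because $x\mapsto c_{0}(x)^{*}c_{0}(x)$ is norm-continuous from $G_{u}$ into $\mathcal{F}_{u}$ with value $a$ at $u$. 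Polarization then gives density of $\lspan\{(c^{*}d)(u)\}$ in $\mathcal{F}^{0}_{u}$, so Lemma \ref{lemma:submodule} applies and $\Gamma'$ is dense in $\Gamma_{c}(\mathcal{F}^{0})$, hence in $\Gamma_{0}(\mathcal{F}^{0})$. Since $\phi$ is bounded, this yields $[\phi(\Gamma_{c}(\mathcal{F})^{*}\Gamma_{c}(\mathcal{F}))]=[\phi(\Gamma_{0}(\mathcal{F}^{0}))]$, which settles the case $E=\Gamma^{2}(\mathcal{F},\lambda^{-1};\phi)$.

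For $E=\Gamma^{2}(\mathcal{F},\lambda;\phi)$ I would run the same argument after substituting $x\mapsto x^{-1}$ in the defining integral — which turns $\int_{G^{u}}(\cdots)\intd\lambda^{u}$ into $\int_{G_{u}}(\cdots)\intd\lambda^{-1}_{u}$ — and concentrating the approximating sections near the unit space, so that again $[\langle E|E\rangle]=[\phi(\Gamma_{0}(\mathcal{F}^{0}))]$. The main obstacle throughout is the density statement $[\Gamma_{c}(\mathcal{F})^{*}\Gamma_{c}(\mathcal{F})]=\Gamma_{0}(\mathcal{F}^{0})$: it rests on the existence of enough continuous sections of $\mathcal{F}$ together with the approximate-unit estimate above, and once this is in hand the remaining verifications — continuity of the integrands (Lemma \ref{lemma:groupoid-bundle-continuous}), $C_{0}(G^{0})$-linearity of the inner product in its second variable, boundedness of $\phi$, and the passage between closed linear spans — are routine.
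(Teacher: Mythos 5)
Your treatment of $E=\Gamma^{2}(\mathcal{F},\lambda^{-1};\phi)$ is correct and in fact more structural than the paper's argument: you identify $\langle c|d\rangle_{E}=\phi(\langle c|d\rangle_{\Gamma^{2}(\mathcal{F},\lambda^{-1})})$ (legitimate, since every $x\in G_{u}$ has $s(x)=u$, so the bounded functional $\phi_{u}$ passes through the integral) and then prove fullness of $\Gamma^{2}(\mathcal{F},\lambda^{-1})$ over $\Gamma_{0}(\mathcal{F}^{0})$ by an approximate-unit argument. The paper instead observes that both sides are closed ideals of $C_{0}(G^{0})$ and compares the sets of points where they do not vanish, using only the value of the integrand at the unit $x=u$ itself together with the full support of $\lambda^{u}$ and $\lambda^{-1}_{u}$; your route buys the stronger statement that $\Gamma^{2}(\mathcal{F},\lambda^{-1})$ is full, which the paper needs elsewhere anyway.

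The case $E=\Gamma^{2}(\mathcal{F},\lambda;\phi)$, however, does not reduce to the same computation, and this is a genuine gap. Substituting $x\mapsto x^{-1}$ turns $\int_{G^{u}}\phi_{s(x)}(c(x)^{*}d(x))\intd\lambda^{u}(x)$ into $\int_{G_{u}}\phi_{r(y)}(c(y^{-1})^{*}d(y^{-1}))\intd\lambda^{-1}_{u}(y)$, and now the weight is evaluated at the varying point $r(y)$, $y\in G_{u}$, so it cannot be pulled out of the integral; the identity $\langle c|d\rangle=\phi(\text{section of }\mathcal{F}^{0})$ is simply not available. Worse, the inclusion $[\langle E|E\rangle]\subseteq[\phi(\Gamma_{0}(\mathcal{F}^{0}))]$ can genuinely fail here: both sides are $C_{0}$ of open subsets of $G^{0}$, the right-hand side corresponding to $Y=\{u\mid\phi_{u}\neq 0\}$ and the left-hand side to the saturation $r(s^{-1}(Y))$; for the pair groupoid on two points with the trivial line bundle and $\phi=(\mathrm{id},0)$ these differ. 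What your idea of concentrating sections near $G^{0}$ (and the paper's own proof) does deliver is the inclusion $[\phi(\Gamma_{0}(\mathcal{F}^{0}))]\subseteq[\langle E|E\rangle]$, and that is the only inclusion used downstream: in Lemma \ref{lemma:groupoid-bundle-module} one sums over all $\phi$ and invokes Lemma \ref{lemma:cx-weights} to get $[\gamma^{*}\gamma]=C_{0}(G^{0})$. So for the $\lambda$ case you should either prove only that inclusion or impose a hypothesis forcing $Y$ to be invariant; the equality you claim cannot be obtained by the argument you sketch.
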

\begin{proof}
  Assume that $(\phi(c^{*}c))(u) \neq 0$ for some $c \in
  \Gamma_{c}({\cal F}^{0})$, $u \in G^{0}$. Choose $d \in
  \Gamma_{c}({\cal F})$ such that $d|_{G^{0}}=c$. Then the
  function on $G$ given by $x \mapsto
  \phi_{s(x)}(d(x)^{*}d(x))$ is non-negative and nonzero at
  $u$, whence $\langle d|d\rangle_{E}(u) \neq 0$. Now, the
  assertion follows because $[\langle E|E\rangle]$ and
  $[\phi(\Gamma_{0}({\cal F}^{0}))]$ are closed ideals in
  $C_{0}(G^{0})$.
\end{proof}
Let $K=\bigoplus_{\phi \in {\cal W}} K_{\phi}$ and identify
each $K_{\phi}$ with a subspace of $K$.  Given $c \in
\Gamma_{c}({\cal F})$ and $f \in C_{0}(G^{0})$, define $fc,
cf, cD^{-1/2} \in \Gamma_{c}({\cal F})$ by
\begin{align*}
  fc &\colon x \mapsto f(r(x))c(x), & cf &\colon x \mapsto
  c(x)f(s(x)), & cD^{-1/2} &\colon x \mapsto
  c(x)D^{-1/2}(x).
\end{align*}
Let $\phi \in {\cal W}$. Straightforward calculations show
that there exist maps
\begin{align*}
  j_{\phi} &\colon \Gamma^{2}(\mathcal{
  F},\lambda;\phi) \to \mathcal{ L}(\frakK,K_{\phi}) &&
\text{and} &
\hat j_{\phi} &\colon \Gamma^{2}(\mathcal{
  F},\lambda^{-1};\phi) \to \mathcal{ L}(\frakK,K_{\phi})
\end{align*}
such that
$j_{\phi}(c)f = fc$ and $\hat j_{\phi}(c)f =
(cD^{-1/2})f$ for all $c \in \Gamma_{c}(\mathcal{F})$, $f
\in C_{c}(G^{0})$, and 
\begin{align*}
  j_{\phi}(c)^{*}j_{\phi}(d)&=\langle
c|d\rangle_{\Gamma^{2}(\mathcal{F},\lambda;\phi)}, & \hat
j_{\phi}(c)^{*} \hat j_{\phi}(d)&=\langle
c|d\rangle_{\Gamma^{2}(\mathcal{F},\lambda^{-1};\phi)}
\text{for
all } c,d \in \Gamma_{c}(\mathcal{F}).
\end{align*}
Denote by $\gamma \subseteq {\cal L}(\frakK,K)$ and $\hdelta
\subseteq {\cal L}(\frakK,K)$ the closed linear span of all
subspaces $j_{\phi}(\Gamma^{2}(\mathcal{ F},\lambda;\phi))$
and $\hat j_{\phi}(\Gamma^{2}(\mathcal{
  F},\lambda^{-1};\phi))$, respectively, where $\phi \in
{\cal W}$.  Lemmas \ref{lemma:cx-weights} and
\ref{lemma:fell-full}  imply:
  \begin{lemma} \label{lemma:groupoid-bundle-module} $\cKhd$
    is a $C^{*}$-$(\frakb,\frakbo)$-module and
    $\rho_{\gamma}(f)(c_{\phi})_{\phi} = (fc_{\phi})_{\phi}$
    and $\rho_{\hdelta}(f)(c_{\phi})_{\phi} =
    (c_{\phi}f)_{\phi}$ for all $f\in C_{0}(G^{0})$,
    $(c_{\phi})_{\phi} \in \bigoplus_{\phi} \Gamma_{c}({\cal
      F}) \subseteq K$. \qed
\end{lemma}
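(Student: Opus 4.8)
The plan is to verify directly the axioms of a $C^{*}$-$(\frakb,\frakbo)$-module for $\cKhd$ stated in Section \ref{section:rtp}, using Lemmas \ref{lemma:cx-weights} and \ref{lemma:fell-full} for the norm-closure conditions and elementary partition-of-unity arguments for the rest. In the present situation $\frakB=\frakBo=C_{0}(G^{0})$, so $\frakbo=\frakb$, and it suffices to show: that $(K,\gamma)$ and $(K,\hdelta)$ are $C^{*}$-$\frakb$-modules; that $\rho_{\gamma}$ and $\rho_{\hdelta}$ are given by the asserted formulas; and that $[\rho_{\gamma}(C_{0}(G^{0}))\hdelta]=\hdelta$ and $[\rho_{\hdelta}(C_{0}(G^{0}))\gamma]=\gamma$. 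Throughout I treat $\gamma$ and $\hdelta$ in parallel; the only difference is the pullback along $r$ versus $s$ and the twist by $D^{-1/2}$ built into $\hat j_{\phi}$.

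First, the identity $j_{\phi}(c)^{*}j_{\phi}(c)=\langle c|c\rangle_{\Gamma^{2}(\mathcal{F},\lambda;\phi)}$ shows that each $j_{\phi}$ is isometric, so $\gamma=[\bigcup_{\phi}j_{\phi}(\Gamma_{c}(\mathcal{F}))]$, and likewise for $\hdelta$. Since $K_{\phi}\perp K_{\phi'}$ for $\phi\neq\phi'$, we get $j_{\phi}(c)^{*}j_{\phi'}(d)=0$ unless $\phi=\phi'$, whence $[\gamma^{*}\gamma]=[\bigcup_{\phi}\langle\Gamma^{2}(\mathcal{F},\lambda;\phi)|\Gamma^{2}(\mathcal{F},\lambda;\phi)\rangle]$; by Lemma \ref{lemma:fell-full} this equals $[\bigcup_{\phi}\phi(\Gamma_{0}(\mathcal{F}^{0}))]$, and by Lemma \ref{lemma:cx-weights} the latter is $C_{0}(G^{0})=\frakB$. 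The same computation with $\lambda^{-1}$ in place of $\lambda$ gives $[\hdelta^{*}\hdelta]=\frakBo$. For the remaining two module conditions one checks, for $c\in\Gamma_{c}(\mathcal{F})$ and $f\in C_{c}(G^{0})$, the identities $j_{\phi}(c)f=j_{\phi}(fc)$ with $(fc)(x)=f(r(x))c(x)$ and $\hat j_{\phi}(c)f=\hat j_{\phi}(cf)$ with $(cf)(x)=c(x)f(s(x))$; by density and continuity these give $[\gamma C_{0}(G^{0})]\subseteq\gamma$ and $[\hdelta C_{0}(G^{0})]\subseteq\hdelta$, while choosing $f$ equal to $1$ on a neighbourhood of $r(\supp c)$ resp.\ $s(\supp c)$ makes $fc=c$ resp.\ $cf=c$, which yields the reverse inclusions and, since then $j_{\phi}(c)f=c$ resp.\ $\hat j_{\phi}(c)f=cD^{-1/2}$ and $\Gamma_{c}(\mathcal{F})$ is dense in each $K_{\phi}$, also $[\gamma\frakK]=K=[\hdelta\frakK]$.

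To identify the two representations, recall that $\rho_{\alpha}$ is characterised by $\rho_{\alpha}(b^{\dagger})(\xi\zeta)=\xi b^{\dagger}\zeta$. Taking $\xi=j_{\phi}(c)$ with $c\in\Gamma_{c}(\mathcal{F})$, $\zeta\in C_{c}(G^{0})$, and writing out the section, $\rho_{\gamma}(f)(j_{\phi}(c)\zeta)=j_{\phi}(c)(f\zeta)$ is the section $x\mapsto f(r(x))\,(j_{\phi}(c)\zeta)(x)$; such vectors are dense in $K_{\phi}$ (indeed $j_{\phi}(c)\zeta=c$ when $\zeta\equiv 1$ near $r(\supp c)$), so $\rho_{\gamma}(f)$ acts on each $K_{\phi}$ as multiplication by $f\circ r$, i.e.\ $\rho_{\gamma}(f)(c_{\phi})_{\phi}=(fc_{\phi})_{\phi}$. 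Likewise $\rho_{\hdelta}(f)(\hat j_{\phi}(c)\zeta)=\hat j_{\phi}(c)(f\zeta)$ is the section $x\mapsto f(s(x))\,(\hat j_{\phi}(c)\zeta)(x)$, so $\rho_{\hdelta}(f)$ is multiplication by $f\circ s$ and $\rho_{\hdelta}(f)(c_{\phi})_{\phi}=(c_{\phi}f)_{\phi}$, as claimed. Finally, $\rho_{\gamma}(f)\hat j_{\phi}(c)$ sends $\zeta$ to the section $x\mapsto f(r(x))c(x)D^{-1/2}(x)\zeta(s(x))$, that is, $\rho_{\gamma}(f)\hat j_{\phi}(c)=\hat j_{\phi}(fc)\in\hdelta$, and taking $f\equiv 1$ near $r(\supp c)$ gives $[\rho_{\gamma}(C_{0}(G^{0}))\hdelta]=\hdelta$; a symmetric computation with $\rho_{\hdelta}(f)j_{\phi}(c)=j_{\phi}(cf)$ gives $[\rho_{\hdelta}(C_{0}(G^{0}))\gamma]=\gamma$.

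The only step requiring genuine input rather than bookkeeping is the pair of closure identities $[\gamma^{*}\gamma]=\frakB$ and $[\hdelta^{*}\hdelta]=\frakBo$, where the fullness of the $C_{0}(G^{0})$-weights (Lemma \ref{lemma:fell-full}) together with their faithfulness and joint nondegeneracy (Lemma \ref{lemma:cx-weights}) are precisely what is used; everything else is routine manipulation of sections, the main thing to watch being the consistent tracking of the pullbacks along $r$ and $s$ and of the twist by $D^{-1/2}$.
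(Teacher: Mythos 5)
Your proof is correct and is exactly the argument the paper intends: the paper's own "proof" consists solely of the remark that Lemmas \ref{lemma:cx-weights} and \ref{lemma:fell-full} imply the statement, and your write-up uses precisely those two lemmas for the closure identities $[\gamma^{*}\gamma]=\frakB$ and $[\hdelta^{*}\hdelta]=\frakBo$ while handling the remaining module axioms, the identification of $\rho_{\gamma}$ and $\rho_{\hdelta}$, and the compatibility conditions by the routine section manipulations the paper leaves implicit. No discrepancies.
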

For $t=s,r$, denote by $p_{1}^{t,r} \colon \GtrG \to G$ the
projection onto the first component, by $\mathcal{
  F}^{2}_{t,r}=(p_{1}^{t,r})^{*}\mathcal{ F}$ the
corresponding pull-back of $\mathcal{ F}$, and by
$\Gamma^{2}(\mathcal{ F}^{2}_{t,r},\nu^{2}_{t,r};\phi)$
the Hilbert space that is the completion of
$\Gamma_{c}(\mathcal{ F}^{2}_{t,r})$ with respect to the
inner product
 \begin{align*}
   \langle c|d\rangle &= \int_{\GtrG}
   \phi_{s(x)}(c(x,y)^{*}d(x,y)) \intd  \nu^{2}_{t,r}(x,y).
 \end{align*}
Straightforward calculations show that there
   exist unitaries
   \begin{align*}
     \Phi\colon \rHsource &\to \bigoplus_{\phi \in {\cal W}}
     \Gamma^{2}(\mathcal{ F}^{2}_{s,r},
     \nu^{2}_{s,r};\phi), & \Psi \colon \rHrange &\to
     \bigoplus_{\phi \in {\cal W}} \Gamma^{2} (\mathcal{
       F}^{2}_{r,r},\nu^{2}_{r,r};\phi),
   \end{align*}
   such that for all $\phi \in {\cal W}$, $c \in
   \Gamma_{c}(\mathcal{ F})$, $f \in C_{c}(G^{0})$, $g
   \in C_{c}(G)$,
\begin{itemize}
\item $\Phi\big(\hat j_{\phi}(c) \tr f \tl
  j(g)\big)$ is in $\Gamma^{2}({\cal
    F}^{2}_{s,r}; \nu^{2}_{s,r};\phi)$ and  given by
  $(x,y)\mapsto ((cD^{-1/2})f)(x) g(y)$,
\item $\Psi\big(j_{\phi}(c) \tr f \tl j(g)\big)$ is
  in $\Gamma^{2}({\cal F}^{2}_{r,r};
  \nu^{2}_{r,r};\phi)$ and given by $(x,y) \mapsto (fc)(x)
 g(y)$.
  \end{itemize}
  We use the isomorphisms above without further notice. If
  $(T_{\phi})_{\phi}$ is a norm-bounded family of operators
  between Hilbert spaces $(H^{1}_{\phi})_{\phi}$ and
  $(H^{2}_{\phi})_{\phi}$, we denote by $\bigoplus_{\phi}
  T_{\phi} \in {\cal L}(\bigoplus_{\phi} H_{\phi}^{1},
  \bigoplus_{\phi} H_{\phi}^{2})$ the operator given by
  $(\xi_{\phi})_{\phi} \mapsto (T_{\phi}\xi_{\phi})_{\phi}$.
  Similar arguments as those used for the construction of
  $V$ in \cite[Theorem 2.7]{timmermann:cpmu-hopf} show:
  \begin{proposition} \label{proposition:groupoid-bundle-rep}
    For each $\phi \in {\cal W}$, there exists a unitary
    $X_{\phi} \colon \Gamma^{2}({\cal
      F}^{2}_{s,r},\nu^{2}_{s,r};\phi) \to \Gamma^{2}({\cal
      F}^{2}_{r,r},\nu^{2}_{r,r};\phi)$ such that
    $(X_{\phi}f)(x,y) =f(x,x^{-1}y)$ for all $f \in
    \Gamma_{c}({\cal F}^{2}_{s,r})$, $(x,y) \in \GrrG$. The
    pair $(\cKhd,\bigoplus_{\phi} X_{\phi})$ is a
    representation of $V$. \qed
  \end{proposition}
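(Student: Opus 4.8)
The plan is to build $X_{\phi}$ by the same procedure used for $V$ itself in \cite[Theorem 2.7]{timmermann:cpmu-hopf}, now with $\mathcal{F}$-valued sections in place of scalar $L^{2}$-functions and with an extra weight $\phi$, and then to read the representation axioms off the explicit formulas. First I would construct $X_{\phi}$. The shear map $\GrrG \to \GsrG$, $(x,y) \mapsto (x,x^{-1}y)$, is a homeomorphism commuting with the projections onto the first coordinate; since $\mathcal{F}^{2}_{s,r}$ and $\mathcal{F}^{2}_{r,r}$ are both pull-backs of $\mathcal{F}$ along these projections, the rule $f \mapsto \big((x,y) \mapsto f(x,x^{-1}y)\big)$ defines a linear bijection $\Gamma_{c}(\mathcal{F}^{2}_{s,r}) \to \Gamma_{c}(\mathcal{F}^{2}_{r,r})$ carrying continuous compactly supported sections to continuous compactly supported sections. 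Substituting $y \mapsto x^{-1}y$ in the innermost integral and using left-invariance of $\lambda$ shows that this map is isometric for the inner products defining $\Gamma^{2}(\mathcal{F}^{2}_{s,r},\nu^{2}_{s,r};\phi)$ and $\Gamma^{2}(\mathcal{F}^{2}_{r,r},\nu^{2}_{r,r};\phi)$; the inverse shear $(x,y)\mapsto(x,xy)$ provides the inverse isometry, so the map extends to the claimed unitary $X_{\phi}$. As for $V$, the factors $D^{-1/2}$ are absorbed into $\hat j_{\phi}$ through $\Phi$, so $X_{\phi}$ itself is given by the clean formula $(X_{\phi}f)(x,y) = f(x,x^{-1}y)$. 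Setting $X := \bigoplus_{\phi} X_{\phi}$ then yields a unitary $\rHsource \to \rHrange$ after the identifications $\Phi$, $\Psi$.

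Next I would verify the axioms of a representation of $V$ from \cite[Definition 4.1]{timmermann:cpmu-hopf}. The three intertwining relations $X(\gamma \lt \alpha) = \gamma \rt \alpha$, $X(\hdelta \rt \beta) = \hdelta \lt \beta$, $X(\hdelta \rt \hbeta) = \gamma \rt \hbeta$ are precisely the analogues of the relations \eqref{eq:pmu-intertwine} for $V$, with the structure subspaces $\gamma,\hdelta$ on the first leg in place of $\alpha,\hbeta$; I would check them by evaluating $X$ on vectors of the form $\hat j_{\phi}(c) \tr f \tl j(g)$ and on their counterparts built from $j_{\phi}$ and $j$, using the explicit descriptions of $\gamma,\hdelta,\alpha,\beta,\hbeta$ from Lemma \ref{lemma:groupoid-bundle-module} together with the formulas for $X_{\phi},\Phi,\Psi$ — the manipulations are exactly the Haar-system computations that establish \eqref{eq:pmu-intertwine} in \cite[Theorem 2.7]{timmermann:cpmu-hopf}. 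For the pentagon relation $X_{12}X_{13}V_{23} = V_{23}X_{12}$, I would pass to the dense subset of elementary tensors on the triple fibre product; representing such a vector, after the relevant associativity and $\Phi,\Psi$-type identifications, as a compactly supported section $F$ over $\GsrGsrG$ with fibre $\mathcal{F}_{x}$ over $(x,y,z)$, one computes that $X_{12}X_{13}V_{23}F$ and $V_{23}X_{12}F$ both equal $(x,y,z) \mapsto F(x,x^{-1}y,y^{-1}z)$, using $(x^{-1}y)^{-1}x^{-1}z = y^{-1}z$. This is the same substitution computation as for the groupoid $V$ of \eqref{eq:groupoid-kac}, now with the first leg carrying $\mathcal{F}$-valued sections. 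Hence $(\cKhd, X)$ is a representation of $V$.

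The step I expect to be most laborious — though not conceptually hard — is the bookkeeping in the second paragraph: carrying the weight $\phi$ and the $\mathcal{F}$-valued sections through the several relative tensor products and keeping the various structure subspaces $\gamma\lt\alpha$, $\hdelta\rt\hbeta$, and so on straight while verifying the intertwining relations. Everything there runs in parallel with the scalar computation in \cite[Theorem 2.7]{timmermann:cpmu-hopf}, and the only genuinely new inputs are the two elementary observations already used above, namely that the shear-pulled-back sections are again continuous and compactly supported (because the shear commutes with the first-coordinate projections along which $\mathcal{F}^{2}_{s,r}$ and $\mathcal{F}^{2}_{r,r}$ are pulled back from $\mathcal{F}$) and that the weighted inner products are preserved (the left-invariance computation).
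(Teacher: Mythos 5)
Your proposal is correct and follows essentially the same route as the paper, which simply remarks that ``similar arguments as those used for the construction of $V$ in \cite[Theorem 2.7]{timmermann:cpmu-hopf}'' establish the result; your shear-map construction, the left-invariance computation for unitarity, and the substitution $F(x,y,z)\mapsto F(x,x^{-1}y,y^{-1}z)$ for the relation $X_{12}X_{13}V_{23}=V_{23}X_{12}$ are exactly the intended fleshing-out of that remark.
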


\paragraph{The  coaction of $C^{*}_{r}(G)$ on
  $C^{*}_{r}({\cal F})$}  
We apply Lemma \ref{lemma:rep-coaction} to the
representation $(\cKhd,X)$ and obtain a coaction of
$C^{*}_{r}(G)$ on $C^{*}_{r}({\cal F})$ as follows. 
\begin{lemma} \label{lemma:groupoid-algebra-phi} Let $\phi
  \in {\cal W}$.  There exists a representation $\pi_{\phi}
  \colon C^{*}_{r}(\mathcal{ F}) \to \mathcal{ L}(K_{\phi})$
  such that for all $c,d \in \Gamma_{c}({\cal F})$, $x \in
  G$,
  \begin{align*}
    (\pi_{\phi}(c)d)(x) &=\int_{G^{r(x)}} c(z)d(z^{-1}x)
    D^{-1/2}(z) \intd\lambda^{r(x)}(z)
  \end{align*}
  and $\pi_{\phi}(c)\hat j_{\phi}(d)=\hat j_{\phi}(cd)$ and
  $\pi_{\phi}(c) \rho_{\gamma}(f) = \pi_{\phi}(c f)$ for all
  $c,d \in \Gamma_{c}(\mathcal{ F})$, $f \in C_{0}(G^{0})$.
\end{lemma}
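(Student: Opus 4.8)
The plan is to realize $K_\phi$ as an internal tensor product of Hilbert modules on which $C^{*}_{r}(\mathcal F)$ acts by left convolution, and then transport that action through one explicit unitary. Write $\iota_\mu\colon C_{0}(G^{0})\to\mathcal L(L^{2}(G^{0},\mu))$ for the representation by multiplication operators and put $\phi_\mu:=\iota_\mu\circ\phi\colon\Gamma_{0}(\mathcal F^{0})\to\mathcal L(L^{2}(G^{0},\mu))$, which is completely positive \cite[Theorem 3.9]{paulsen} and bounded \cite[Lemma 5.1]{lance}, and form the Hilbert space $E_\phi:=\Gamma^{2}(\mathcal F,\lambda^{-1})\tr_{\phi_\mu}L^{2}(G^{0},\mu)$ as in Section \ref{section:introduction}. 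The first step is to exhibit a unitary $W\colon E_\phi\to K_\phi$ characterised by requiring $W(d\tr_{\phi_\mu}\zeta)$ to be the section $x\mapsto d(x)\zeta(s(x))D^{-1/2}(x)$ for $d\in\Gamma_{c}(\mathcal F)$ and $\zeta\in C_{c}(G^{0})$. That $W$ is isometric is a short computation from $\langle d|d'\rangle_{\Gamma^{2}(\mathcal F,\lambda^{-1})}(u)=\int_{G_{u}}d(x)^{*}d'(x)\intd\lambda^{-1}_{u}(x)$, the identity $\intd\nu=D\intd\nu^{-1}$, and $s(x)=u$ on $G_{u}$: the factor $D^{-1}$ produced by the two copies of $D^{-1/2}$ turns the $\nu$-integral defining the inner product of $K_\phi$ into a $\nu^{-1}$-integral. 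The range of $W$ is dense because $D^{-1/2}$ is a nowhere-vanishing Borel function and $\Gamma_{c}(\mathcal F)$ is dense in $K_\phi$, by the same $L^{2}$-approximation argument underlying the definitions of $\hat j_\phi$ and of $V$ in Section \ref{section:kac-groupoid}.

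Next, since any $a\in C^{*}_{r}(\mathcal F)\subseteq\mathcal L(\Gamma^{2}(\mathcal F,\lambda^{-1}))$ and $\Id_{L^{2}(G^{0},\mu)}$ lies in the commutant of $\phi_\mu(\Gamma_{0}(\mathcal F^{0}))$, the operator $a\tr_{\phi_\mu}\Id\in\mathcal L(E_\phi)$ exists by \cite[Proposition 1.34]{echter}, and $a\mapsto a\tr_{\phi_\mu}\Id$ is a $*$-homomorphism. Define $\pi_\phi(a):=W(a\tr_{\phi_\mu}\Id)W^{*}$; this is visibly a $*$-representation of $C^{*}_{r}(\mathcal F)$ on $K_\phi$, so it only remains to verify the three formulas, and for this everything reduces to testing on the dense set of vectors $W(d\tr_{\phi_\mu}\zeta)$ with $d\in\Gamma_{c}(\mathcal F)$, $\zeta\in C_{c}(G^{0})$, together with the identity $L_{\mathcal F}(c)d=cd$, i.e. $(cd)(x)=\int_{G^{r(x)}}c(y)d(y^{-1}x)\intd\lambda^{r(x)}(y)$.

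For the first formula one computes $\pi_\phi(c)W(d\tr_{\phi_\mu}\zeta)=W((cd)\tr_{\phi_\mu}\zeta)$, the section $x\mapsto(cd)(x)\zeta(s(x))D^{-1/2}(x)$, and separately evaluates the claimed integral operator on $g:=W(d\tr_{\phi_\mu}\zeta)$; using $s(z^{-1}x)=s(x)$ and the homomorphism property of $D$ (valid since $D$ is chosen Borel homomorphic), one gets $D^{-1/2}(z^{-1}x)D^{-1/2}(z)=D^{-1/2}(x)$, and the two expressions coincide. A routine norm estimate, exactly as in the construction of $V$ in \cite[Theorem 2.7]{timmermann:cpmu-hopf}, shows that the right-hand side of the displayed formula defines a bounded operator on $K_\phi$; since it agrees with $\pi_\phi(c)$ on the dense subspace just described, it equals $\pi_\phi(c)$, and in particular the formula holds for $g\in\Gamma_{c}(\mathcal F)$. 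For the second, note that $\hat j_\phi(d)f$ is precisely the vector $W(d\tr_{\phi_\mu}f)$ for $f\in C_{c}(G^{0})$, so $\pi_\phi(c)\hat j_\phi(d)f=W((cd)\tr_{\phi_\mu}f)=\hat j_\phi(cd)f$. For the third, $\rho_\gamma(f)$ acts on $K_\phi$ by pointwise multiplication by $f\circ r$ (Lemma \ref{lemma:groupoid-bundle-module}); inserting this into the integral formula and using $r(z^{-1}x)=s(z)$ turns $c(z)f(r(z^{-1}x))$ into $(cf)(z)=c(z)f(s(z))$, which yields $\pi_\phi(cf)$.

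The only real work is checking that $W$ is a well-defined unitary and reconciling the abstract operator $a\tr_{\phi_\mu}\Id$ with the concrete integral kernel; once $W$ is in place the boundedness of $\pi_\phi$ is automatic. The point to keep in mind is that $D^{-1/2}$ is merely Borel, so the sections $W(d\tr_{\phi_\mu}\zeta)$ need not lie in $\Gamma_{c}(\mathcal F)$; this causes no trouble because $K_\phi$ is an $L^{2}$-completion, just as in the handling of $\hat j$ and $V$ in Section \ref{section:kac-groupoid}.
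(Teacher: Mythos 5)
Your proof is correct and follows essentially the same route as the paper: the paper's (very terse) proof likewise identifies $K_{\phi}$ with $\Gamma^{2}(\mathcal{F},\lambda^{-1}) \tr_{\phi} L^{2}(G^{0},\mu)$ via $c \tr f \mapsto \hat j_{\phi}(c)f$ --- which is exactly your unitary $W$ --- and defines $\pi_{\phi}(c) = c \tr_{\phi} \Id$. You have merely made explicit the isometry computation (the cancellation of $D^{-1}$ against $\intd\nu/\intd\nu^{-1}$) and the verification of the three formulas, which the paper leaves to the reader.
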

\begin{proof}
  Identify $ \Gamma^{2}(\mathcal{ F},\lambda^{-1})
  \tr_{\phi} L^{2}(G^{0},\mu) $ with $K_{\phi}$ via $c \tr f
  \mapsto \hat j_{\phi}(c)f$ for all $c \in \Gamma_{c}({\cal
    F})$, $f \in C_{c}(G^{0})$, and define $\pi_{\phi}$ by
  $c \mapsto c \tr_{\phi} \Id$.  
\end{proof}
Define $\pi \colon C^{*}_{r}({\cal F}) \to {\cal L}(K)$ by
$c \mapsto \bigoplus_{\phi} \pi_{\phi}(c)$.  Lemmas
\ref{lemma:cx-weights} and \ref{lemma:groupoid-algebra-phi}
imply:
\begin{lemma} \label{lemma:groupoid-algebra} The
  representation $\pi$ is faithful, $\pi(C^{*}_{r}(\mathcal{
    F}))^{\gamma}_{K}$ is a $C^{*}$-$\frakb$-algebra, and
  $[\pi(C^{*}_{r}(\mathcal{ F}))\hdelta]=\hdelta$. \qed
\end{lemma}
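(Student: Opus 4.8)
The plan is to treat the three assertions in turn, each time reducing to the fibrewise statements collected in Lemma~\ref{lemma:groupoid-algebra-phi} together with Lemma~\ref{lemma:cx-weights}.

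First, for faithfulness I would note that each $\pi_{\phi}$ is a $*$-homomorphism, hence contractive, so $\pi=\bigoplus_{\phi}\pi_{\phi}$ is a well-defined $*$-homomorphism into $\mathcal{L}(K)$ and it suffices to show $\ker\pi=0$. By construction $C^{*}_{r}(\mathcal{F})$ acts faithfully on $E:=\Gamma^{2}(\mathcal{F},\lambda^{-1})$, and in the proof of Lemma~\ref{lemma:groupoid-algebra-phi} one identifies $K_{\phi}$ with $E\tr_{\phi}\frakK$ in such a way that $\pi_{\phi}(a)=a\tr_{\phi}\Id$. Given $a\neq 0$, pick $e\in E$ with $\langle ae|ae\rangle\neq 0$ in $\Gamma_{0}(\mathcal{F}^{0})$; by the first part of Lemma~\ref{lemma:cx-weights} there is $\phi\in\mathcal{W}$ with $\phi(\langle ae|ae\rangle)\neq 0$ in $C_{0}(G^{0})\subseteq\mathcal{L}(\frakK)$, and then $\|\pi_{\phi}(a)(e\tr_{\phi}\zeta)\|^{2}=\langle\zeta|\phi(\langle ae|ae\rangle)\zeta\rangle>0$ for a suitable $\zeta\in\frakK$. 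Hence $\pi_{\phi}(a)\neq 0$, so $\pi(a)\neq 0$ and $\pi$ is isometric.

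Since $\pi$ is isometric, $\pi(C^{*}_{r}(\mathcal{F}))$ is a $C^{*}$-subalgebra of $\mathcal{L}(K)$, and $(K,\gamma)$ is a $C^{*}$-$\frakb$-module by Lemma~\ref{lemma:groupoid-bundle-module}; so for the $C^{*}$-$\frakb$-algebra property it only remains to check $\rho_{\gamma}(C_{0}(G^{0}))\,\pi(C^{*}_{r}(\mathcal{F}))\subseteq\pi(C^{*}_{r}(\mathcal{F}))$. Taking adjoints in the identity $\pi_{\phi}(c)\rho_{\gamma}(f)=\pi_{\phi}(cf)$ of Lemma~\ref{lemma:groupoid-algebra-phi} and using that $\rho_{\gamma}$ is a $*$-representation of the abelian algebra $C_{0}(G^{0})$, one gets $\rho_{\gamma}(f)\pi_{\phi}(c)=\pi_{\phi}(fc)$ for all $c\in\Gamma_{c}(\mathcal{F})$, $f\in C_{0}(G^{0})$, where $(fc)(x)=f(r(x))c(x)$ again lies in $\Gamma_{c}(\mathcal{F})$. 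Summing over $\phi$ and using that $\pi$ is isometric, the map $c\mapsto fc$ is $\|\cdot\|_{C^{*}_{r}(\mathcal{F})}$-bounded, so this relation extends by continuity in $c$ to all of $C^{*}_{r}(\mathcal{F})$, giving $\rho_{\gamma}(f)\pi(a)\in\pi(C^{*}_{r}(\mathcal{F}))$ for every $a$.

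For the last assertion, recall that $\hdelta=[\,\bigcup_{\phi}\hat{j}_{\phi}(\Gamma^{2}(\mathcal{F},\lambda^{-1};\phi))\,]$ and that $\Gamma_{c}(\mathcal{F})$ is dense in each $\Gamma^{2}(\mathcal{F},\lambda^{-1};\phi)$. The relation $\pi_{\phi}(c)\hat{j}_{\phi}(d)=\hat{j}_{\phi}(cd)$ of Lemma~\ref{lemma:groupoid-algebra-phi}, with $cd$ the convolution product, gives at once $[\pi(C^{*}_{r}(\mathcal{F}))\hdelta]\subseteq\hdelta$; for the reverse inclusion one needs that $C^{*}_{r}(\mathcal{F})$ acts non-degenerately on $\Gamma^{2}(\mathcal{F},\lambda^{-1})$, equivalently that $\Gamma_{c}(\mathcal{F})\Gamma_{c}(\mathcal{F})$ is dense in $\Gamma_{c}(\mathcal{F})$ for the inductive limit topology, so that $[\pi_{\phi}(C^{*}_{r}(\mathcal{F}))\hat{j}_{\phi}(\Gamma_{c}(\mathcal{F}))]$ is dense in $\hat{j}_{\phi}(\Gamma^{2}(\mathcal{F},\lambda^{-1};\phi))$. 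This non-degeneracy is the step I expect to be the main obstacle: it is standard but not quite formal, and would follow from the relation $\mathcal{F}_{x}=[\mathcal{F}_{r(x)}\mathcal{F}_{x}]$ recorded in Section~\ref{section:prelim} via an approximate-unit argument for the convolution $*$-algebra $\Gamma_{c}(\mathcal{F})$, built from approximate units of the fibre $C^{*}$-algebras $\mathcal{F}_{u}$ and bump functions on $G$ concentrating on $G^{0}$ (and Lemma~\ref{lemma:submodule}); everything else in the proof is formal.
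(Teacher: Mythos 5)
Your proof is correct and is exactly the intended argument: the paper derives this lemma directly from Lemmas \ref{lemma:cx-weights} and \ref{lemma:groupoid-algebra-phi} with no further detail, and your three steps (separating points via the weights, transporting the module relation $\pi_{\phi}(c)\rho_{\gamma}(f)=\pi_{\phi}(cf)$ through adjoints and continuity, and reducing the last equality to nondegeneracy of the convolution action) are the natural way to fill them in. The one point you flag --- that $[\Gamma_{c}(\mathcal{F})\ast\Gamma_{c}(\mathcal{F})]$ is dense in $\Gamma^{2}(\mathcal{F},\lambda^{-1})$ --- is indeed the only non-formal step, and the approximate-unit argument you sketch (approximate units of the fibre $C^{*}$-algebras $\mathcal{F}_{u}$ cut down by bump functions of $\lambda$-integral close to $1$ concentrated near $G^{0}$, using $[\mathcal{F}_{r(x)}\mathcal{F}_{x}]=\mathcal{F}_{x}$ and a compactness/partition-of-unity argument to get uniformity, so that $e_{i}\ast d\to d$ in the inductive limit topology and hence in each $\Gamma^{2}(\mathcal{F},\lambda^{-1};\phi)$-norm) is the standard and correct way to establish it.
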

Define $\delta \colon \pi(C^{*}_{r}({\cal F})) \to {\cal
  L}(\rHrange)$ by $\pi(c) \mapsto X(\pi(c) \botensor
\Id)X^{*}$. Then $\delta(\pi(c))=\bigoplus_{\phi}
\delta(\pi(c))_{\phi}$ for each $c \in C^{*}_{r}({\cal F})$,
where $\delta(\pi(c))_{\phi} \in {\cal L}(\Gamma^{2}({\cal
  F}^{2}_{r,r},\nu^{2}_{r,r};\phi))$ acts as follows.
\begin{lemma} \label{lemma:fell-delta-formula}
  For all $c
  \in \Gamma_{c}(\mathcal{ F})$,  $\phi \in
  {\cal W}$, $d \in \Gamma_{c}(\mathcal{ F}^2_{r,r})$,
  $(x,y) \in \GrrG$,
  \begin{align*} 
    \big(\delta(\pi(c))_{\phi}d\big)(x,y) &= \int_{G^{r(x)}}
    c(z)d(z^{-1}x,z^{-1}y) D^{-1/2}(z) \intd
    \lambda^{r(x)}(z).
  \end{align*}
\end{lemma}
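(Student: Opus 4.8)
The plan is to unwind the definition $\delta(\pi(c))=X(\pi(c)\botensor\Id)X^{*}$ one summand at a time. Since $X=\bigoplus_{\phi}X_{\phi}$ and the relative tensor product $\rHsource=K\rtensor{\hdelta}{\frakbo}{\alpha}H$ is compatible with the decomposition $K=\bigoplus_{\phi}K_{\phi}$, one gets $\delta(\pi(c))_{\phi}=X_{\phi}(\pi_{\phi}(c)\botensor\Id)X_{\phi}^{*}$, where $\pi_{\phi}(c)\botensor\Id$ acts on the $\phi$-summand of $\rHsource$, identified via $\Phi$ with $\Gamma^{2}(\mathcal{F}^{2}_{s,r},\nu^{2}_{s,r};\phi)$. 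So it suffices to evaluate this composition of three operators on $d\in\Gamma_{c}(\mathcal{F}^{2}_{r,r})$.

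The key step is to record the explicit form of $\pi_{\phi}(c)\botensor\Id$ on $\Gamma_{c}(\mathcal{F}^{2}_{s,r})$, namely
\[
\big((\pi_{\phi}(c)\botensor\Id)w\big)(x,y)=\int_{G^{r(x)}}c(z)\,w(z^{-1}x,y)\,D^{-1/2}(z)\intd\lambda^{r(x)}(z)\qquad((x,y)\in\GsrG).
\]
This I would prove exactly as Lemma \ref{lemma:groupoid-algebra-phi} and the construction of $V$ in \cite[Theorem 2.7]{timmermann:cpmu-hopf}: using associativity of the relative tensor product and the identification $\frakK\tl\alpha\cong H$, the $\phi$-summand of $\rHsource$ becomes $\Gamma^{2}(\mathcal{F},\lambda^{-1})\tr_{\phi}H$, under which $\pi_{\phi}(c)$ in the first leg becomes left convolution by $c$ while the $H$-variable is carried along passively; the $D^{-1/2}(z)$-weight then appears via a partition-of-unity argument together with $D(z)D(z^{-1}x)=D(x)$, just as in the proof of Lemma \ref{lemma:groupoid-algebra-phi}. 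I would also note that $X_{\phi}$ is unitary with $(X_{\phi}v)(x,y)=v(x,x^{-1}y)$ for $v\in\Gamma_{c}(\mathcal{F}^{2}_{s,r})$ and $(x,y)\in\GrrG$, so that $X_{\phi}^{*}=X_{\phi}^{-1}$ and $(X_{\phi}^{*}d)(x,t)=d(x,xt)$ for $d\in\Gamma_{c}(\mathcal{F}^{2}_{r,r})$, $(x,t)\in\GsrG$.

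Composing the three maps and using the groupoid cancellation $z^{-1}x\cdot x^{-1}y=z^{-1}y$ then gives, for $d\in\Gamma_{c}(\mathcal{F}^{2}_{r,r})$ and $(x,y)\in\GrrG$,
\[
\big(\delta(\pi(c))_{\phi}d\big)(x,y)=\big((\pi_{\phi}(c)\botensor\Id)(X_{\phi}^{*}d)\big)(x,x^{-1}y)=\int_{G^{r(x)}}c(z)\,d(z^{-1}x,z^{-1}y)\,D^{-1/2}(z)\intd\lambda^{r(x)}(z),
\]
which is the claimed formula. I expect the main obstacle to be the first step: faithfully matching the abstractly defined operator $\pi_{\phi}(c)\botensor\Id$ on the relative tensor product with the concrete convolution-in-the-first-variable on $\Gamma_{c}(\mathcal{F}^{2}_{s,r})$, i.e.\ keeping track of the unitary identifications $\Phi$, $\hat j_{\phi}$ and $j$ simultaneously and checking that the formula holds on all of $\Gamma_{c}(\mathcal{F}^{2}_{s,r})$ rather than merely on a dense set of elementary tensors; once that bookkeeping is done, the remaining manipulations are routine substitutions.
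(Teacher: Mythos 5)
Your proposal is correct and follows essentially the route the paper intends: the paper's own proof is just the remark that the verification is "straightforward and similar to the calculation of the comultiplication $\Delta$ on $C^{*}_{r}(G)$", which amounts to exactly your computation of $X_{\phi}(\pi_{\phi}(c)\botensor\Id)X_{\phi}^{*}$ on sections, with the $D^{-1/2}$-weighted convolution in the first leg coming from Lemma \ref{lemma:groupoid-algebra-phi} and the identification $\Phi$. Your bookkeeping (including the cancellation $z^{-1}x\cdot x^{-1}y=z^{-1}y$ and the density/continuity argument needed to pass from elementary tensors to all of $\Gamma_{c}(\mathcal{F}^{2}_{s,r})$) checks out.
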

\begin{proof}
  The verification is straightforward and similar to the calculation of the
  comultiplication $\Delta$ on $C^{*}_{r}(G)$; see
  \cite[Theorem 3.22]{timmermann:cpmu-hopf}.
\end{proof}
\begin{theorem} \label{theorem:groupoid-bundle-coaction}
  $(\pi(C^{*}_{r}(\mathcal{ F}))^{\gamma}_{K},\delta)$ is a
  very fine and left-full coaction of $C^{*}_{r}(G)$.
\end{theorem}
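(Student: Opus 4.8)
The plan is to obtain $\delta$ as the coaction attached by Lemma~\ref{lemma:rep-coaction} to the representation $(\cKhd,X)$ of $V$ built in Proposition~\ref{proposition:groupoid-bundle-rep}, and then to strengthen the conclusion of that lemma to \emph{very fine} and \emph{left-full}. First I would check the hypotheses of Lemma~\ref{lemma:rep-coaction} with $C:=\pi(C^{*}_{r}(\mathcal{F}))$ and $A:=C^{*}_{r}(G)$: that $\pi(C^{*}_{r}(\mathcal{F}))^{\gamma}_{K}$ is a $C^{*}$-$\frakb$-algebra is Lemma~\ref{lemma:groupoid-algebra}, and the relation $[C,\rho_{\hdelta}(\frakB)]=0$ is immediate from the convolution formula in Lemma~\ref{lemma:groupoid-algebra-phi} together with the identity $s(z^{-1}x)=s(x)$, which shows that left convolution commutes with pointwise multiplication by functions of $s$. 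It then remains to verify the two inclusions $[\delta(C)\kgamma{1}A]\subseteq[\kgamma{1}A]$ and $[\delta(C)\kbeta{2}]\subseteq[\kbeta{2}C]$. I would in fact prove that both are equalities: the inclusions are exactly what Lemma~\ref{lemma:rep-coaction} needs in order to conclude that $\delta$ is an injective morphism and a coaction of $C^{*}_{r}(G)$ on $\pi(C^{*}_{r}(\mathcal{F}))^{\gamma}_{K}$, while the equalities yield, by the same lemma, that $\delta$ is \emph{left-full} and \emph{right-full}.

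The two fullness equalities are the computational heart of the proof. Using the explicit formula of Lemma~\ref{lemma:groupoid-algebra-phi} for $\pi_{\phi}$, the formula of Lemma~\ref{lemma:fell-delta-formula} for $\delta(\pi(c))_{\phi}$, the descriptions of $\kgamma{1}$, $\kbeta{2}$ and of $A=C^{*}_{r}(G)$ from Section~\ref{section:kac-groupoid}, and the unitaries $\Phi,\Psi$ introduced above, one writes each side of each identity as the closed linear span of an explicit family of sections of $\mathcal{F}^{2}_{r,r}$ over $\GrrG$ (uniformly in $\phi\in\mathcal{W}$). The equality of these spans then follows from a groupoid change of variables $w\mapsto w^{-1}x$, which carries $\lambda^{r(x)}$ to $\lambda^{-1}_{s(x)}$, together with the density criterion of Lemma~\ref{lemma:submodule} applied in the inductive limit topology (the roles of Lemmas~\ref{lemma:cx-weights}, \ref{lemma:fell-full}, \ref{lemma:gamma0} being to guarantee that sufficiently many weights and sections are available). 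This is the step I expect to be the main obstacle: no single estimate is hard, but one must juggle the relative tensor product identifications, the direct sum over $\mathcal{W}$, and the measure-theoretic bookkeeping simultaneously. The whole computation runs exactly parallel to the verification of the corresponding statements for the comultiplication $\Delta$ on $C^{*}_{r}(G)$ in \cite[Theorem~3.22]{timmermann:cpmu-hopf}, and I would organize the argument so as to reuse that pattern rather than repeat it in full.

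It then remains to promote \emph{right-full} to \emph{fine} and \emph{fine} to \emph{very fine}. For \emph{fine} one still needs $[\rho_{\gamma}(\frakBo)C]=C$: since $\rho_{\gamma}(f)\pi(c)=\pi(fc)$ for $f\in C_{0}(G^{0})=\frakBo$ (again because $r(z)=r(x)$ on $G^{r(x)}$), and since the sections $fc$ with $f\in C_{c}(G^{0})$, $c\in\Gamma_{c}(\mathcal{F})$ are dense in $\Gamma_{c}(\mathcal{F})$ by Lemma~\ref{lemma:submodule}, the claim follows after passing to closures. Together with injectivity and the morphism property from Lemma~\ref{lemma:rep-coaction} and right-fullness from the preceding paragraph, this shows $\delta$ is fine.

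Finally, for \emph{very fine} I would exhibit the morphism $\delta^{-1}$ directly from the representation. For each $\xi$ in $\alpha=\beta=j(L^{2}(G,\lambda))$ the operator $\langle\xi|_{2}X^{*}\in\mathcal{L}(\rHrange,K)$ satisfies $\langle\xi|_{2}X^{*}\delta(c)=c\,\langle\xi|_{2}X^{*}$ for all $c\in C$ — this is the adjoint of the intertwining relation $X|\xi\rangle_{2}c=\delta(c)X|\xi\rangle_{2}$ used in the proof of Lemma~\ref{lemma:rep-coaction}, combined with $[C,\rho_{\hdelta}(\frakB)]=0$ — and it is a morphism of $C^{*}$-$\frakb$-modules from $(\rHrange_{\alpha},\gamma\rt\alpha)$ to $(K,\gamma)$: indeed $X^{*}(\gamma\rt\alpha)=\gamma\lt\alpha=[\kalpha{2}\gamma]$ by the representation axioms, so $\langle\xi|_{2}X^{*}(\gamma\rt\alpha)\subseteq[\rho_{\hdelta}(\frakB)\gamma]=\gamma$, while conversely $X|\xi\rangle_{2}\gamma\subseteq X[\kalpha{2}\gamma]=\gamma\rt\alpha$. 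Letting $\xi$ range over $\alpha$ gives $[\langle\alpha|_{2}X^{*}(\gamma\rt\alpha)]=[\langle\alpha|_{2}\kalpha{2}\gamma]=[\rho_{\hdelta}(\frakB)\gamma]=\gamma$, so the $\langle\xi|_{2}X^{*}$ span the morphism space required by the definition, and $\delta^{-1}\colon\delta(C)\to C$ is a morphism of $C^{*}$-$\frakb$-algebras. Hence $\delta$ is very fine, which together with left-fullness established above completes the proof.
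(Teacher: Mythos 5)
Your proposal is correct and follows the paper's own route: both reduce the theorem to Lemma~\ref{lemma:rep-coaction} applied to the representation of Proposition~\ref{proposition:groupoid-bundle-rep}, and then upgrade the two inclusions to equalities by identifying $[\delta(\pi(C^{*}_{r}({\cal F})))\kalpha{2}]$ and $[\delta(\pi(C^{*}_{r}({\cal F})))\kgamma{1}C^{*}_{r}(G)]$ with closed spans of sections of ${\cal F}^{2}_{r,r}$. The one step you compress with ``no single estimate is hard'' is where the paper does its real work: the bound $\|T_{\phi}(c)\|^{2}\le\|\pi_{\phi}(R_{c})\|$ of Lemma~\ref{lemma:groupoid-bundle-coaction}, uniform in $\phi$, is what allows density of the sections $\omega_{c,d,f}$ (Lemma~\ref{lemma:fell-omega-dense}) in the inductive limit topology to pass to density of the corresponding operator families; on the other hand your verification of very-fineness via the intertwiners $\langle\xi|_{2}X^{*}$ and of $[\rho_{\gamma}(\frakBo)C]=C$ is sound and is more explicit than the paper's proof, which leaves those points implicit.
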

The proof involves the following two lemmas.
\begin{lemma} \label{lemma:groupoid-bundle-coaction} Let
  $\phi\in {\cal W}$.  There exist maps $T_{\phi}\colon
  \Gamma_{c}(\mathcal{ F}^{2}_{r,r}) \to {\cal
    L}(K_{\phi},\Gamma^{2}({\cal
    F}^{2}_{r,r},\nu^{2}_{r,r};\phi))$ and $ S_{\phi}\colon
  \Gamma_{c}(\mathcal{ F}^{2}_{r,r}) \to {\cal
    L}(H,\Gamma^{2}({\cal F}^{2}_{r,r},\nu^{2}_{r,r};\phi))$
  that are continuous with respect to the inductive topology
  on $\Gamma_{c}(\mathcal{ F}^{2}_{r,r})$ and the operator
  norm, respectively, such that for all $c \in
  \Gamma_{c}({\cal F}^{2}_{r,r})$, $d \in
  \Gamma_{c}(\mathcal{ F})$, $f \in C_{c}(G)$, $(x,y) \in
  \GrrG$,
    \begin{align*}
      (T_{\phi}(c)d)(x,y) &= \int_{G^{r(x)}}
      c(z,y) d(z^{-1}x) D^{-1/2}(z) \intd \lambda^{r(x)}(z), \\
      (S_{\phi}(c)f)(x,y) &= \int_{G^{r(y)}} c(x,z)f(z^{-1}y)
        D^{-1/2}(z) \intd\lambda^{r(y)}(z).
    \end{align*}
  \end{lemma}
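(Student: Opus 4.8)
The plan is as follows. First one checks that the two formulas define, for each $c \in \Gamma_{c}(\mathcal{ F}^{2}_{r,r})$, $d \in \Gamma_{c}(\mathcal{ F})$, $f \in C_{c}(G)$, sections $T_{\phi}(c)d$ and $S_{\phi}(c)f$ in $\Gamma_{c}(\mathcal{ F}^{2}_{r,r})$. Indeed, for each $(x,y) \in \GrrG$ the integrand in $(T_{\phi}(c)d)(x,y)$ is $z \mapsto c(z,y)d(z^{-1}x)D^{-1/2}(z)$, a continuous compactly supported section of $\mathcal{ F}$ over $G^{r(x)}$ whose value at $z$ lies in $\mathcal{ F}_{z}\mathcal{ F}_{z^{-1}x} \subseteq \mathcal{ F}_{x}$, so the integral lies in $\mathcal{ F}_{x}$, which is the fibre of $\mathcal{ F}^{2}_{r,r}$ at $(x,y)$; joint continuity in $(x,y)$ follows from continuity of the multiplication and involution on $\mathcal{ F}$ together with continuity of the Haar system $\lambda$, and the supports of $c$ and $d$ confine $T_{\phi}(c)d$ to a compact subset of $\GrrG$. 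The same remarks apply to $S_{\phi}(c)f$ (here the spectator variable sits in the first slot and $f$ is scalar). This step is routine and parallels Lemma \ref{lemma:fell-multiplier-sections} i).

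The substance of the proof is the norm estimate showing that $T_{\phi}(c)$ and $S_{\phi}(c)$ extend to bounded operators whose operator norms are controlled by $c$ in a way that depends continuously on $c$ for the inductive limit topology. For $T_{\phi}$ the plan is to expand $\|T_{\phi}(c)d\|^{2}$ in $\Gamma^{2}(\mathcal{ F}^{2}_{r,r},\nu^{2}_{r,r};\phi)$ using the disintegration $\intd\nu^{2}_{r,r} = \int_{G^{0}}\int_{G^{u}}\int_{G^{u}}\cdots\intd\lambda^{u}\intd\lambda^{u}\intd\mu$, so that the integrand is $\phi_{s(x)}$ applied to the square of an $\mathcal{ F}_{x}$-valued integral over $z \in G^{r(x)}$. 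Applying the Cauchy--Schwarz inequality for the positive functional $\phi_{s(x)}$ twice --- once for the semi-inner product $(a,b) \mapsto \phi_{s(x)}(a^{*}b)$ and once for the scalar $\lambda^{u}$-integral in $z$ --- and then using $c(z,y)^{*}c(z,y) \leq \|c\|_{\infty}^{2}$ in the unitisation of $\mathcal{ F}_{s(z)}$ together with positivity of $\phi_{s(x)}$, reduces the problem to bounding
\begin{align*}
  \int_{G^{0}}\int_{G^{u}}\int_{G^{u}} 1_{C_{1}}(z)\,\phi_{s(x)}\bigl(d(z^{-1}x)^{*}d(z^{-1}x)\bigr)\,D^{-1}(z)\intd\lambda^{u}(z)\intd\lambda^{u}(x)\intd\mu(u),
\end{align*}
where $C_{1} \subseteq G$ is the (compact) image of $\supp c$ under the first projection and the other compact supports have already yielded constants depending only on $\supp c$. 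The substitution $w = z^{-1}x$ and left invariance of $\lambda$ turn the inner $x$-integral into $\int_{G^{s(z)}}\phi_{s(w)}(d(w)^{*}d(w))\intd\lambda^{s(z)}(w)$, while the factor $D^{-1}(z)$ converts $\intd\nu(z)$ into $\intd\nu^{-1}(z)$; disintegrating the latter over the source map then produces exactly $\|d\|_{K_{\phi}}^{2}$, up to the further constant $\sup_{v}\lambda^{-1}_{v}(C_{1})$. Thus $\|T_{\phi}(c)d\| \leq M(\supp c)^{1/2}\|c\|_{\infty}\|d\|_{K_{\phi}}$ with $M(\supp c)$ depending only on $\supp c$, which gives both the bounded extension of $T_{\phi}(c)$ and, by linearity of $T_{\phi}$ in $c$, the asserted continuity.

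For $S_{\phi}$ the argument runs the same way but is simpler, since $f$ is scalar: after expanding $\|S_{\phi}(c)f\|^{2}$ by the disintegration of $\nu^{2}_{r,r}$ and applying Cauchy--Schwarz, one bounds $\phi_{s(x)}(c(x,z)^{*}c(x,z)) \leq \|\phi\|\,\|c\|_{\infty}^{2}$ using that $\phi$ is bounded with $\|\phi_{v}\| \leq \|\phi\|$ for all $v$, and then the substitution $w = z^{-1}y$ together with the cancellation of $D^{-1}(z)$ against the Radon--Nikodym derivative produces $\|f\|_{L^{2}(G,\nu)}^{2} = \|f\|_{H}^{2}$ times a constant depending only on $\supp c$ and $\|\phi\|$. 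Hence $S_{\phi}(c)$ extends to a bounded operator $H \to \Gamma^{2}(\mathcal{ F}^{2}_{r,r},\nu^{2}_{r,r};\phi)$ with norm controlled as required, and $S_{\phi}$ is continuous as claimed.

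The delicate point, and the one for which the construction of the unitary $V$ in \cite[Theorem 2.7]{timmermann:cpmu-hopf} is the right model, is the bookkeeping of the modular factor $D^{-1/2}$: one must verify that these factors combine to a single $D^{-1}$ that exactly absorbs $\intd\nu/\intd\nu^{-1}$ and lets the remaining integral be disintegrated over $s$ rather than $r$. All the other uniformity that is used --- finiteness of $\sup_{u}\lambda^{u}(C)$ and $\sup_{v}\lambda^{-1}_{v}(C)$ for compact $C$, and of $\sup_{v}\|\phi_{v}\|$ --- is standard and follows from continuity of the Haar systems and boundedness of $\phi$.
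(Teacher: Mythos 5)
Your argument is correct, but it takes a genuinely different route to the boundedness and continuity of $T_{\phi}$ than the paper does. The paper computes $\|T_{\phi}(c)d\|^{2}$ \emph{exactly}: after the substitution $x'=z_{1}^{-1}x$, $z=z_{1}^{-1}z_{2}$ and the measure identity $D^{-1}(z_{1})\intd\lambda^{r(x)}(z_{1})\intd\nu(x)=\intd\lambda^{-1}_{r(x')}(z_{1})\intd\nu(x')$, the double $z_{1},z_{2}$-integral collapses into $\langle d|\pi_{\phi}(R_{c})d\rangle_{K_{\phi}}$ for an explicit auxiliary section $R_{c}\in\Gamma_{c}({\cal F})$, so that $\|T_{\phi}(c)\|^{2}\leq\|\pi_{\phi}(R_{c})\|$ and continuity is inherited from the inductive-limit continuity of $c\mapsto R_{c}$ together with that of $\pi_{\phi}$. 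You instead give up the exact identity and run a Cauchy--Schwarz estimate on the inner $z$-integral, bounding $c(z,y)^{*}c(z,y)\leq\|c\|_{\infty}^{2}$ and then performing the same substitution $w=z^{-1}x$ and the same $D^{-1}(z)\intd\nu(z)=\intd\nu^{-1}(z)$ conversion to land on $\|d\|_{K_{\phi}}^{2}$ times a constant $M(\supp c)$ built from $\sup_{u}\lambda^{u}(\cdot)$ and $\sup_{v}\lambda^{-1}_{v}(\cdot)$ of the projections of $\supp c$. This is cruder but more self-contained: it yields the explicit bound $\|T_{\phi}(c)\|\leq M(\supp c)^{1/2}\|c\|_{\infty}$, from which inductive-limit continuity is immediate by linearity, without invoking the boundedness of $\pi_{\phi}(R_{c})$ or constructing $R_{c}$ at all; the paper's computation, on the other hand, is sharper and reuses machinery already in place (Lemma \ref{lemma:groupoid-algebra-phi}). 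Your treatment of $S_{\phi}$ along the same lines, using $\sup_{v}\|\phi_{v}\|\leq\|\phi\|$ and the scalar nature of $f$, is likewise sound and matches the paper's remark that the $S_{\phi}$ case is analogous. The one thing you should make explicit when writing this up is the vector-valued Cauchy--Schwarz inequality $\big(\int g\big)^{*}\big(\int g\big)\leq\lambda(\supp g)\int g^{*}g$ in the fibre $C^{*}$-algebra and the compatibility $e^{*}ae\leq\|a\|\,e^{*}e$ for $e$ in a fibre of the Fell bundle, since these carry the positivity bookkeeping that replaces the paper's exact computation.
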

\begin{proof}
  Let $c,d, T_{\phi}(c)d$ as above. Then
  \begin{align*}
\|T_{\phi}(c)d\|^{2} &=
    \int_{G} \int_{G^{r(x)}}
    \int_{G^{r(x)}} \int_{G^{r(x)}} \phi_{s(x)}
    \left(d(z_{1}^{-1}x)^{*} c(z_{1},y)^{*} c(z_{2},y)
      d(z_{2}^{-1}x)\right) \cdot \\ &\hspace{2cm} \cdot
    D^{-1/2}(z_{1})D^{-1/2}(z_{2}) \intd\lambda^{r(x)}(y)
    \intd\lambda^{r(x)}(z_{1}) \intd\lambda^{r(x)}(z_{2})
    \intd\nu(x).
  \end{align*}
  We substitute $x'=z_{1}^{-1}x$, $z=z_{1}^{-1}z_{2}$, use
  the relations $D(z_{2})=D(z_{1})D(z)$ and
\begin{align*}
  D^{-1}(z_{1})\intd\lambda^{r(x)}(z_{1})
  \intd\nu(x) &= D^{-1}(z_{1})
  \intd\lambda^{r(z_{1})}(x)\intd\nu(z_{1}) \\ &=
  \intd\lambda^{s(z_{1})}(x') \intd\nu^{-1}(z_{1}) =
  \intd\lambda^{-1}_{r(x')}(z_{1})\intd\nu(x'),
\end{align*}
and find
\begin{align*}
  \|T_{\phi}(c)d\|^{2} &= \int_{G}   \int_{G_{r(x')}} \int_{G^{r(x')}}
 \int_{G^{s(z_{1})}} 
  \phi_{s(x)}(d(x')^{*}c(z_{1},y)^{*}c(z_{1}z,y)d(z^{-1}x'))
  \cdot \\ &\hspace{4cm} \cdot D^{-1/2}(z)
  \intd\lambda^{s(z_{1})}(y) \intd\lambda^{r(x')}(z)
  \intd\lambda^{-1}_{r(x')}(z_{1}) \intd\nu(x') \\
  &= \int_{G}\int_{G^{r(x')}}
\phi_{s(x')} (d(x')R_{c}(z)d(z^{-1}x'))
\intd\lambda^{r(x')}(z)\intd\nu(x') =
\langle d|\pi_{\phi}(R_{c})d\rangle_{K_{\phi}},
\end{align*}
where $R_{c} \in \Gamma_{c}({\cal F})$ is given by
\begin{align*}
  R_{c}(z) &= \int_{G_{r(z)}}\int_{G^{s(z_{1})}}
  c(z_{1},y)^{*}c(z_{1}z,y) \intd\lambda^{s(z_{1})}(y)
  \intd\lambda^{-1}_{r(z)}(z_{1}) \quad \text{for all } z
  \in G.
\end{align*}
  Hence, $T_{\phi}(c)$ extends to a bounded linear
  operator of norm $\|T_{\phi}(c)\|^{2}\leq
  \|\pi_{\phi}(R_{c})\|$.  If $(c_{n})_{n}$ is a
  sequence in $\Gamma_{c}(\mathcal{ F}^{2}_{r,r})$
  converging to $c$ in the inductive limit topology,
  then the functions $R_{(c-c_{n})}$ defined
  similarly as $R_{c}$  converge to $0$ in the inductive limit
  topology and hence $\|T_{\phi}(c-c_{n})\|^{2} \leq
  \|\pi_{\phi}(R_{(c-c_{n})})\|$ converges to $0$.

  The proof of the assertion concerning $S_{\phi}$ is very
  similar.
\end{proof}
Given $c,d \in \Gamma_{c}({\cal F})$ and $f \in
C_{c}(G)$, define $\omega_{c,d,f} \in
\Gamma_{c}({\cal F}^{2}_{r,r})$ by
  \begin{align*}
    (x,y) \mapsto \int_{G^{r(x)}} c(z) d(z^{-1}x) f(z^{-1}y)
    \intd\lambda^{r(x)}(z).
  \end{align*}
  \begin{lemma} \label{lemma:fell-omega-dense} The linear
    span of all elements $\omega_{c,d,f}$ as above is dense
    in $\Gamma_{c}({\cal F}^{2}_{r,r})$ with respect to the
    inductive limit topology.
\end{lemma}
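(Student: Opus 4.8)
The plan is to apply Lemma \ref{lemma:submodule} to a $C_{c}(\GrrG)$-submodule of $\Gamma_{c}(\mathcal{F}^{2}_{r,r})$ that is manifestly dense, and then to show that every element of a spanning set of that module lies in the inductive-limit closure of $\mathrm{span}\{\omega_{c,d,f}\}$ by convolving with an approximate identity. Recall that the fibre of $\mathcal{F}^{2}_{r,r}$ at $(x,y)\in\GrrG$ is $\mathcal{F}_{x}$, so that $\omega_{c,d,f}$ is indeed a section of $\mathcal{F}^{2}_{r,r}$. The first auxiliary fact I would record is that for $c\in\Gamma_{c}(\mathcal{F})$ the formula
\[
  (c\ast\omega)(x,y)=\int_{G^{r(x)}}c(z)\,\omega(z^{-1}x,z^{-1}y)\intd\lambda^{r(x)}(z)
\]
defines an element $c\ast\omega\in\Gamma_{c}(\mathcal{F}^{2}_{r,r})$ for every $\omega\in\Gamma_{c}(\mathcal{F}^{2}_{r,r})$, and that $\omega\mapsto c\ast\omega$ is continuous for the inductive limit topology; this is proved just as Lemma \ref{lemma:fell-multiplier-sections} i) (using Lemma \ref{lemma:gamma0}) or as in the proof of Lemma \ref{lemma:groupoid-bundle-coaction}. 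Writing $D_{d,f}\in\Gamma_{c}(\mathcal{F}^{2}_{r,r})$ for the ``decoupled'' section $(a,b)\mapsto d(a)f(b)$, where $d\in\Gamma_{c}(\mathcal{F})$ and $f\in C_{c}(G)$, one has the identity $\omega_{c,d,f}=c\ast D_{d,f}$.

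The second auxiliary fact is the existence of a convolution approximate identity. Using that $\Gamma_{0}(\mathcal{F}^{0})$ is a continuous $C_{0}(G^{0})$-algebra, that $\mathcal{F}_{x}=[\mathcal{F}_{r(x)}\mathcal{F}_{x}]$ for all $x$, and that the Haar system has full support on each fibre $G^{u}$, one constructs sections $c_{i}\in\Gamma_{c}(\mathcal{F})$ all supported in a fixed compact neighbourhood of $G^{0}$ in $G$ and concentrating towards $G^{0}$, such that $c_{i}\ast\omega\to\omega$ in the inductive limit topology for each $\omega\in\Gamma_{c}(\mathcal{F}^{2}_{r,r})$ (the support condition on the $c_{i}$ keeps the supports of the $c_{i}\ast\omega$ inside a fixed compact set). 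In particular $D_{d,f}=\lim_{i}\omega_{c_{i},d,f}$. Now apply Lemma \ref{lemma:submodule} to
\[
  \Gamma':=\mathrm{span}\bigl\{\,g\cdot D_{d,f}\ :\ g\in C_{c}(\GrrG),\ d\in\Gamma_{c}(\mathcal{F}),\ f\in C_{c}(G)\,\bigr\}.
\]
This $\Gamma'$ is closed under pointwise multiplication by $C_{c}(\GrrG)$ by construction, and at each $(x,y)$ the set $\{g(x,y)d(x)f(y)\}$ spans a dense subspace of $\mathcal{F}_{x}$, since $\mathcal{F}$ has enough continuous sections and one may take $g(x,y)=f(y)=1$. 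Hence $\Gamma'$ is dense in $\Gamma_{c}(\mathcal{F}^{2}_{r,r})$, and it suffices to prove that each generator $g\cdot D_{d,f}$ lies in the inductive-limit closure of $\mathrm{span}\{\omega_{c,d,f}\}$.

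Fix such a generator. By the approximate-identity property $g\cdot D_{d,f}=\lim_{i}c_{i}\ast(g\cdot D_{d,f})$, so it is enough to place each $c_{i}\ast(g\cdot D_{d,f})$ in the inductive-limit closure of $\mathrm{span}\{\omega_{c,d,f}\}$. For fixed $c_{i}$,
\[
  \bigl(c_{i}\ast(g\cdot D_{d,f})\bigr)(x,y)=\int_{G^{r(x)}}c_{i}(z)\,g(z^{-1}x,z^{-1}y)\,d(z^{-1}x)\,f(z^{-1}y)\intd\lambda^{r(x)}(z),
\]
and the only thing preventing this from being a finite sum of sections $\omega_{c_{i},d',f'}$ is that $g$ need not factor through the two coordinate projections $\GrrG\to G$. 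Here I would invoke Stone--Weierstrass: over the fixed compact subset of $\GrrG$ on which the integrand varies, $g$ is a uniform limit of finite sums $\sum_{k}(g^{(k)}_{1}\circ p_{1})\cdot(g^{(k)}_{2}\circ p_{2})$ with $g^{(k)}_{1},g^{(k)}_{2}\in C_{c}(G)$, since the projections separate points and the subalgebra they generate contains the constants. Substituting this approximation and using that $c_{i}$ has finite $L^{1}$-norm and compact support shows that $c_{i}\ast(g\cdot D_{d,f})$ is an inductive-limit limit of finite sums $\sum_{k}\omega_{c_{i},\,g^{(k)}_{1}d,\,g^{(k)}_{2}f}$ (all with supports in a fixed compact set). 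Chaining the three approximations completes the proof.

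The main obstacle is exactly this last maneuver: the sections $\omega_{c,d,f}$ do not themselves form a $C_{c}(\GrrG)$-module, so Lemma \ref{lemma:submodule} cannot be applied to their span directly. One is forced to pass through the decoupled sections $D_{d,f}$, reinstate a general multiplier $g\in C_{c}(\GrrG)$ via Stone--Weierstrass, and then absorb everything back into genuine $\omega$'s by convolving with an approximate identity --- all while controlling supports so that every estimate is valid for the inductive limit topology. The remaining ingredients (well-definedness and continuity of $c\ast(-)$, existence of the approximate identity $c_{i}$, and existence of enough continuous sections of $\mathcal{F}$) are routine and run parallel to arguments already carried out in Section \ref{section:prelim}.
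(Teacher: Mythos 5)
Your proof is correct in outline but takes a genuinely different route from the paper's. The paper argues locally: for $(x,y)\in\GrrG$, $e\in{\cal F}_{x}$, $\epsilon>0$ and a compact neighbourhood $C$ of $(x,y)$, it uses $[{\cal F}_{r(x)}{\cal F}_{x}]={\cal F}_{x}$ to factor $e\approx c'(r(x))d'(x)$, then cuts $c'$ down by a bump concentrated at $r(x)$ with normalized integral and cuts $d'$, $f$ down near $x$, $y$, so that $\omega_{c,d,f}(x,y)$ is $\epsilon$-close to $e$ while $\supp\omega_{c,d,f}\subseteq C$; a partition-of-unity argument then finishes. You instead pass through the decoupled sections $D_{d,f}$, apply Lemma \ref{lemma:submodule} to the auxiliary $C_{c}(\GrrG)$-module $\mathrm{span}\{g\cdot D_{d,f}\}$, and recover genuine $\omega$'s via a convolution approximate identity together with a Stone--Weierstrass decomposition of $g$ into sums of product functions. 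Your diagnosis that the span of the $\omega_{c,d,f}$ is not itself a $C_{c}(\GrrG)$-module is exactly right, and your explicit handling of it is arguably more careful than the paper's appeal to a ``standard partition of unity argument'', which tacitly needs the same product-decomposition idea since multiplying $\omega_{c,d,f}$ by a cutoff in $(x,y)$ leaves the span. What your route buys in structure it pays for in machinery: the existence of the approximate identity $(c_{i})$ --- sections built from approximate units of the fibres ${\cal F}_{u}$, normalized against $\lambda$ and concentrated at the unit space, with $c_{i}\ast\omega\to\omega$ in the inductive limit topology --- is standard for Fell bundles over groupoids but is the one step you assert rather than prove, and it is heavier than anything the paper's direct factorization requires; note also that when $G^{0}$ is noncompact one must localize to the compact piece of $G^{0}$ determined by $\supp\omega$ rather than take a compact neighbourhood of all of $G^{0}$. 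With that ingredient supplied or cited, the chain $g\cdot D_{d,f}=\lim_{i}c_{i}\ast(g\cdot D_{d,f})$ and $c_{i}\ast(g\cdot D_{d,f})\approx\sum_{k}\omega_{c_{i},\,g^{(k)}_{1}d,\,g^{(k)}_{2}f}$ does close the argument.
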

\begin{proof}
  Let $(x,y) \in \GrrG$, $e \in {\cal F}_{x}$, let $C
  \subseteq \GrrG$ be a compact neighbourhood of $(x,y)$,
  and let $\epsilon > 0$. Since $[{\cal F}_{r(x)}{\cal
    F}_{x}] = {\cal F}_{x}$, we can choose $c',d' \in
  \Gamma_{c}({\cal F})$ such that $\|c'(z)d'(z^{-1}x)
  -e\|<\epsilon$ for all $z$ in some neighbourhood of $r(x)$
  in $G^{r(x)}$. Next, we can choose $h_{c},h_{d},f \in
  C_{c}(G)$ such that the elements $c,d \in \Gamma_{c}({\cal
    F})$ given by $c(z)=c'(z)h_{c}(z)$ and
  $d(z)=d'(z)h_{d}(z)$ for all $z \in G$ satisfy
  $\|\omega_{c,d,f}(x,y) - e\| < \epsilon$ and $\supp
  \omega_{c,d,f} \subseteq C$.  A standard partition of
  unity argument concludes the proof.
\end{proof}
\begin{proof}[Proof of Theorem
  \ref{theorem:groupoid-bundle-coaction}] 
  We show that Lemma \ref{lemma:rep-coaction} applies.  Let
  $\phi \in {\cal W}$, $c,d \in \Gamma_{c}(\mathcal{ F})$,
  $f ,g\in C_{c}(G)$. Define 
  $e_{1},e_{2},e_{3},e_{4} \in \Gamma^{2}({\cal
    F}^{2}_{r,r},\nu^{2}_{r,r};\phi)$ and
  $\omega_{1},\omega_{2},\omega_{3},\omega_{4} \in
  \Gamma_{c}({\cal F}^{2}_{r,r})$ by
  \begin{align*}
    e_{1} &= \delta(\pi(c))_{\phi}|j(f)\rangle_{2} d, &
    \omega_{1}(z,y) &= c(z)f(z^{-1}y) \text{ for all }
    (z,y) \in \GrrG, \\
    e_{2}&= |j(f)\rangle_{2}\pi_{\phi}(c)d, &
    \omega_{2}(z,y) &= c(z)f(y) \text{ for all }
    (z,y) \in \GrrG, \\
    e_{3}&=|j_{\phi}(c)\rangle_{1}L(f)g, & \omega_{3}(x,z)
    &= c(x)f(z) \text{ for all } (x,z) \in \GrrG,
    \\
    e_{4}&=\delta(\pi(c))_{\phi}|j_{\phi}(d)\rangle_{1}L(f)g,
    & \omega_{4}&=\omega_{c,d,f}.
  \end{align*}
  Using Lemma \ref{lemma:fell-delta-formula}, we find that
  for all $(x,y) \in \GrrG$,
  \begin{align*} 
    e_{1} (x,y) &= \int_{G^{r(x)}} c(z)D^{-1/2}(z)
    d(z^{-1}x) f(z^{-1}y) \intd\lambda^{r(x)}(z) =
    (T_{\phi}(\omega_{1})d)(x,y), \\
    e_{2} (x,y) &= \int_{G^{r(x)}} c(z) d(z^{-1}x)
    D^{-1/2}(z) \intd\lambda^{r(x)}(z) f(y) =
    (T_{\phi}(\omega_{2})d)(x,y), \\
    e_{3} (x,y) &= c(x) \int_{G^{r(y)}}
    f(z)D^{-1/2}(z)g(z^{-1}y) \intd\lambda^{r(y)}(z) =
    (S_{\phi}(\omega_{3})g)(x,y), \\
    e_{4} (x,y)& = \int_{G^{r(x)}} c(z_{1}) D^{-1/2}(z_{1})
    d(z_{1}^{-1}x) (L(f)g)(z_{1}^{-1}y)
    \intd\lambda^{r(x)}(z_{1}) \\
    &=\int_{G^{r(x)}} \int_{G^{s(z_{1})}}
    c(z_{1})D^{-1/2}(z_{1})
    d(z_{1}^{-1}x)f(z_{2})  \cdot \\
    & \hspace{4cm} \cdot D^{-1/2}(z_{2})
    g(z_{2}^{-1}z_{1}^{-1}y) \intd\lambda^{s(z_{1})}(z_{2})
    \intd\lambda^{r(x)}(z_{1}) \\
    &=\int_{G^{r(x)}} \int_{G^{r(x)}} c(z_{1})
    d(z_{1}^{-1}x)f(z_{1}^{-1}z'_{2})D^{-1/2}(z_{2}')
    g(z'_{2}{}^{-1}y) \intd\lambda^{r(x)}(z_{2}')
    \intd\lambda^{r(x)}(z_{1}) \\
    &= (S_{\phi}(\omega_{c,d,f})g)(x,y).
  \end{align*}
  By Lemmas \ref{lemma:submodule} and
  \ref{lemma:fell-omega-dense}, sections of the form like
  $\omega_{1},\omega_{2},\omega_{3}$ or $\omega_{4}$,
  respectively, are linearly dense in $\Gamma_{c}(\mathcal{
    F}^{2}_{r,r})$. Therefore, $[\delta(\pi(C^{*}_{r}({\cal
    F})))_{\phi}\kalpha{2}]= [T_{\phi}(\Gamma_{c}(\mathcal{
    F}^{2}_{r,r}))] =
  [\kalpha{2}\pi_{\phi}(C^{*}_{r}(\mathcal{ F}))]$ and similarly
  $[\delta(\pi(C^{*}_{r}(\mathcal{ F}))) |\gamma\rangle_{1}
  C^{*}_{r}(G)] = [\bigcup_{\phi \in {\cal W}}
  S_{\phi}(\Gamma_{c}(\mathcal{ F}^{2}_{r,r}))] =
  [\kgamma{1}C^{*}_{r}(G)]$.
\end{proof}
Given $g,g' \in C_{c}(G)$, define $h_{g,g'} \in C_{c}(G)$ by
\begin{align} \label{eq:hxixi}
  h_{g,g'}(z) &=\int_{G^{r(z)}}
  \overline{g(y)}g'(z^{-1}y) \intd\lambda^{r(z)}(y)
  \quad \text{for all } z \in G.
\end{align}
\begin{lemma} \label{lemma:fell-eq-1}
Let $c \in \Gamma_{c}(\mathcal{
    F})$, $g,g' \in C_{c}(G)$. Then $\langle j(g)|_{2}
  \delta(\pi(c))_{\phi}|j(g')\rangle_{2} = \pi_{\phi}(c')$,
  where $c'(x)=c(x)h_{g,g'}(x)$ for all $x \in G$.
\end{lemma}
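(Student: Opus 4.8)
The plan is to prove the identity by evaluating both sides on the dense subspace $\Gamma_{c}(\mathcal{ F}) \subseteq K_{\phi}$; this reduction is legitimate because $|j(g')\rangle_{2}$, $\delta(\pi(c))_{\phi}$ and $\langle j(g)|_{2}$ are all bounded operators and $\Gamma_{c}(\mathcal{ F})$ is dense in $K_{\phi}$. First I would record the action of the two leg operators in terms of the Hilbert-space model $\Gamma^{2}(\mathcal{ F}^{2}_{r,r},\nu^{2}_{r,r};\phi)$ supplied by the unitary $\Psi$. Unravelling the identifications on elementary vectors $d = fc_{0}$ (with $f \in C_{c}(G^{0})$, $c_{0} \in \Gamma_{c}(\mathcal{ F})$), where $j_{\phi}(c_{0})f \tl j(g') = j_{\phi}(c_{0}) \tr f \tl j(g')$ and $\Psi$ evaluates to $(fc_{0})(x)g'(y)$, and then extending by linearity and continuity, one obtains $\big(|j(g')\rangle_{2}d\big)(x,y) = d(x)g'(y)$ for $d \in \Gamma_{c}(\mathcal{ F}) \subseteq K_{\phi}$. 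Dually, passing to adjoints and using the definition of $\nu^{2}_{r,r}$ in \eqref{eq:measures}, one reads off $\big(\langle j(g)|_{2}e\big)(x) = \int_{G^{r(x)}} \overline{g(y)}\,e(x,y)\,\intd\lambda^{r(x)}(y)$ for every $e \in \Gamma^{2}(\mathcal{ F}^{2}_{r,r},\nu^{2}_{r,r};\phi)$.

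Next I would apply Lemma \ref{lemma:fell-delta-formula} to compute $\delta(\pi(c))_{\phi}|j(g')\rangle_{2}d$; this is precisely the section $e_{1}$ occurring in the proof of Theorem \ref{theorem:groupoid-bundle-coaction} with $f$ replaced by $g'$, so that $\big(\delta(\pi(c))_{\phi}|j(g')\rangle_{2}d\big)(x,y) = \int_{G^{r(x)}} c(z)D^{-1/2}(z)\,d(z^{-1}x)\,g'(z^{-1}y)\,\intd\lambda^{r(x)}(z)$. Composing with $\langle j(g)|_{2}$ using the formula above and interchanging the two integrations (legitimate, since the integrand is continuous with compact support) gives $\big(\langle j(g)|_{2}\delta(\pi(c))_{\phi}|j(g')\rangle_{2}d\big)(x) = \int_{G^{r(x)}} c(z)D^{-1/2}(z)\,d(z^{-1}x)\big(\int_{G^{r(x)}}\overline{g(y)}\,g'(z^{-1}y)\,\intd\lambda^{r(x)}(y)\big)\intd\lambda^{r(x)}(z)$. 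Since $z \in G^{r(x)}$ forces $r(z)=r(x)$, the inner integral is exactly $h_{g,g'}(z)$ by \eqref{eq:hxixi}, so the right-hand side equals $\int_{G^{r(x)}} c'(z)D^{-1/2}(z)\,d(z^{-1}x)\,\intd\lambda^{r(x)}(z)$ with $c'(z)=c(z)h_{g,g'}(z)$; note $c' = c\cdot h_{g,g'} \in \Gamma_{c}(\mathcal{ F})$. By Lemma \ref{lemma:groupoid-algebra-phi} this is $(\pi_{\phi}(c')d)(x)$, and as $d$ ranges over the dense set $\Gamma_{c}(\mathcal{ F})$ the claimed operator identity follows.

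The only delicate point is the bookkeeping of the ket-bra operators $|j(g')\rangle_{2}$ and $\langle j(g)|_{2}$ under the various identifications in play (the unitary $\Psi$, the leg conventions for a relative tensor product over $\frakb$, and the Hilbert-module/Hilbert-space structure on $K_{\phi}$); getting the scalar factors and the role of $D^{-1/2}$ right there is where care is needed. The genuine analytic content is limited to an application of Fubini's theorem for compactly supported continuous integrands, and I do not anticipate any further obstacle.
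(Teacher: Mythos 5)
Your proposal is correct and takes essentially the same route as the paper's proof, which simply observes that both operators send each $d\in\Gamma_{c}(\mathcal{F})$ to the section $x\mapsto\int_{G^{r(x)}}\int_{G^{r(x)}}\overline{g(y)}c(z)d(z^{-1}x)g'(z^{-1}y)D^{-1/2}(z)\intd\lambda^{r(x)}(z)\intd\lambda^{r(x)}(y)$. You merely make explicit the intermediate bookkeeping (the action of the leg operators under $\Psi$, the appeal to Lemma \ref{lemma:fell-delta-formula}, and the Fubini interchange producing $h_{g,g'}$), all of which checks out.
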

\begin{proof}
The operators on
  both sides map each $d \in \Gamma_{c}({\cal F})$ to
  the section
  \begin{gather*}
    x \mapsto \int_{G^{r(x)}}\int_{G^{r(x)}}
    \overline{g(y)}c(z)d(z^{-1}x)g'(z^{-1}y)D^{-1/2}(z)
    \intd\lambda^{r(x)}(z) \intd\lambda^{r(x)}(y). \qedhere
 \end{gather*}
\end{proof}

\paragraph{The reduced crossed product of the coaction}
The bundle ${\cal F}^{2}_{s,r}$ carries the structure of a
Fell bundle, and the reduced crossed product
$\pi(C^{*}_{r}(\mathcal{ F})) \rtimes_{r} C_{0}(G)$ for the
coaction $\delta$ constructed above can be identified with
$C^{*}_{r}(\mathcal{F}^{2}_{s,r})$ as follows.  

Denote by $G \ltimes G$ the transformation groupoid for the
action of $G$ on itself given by right multiplication. Thus,
$G \ltimes G= \GsrG$ as a set, $(G\ltimes G)^{0} =
\bigcup_{u \in G^{0}} \{u\} \times G^{u}$ can be identified
with $G$ via $(r(y),y) \equiv y$, the range map $\tilde r$,
the source map $\tilde s$, and the multiplication are given
by $ (x,y) \stackrel{\tilde r}{\mapsto} xy$, $(x,y)
\stackrel{\tilde s}{\mapsto} y$, and $((x,y),(x',y'))\mapsto
(xx',y')$, respectively, and the topology on $G \ltimes G$
is the weakest topology that makes $\tilde r, \tilde s$ and the map
$(x,y) \mapsto x$ continuous. We equip $G \ltimes G$ with
the right Haar system $\tilde \lambda^{-1}$ given by $\tilde
\lambda^{-1}_{y}(C \times \{y\}) =\lambda^{-1}_{r(y)}(C)$
for all $C \subseteq G_{r(y)}$, $y \in G$.

The bundle $\mathcal{F}^{2}_{s,r}$ is a Fell bundle on $G
\ltimes G$ with respect to the multiplication and involution
given by $((f,y),(f',y')) \mapsto (ff',y')$ and $(f,y)
\mapsto (f^{*},p(f)y)$.  The convolution product in
$\Gamma_{c}({\cal F}^{2}_{s,r})$ is given by
\begin{align} \label{eq:convolution-qf} (cd)(x,y) &=
  \int_{G_{r(y)}} c(xz^{-1},zy)d(z,y)
  \intd\lambda^{-1}_{r(y)}(z)
  \end{align}
  for all $c,d \in \Gamma_{c}(\mathcal{F}^{2}_{s,r})$,
  $(x,y) \in \GsrG$, because $(G \ltimes G)_{\tilde s(x,y)} =
  G_{r(y)} \times \{y\}$ and $(x,y) (z,y)^{-1} =
  (xz^{-1},zy)$ for all $z \in G_{r(y)}$.
  \begin{proposition} \label{proposition:groupoid-bundle-crossed}
    There exists an isomorphism $ \pi(C^{*}_{r}(\mathcal{
      F})) \rtimes_{r} C_{0}(G) \to
    C^{*}_{r}(\mathcal{F}^{2}_{s,r})$ such that
    $\delta(\pi(c))(1 \btensor f) \mapsto
    L_{\mathcal{F}^{2}_{s,r}}(d)$  whenever $c \in
    \Gamma_{c}(\mathcal{ F})$, $f \in C_{c}(G)$, and
    $d(x,y)=c(x)f(y)$ for all $(x,y) \in \GsrG$.
\end{proposition}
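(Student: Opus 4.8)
The plan is to realise $C^{*}_{r}(\mathcal{F}^{2}_{s,r})$ faithfully on the Hilbert space carrying the reduced crossed product in such a way that the two distinguished families of generators correspond; since each of the two $C^{*}$-algebras is the closed linear span of its distinguished generators, they then coincide as subalgebras of $\mathcal{L}(\rHrange)$.

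First I would make the crossed product explicit. By definition $\pi(C^{*}_{r}(\mathcal{F})) \rtimes_{r} C_{0}(G) = [\delta(\pi(C^{*}_{r}(\mathcal{F})))(1 \btensor C_{0}(G))] \subseteq \mathcal{L}(\rHrange)$, and since $\Gamma_{c}(\mathcal{F})$ is dense in $C^{*}_{r}(\mathcal{F})$ and $C_{c}(G)$ in $C_{0}(G)$, it is the closed linear span of the operators $\delta(\pi(c))(1 \btensor f)$ with $c \in \Gamma_{c}(\mathcal{F})$ and $f \in C_{c}(G)$. Under the identification $\Psi$ of $\rHrange$ with $\bigoplus_{\phi \in \mathcal{W}} \Gamma^{2}(\mathcal{F}^{2}_{r,r},\nu^{2}_{r,r};\phi)$ the element $1 \btensor f$ acts in the $\phi$-summand by $\omega \mapsto \big((x,z) \mapsto \omega(x,z)f(z)\big)$, so by Lemma \ref{lemma:fell-delta-formula} the operator $\delta(\pi(c))(1 \btensor f)$ acts in the $\phi$-summand on $\omega \in \Gamma_{c}(\mathcal{F}^{2}_{r,r})$ by
\begin{align*}
  (\delta(\pi(c))(1 \btensor f)\omega)(x,z) = \int_{G^{r(x)}} c(w)\, f(w^{-1}z)\, \omega(w^{-1}x,w^{-1}z)\, D^{-1/2}(w) \intd\lambda^{r(x)}(w).
\end{align*}

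Next I would describe the right-hand side. As noted above, $\mathcal{F}^{2}_{s,r}$ is a Fell bundle on $G \ltimes G$ with right Haar system $\tilde\lambda^{-1}$ and convolution product \eqref{eq:convolution-qf}, and $(\mathcal{F}^{2}_{s,r})^{0} \cong r^{*}\mathcal{F}^{0}$ as a bundle over $(G \ltimes G)^{0} \equiv G$. For $\phi \in \mathcal{W}$ form the Hilbert space $L_{\phi} := \Gamma^{2}(\mathcal{F}^{2}_{s,r},\tilde\lambda^{-1}) \tr_{r^{*}\phi} L^{2}(G,\nu^{-1})$, so that $C^{*}_{r}(\mathcal{F}^{2}_{s,r})$ is represented on $L_{\phi}$ by $T \mapsto T \tr \Id =: \widehat\pi_{\phi}(T)$; a separation argument using $\bigcap_{\phi} \ker\phi = \{0\}$ (Lemma \ref{lemma:cx-weights}) and the full support of $\nu^{-1}$ shows, exactly as for the faithfulness of $\pi$ in Lemma \ref{lemma:groupoid-algebra}, that $\widehat\pi := \bigoplus_{\phi} \widehat\pi_{\phi}$ is faithful. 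The key step is to show that
\begin{align*}
  W_{\phi} \colon L_{\phi} \to \Gamma^{2}(\mathcal{F}^{2}_{r,r},\nu^{2}_{r,r};\phi), \qquad W_{\phi}(e \tr \zeta)(x,z) = e(x,x^{-1}z)\,\zeta(x^{-1}z)\, D(z)^{-1/2},
\end{align*}
defined for $e \in \Gamma_{c}(\mathcal{F}^{2}_{s,r})$, $\zeta \in C_{c}(G)$, extends to a unitary, with surjectivity supplied by Lemma \ref{lemma:submodule}. This is a change of variables: substituting $z = xy$ and then $x \mapsto x^{-1}$, and using $\intd\lambda^{r(x)}(xy) = \intd\lambda^{s(x)}(y)$, $D(xy) = D(x)D(y)$ and $D^{-1}\intd\nu = \intd\nu^{-1}$, transforms $\|e \tr \zeta\|^{2}$ into $\|W_{\phi}(e \tr \zeta)\|^{2}$ (compare the constructions of $V$ and of $\Phi, \Psi$ in \cite{timmermann:cpmu-hopf} and above). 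A second computation of the same flavour, now using \eqref{eq:convolution-qf} — in which the factor $D(z)^{-1/2}$ built into $W_{\phi}$ exactly cancels the factor $D^{-1/2}(w)$ in the formula for $\delta(\pi(c))(1\btensor f)$ — shows that for $c \in \Gamma_{c}(\mathcal{F})$, $f \in C_{c}(G)$ and $d$ given by $d(x,y) = c(x)f(y)$,
\begin{align*}
  W_{\phi}\,\widehat\pi_{\phi}\big(L_{\mathcal{F}^{2}_{s,r}}(d)\big)\,W_{\phi}^{*} = \delta(\pi(c))(1 \btensor f)\big|_{\phi\text{-summand}}.
\end{align*}

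Finally, $W := \bigoplus_{\phi} W_{\phi}$ is a unitary onto $\rHrange$. The sections $d$ with $d(x,y) = c(x)f(y)$ ($c \in \Gamma_{c}(\mathcal{F})$, $f \in C_{c}(G)$) span a subspace of $\Gamma_{c}(\mathcal{F}^{2}_{s,r})$ that is dense for the inductive limit topology — approximate an arbitrary test function on $\GsrG$ by finite sums of products and apply Lemma \ref{lemma:submodule} — whence $[L_{\mathcal{F}^{2}_{s,r}}(\{d : d(x,y)=c(x)f(y)\})] = C^{*}_{r}(\mathcal{F}^{2}_{s,r})$. Conjugating by $W$ and using the intertwining relation therefore gives $W\,\widehat\pi\big(C^{*}_{r}(\mathcal{F}^{2}_{s,r})\big)\,W^{*} = [\delta(\pi(C^{*}_{r}(\mathcal{F})))(1 \btensor C_{0}(G))] = \pi(C^{*}_{r}(\mathcal{F})) \rtimes_{r} C_{0}(G)$; since $\widehat\pi$ is faithful, conjugation by $W^{*}$ followed by $\widehat\pi^{-1}$ is the desired isomorphism $\pi(C^{*}_{r}(\mathcal{F})) \rtimes_{r} C_{0}(G) \to C^{*}_{r}(\mathcal{F}^{2}_{s,r})$, sending $\delta(\pi(c))(1 \btensor f)$ to $L_{\mathcal{F}^{2}_{s,r}}(d)$. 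The only genuine obstacle is the construction of the unitaries $W_{\phi}$ together with the verification of the intertwining relation; both are elementary but lengthy groupoid change-of-variable computations of precisely the kind carried out repeatedly elsewhere in the paper.
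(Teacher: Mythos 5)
Your proof is correct and follows essentially the same route as the paper: the paper likewise builds, for each $\phi$, a unitary $U_{\phi}\colon \Gamma^{2}({\cal F}^{2}_{s,r},\tilde\lambda^{-1};r^{*}\phi)\tr H \to \Gamma^{2}({\cal F}^{2}_{s,r},\nu^{2}_{s,r};\phi)$, shows that $X_{\phi}U_{\phi}$ intertwines $L_{r^{*}\phi}(d)\tr\Id$ with $\delta(\pi(c))(1\btensor f)$, and concludes by the same density (Lemma \ref{lemma:submodule}) and faithfulness ($\bigcap_{\phi}\ker r^{*}\phi=0$) arguments. Your $W_{\phi}$ is just the composite $X_{\phi}U_{\phi}$ precomposed with the canonical unitary $L^{2}(G,\nu)\to L^{2}(G,\nu^{-1})$, which accounts for the factor $D(z)^{-1/2}$ in place of $D^{-1/2}(x)$; this is a cosmetic reparametrization, not a different argument.
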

Let $\phi \in {\cal W}$. Then the map $r^{*}\phi \colon
\Gamma_{0}(({\cal F}^{2}_{s,r})^{0}) \to C_{0}(G)$ given by
$(r^{*}\phi(c))(y)=\phi_{r(y)}(c(r(y),y))$ for all $c \in
\Gamma_{0}((\mathcal{F}^{2}_{s,r})^{0})$ and $y \in G$ is a
$C_{0}(G)$-weight. One easily verifies that there exists a
representation $L_{r^{*}\phi}\colon C^{*}_{r}({\cal
  F}^{2}_{s,r}) \to {\cal L}(\Gamma^{2}({\cal
  F}^{2}_{s,r},\tilde \lambda^{-1};r^{*}\phi))$ such that
$L_{r^{*}\phi}(c)d=cd$ for all $c,d \in \Gamma_{c}({\cal
  F}^{2}_{s,r})$.
\begin{lemma}
\begin{enumerate}
\item There exists a unique unitary $U_{\phi}\colon
  \Gamma^{2}({\cal F}^{2}_{s,r},\tilde
  \lambda^{-1};r^{*}\phi) \tr H \to \Gamma^{2}({\cal
    F}^{2}_{s,r},\nu^{2}_{s,r};\phi) \subseteq \rHsource$
  such that $(U_{\phi}(e \tr g))(x,y) = e(x,y) g(y)D^{-1/2}(x)$ for all
  $e \in \Gamma_{c}(\mathcal{F}^{2}_{s,r})$, $g \in
  C_{c}(G)$, $(x,y) \in \GsrG$.
\item $\delta(\pi(c))(1 \btensor f)X_{\phi}U_{\phi} =
  X_{\phi} U_{\phi}(L_{r^{*}\phi}(d) \tr \Id)$ for all
  $c,d,f$ as in Proposition
  \ref{proposition:groupoid-bundle-crossed}.
  \end{enumerate}
\end{lemma}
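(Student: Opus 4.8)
The plan is to prove both assertions by explicit computation on the canonical dense subspaces.

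\emph{For i).} I would define $U_\phi$ on elementary tensors $e\tr g$ with $e\in\Gamma_{c}(\mathcal{F}^{2}_{s,r})$ and $g\in C_{c}(G)$ by the stated formula, and first check compatibility with the balancing relation: the Hilbert $C_{0}(G)$-module $\Gamma^{2}(\mathcal{F}^{2}_{s,r},\tilde\lambda^{-1};r^{*}\phi)$ carries the $C_{0}(G)$-action $(eh)(x,y)=e(x,y)h(y)$, while $C_{0}(G)$ acts on $H=L^{2}(G,\nu)$ by multiplication, so moving a scalar $h(y)$ across $\tr$ is harmless. To see that $U_\phi$ is isometric I would expand $\langle e\tr g,e'\tr g'\rangle$ and the inner product of the images; the two expressions differ only by the factor $D^{-1}(x)=(D^{-1/2}(x))^{*}D^{-1/2}(x)$ and by replacing $\intd\lambda^{-1}_{r(y)}(x)\intd\nu(y)$ by $\intd\nu^{2}_{s,r}(x,y)$, so this step is closed by the measure identity $D^{-1}(x)\intd\nu^{2}_{s,r}(x,y)=\intd\lambda^{-1}_{r(y)}(x)\intd\nu(y)$ on $\GsrG$, which one checks by disintegrating over $G^{0}$ and using $D^{-1}\intd\nu=\intd\nu^{-1}$ together with the definitions of $\lambda^{\pm1}$. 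For surjectivity I would note that taking $e(x,y)=c(x)\zeta(s(x))h(y)$ with $c\in\Gamma_{c}(\mathcal{F})$, $\zeta\in C_{c}(G^{0})$, and $h\in C_{c}(G)$ equal to $1$ on the support of some $g_{0}\in C_{c}(G)$ gives $U_\phi(e\tr g_{0})=\Phi(\hat j_\phi(c)\tr\zeta\tl j(g_{0}))$; since the unitary $\Phi\colon\rHsource\to\bigoplus_{\phi}\Gamma^{2}(\mathcal{F}^{2}_{s,r},\nu^{2}_{s,r};\phi)$ from the construction preceding Proposition \ref{proposition:groupoid-bundle-rep} is onto, such vectors span $\Gamma^{2}(\mathcal{F}^{2}_{s,r},\nu^{2}_{s,r};\phi)$ densely, and the range of an isometry on a complete space is closed.

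\emph{For ii).} Both sides are operators $\Gamma^{2}(\mathcal{F}^{2}_{s,r},\tilde\lambda^{-1};r^{*}\phi)\tr H\to\Gamma^{2}(\mathcal{F}^{2}_{r,r},\nu^{2}_{r,r};\phi)$, so I would evaluate them on an elementary tensor $e\tr g$ and compare the two resulting sections on $\GrrG$. On the left I would compute step by step $(X_\phi U_\phi(e\tr g))(x,y)=e(x,x^{-1}y)g(x^{-1}y)D^{-1/2}(x)$, then apply $1\btensor f$ — which, under the identification $\Psi$ of $\rHrange$ with $\bigoplus_{\phi}\Gamma^{2}(\mathcal{F}^{2}_{r,r},\nu^{2}_{r,r};\phi)$, acts on the last leg, i.e.\ as multiplication by $f(y)$ — and finally apply $\delta(\pi(c))_\phi$ through the explicit formula of Lemma \ref{lemma:fell-delta-formula}; the cocycle relation $D^{-1/2}(z^{-1}x)D^{-1/2}(z)=D(x)^{-1/2}$ collapses the two modular factors, leaving the section
\begin{align*}
  (x,y)\mapsto g(x^{-1}y)\,D(x)^{-1/2}\int_{G^{r(x)}}c(z)\,e(z^{-1}x,x^{-1}y)\,f(z^{-1}y)\intd\lambda^{r(x)}(z).
\end{align*}
On the right I would expand $L_{r^{*}\phi}(d)$ by the convolution formula \eqref{eq:convolution-qf} on the transformation groupoid $G\ltimes G$, insert $d(a,b)=c(a)f(b)$, and then apply $U_\phi$ followed by $X_\phi$, obtaining
\begin{align*}
  (x,y)\mapsto g(x^{-1}y)\,D(x)^{-1/2}\int_{G_{s(x)}}c(xz^{-1})\,e(z,x^{-1}y)\,f(zx^{-1}y)\intd\lambda^{-1}_{s(x)}(z).
\end{align*}
These two sections coincide: in the first integral I would substitute $z'=xz^{-1}$, a bijection $G^{r(x)}\to G_{s(x)}$ carrying $\intd\lambda^{r(x)}$ to $\intd\lambda^{-1}_{s(x)}$ by inversion and right-invariance of $\lambda^{-1}$, under which $z'^{-1}x=z$ and $z'^{-1}y=zx^{-1}y$, so that the first integrand becomes the second. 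A density and continuity argument then promotes the identity from $\Gamma_{c}(\mathcal{F}^{2}_{s,r})\odot C_{c}(G)$ to the full operator identity.

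The definition of $U_\phi$, the balancing check, and the identification of $1\btensor f$ as multiplication by $f(y)$ are routine. The main obstacle is the bookkeeping around the modular function $D$ and the several Haar systems — keeping straight the cocycle identities, inversion, and left- versus right-invariance through the substitution in ii), and likewise the measure identity in i). None of this is deep, but it requires care; in particular, since $D$ is only a Borel homomorphism, these formulas must be read on the $L^{2}$-completions rather than on continuous sections.
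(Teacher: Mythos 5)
Your proof is correct and takes essentially the same route as the paper: part i) is the same isometry computation, resting on the measure identity $D^{-1}(x)\intd\nu^{2}_{s,r}(x,y)=\intd\lambda^{-1}_{r(y)}(x)\intd\nu(y)$ (your explicit density argument for surjectivity is a welcome addition the paper leaves implicit). In part ii) you perform the same convolution and change-of-variables computation, only after conjugating by $X_{\phi}$ (comparing sections on $\GrrG$), whereas the paper works on $\GsrG$ and recognizes $(\pi_{\phi}(c)\botensor\Id)\hDelta(f)_{\phi}U_{\phi}(e\tr g)$ directly as $U_{\phi}(de\tr g)$ — a purely cosmetic difference.
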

\begin{proof}
 i)  For all $e,g$ as in above,
 \begin{align*}
    \| U_{\phi}(e \tr g)\|^{2} &= \int_{G} \int_{G_{r(y)}}
    \phi_{s(x)}(e(x,y)^{*}e(x,y) )|g(y)|^{2}
    \intd\lambda^{-1}_{r(y)}(x)
    \intd\nu(y) = \|
    e\tr g\|^{2}.
  \end{align*}

  ii) Let $c,d,e,f,g,(x,y)$ as above and $\hDelta(f)_{\phi}
  = X_{\phi}^{*}(1 \btensor f)X_{\phi}$. A short calculation
  shows that $(\hDelta(f)_{\phi}U_{\phi} (e \tr g)\big)(x,y)
  = f(xy) e(x,y)g(y)D^{-1/2}(x)$. Using
  \eqref{eq:convolution-qf}, we find that $    \big( (\pi_{\phi}(c) \botensor
    \Id)\hDelta(f)_{\phi}U_{\phi} (e \tr g)\big)(x,y)$ is
    equal to
  \begin{align*}
    \int_{G^{r(x)}}& c(z) f(z^{-1}xy) e(z^{-1}x,y)g(y) 
 D^{-1/2}(z)D^{-1/2}(z^{-1}xy)
    \intd\lambda^{r(x)}(z) \\
    &= \int_{G_{s(x)}}c(xz^{-1})f(zy)e(z,y)g(y)
    D^{-1/2}(xy) \intd\lambda^{-1}_{s(x)}(z)  \\
    &=\int_{G_{s(x)}}
    d(xz^{-1},zy) e(z,y) g(y)  D^{-1/2}(xy)\intd\lambda^{-1}_{s(x)}(z) 
= (U_{\phi}(de \tr g))(x,y).
  \end{align*}
Thus, $\delta(\pi(c))(1 \btensor f)X_{\phi}U_{\phi}= X_{\phi}
  (\pi_{\phi}(c) \botensor \Id)\hDelta(f)_{\phi}X_{\phi}U_{\phi} =
  X_{\phi}U_{\phi}(L_{r^{*}\phi}(d) \tr \Id)$.
\end{proof}
\begin{proof}[Proof of Proposition
  \ref{proposition:groupoid-bundle-crossed}]
  Consider the $*$-homomorphism
  \begin{align*}
    \Phi \colon C^{*}_{r}(\mathcal{F}^{2}_{s,r}) \to {\cal
      L}(\rHrange), \quad L_{\mathcal{F}^{2}_{s,r}}(d)
    \mapsto \bigoplus_{\phi \in {\cal W}}
    X_{\phi}U_{\phi}(L_{r^{*}\phi}(d)\tr
    \Id)U_{\phi}^{*}X_{\phi}^{*}.
  \end{align*}
  By part ii) of the lemma above,
  $\Phi(C^{*}_{r}(\mathcal{F}^{2}_{s,r}))$ contains $
  [\delta(\pi(C^{*}_{r}(\mathcal{ F})))(1 \btensor
  C_{0}(G))] = \pi(C^{*}_{r}(\mathcal{ F})) \rtimes_{r}
  C_{0}(G)$.  The same lemma implies that this inclusion is
  an equality because the map $a \to \bigoplus_{\phi} a \tr
  \Id$ is continuous with respect to the inductive limit
  topology on $\Gamma_{c}(\mathcal{ F}^{2}_{s,r})$ and
  sections of the form $(x,y) \mapsto c(x)f(y)$, where $c\in
  \Gamma_{c}(\mathcal{ F})$, $f \in C_{c}(G)$, are dense in
  $\Gamma_{c}(\mathcal{ F}^{2}_{s,r})$ by Lemma
  \ref{lemma:submodule}.  Lemma \ref{lemma:cx-weights}
  implies that $[\bigcap_{\phi} \ker r^{*}\phi] = 0$, and
  therefore $\Phi$ is injective.
\end{proof}
\begin{proposition} \label{proposition:bundle-crossed-compact}
  If $\mathcal{ F}$ is saturated, then
  $C^{*}_{r}(\mathcal{F}^{2}_{s,r}) \cong \mathcal{ K}
  (\Gamma^{2}(\mathcal{ F},\lambda^{-1}))$.
\end{proposition}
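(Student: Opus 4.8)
The plan is to write down an explicit isomorphism. Put $B=\Gamma_0(\mathcal{F}^0)$ and $E=\Gamma^2(\mathcal{F},\lambda^{-1})$, so that $\mathcal{K}(E)$ is the closed span of the rank-one operators $\theta_{\xi,\eta}\colon d\mapsto\xi\langle\eta|d\rangle$ with $\xi,\eta\in\Gamma_c(\mathcal{F})$. First I would define a $*$-homomorphism $\Theta\colon\Gamma_c(\mathcal{F}^2_{s,r})\to\mathcal{L}(E)$ by
\begin{align*}
  (\Theta(c)d)(x)=\int_{G_{s(x)}}c(xw^{-1},w)\,d(w)\,\intd\lambda^{-1}_{s(x)}(w),
\end{align*}
which makes sense because $c(xw^{-1},w)\in\mathcal{F}_{xw^{-1}}$ carries $\mathcal{F}_w$ into $\mathcal{F}_x$ whenever $s(x)=s(w)$. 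Multiplicativity of $\Theta$ reduces, via Fubini and right-invariance of $\lambda^{-1}$, to the substitution $w=zb$ turning the convolution formula \eqref{eq:convolution-qf} into $(c_1c_2)(xv^{-1},v)=\int_{G_{s(x)}}c_1(xw^{-1},w)c_2(wv^{-1},v)\intd\lambda^{-1}_{s(x)}(w)$, and $\Theta(c^*)=\Theta(c)^*$ follows from the identity $c^*(xw^{-1},w)=c(wx^{-1},x)^*$, which encodes the involution $(f,b)\mapsto(f^*,p(f)b)$ of $\mathcal{F}^2_{s,r}$. A direct computation gives $\Theta(c_{\xi,\eta})=\theta_{\xi,\eta}$ for the sections $c_{\xi,\eta}(a,b)=\xi(ab)\eta(b)^*$, so $\Theta(\Gamma_c(\mathcal{F}^2_{s,r}))$ is dense in $\mathcal{K}(E)$.

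Next I would show that $\Theta$ actually lands inside $\mathcal{K}(E)$; this is where saturation enters. The sections $(a,b)\mapsto h(a,b)\,\xi(ab)\eta(b)^{*}$ with $h\in C_c(\GsrG)$ and $\xi,\eta\in\Gamma_c(\mathcal{F})$ span a subspace of $\Gamma_c(\mathcal{F}^2_{s,r})$ that is stable under multiplication by $C_c(\GsrG)$ and, because $[\mathcal{F}_{ab}\mathcal{F}_{b^{-1}}]=\mathcal{F}_a$ by saturation, is fibrewise dense; hence it is dense in the inductive limit topology by Lemma \ref{lemma:submodule}. Each such section is sent by $\Theta$ to a compactly supported continuous kernel, hence to a compact operator, and $\Theta$ is continuous from the inductive limit topology to the operator norm by a norm estimate of the type appearing in Lemma \ref{lemma:groupoid-bundle-coaction}, so $\Theta(\Gamma_c(\mathcal{F}^2_{s,r}))\subseteq\mathcal{K}(E)$. (Saturation together with admissibility also gives $\mathcal{F}_x\neq 0$ for every $x$, whence $\Theta$ is injective on $\Gamma_c(\mathcal{F}^2_{s,r})$; the assertion genuinely fails for non-saturated $\mathcal{F}$, as one sees already for a line bundle over $\mathbb{Z}$ supported at $0$.)

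The main work is to see that $\Theta$ is isometric for the reduced norm, equivalently that it is unitarily equivalent to the regular representation $L_{\mathcal{F}^2_{s,r}}$. For this I would identify the defining Hilbert module $\Gamma^2(\mathcal{F}^2_{s,r},\tilde\lambda^{-1})$ over $\Gamma_0((\mathcal{F}^2_{s,r})^0)$ with the pull-back $E\otimes_{r^{*}}\Gamma_0((\mathcal{F}^2_{s,r})^0)$ via $c\mapsto\big(b\mapsto(a\mapsto c(a,b))\big)$, and check that under this identification $L_{\mathcal{F}^2_{s,r}}(c)$ becomes $\Theta(c)\otimes\mathrm{id}$; since $r$ is surjective, amplification along $r^{*}$ is isometric, so $\|\Theta(c)\|=\|L_{\mathcal{F}^2_{s,r}}(c)\|=\|c\|_{C^*_r(\mathcal{F}^2_{s,r})}$. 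Then $\Theta$ extends to an isometric $*$-homomorphism whose image is the $C^*$-algebra generated by the $\theta_{\xi,\eta}$, i.e. $\mathcal{K}(E)$. An alternative route, bypassing this last bookkeeping, is to start from Proposition \ref{proposition:groupoid-bundle-crossed}, realise $C^*_r(\mathcal{F}^2_{s,r})$ as $[\delta(\pi(C^*_r(\mathcal{F})))(1\btensor C_0(G))]$ acting on $\rHrange$, and match it with $\mathcal{K}(E)$ summand by summand through the Hilbert–Schmidt-type description of $\Gamma^2(\mathcal{F}^2_{r,r},\nu^2_{r,r};\phi)$; the faithfulness point, and the essential use of saturation, are the same obstacle in either approach.
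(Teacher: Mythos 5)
Your overall strategy is in substance the paper's own argument run in the opposite direction: the paper amplifies $\mathcal{K}(\Gamma^{2}(\mathcal{F},\lambda^{-1}))$ into $\mathcal{L}(\Gamma^{2}(\mathcal{F}^{2}_{s,r},\tilde\lambda^{-1}))$ and identifies the image with $C^{*}_{r}(\mathcal{F}^{2}_{s,r})$, while you map $\Gamma_{c}(\mathcal{F}^{2}_{s,r})$ onto the rank-one operators. Your computation $\Theta(c_{\xi,\eta})=\theta_{\xi,\eta}$ for $c_{\xi,\eta}(a,b)=\xi(ab)\eta(b)^{*}$ is exactly the paper's identity $\Psi(|d\rangle\langle d'|\tr\Id)\Psi^{*}=L_{\mathcal{F}^{2}_{s,r}}(e)$ with $e(x,y)=d(xy)d'(y)^{*}$, and the use of saturation together with Lemma \ref{lemma:submodule} to obtain density of such sections is the same.

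However, the step on which your isometry argument rests fails as stated. The identification $c\mapsto\big(b\mapsto(a\mapsto c(a,b))\big)$ --- the pull-back of $E=\Gamma^{2}(\mathcal{F},\lambda^{-1})$ along $r$ with no change of variables --- does \emph{not} intertwine $L_{\mathcal{F}^{2}_{s,r}}(c)$ with $\Theta(c)\otimes\mathrm{id}$. By \eqref{eq:convolution-qf}, $L_{\mathcal{F}^{2}_{s,r}}(c)$ acts on the slice over $y$ through the kernel $(x,z)\mapsto c(xz^{-1},zy)$, whose second argument is $zy$ rather than $z$; this kernel depends on $y$ itself and not only on $r(y)$, so the operator is not of the form $T\otimes\mathrm{id}$ for your identification, and already on the sections $c_{\xi,\eta}$ one gets $\xi(xy)$ where your formula would produce $\xi(x)$. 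The correct identification is the sheared one $D\mapsto\big(y\mapsto(w\mapsto D(wy^{-1},y))\big)$ with $w\in G_{s(y)}$ --- equivalently the paper's unitary $\Psi(c\tr f)(x,y)=c(xy)f(y)$, which amplifies along $s$ rather than $r$; substituting $v=zy$ in \eqref{eq:convolution-qf} then does yield $L_{\mathcal{F}^{2}_{s,r}}(c)=\Psi(\Theta(c)\tr\Id)\Psi^{*}$. With this repair your argument closes and coincides with the paper's; without it the claimed equality $\|\Theta(c)\|=\|L_{\mathcal{F}^{2}_{s,r}}(c)\|$ is unjustified, and since $\Theta$ has no a priori bound for the reduced norm, nothing else in your write-up supplies it. Running the map in the paper's direction, $k\mapsto\Psi(k\tr\Id)\Psi^{*}$, which is injective because $s^{*}$ is and hence isometric, also lets you dispense with the separate inductive-limit norm estimate that you only sketch.
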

\begin{proof}
  To simplify notation, let $\Gamma^{2} =
  \Gamma^{2}(\mathcal{ F},\lambda^{-1})$, $ \tilde
  \Gamma^{2}=\Gamma^{2}(\mathcal{F}^{2}_{s,r},\tilde
  \lambda^{-1})$, $\Gamma_{0} = \Gamma_{0}(\mathcal{
    F}^{0})$, $\tilde \Gamma_{0} =
  \Gamma_{0}((\mathcal{F}^{2}_{s,r})^{0})$.  There exists a
  unitary $\Psi \colon \Gamma^{2} \tr_{s^{*}} C_{0}(G) \to
  \tilde \Gamma^{2}$ such that $(\Psi(c \tr f))(x,y)=c(xy)f(y)$ for
  all $c\in \Gamma_{c}(\mathcal{ F})$, $f \in C_{c}(G)$,
  $(x,y) \in \GsrG$, because
\begin{align*}
  \langle \Psi(c \tr f)|\Psi(c' \tr f')\rangle((r(y),y)) &=
  \int_{G_{r(y)}}c(xy)^{*}c'(xy)\intd\lambda^{-1}_{r(y)}(x)
  \overline{f(y)}f'(y) \\ &= \overline{f(y)}\langle
  c|c'\rangle_{\Gamma^{2}}(s(y)) f(y) = \langle c \tr f|c'
  \tr f'\rangle(y)
\end{align*}
for all $c,c' \in \Gamma_{c}(\mathcal{ F}),f,f' \in
C_{c}(G)$, $y \in G$ by right-invariance of
$\lambda^{-1}$. The $*$-homo\-morphism $\Phi \colon
\mathcal{ K}(\Gamma^{2}) \to \mathcal{ L}(\tilde
\Gamma^{2})$ given by $k \mapsto \Psi(k \tr_{s^{*}}
\Id)\Psi^{*}$ is injective because $s^{*} \colon
C_{0}(G^{0}) \to \mathcal{ L}(C_{0}(G))$ is injective, and
the claim follows once we have shown that $\Phi(\mathcal{
  K}(\Gamma^{2}))=C^{*}_{r}({\cal F}^{2}_{s,r})$.  Let $d,d'
\in \Gamma_{c}(\mathcal{ F})$ and denote by
$|d\rangle\langle d'| \in \mathcal{ K}(\Gamma^{2})$ the
operator given by $e \mapsto d\langle d'|e\rangle$. Then for
all $c,f,(x,y)$ as above,
\begin{align*}
  \big(\Psi(|d\rangle\langle d'|c \tr f)\big)(x,y) &=
  \int_{G_{s(y)}} d(xy)d'(z)^{*}c(z)f(y)
  \intd\lambda^{-1}_{s(y)}(z) \\
  &= \int_{G_{s(y)}} d(xy)d'(z)^{*} \big(\Psi(c \tr
  f)\big)(zy^{-1},y)
  \intd\lambda^{-1}_{s(y)}(z) \\
  &= \int_{G_{r(y)}} d(xy)d'(z'y)^{*} \big(\Psi(c \tr
  f)\big)(z',y)   \intd\lambda^{-1}_{r(y)}(z').
\end{align*}
Comparing with equation \eqref{eq:convolution-qf}, we find
that $\Psi(|d\rangle\langle d'| \tr
\Id)\Psi^{*}=L_{\mathcal{F}^{2}_{s,r}}(e)$, where $e \in
\Gamma_{c}(\mathcal{F}^{2}_{s,r})$ is given by
$e(xz^{-1},zy) = d(xy)d'(zy)^{*}$, or equivalently, by
$e(x',y')=d(x'y')d'(y')^{*}$ for all $(x',y') \in
\GsrG$. Since $\mathcal{ F}$ is saturated, Lemma
\ref{lemma:submodule} implies that sections of this
form are dense in $\Gamma_{c}(\mathcal{F}^{2}_{s,r})$ with
respect to the inductive limit topology, and since the map
$e \mapsto L_{\mathcal{F}^{2}_{s,r}}(e) $ is continuous with
respect to this topology, we can conclude that
$\Phi(\mathcal{ K}(\Gamma^{2})) = \Psi(\mathcal{
  K}(\Gamma^{2}) \tr \Id)\Psi^{*} = C^{*}_{r}({\cal
  F}^{2}_{s,r})$.
\end{proof}

\begin{corollary}
  If $\mathcal{ F}$ is saturated, then
  $\pi(C^{*}_{r}(\mathcal{ F})) \rtimes_{r} C_{0}(G)$ and
  $\Gamma_{0}(\mathcal{ F}^{0})$ are Morita equivalent.
\end{corollary}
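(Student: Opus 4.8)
The corollary is, in essence, a combination of the two preceding propositions with a standard fact about Hilbert modules. First I would invoke Proposition~\ref{proposition:groupoid-bundle-crossed} and Proposition~\ref{proposition:bundle-crossed-compact} to obtain, for saturated $\mathcal{ F}$, a chain of isomorphisms
\begin{align*}
  \pi(C^{*}_{r}(\mathcal{ F})) \rtimes_{r} C_{0}(G) \cong C^{*}_{r}(\mathcal{ F}^{2}_{s,r}) \cong \mathcal{ K}\big(\Gamma^{2}(\mathcal{ F},\lambda^{-1})\big),
\end{align*}
where $\Gamma^{2}(\mathcal{ F},\lambda^{-1})$ carries its natural structure of a Hilbert $C^{*}$-module over $\Gamma_{0}(\mathcal{ F}^{0})$ from Section~\ref{section:prelim}. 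Since for any Hilbert $C^{*}$-module $E$ over a $C^{*}$-algebra $B$ the algebra $\mathcal{ K}(E)$ is Morita equivalent to the closed ideal $[\langle E|E\rangle] \subseteq B$ --- with $E$ itself serving as an imprimitivity bimodule, the $\mathcal{ K}(E)$-valued left inner product being $|c\rangle\langle d|$ --- the corollary reduces to showing that $\Gamma^{2}(\mathcal{ F},\lambda^{-1})$ is a \emph{full} Hilbert $\Gamma_{0}(\mathcal{ F}^{0})$-module, i.e.\ that $I := [\langle \Gamma_{c}(\mathcal{ F})|\Gamma_{c}(\mathcal{ F})\rangle]$ equals $\Gamma_{0}(\mathcal{ F}^{0})$.

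To establish fullness I would first note that $I$ is a closed ideal of $\Gamma_{0}(\mathcal{ F}^{0})$, using $\langle c|d\rangle e = \langle c|de\rangle$ and $\langle c|d\rangle^{*} = \langle d|c\rangle$ for $c,d \in \Gamma_{c}(\mathcal{ F})$, $e \in \Gamma_{0}(\mathcal{ F}^{0})$. It then suffices to check that the image $p_{u}(I)$ under evaluation at an arbitrary $u \in G^{0}$ is all of $\mathcal{ F}_{u}$, since $\Gamma_{0}(\mathcal{ F}^{0})$ is a $C_{0}(G^{0})$-algebra and a closed ideal with full image in every fibre must be the whole algebra. For the surjectivity of $p_{u}|_{I}$: given $a \in \mathcal{ F}_{u}$, write $a = b^{*}b$ with $b \in \mathcal{ F}_{u}$, choose a section $\tilde b \in \Gamma_{c}(\mathcal{ F})$ with $\tilde b(u) = b$, and cut it down by functions $h_{n} \in C_{c}(G)$ supported in shrinking neighbourhoods of $u$ and normalized by $\int_{G_{u}} |h_{n}(x)|^{2}\intd\lambda^{-1}_{u}(x) = 1$; continuity of $x \mapsto \tilde b(x)^{*}\tilde b(x)$ then gives $\langle \tilde b h_{n}|\tilde b h_{n}\rangle(u) \to a$, while each $\langle\tilde b h_{n}|\tilde b h_{n}\rangle$ lies in $I$. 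Alternatively, fullness can be read off directly from the identity $[\mathcal{ F}_{u}\mathcal{ F}_{u}^{*}] = \mathcal{ F}_{u}$ together with Lemma~\ref{lemma:submodule}, exactly in the manner of the proof of Lemma~\ref{lemma:fell-full}; note that saturation of $\mathcal{ F}$ enters only through Proposition~\ref{proposition:bundle-crossed-compact}, not here.

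The only step demanding any genuine work is the fullness of $\Gamma^{2}(\mathcal{ F},\lambda^{-1})$, and even that is routine; the rest is a purely formal combination of Propositions~\ref{proposition:groupoid-bundle-crossed} and~\ref{proposition:bundle-crossed-compact} with the standard Morita equivalence $\mathcal{ K}(E) \sim B$ for a full Hilbert $B$-module $E$. I would therefore expect the writeup to be very short, essentially a two-line deduction followed by the fullness remark.
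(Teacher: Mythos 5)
Your proposal is correct and follows exactly the paper's route: the paper's entire proof is the one-line remark that $\Gamma^{2}(\mathcal{F},\lambda^{-1})$ is full, relying implicitly on Propositions \ref{proposition:groupoid-bundle-crossed} and \ref{proposition:bundle-crossed-compact} and the standard Morita equivalence $\mathcal{K}(E)\sim [\langle E|E\rangle]$. You merely spell out the fullness verification (via fibrewise surjectivity and $[\mathcal{F}_{u}\mathcal{F}_{u}^{*}]=\mathcal{F}_{u}$, in the manner of Lemma \ref{lemma:fell-full}) that the paper leaves to the reader, and you correctly observe that saturation enters only through Proposition \ref{proposition:bundle-crossed-compact}.
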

\begin{proof}
  One easily verifies that $\Gamma^{2}({\cal
    F},\lambda^{-1})$ is full.
\end{proof}
\begin{example}
  Let $\sigma$ be an action of $G$ on an admissible
  $C_{0}(G^{0})$-algebra $C$ and let $\delta_{\sigma}$ be
  the corresponding coaction of $C_{0}(G)$ on $\bfF C$
  (Proposition \ref{proposition:groupoid-actions}). Then
  there exists an admissible Fell bundle $\mathcal{ C}$ on
  $G$ with fibre $\mathcal{ C}_{x} = C_{r(x)}$ for each $x
  \in G$, continuous sections $\Gamma_{0}(\mathcal{ C}) =
  r^{*}C$, and multiplication and involution given by $cd =
  c\sigma_{x}(d)$, $c^{*} = \sigma_{x^{-1}}(c^{*})$ for all
  $c \in \mathcal{ C}_{x}$, $d \in \mathcal{ C}_{y}$, $(x,y)
  \in \GsrG$ \cite{kumjian}, and the identity on
  $\Gamma_{c}(\mathcal{ C}) = C_{c}(G)r^{*}C$ extends to an
  isomorphism $C^{*}_{r}(\mathcal{ C}) \to C \rtimes_{r}
  G$. One easily verifies that with respect to the
  isomorphism $\pi(C^{*}_{r}(\mathcal{ C})) \cong
  C^{*}_{r}(\mathcal{ C}) \cong C \rtimes_{r} G \cong \bfF C
  \rtimes_{r} C^{*}_{r}(G)$ of Proposition
  \ref{proposition:groupoid-crossed}, the coaction of
  Theorem \ref{theorem:groupoid-bundle-coaction} coincides
  with the dual coaction on $ \bfF C \rtimes_{r}
  C^{*}_{r}(G)$.  Moreover, the Fell bundle $\mathcal{ C}$
  is saturated and $C^{*}_{r}(\mathcal{ C}) \rtimes_{r}
  C_{0}(G) \cong \bfF C \rtimes_{r} C^{*}_{r}(G) \rtimes_{r}
  C_{0}(G)$ is Morita equivalent to $\Gamma_{0}(\mathcal{
    C}^{0})\cong C$, as we already know by Theorem
  \ref{theorem:duality}.
\end{example}
\begin{remark}
  The Fell bundle $\mathcal{ F}$ can be equipped with the
  structure of an $\mathcal{F}^{2}_{s,r}$-$\mathcal{
    F}^{0}$-equivalence in the sense of
  \cite{williams:fell} in a straightforward way. 
\end{remark}

\paragraph{Functoriality of the construction}
Let ${\cal G}$, ${\cal F}$ be admissible Fell bundles on $G$
with associated representations $((K_{{\cal G}},
{\gamma_{{\cal G}},\hdelta_{{\cal G}}}), X_{{\cal G}})$,
$((K_{{\cal F}}, {\gamma_{{\cal F}},\hdelta_{{\cal
      F}}}),X_{{\cal F}})$ and coactions $(\pi_{{\cal
    G}}(C^{*}_{r}({\cal G}))^{\gamma_{{\cal G}}}_{K_{{\cal
      G}}},\delta_{{\cal G}})$, $(\pi_{{\cal
    F}}(C^{*}_{r}({\cal F}))^{\gamma_{{\cal F}}}_{K_{{\cal
      F}}},\delta_{{\cal F}})$, and let $T$ be a morphism
from ${\cal G}$ to ${\cal F}$.
\begin{proposition} \label{proposition:fell-coaction-morphism}
  There exists a  unique morphism $\tilde T_{*}$  from
  $\big(\pi_{\mathcal{ G}}(C^{*}_{r}(\mathcal{
    G}))^{\gamma_{\mathcal{ G}}}_{K_{\mathcal{
        G}}},\delta_{\mathcal{ G}})$ to $\big(\pi_{\mathcal{
      F}}(C^{*}_{r}(\mathcal{ F}))^{\gamma_{\mathcal{
        F}}}_{K_{\mathcal{ F}}},\delta_{\mathcal{ F}})$ that
  satisfies $\tilde T_{*}(\pi_{\mathcal{
      G}}(a))=\pi_{\mathcal{ F}}(T_{*}(a))$ for all $a \in
  \Gamma_{c}(\mathcal{ G})$.
\end{proposition}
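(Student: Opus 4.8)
The plan is to lift the nondegenerate $*$-homomorphism $T_{*}\colon C^{*}_{r}(\mathcal{ G})\to M(C^{*}_{r}(\mathcal{ F}))$ of Proposition~\ref{proposition:fell-morphism}~iii) to the represented algebras and then to check that the lift is an equivariant morphism of $C^{*}$-$\frakb$-algebras. Since $[\pi_{\mathcal{ F}}(C^{*}_{r}(\mathcal{ F}))\hdelta_{\mathcal{ F}}]=\hdelta_{\mathcal{ F}}$ (Lemma~\ref{lemma:groupoid-algebra}) and $[\hdelta_{\mathcal{ F}}\frakK]=K_{\mathcal{ F}}$, the representation $\pi_{\mathcal{ F}}$ is nondegenerate, extends to $M(C^{*}_{r}(\mathcal{ F}))$, and satisfies $\pi_{\mathcal{ F}}(M(C^{*}_{r}(\mathcal{ F})))=M(\pi_{\mathcal{ F}}(C^{*}_{r}(\mathcal{ F})))$; the same holds for $\pi_{\mathcal{ G}}$. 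One therefore obtains a well-defined $*$-homomorphism
\begin{align*}
 \tilde T_{*}:=\pi_{\mathcal{ F}}\circ T_{*}\circ\pi_{\mathcal{ G}}^{-1}\colon \pi_{\mathcal{ G}}(C^{*}_{r}(\mathcal{ G}))\longrightarrow M\bigl(\pi_{\mathcal{ F}}(C^{*}_{r}(\mathcal{ F}))\bigr)
\end{align*}
with $\tilde T_{*}(\pi_{\mathcal{ G}}(a))=\pi_{\mathcal{ F}}(T_{*}(a))$ for all $a$, and nondegeneracy of $T_{*}$ gives $[\tilde T_{*}(\pi_{\mathcal{ G}}(C^{*}_{r}(\mathcal{ G})))\pi_{\mathcal{ F}}(C^{*}_{r}(\mathcal{ F}))]=\pi_{\mathcal{ F}}(C^{*}_{r}(\mathcal{ F}))$. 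Uniqueness of $\tilde T_{*}$ is then automatic, since a morphism of coactions is norm-continuous and $\Gamma_{c}(\mathcal{ G})$ is dense in $C^{*}_{r}(\mathcal{ G})$.

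The remaining work --- showing that $\tilde T_{*}$ is a semi-morphism of $C^{*}$-$\frakb$-algebras and that it is equivariant --- rests on producing enough intertwining operators. For $\psi\in\mathcal{ W}=\mathcal{ W}(\Gamma_{0}(\mathcal{ F}^{0}))$ set $\phi_{\psi}:=\psi\circ T^{0}_{*}$; its fibres are $\phi_{\psi,u}=\psi_{u}\circ T_{u}$, and since $\|T_{u}\|\le 1$ and $T^{0}_{*}$ is nondegenerate (Proposition~\ref{proposition:fell-morphism}~i)), $\phi_{\psi}$ is a $C_{0}(G^{0})$-weight on $\Gamma_{0}(\mathcal{ G}^{0})$. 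The identity $T(e_{1})^{*}T(e_{2})=T(e_{1}^{*}e_{2})$ (from $T(e)^{*}=T(e^{*})$ and multiplicativity) shows that $c\mapsto T\circ c$ preserves the $\phi_{\psi}$- and $\psi$-semi-inner products used to build $\Gamma^{2}(\mathcal{ G},\lambda^{-1};\phi_{\psi})$ and $\Gamma^{2}(\mathcal{ F},\lambda^{-1};\psi)$. Re-running the argument of Proposition~\ref{proposition:fell-morphism}~iii) with the single weights $\phi_{\psi},\psi$ in place of the universal ones and tensoring with $\frakK$ over these weights produces a unitary $w_{\psi}\colon K^{\mathcal{ G}}_{\phi_{\psi}}\to K^{\mathcal{ F}}_{\psi}$ between the corresponding summands of $K_{\mathcal{ G}}$ and $K_{\mathcal{ F}}$ which
\begin{enumerate}
\item intertwines $\pi^{\mathcal{ G}}_{\phi_{\psi}}$ with $\pi^{\mathcal{ F}}_{\psi}\circ T_{*}$;
\item restricts to isomorphisms of Hilbert $C^{*}$-modules carrying $j^{\mathcal{ G}}_{\phi_{\psi}}(\Gamma_{c}(\mathcal{ G}))$ and $\hat j^{\mathcal{ G}}_{\phi_{\psi}}(\Gamma_{c}(\mathcal{ G}))$ onto dense subspaces of $j^{\mathcal{ F}}_{\psi}(\Gamma_{c}(\mathcal{ F}))$ and $\hat j^{\mathcal{ F}}_{\psi}(\Gamma_{c}(\mathcal{ F}))$ respectively (here Proposition~\ref{proposition:fell-morphism}~ii) is used);
\item intertwines the coordinate-change unitaries $X^{\mathcal{ G}}_{\phi_{\psi}}$ and $X^{\mathcal{ F}}_{\psi}$ of Proposition~\ref{proposition:groupoid-bundle-rep}, since both are the map $f\mapsto\bigl((x,y)\mapsto f(x,x^{-1}y)\bigr)$ and hence do not depend on the weight.
\end{enumerate}
Writing $P^{\mathcal{ G}}_{\phi}$ and $\iota^{\mathcal{ F}}_{\psi}$ for the canonical projections and inclusions of the decompositions $K_{\mathcal{ G}}=\bigoplus_{\phi}K^{\mathcal{ G}}_{\phi}$, $K_{\mathcal{ F}}=\bigoplus_{\psi}K^{\mathcal{ F}}_{\psi}$, the block operators $W_{\psi}:=\iota^{\mathcal{ F}}_{\psi}w_{\psi}P^{\mathcal{ G}}_{\phi_{\psi}}\in\mathcal{ L}(K_{\mathcal{ G}},K_{\mathcal{ F}})$ all lie in $\mathcal{ L}^{\tilde T_{*}}_{s}$ and are in fact morphisms of the underlying $C^{*}$-$(\frakb,\frakbo)$-modules (by (i), (ii) and because $\pi_{\mathcal{ G}},\pi_{\mathcal{ F}}$ act diagonally), and $[\bigcup_{\psi}W_{\psi}\gamma_{\mathcal{ G}}]=[\bigcup_{\psi}j^{\mathcal{ F}}_{\psi}(\Gamma_{c}(\mathcal{ F}))]=\gamma_{\mathcal{ F}}$. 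Hence $[\mathcal{ L}^{\tilde T_{*}}_{s}\gamma_{\mathcal{ G}}]=\gamma_{\mathcal{ F}}$, so $\tilde T_{*}$ is a semi-morphism of $C^{*}$-$\frakb$-algebras, and together with the nondegeneracy above it will be a morphism of coactions once equivariance is shown.

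For equivariance, recall from Lemma~\ref{lemma:rep-coaction} that $\delta_{\mathcal{ G}}=\Ad_{X_{\mathcal{ G}}}\circ(\,\cdot\,\botensor\Id_{H})$ and $\delta_{\mathcal{ F}}=\Ad_{X_{\mathcal{ F}}}\circ(\,\cdot\,\botensor\Id_{H})$ on the represented algebras. Using (i) and (iii) one computes, for every $a\in C^{*}_{r}(\mathcal{ G})$,
\begin{align*}
 \delta_{\mathcal{ F}}\bigl(\tilde T_{*}(\pi_{\mathcal{ G}}(a))\bigr)(W_{\psi}\btensor\Id_{H})
 &=X_{\mathcal{ F}}\bigl(\pi_{\mathcal{ F}}(T_{*}(a))\botensor\Id_{H}\bigr)X_{\mathcal{ F}}^{*}(W_{\psi}\btensor\Id_{H}) \\
 &=X_{\mathcal{ F}}\bigl(\pi_{\mathcal{ F}}(T_{*}(a))\botensor\Id_{H}\bigr)(W_{\psi}\botensor\Id_{H})X_{\mathcal{ G}}^{*} \\
 &=X_{\mathcal{ F}}(W_{\psi}\botensor\Id_{H})\bigl(\pi_{\mathcal{ G}}(a)\botensor\Id_{H}\bigr)X_{\mathcal{ G}}^{*} \\
 &=(W_{\psi}\btensor\Id_{H})\,X_{\mathcal{ G}}\bigl(\pi_{\mathcal{ G}}(a)\botensor\Id_{H}\bigr)X_{\mathcal{ G}}^{*}
  =(W_{\psi}\btensor\Id_{H})\,\delta_{\mathcal{ G}}(\pi_{\mathcal{ G}}(a)).
\end{align*}
On the other hand, functoriality of the fibre product with respect to semi-morphisms (Lemma~\ref{lemma:fp-c-morphism}, applied to $\tilde T_{*}$ and the identity morphism of $\cA$) characterises $(\tilde T_{*}\ast\Id)(\delta_{\mathcal{ G}}(\pi_{\mathcal{ G}}(a)))$ by the relations $(\tilde T_{*}\ast\Id)(\delta_{\mathcal{ G}}(\pi_{\mathcal{ G}}(a)))R=R\,\delta_{\mathcal{ G}}(\pi_{\mathcal{ G}}(a))$ for all $R\in\mathcal{ L}^{\tilde T_{*}}_{s}(K_{\mathcal{ G}},K_{\mathcal{ F}})\btensor\Id_{H}$, in particular $R=W_{\psi}\btensor\Id_{H}$. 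Comparing with the display and using that $[\bigcup_{\psi}(W_{\psi}\btensor\Id_{H})(K_{\mathcal{ G}}\rtensor{\gamma_{\mathcal{ G}}}{\frakb}{\beta}H)]=K_{\mathcal{ F}}\rtensor{\gamma_{\mathcal{ F}}}{\frakb}{\beta}H$, I conclude $(\tilde T_{*}\ast\Id)\circ\delta_{\mathcal{ G}}=\delta_{\mathcal{ F}}\circ\tilde T_{*}$; since $\delta_{\mathcal{ F}}$ extends multiplicatively to multipliers, multiplying by $\delta_{\mathcal{ F}}(d)$ yields the equivariance identity $\delta_{\mathcal{ F}}(d)\cdot(\tilde T_{*}\ast\Id)(\delta_{\mathcal{ G}}(c))=\delta_{\mathcal{ F}}(d\,\tilde T_{*}(c))$, completing the proof.

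The main obstacle I expect is the construction of the unitaries $w_{\psi}$ and the verification of (i)--(iii): this means re-examining the proof of Proposition~\ref{proposition:fell-morphism}~iii) weight by weight and lifting it through the internal tensor product with $\frakK$, taking care that the identifications respect not merely the Hilbert spaces $K^{\bullet}_{\bullet}$ but also the module structures $\gamma_{\bullet},\hdelta_{\bullet}$ and the coordinate-change operators $X^{\bullet}_{\bullet}$. A secondary nuisance is the bookkeeping when several weights on $\mathcal{ F}$ pull back to the same weight on $\mathcal{ G}$; this is absorbed, as above, by working with the block operators $W_{\psi}$ rather than trying to assemble a single isometry $K_{\mathcal{ G}}\to K_{\mathcal{ F}}$.
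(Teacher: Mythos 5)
Your overall strategy---lift $T_{*}$ through the faithful nondegenerate representations, produce a family of operators in $\mathcal{L}^{\tilde T_{*}}_{s}(K_{\mathcal{G}},K_{\mathcal{F}})$ whose images span $\gamma_{\mathcal{F}}$ and which also intertwine $X_{\mathcal{G}}$ with $X_{\mathcal{F}}$, and then read off both the semi-morphism property and equivariance from these intertwiners---is exactly the paper's (Lemma \ref{lemma:bundle-morphism-intertwiner} plus the concluding argument). The gap is in the construction of the intertwiners. First, $\phi_{\psi}:=\psi\circ T^{0}_{*}$ is not a well-defined $C_{0}(G^{0})$-weight: $T^{0}_{*}$ takes values in $M(\Gamma_{0}(\mathcal{F}^{0}))$, on which $\psi$ is not defined, and although the fibrewise extensions $\psi_{u}\circ T_{u}$ exist, they need not assemble into a \emph{continuous} $C_{0}(G^{0})$-valued map. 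Second, and more seriously, the unitaries $w_{\psi}\colon K^{\mathcal{G}}_{\phi_{\psi}}\to K^{\mathcal{F}}_{\psi}$ do not exist in general: the natural candidate $g\mapsto T_{*}(g)$ does not land in $\Gamma_{c}(\mathcal{F})$ (only in $\Gamma_{c}(\mathcal{M}(\mathcal{F}))$), and even after completion its range is only the closure of $T_{*}(\Gamma_{c}(\mathcal{G}))$, which is a proper subspace in general---Proposition \ref{proposition:fell-morphism} ii) gives density of $T_{*}(\Gamma_{c}(\mathcal{G}))\Gamma_{0}(\mathcal{F}^{0})$, not of $T_{*}(\Gamma_{c}(\mathcal{G}))$ itself, so the density claim in your point (ii) fails. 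Already for $G=G^{0}=\{\mathrm{pt}\}$, $\mathcal{G}=\complex$ and $\mathcal{F}$ a $C^{*}$-algebra $A$ with $T$ the unit map $\complex\to M(A)$, the left-hand space is at most one-dimensional while the right-hand space is a GNS space of $A$.

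The paper repairs both defects at once by introducing an extra parameter $f\in\Gamma_{0}(\mathcal{F}^{0})$: for each $\phi\in\mathcal{W}(\Gamma_{0}(\mathcal{F}^{0}))$ and each such $f$ one sets $\psi(g):=\phi(f^{*}T^{0}_{*}(g)f)$---now $f^{*}T^{0}_{*}(g)f$ lies in $\Gamma_{0}(\mathcal{F}^{0})$, so $\phi$ applies---and obtains an \emph{isometry} $T^{f}_{\phi}\colon K_{\psi}\to K_{\phi}$, $g\mapsto T_{*}(g)f$, which intertwines $\pi_{\mathcal{G}}$ with $\pi_{\mathcal{F}}\circ T_{*}$ and satisfies $(T^{f}_{\phi}\btensor\Id)X_{\mathcal{G}}=X_{\mathcal{F}}(T^{f}_{\phi}\botensor\Id)$. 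These are only isometries, but the closed span $\mathcal{T}$ of all of them over all pairs $(\phi,f)$ satisfies $[\mathcal{T}\gamma_{\mathcal{G}}]=[\bigcup_{\phi}j_{\phi}(T_{*}(\Gamma_{c}(\mathcal{G}))\Gamma_{0}(\mathcal{F}^{0}))]=\gamma_{\mathcal{F}}$ precisely by Proposition \ref{proposition:fell-morphism} ii), which is all your argument actually needs; with $W_{\psi}$ replaced by elements of $\mathcal{T}$, the rest of your proof (the semi-morphism property and the equivariance computation against $R=S\btensor\Id$ for $S\in\mathcal{T}$) goes through essentially as written. So the fix is local, but as stated the central objects of your proof do not exist.
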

The proof involves the following construction. 
\begin{lemma} \label{lemma:bundle-morphism-intertwiner} Let
  $\phi \in {\cal W}(\Gamma_{0}({\cal F}^{0}))$, $f \in
  \Gamma_{0}(\mathcal{ F}^{0})$ and define $\psi \in {\cal
    W}(\Gamma_{0}({\cal G}^{0}))$ by $g \mapsto
  \phi(f^{*}T_{*}^{0}(g)f)$.
 \begin{enumerate}
 \item There exists a unique isometry    $T^{f}_{\phi}
   \colon K_{\psi} \to K_{\phi}$  such that
   $T^{\phi}_{f}g=T_{*}(g)f$ for all $g \in \Gamma_{c}({\cal G})$.
 \item  $T_{\phi}^{f}j_{\psi}(g) =
   j_{\phi}(T_{*}(g)f)$, $T_{\phi}^{f} \hat j_{\psi}(g) =
   \hat j_{\psi}(T_{*}(g)f)$, $T_{\phi}^{f}
   \pi_{\psi}(g)=\pi_{\phi}(T_{*}(g))T_{\phi}^{f}$ for all $g
   \in \Gamma_{c}(\mathcal{ G})$.
\end{enumerate}
Denote also the map $K_{{\cal G}} \to K_{\psi}
\hookrightarrow K_{{\cal F}}$ given by
$(\xi_{\psi'})_{\psi'} \mapsto T_{\phi}^{f}\xi_{\psi}$ by
$T^{f}_{\phi}$.
\begin{enumerate} \setcounter{enumi}{2}
\item $T^{f}_{\phi}$ is a semi-morphism from
  $(K_{{\cal G}},\hdelta_{{\cal G}},\gamma_{{\cal G}})$ to
  $(K_{{\cal F}},\hdelta_{{\cal F}},\gamma_{{\cal F}})$ and
 $(T^{f}_{\phi} \btensor \Id)X_{{\cal G}}
  =X_{{\cal F}} (T^{f}_{\phi} \botensor \Id)$.
\item $\delta_{{\cal F}}(\pi_{{\cal F}}(h))(T_{\phi}^{f}
  \btensor \Id)\delta_{{\cal G}}(\pi_{{\cal G}}(g)) =
  \delta_{{\cal F}}(\pi_{{\cal F}}(hT_{*}(g)))(T_{\phi}^{f} \btensor
  \Id)$ for all $h \in \Gamma_{c}(\mathcal{ F})$, $g \in
  \Gamma_{c}(\mathcal{ G})$.
  \end{enumerate}
\end{lemma}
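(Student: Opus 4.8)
The plan is to prove the four assertions in order: (i) and (ii) by direct computation, then (iii) and (iv) as essentially formal consequences. Throughout I write $T_x$ for the fibre maps of the morphism $T\colon\mathcal{G}\to\mathcal{F}$ and recall that $T$ is fibrewise linear and multiplicative, so in particular $T_x(e_1)^{*}T_x(e_2)=T_{x^{-1}}(e_1^{*})T_x(e_2)=T_{s(x)}(e_1^{*}e_2)$ for $e_1,e_2\in\mathcal{G}_x$.

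\emph{Part (i).} First I would note that $\psi$ is a genuine $C_0(G^0)$-weight: it is positive because $T^0_{*}$ is a $*$-homomorphism and $C_0(G^0)$-linear because $T^0_{*}$ is a morphism of $C_0(G^0)$-algebras and $C_0(G^0)$ is central; hence $K_\psi$ is a summand of $K_{\mathcal{G}}$. For $g\in\Gamma_c(\mathcal{G})$ the section $T_{*}(g)$ lies in $\Gamma_c(\mathcal{M}(\mathcal{F}))$, so $T_{*}(g)f\in\Gamma_c(\mathcal{F})$ with $(T_{*}(g)f)(x)=T_x(g(x))f(s(x))$. Combining the fibrewise identity above with $\psi_{s(x)}(\cdot)=\phi_{s(x)}(f(s(x))^{*}T_{s(x)}(\cdot)f(s(x)))$ gives, for $g,g'\in\Gamma_c(\mathcal{G})$,
\[ \phi_{s(x)}\big((T_{*}(g)f)(x)^{*}(T_{*}(g')f)(x)\big)=\psi_{s(x)}(g(x)^{*}g'(x))\qquad(x\in G).\]
Integrating over $G$ against $\nu$ shows $g\mapsto T_{*}(g)f$ is isometric for the $K_\psi$-inner product, and since $\Gamma_c(\mathcal{G})$ is dense in $K_\psi$ it extends uniquely to an isometry $T^{f}_{\phi}\colon K_\psi\to K_\phi$.

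\emph{Part (ii).} Each identity is checked on a dense set. Since $T$ is fibrewise linear, $T_{*}(hg)=h\,T_{*}(g)$ for $h\in C_c(G^0)$, so $T^{f}_{\phi}j_\psi(g)h=T_{*}(hg)f=h(T_{*}(g)f)=j_\phi(T_{*}(g)f)h$, giving $T^{f}_{\phi}j_\psi(g)=j_\phi(T_{*}(g)f)$; the same computation with the twist $D^{-1/2}$ inserted yields $T^{f}_{\phi}\hat j_\psi(g)=\hat j_\phi(T_{*}(g)f)$. For the third identity I would use that $T_{*}$ is a $*$-homomorphism, so $T_{*}(g)T_{*}(d)=T_{*}(gd)$ in $\Gamma_c(\mathcal{M}(\mathcal{F}))$; unwinding the convolution formulas of Lemmas~\ref{lemma:fell-multiplier-sections} and~\ref{lemma:groupoid-algebra-phi} then gives $T^{f}_{\phi}\pi_\psi(g)d=T_{*}(gd)f=\pi_\phi(T_{*}(g))(T_{*}(d)f)=\pi_\phi(T_{*}(g))T^{f}_{\phi}d$ for $d\in\Gamma_c(\mathcal{G})$, whence $T^{f}_{\phi}\pi_\psi(g)=\pi_\phi(T_{*}(g))T^{f}_{\phi}$. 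Finally, for the extended operator $T^{f}_{\phi}\colon K_{\mathcal{G}}\to K_{\mathcal{F}}$ (the isometry on the summand $K_\psi$, zero on $K_{\psi'}$ for $\psi'\neq\psi$), these identities upgrade summandwise to $T^{f}_{\phi}\pi_{\mathcal{G}}(g)=\pi_{\mathcal{F}}(T_{*}(g))T^{f}_{\phi}$ as operators $K_{\mathcal{G}}\to K_{\mathcal{F}}$, using $\pi_{\mathcal{G}}(g)=\bigoplus_{\psi'}\pi_{\psi'}(g)$ and $\pi_{\mathcal{F}}(T_{*}(g))=\bigoplus_{\phi}\pi_\phi(T_{*}(g))$.

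\emph{Part (iii).} From $T^{f}_{\phi}j_\psi(g)=j_\phi(T_{*}(g)f)\in\gamma_{\mathcal{F}}$, boundedness of $T^{f}_{\phi}$, density of $j_\psi(\Gamma_c(\mathcal{G}))$ in $j_\psi(\Gamma^{2}(\mathcal{G},\lambda;\psi))$, and the vanishing of $T^{f}_{\phi}$ on the other summands, one gets $T^{f}_{\phi}\gamma_{\mathcal{G}}\subseteq\gamma_{\mathcal{F}}$, and likewise $T^{f}_{\phi}\hdelta_{\mathcal{G}}\subseteq\hdelta_{\mathcal{F}}$; so $T^{f}_{\phi}$ is a semi-morphism and $T^{f}_{\phi}\botensor\Id$, $T^{f}_{\phi}\btensor\Id$ exist by functoriality of the relative tensor product. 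Using part (ii), I would check on the total set of vectors $\hat j_\psi(c)\tr\zeta\tl j(g)$ (resp.\ $j_\psi(c)\tr\zeta\tl j(g)$) that, under the identifications $\Phi$ (resp.\ $\Psi$) of Section~\ref{section:fell}, the operator $T^{f}_{\phi}\botensor\Id$ (resp.\ $T^{f}_{\phi}\btensor\Id$) annihilates the summands with $\psi'\neq\psi$ and sends $\omega\in\Gamma^{2}(\mathcal{G}^{2}_{s,r},\nu^{2}_{s,r};\psi)$ to the section $(x,y)\mapsto T_x(\omega(x,y))f(s(x))$ in $\Gamma^{2}(\mathcal{F}^{2}_{s,r},\nu^{2}_{s,r};\phi)$ (resp.\ with $r,r$ in place of $s,r$). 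Since $X_{\mathcal{G}}$ and $X_{\mathcal{F}}$ act by the same substitution $\omega(x,y)\mapsto\omega(x,x^{-1}y)$ on these pictures (Proposition~\ref{proposition:groupoid-bundle-rep}), they manifestly commute with this fibrewise application of $T$, which is exactly $(T^{f}_{\phi}\btensor\Id)X_{\mathcal{G}}=X_{\mathcal{F}}(T^{f}_{\phi}\botensor\Id)$; multiplying by the unitaries $X_{\mathcal{G}}^{*},X_{\mathcal{F}}^{*}$ also yields $(T^{f}_{\phi}\botensor\Id)X_{\mathcal{G}}^{*}=X_{\mathcal{F}}^{*}(T^{f}_{\phi}\btensor\Id)$.

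\emph{Part (iv) and the main difficulty.} Writing $\delta_{\mathcal{G}}(\pi_{\mathcal{G}}(g))=X_{\mathcal{G}}(\pi_{\mathcal{G}}(g)\botensor\Id)X_{\mathcal{G}}^{*}$ and inserting the relations of (iii), the functoriality rule $(S_1\botensor\Id)(S_2\botensor\Id)=(S_1S_2)\botensor\Id$, and $T^{f}_{\phi}\pi_{\mathcal{G}}(g)=\pi_{\mathcal{F}}(T_{*}(g))T^{f}_{\phi}$ of (ii), one gets $(T^{f}_{\phi}\btensor\Id)\delta_{\mathcal{G}}(\pi_{\mathcal{G}}(g))=X_{\mathcal{F}}(\pi_{\mathcal{F}}(T_{*}(g))\botensor\Id)X_{\mathcal{F}}^{*}(T^{f}_{\phi}\btensor\Id)$; left-multiplying by $\delta_{\mathcal{F}}(\pi_{\mathcal{F}}(h))=X_{\mathcal{F}}(\pi_{\mathcal{F}}(h)\botensor\Id)X_{\mathcal{F}}^{*}$ and using $\pi_{\mathcal{F}}(h)\pi_{\mathcal{F}}(T_{*}(g))=\pi_{\mathcal{F}}(hT_{*}(g))$ with $hT_{*}(g)\in\Gamma_c(\mathcal{F})$ (Lemma~\ref{lemma:fell-multiplier-sections}) gives the claimed identity. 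I expect the one step requiring genuine care is the explicit description of $T^{f}_{\phi}\botensor\Id$ and $T^{f}_{\phi}\btensor\Id$ under $\Phi$ and $\Psi$ in part (iii): one must chase these operators through the unitaries of Section~\ref{section:fell} on the product vectors, keeping track of the $D^{-1/2}$-twists and of $\mathcal{F}^{2}_{s,r}$ versus $\mathcal{F}^{2}_{r,r}$; once that bookkeeping is settled the commutation with $X_{\mathcal{G}},X_{\mathcal{F}}$ and all of (iv) are purely formal.
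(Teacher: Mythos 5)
Your proposal is correct and follows essentially the same route as the paper: the isometry in (i) via the inner-product computation, (ii) by direct verification on dense subspaces of sections (which the paper dismisses as ``straightforward''), (iii) by combining the semi-morphism property from (ii) with the explicit fibrewise action of $T^{f}_{\phi}\btensor\Id$ and $T^{f}_{\phi}\botensor\Id$ on sections of the pullback bundles, where the intertwining with $X_{{\cal G}}$, $X_{{\cal F}}$ is immediate because both act by the substitution $(x,y)\mapsto(x,x^{-1}y)$, and (iv) as the purely formal consequence of (ii) and (iii). The extra bookkeeping you flag in (iii) is exactly what the paper's one-line computation encodes, so there is no divergence of method.
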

\begin{proof}
  i) Uniqueness is clear. Existence follows from the
  fact that  for all $g,g' \in \Gamma_{c}(\mathcal{ G})$,
  \begin{align*}
    \langle
    T_{*}(g)f|T_{*}(g')f\rangle_{K_{\phi}} &= \int_{G}
    \phi_{s(x)}\big(f(s(x))^{*}T(g(x)^{*}g'(x))f(s(x))\big)
    \intd\nu(x) \\
    &= \int_{G} \psi_{s(x)}(g(x)^{*}g'(x)) \intd\nu(x)
    = \langle g|g\rangle_{K_{\psi}}.
  \end{align*}

  ii)  Straightforward.

  iii) By ii), $T_{\phi}^{f}\gamma_{{\cal G}}\subseteq
  \gamma_{{\cal F}}$ and $T_{\phi}^{f}\hdelta_{{\cal G}}
  \subseteq \hdelta_{{\cal F}}$. For all $\omega \in
  \Gamma_{c}(\mathcal{ G}^{2}_{s,r})$ and $(x,y) \in \GrrG$,
  \begin{align*}
    ((T_{\phi}^{f} \btensor \Id)X_{{\cal G}}\omega)(x,y)
    = \omega(x,x^{-1}y)f(s(x)) 
    = (X_{{\cal F}}(T_{\phi}^{f} \botensor \Id)\omega)(x,y).
  \end{align*}

  iv) By ii) and iii), we have $X_{{\cal F}}(\pi_{{\cal
      F}}(h) \botensor \Id)X_{{\cal F}}^{*}(T_{\phi}^{f}
  \btensor \Id)X_{{\cal G}}(\pi_{{\cal G}}(g) \botensor
  \Id)X_{{\cal G}}^{*} =X_{{\cal F}}(\pi_{{\cal
      F}}(hT_{*}(g)) \botensor \Id)X_{{\cal
      F}}^{*}(T_{\phi}^{f}\btensor \Id)$ for all $g \in
  \Gamma_{c}(\mathcal{ G})$ and $h \in \Gamma_{c}(\mathcal{
    F})$.
\end{proof}
\begin{proof}[Proof of Proposition
  \ref{proposition:fell-coaction-morphism}] 
Denote by $\mathcal{ T} \subseteq \mathcal{ L}(K_{\mathcal{
      G}},K_{\mathcal{ F}})$  the closed linear span of
  all operators $T^{f}_{\phi}$, where $\phi \in {\cal
    W}(\Gamma_{0}({\cal F}^{0}))$ and $f \in
  \Gamma_{0}({\cal F}^{0})$.  Then Lemma
  \ref{lemma:bundle-morphism-intertwiner} and Proposition
  \ref{proposition:fell-morphism} imply that $S \pi_{\mathcal{
      G}}(g) = \pi_{\mathcal{ F}}(T_{*}(g))S$ for all $S \in
  \mathcal{ T}, g\in \Gamma_{c}(\mathcal{ G})$ and that
  \begin{align*}
    [\mathcal{ T}\gamma_{\mathcal{ G}}] &=
    \Big[\bigcup_{\phi} j_{\phi}
    \big(T_{*}(\Gamma_{c}(\mathcal{ G}))\Gamma_{0}(\mathcal{
      F}^{0})\big)\Big] = \Big[\bigcup_{\phi}
    j_{\phi}(\Gamma_{c}({\cal F}))\Big]=\gamma_{\mathcal{
        F}}.
  \end{align*}
  By Proposition \ref{proposition:fell-morphism}, $T_{*}$
  extends to a nondegenerate $*$-homomorphism
  $C^{*}_{r}({\cal G}) \to M(C^{*}_{r}(\mathcal{ F}))$.
  Thus, there exists a semi-morphism $\tilde T_{*}$ from
  $\pi_{\mathcal{ G}}(C^{*}_{r}(\mathcal{
    G}))^{\gamma_{\mathcal{ G}}}_{K_{\mathcal{ G}}}$ to
  $\pi_{\mathcal{ F}}(C^{*}_{r}(\mathcal{
    F}))^{\gamma_{\mathcal{ F}}}_{K_{\mathcal{ F}}}$ such
  that $\tilde T_{*}(\pi_{\mathcal{ G}}(g))=\pi_{\mathcal{
      F}}(T_{*}(g))$ for all $g \in \Gamma_{c}(\mathcal{
    G})$.  For all $h \in \pi_{\mathcal{
      F}}(\Gamma_{c}(\mathcal{ F}))$, $g \in \pi_{\mathcal{
      G}}(\Gamma_{c}(\mathcal{ G}))$, $S \in \mathcal{ T}$,
  \begin{align*}
    \delta_{\mathcal{ F}}(h) \cdot (\tilde T_{*} \ast
    \Id)(\delta_{\mathcal{ G}}(g)) \cdot (S \btensor
    \Id)&= \delta_{\mathcal{ F}}(h) (S \btensor
    \Id)\delta_{\mathcal{ G}}(g) =
    \delta_{\mathcal{ F}}(h \tilde T_{*}(g))(S \btensor
    \Id)
  \end{align*}
  by Lemma \ref{lemma:bundle-morphism-intertwiner}, and
  therefore $ \delta_{\mathcal{ F}}(h) \cdot (\tilde T_{*} \ast
  \Id)(\delta_{\mathcal{ G}}(g)) = \delta_{\mathcal{
      F}}(h\tilde T_{*}(g))$.
\end{proof} 
Denote by $\bffell^{a}_{G}$ the category of all admissible
Fell bundles on $G$, and by $\bfcoact_{C^{*}_{r}(G)}^{a}$
the category of very fine left-full coactions of $C^{*}_{r}(G)$.
\begin{theorem}
  The assignments $\mathcal{ F} \mapsto (\pi_{\mathcal{
      F}}(C^{*}_{r}(\mathcal{ F}))^{\gamma_{\mathcal{
        F}}}_{K_{\mathcal{ F}}},\delta_{\mathcal{ F}})$ and
  $T \mapsto \tilde T_{*}$ form a faithful functor $\bfFA \colon
  \bffell^{a}_{G} \to \bfcoact_{C^{*}_{r}(G)}^{a}$.
\end{theorem}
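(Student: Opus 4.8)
The plan is to verify the two functor axioms: preservation of identities and composition, plus well-definedness of $\bfFA$ on objects and morphisms, most of which has already been established. First I would note that on objects the assignment is well defined: by Theorem~\ref{theorem:groupoid-bundle-coaction}, each admissible Fell bundle $\mathcal{F}$ is sent to a very fine and left-full coaction of $C^{*}_{r}(G)$, hence to an object of $\bfcoact^{a}_{C^{*}_{r}(G)}$. On morphisms the assignment $T\mapsto \tilde T_{*}$ is well defined and lands in the morphism set of $\bfcoact^{a}_{C^{*}_{r}(G)}$ by Proposition~\ref{proposition:fell-coaction-morphism}; recall that $\tilde T_{*}$ is the unique morphism of coactions with $\tilde T_{*}(\pi_{\mathcal{G}}(a))=\pi_{\mathcal{F}}(T_{*}(a))$ for all $a\in\Gamma_{c}(\mathcal{G})$, and nondegeneracy $[\tilde T_{*}(\pi_{\mathcal G}(C^{*}_{r}(\mathcal G)))\pi_{\mathcal F}(C^{*}_{r}(\mathcal F))]=\pi_{\mathcal F}(C^{*}_{r}(\mathcal F))$ follows from Proposition~\ref{proposition:fell-morphism}~iii).

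Next I would check functoriality. For the identity morphism $\Id_{\mathcal{F}}\colon\mathcal{F}\to\mathcal{F}$ one has $(\Id_{\mathcal{F}})_{*}=\Id$ on $\Gamma_{c}(\mathcal{F})$ and hence on $C^{*}_{r}(\mathcal{F})$, so $\widetilde{(\Id_{\mathcal{F}})}_{*}$ agrees with the identity semi-morphism on the dense subalgebra $\pi_{\mathcal{F}}(\Gamma_{c}(\mathcal{F}))$; by the uniqueness clause in Proposition~\ref{proposition:fell-coaction-morphism} it equals $\Id$. For composition, let $S\colon\mathcal{G}\to\mathcal{F}$ and $T\colon\mathcal{F}\to\mathcal{E}$ be morphisms of admissible Fell bundles; one checks directly from Definition~\ref{definition:fell-morphism} that $T\circ S$ is again such a morphism and that $(T\circ S)_{*}=T_{*}\circ S_{*}$ as $*$-homomorphisms $\Gamma_{c}(\mathcal{G})\to\Gamma_{c}(\mathcal{E})$ (hence, after extension, on the reduced $C^{*}$-algebras, using Proposition~\ref{proposition:fell-morphism}~iii) and the standard fact that an extension to the reduced completion is unique). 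Then both $\widetilde{(T\circ S)}_{*}$ and $\tilde T_{*}\circ\tilde S_{*}$ are morphisms of coactions sending $\pi_{\mathcal{G}}(a)\mapsto\pi_{\mathcal{E}}((T\circ S)_{*}(a))$ for $a\in\Gamma_{c}(\mathcal{G})$, so again by uniqueness they coincide.

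Finally, faithfulness: if $\tilde T_{*}=\tilde T'_{*}$ for morphisms $T,T'\colon\mathcal{G}\to\mathcal{F}$, then $\pi_{\mathcal{F}}(T_{*}(a))=\pi_{\mathcal{F}}(T'_{*}(a))$ for all $a\in\Gamma_{c}(\mathcal{G})$; since $\pi_{\mathcal{F}}$ is faithful (Lemma~\ref{lemma:groupoid-algebra}), $T_{*}(a)=T'_{*}(a)$ as sections, i.e.\ $T(a(x))=T'(a(x))$ for all $x$, and since $\{a(x)\mid a\in\Gamma_{c}(\mathcal{G})\}$ is dense in $\mathcal{G}_{x}$ for each $x$ and $T,T'$ are continuous and fibrewise linear, $T=T'$.

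The bulk of the work is genuinely routine here, since Theorem~\ref{theorem:groupoid-bundle-coaction} and Proposition~\ref{proposition:fell-coaction-morphism} do the heavy lifting; the only point requiring a little care is verifying $(T\circ S)_{*}=T_{*}\circ S_{*}$ at the level of reduced $C^{*}$-algebras, which I expect to be the main (mild) obstacle. It reduces to checking the equality on $\Gamma_{c}(\mathcal{G})$, where it is immediate from the definition of $T_{*}$ as $f\mapsto T\circ f$ together with associativity of the multiplier actions (the maps ${\cal M}(\mathcal{F})_{x}{\cal M}(\mathcal{G})_{y}\subseteq{\cal M}(\mathcal{G})_{xy}$ compose correctly), and then invoking uniqueness of the extension to the reduced completion from Proposition~\ref{proposition:fell-morphism}~iii).
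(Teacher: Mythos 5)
Your proposal is correct and follows essentially the same route as the paper: the paper declares functoriality "evident" (which you spell out via the uniqueness clause of Proposition~\ref{proposition:fell-coaction-morphism}) and proves faithfulness exactly as you do, by using injectivity of $\pi_{\mathcal F}$ to get $T_{*}=T'_{*}$ on $\Gamma_{c}$ and then recovering $T=T'$ from pointwise evaluation and the density of $\{a(x)\mid a\in\Gamma_{c}\}$ in each fiber.
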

\begin{proof}
  Functoriality of the constructions is evident. Assume that
  $\bfFA S=\bfFA T$ for some morphisms $S,T$ from $\mathcal{
    F}$ to $\mathcal{ G}$ in $\bffell^{a}_{G}$. Then the maps
  $S_{*},T_{*} \colon \Gamma_{c}(\mathcal{ F}) \to
  \Gamma_{c}(\mathcal{ M}(\mathcal{ G}))$ coincide because
  $\pi_{\mathcal{ G}}$ is injective. Since $\{a(x)\mid a \in
  \Gamma_{c}(\mathcal{ F})\} = \mathcal{ F}_{x}$ for each $x
  \in G$ and $S(a(x))=(S_{*}a)(x)=(T_{*}a)(x)=T(a(x))$ for
  each $a \in \Gamma_{c}(\mathcal{ F})$, $x\in G$, we can
  conclude that $S=T$.
\end{proof}

\section{From coactions of $C^{*}_{r}(G)$ to Fell bundles
  for  \'etale $G$}

\label{section:etale}

We now assume that the groupoid $G$ is \'etale
\cite{renault} in the sense that the set $\mathfrak{ G}$ of
all open subsets $U \subseteq G$ for which the restrictions
$r_{U}=r|_{U} \colon U \to r(U)$ and $s_{U}=s|_{U} \colon U
\to s(U)$ are homeomorphisms is a cover of $G$. Moreover, we
assume that the Haar systems $\lambda$ and $\lambda^{-1}$ are the
families of counting measures. Then the functor $\bfFA$ has
a right adjoint $\bfGA$ and embeds the category of admissible Fell
bundles into a category of very fine coactions of
$C^{*}_{r}(G)$ as a full and coreflective subcategory.  The
construction of the functor $\bfGA$ uses the correspondence
between Banach bundles and convex Banach modules developed
in \cite{dupre}.

\paragraph{The Fell bundle of a coaction of $C^{*}_{r}(G)$}
Let $\delta$ be an injective coaction of $C^{*}_{r}(G)$ on a
$C^{*}$-$\frakb$-algebra ${\cal C}=C^{\gamma}_{K}$.  Since
$G$ is \'etale,  $\rho_{\beta}(\frakB)\subseteq
C^{*}_{r}(G)$ and $\delta(C)\kgamma{1} \subseteq
[\kgamma{1}C^{*}_{r}(G)]$. For each $U \in \mathfrak{G}$, we
define a closed subspace
\begin{align*}
  C_{U} &:= \left\{ c \in [C\rho_{\gamma}(C_{0}(s(U)))] \,\middle|\,
  \delta(c)\kgamma{1} \subseteq [\kgamma{1} L(C_{0}(U))]
\right\} \subseteq C,
\end{align*}
denote by $s_{U*} \colon C_{0}(U) \to C_{0}(s(U))$ and
$r_{U*} \colon C_{0}(U) \to C_{0}(r(U))$ the push-forward of
functions along $s_{U}$ and $r_{U}$, respectively, 
and consider $C_{U}$ as a right Banach $C_{0}(U)$-module via the
formula $c \cdot f :=c\rho_{\gamma}(s_{U*}(f))$. Denote by
$\Gamma_{f}({\cal F})$ the space of all sections of ${\cal
  F}$ that can be written as finite sums of sections in
$\Gamma_{0}({\cal F}|_{U})$, where $U \in
\mathfrak{G}$. Then $\Gamma_{f}({\cal F})$ is a $*$-algebra
with respect to the operations defined in
\eqref{eq:fell-algebra}, and one has natural inclusions
$\Gamma_{c}({\cal F}) \subseteq \Gamma_{f}({\cal F})
\subseteq C^{*}_{r}({\cal F})$ of $*$-algebras.
\begin{proposition} \label{proposition:fell-construction}
  There exist a continuous Fell bundle $\mathcal{ F}$ on $G$ and
  a $*$-homomor\-phism $\iota \colon \Gamma_{f}(\mathcal{ F}) \to
  C$ such that for each $U \in \mathfrak{G}$, the map $\iota$
  restricts to an isometric isomorphism $\iota_{U} \colon
  \Gamma_{0}(\mathcal{ F}|_{U}) \to C_{U}$ of 
Banach  $C_{0}(U)$-modules. If $({\cal F}',\iota')$ is another
  such  pair, then there exists an isomorphism $T \colon
  {\cal F} \to {\cal F}'$ such that $\iota' \circ T_{*} = \iota$.
\end{proposition}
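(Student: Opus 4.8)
The plan is to build the bundle locally and then glue. For each open bisection $U\in\mathfrak{G}$ the source map identifies $U$ with the open set $s(U)\subseteq G^{0}$, and I would first show that $C_{U}$, with the action $c\cdot f:=c\rho_{\gamma}(s_{U*}(f))$, is a nondegenerate Banach $C_{0}(U)$‑module; being a closed subspace of the $C^{*}$‑algebra $C\subseteq\mathcal{L}(K)$ it is ``convex'' in the sense of \cite{dupre}, so the correspondence of \cite{dupre} yields a continuous Banach bundle $\mathcal{F}^{U}$ over $U$ with $\Gamma_{0}(\mathcal{F}^{U})=C_{U}$ (continuity coming, as in Lemma~\ref{lemma:groupoid-cx-g}, from the fact that $C_{U}$ acts as a faithful field of operators on the restriction of $\gamma$ to $s(U)$). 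The decisive compatibility is that for open $V\subseteq U$ one has $C_{V}\subseteq C_{U}$ and $C_{V}=[C_{U}\rho_{\gamma}(C_{0}(s(V)))]$: this uses only that $\delta$ is a morphism of $C^{*}$‑$\frakb$‑algebras respecting the $C_{0}(G^{0})$‑actions, together with $L(C_{0}(V))\subseteq L(C_{0}(U))$ and $C_{0}(s(V))\subseteq C_{0}(s(U))$. Consequently $\mathcal{F}^{U}|_{V}\cong\mathcal{F}^{V}$ canonically, the bundles $\mathcal{F}^{U}$ agree on overlaps (the triple‑overlap cocycle condition being automatic), and they glue over the cover $\mathfrak{G}$ of $G$ to a continuous Banach bundle $\mathcal{F}$ with $\Gamma_{0}(\mathcal{F}|_{U})=C_{U}$ for every $U\in\mathfrak{G}$.

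Next I would transport the algebraic structure. Since $\mathfrak{G}$ is an inverse semigroup, $UU'$ and $U^{-1}$ lie in $\mathfrak{G}$ whenever $U,U'$ do, and one checks that multiplication in $C$ maps $C_{U}\times C_{U'}$ into $C_{UU'}$ and that $c\mapsto c^{*}$ maps $C_{U}$ into $C_{U^{-1}}$. For the first: if $\delta(c)\kgamma{1}\subseteq[\kgamma{1}L(C_{0}(U))]$ and $\delta(c')\kgamma{1}\subseteq[\kgamma{1}L(C_{0}(U'))]$, then $\delta(cc')\kgamma{1}\subseteq[\kgamma{1}L(C_{0}(U))L(C_{0}(U'))]\subseteq[\kgamma{1}L(C_{0}(UU'))]$, because the convolution of a function supported on the bisection $U$ with one supported on $U'$ is again supported on the bisection $UU'$ --- this is precisely where \'etaleness is used; the involution case is analogous, using $L(g)^{*}=L(g^{*})$ and $g\in C_{0}(U)\Rightarrow g^{*}\in C_{0}(U^{-1})$. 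Passing to fibres, these induce a continuous multiplication $\mathcal{F}_{x}\times\mathcal{F}_{y}\to\mathcal{F}_{xy}$ and involution $\mathcal{F}_{x}\to\mathcal{F}_{x^{-1}}$, and all the axioms of Definition~\ref{definition:fell-bundle} --- associativity, $(e_{1}e_{2})^{*}=e_{2}^{*}e_{1}^{*}$, the norm inequalities, and positivity of $e^{*}e$ --- are inherited from the $C^{*}$‑algebra $C$ because every operation and norm on $\mathcal{F}$ is read off from $C$ via the isometric identifications $C_{U}=\Gamma_{0}(\mathcal{F}|_{U})$.

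Finally, I would define $\iota$ by $\iota\!\big(\sum_{i}e_{i}\big):=\sum_{i}\iota_{U_{i}}(e_{i})$ for $e_{i}\in\Gamma_{0}(\mathcal{F}|_{U_{i}})$, where $\iota_{U}$ is the tautological isometric isomorphism $\Gamma_{0}(\mathcal{F}|_{U})=C_{U}\hookrightarrow C$; a common‑refinement / partition‑of‑unity argument over the basis $\mathfrak{G}$ of bisections, together with the restriction compatibility $C_{V}=[C_{U}\rho_{\gamma}(C_{0}(s(V)))]$, shows this is well defined and that $\iota|_{\Gamma_{0}(\mathcal{F}|_{U})}=\iota_{U}$. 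That $\iota$ is a $*$‑homomorphism for the operations in \eqref{eq:fell-algebra} reduces, by bilinearity and continuity, to $e\in\Gamma_{0}(\mathcal{F}|_{U})$, $f\in\Gamma_{0}(\mathcal{F}|_{U'})$: there the convolution $ef$ is supported on $UU'$ and, for $x\in UU'$, is given by the single‑summand formula $ef(x)=e(y)f(y^{-1}x)$ (the unique $y\in U$ with $r(y)=r(x)$), which is exactly the fibrewise product of $\iota_{U}(e)\iota_{U'}(f)\in C_{UU'}$; the relation $\iota(e^{*})=\iota(e)^{*}$ is checked the same way. For uniqueness, given $(\mathcal{F}',\iota')$, each $(\iota'_{U})^{-1}\circ\iota_{U}\colon\Gamma_{0}(\mathcal{F}|_{U})\to\Gamma_{0}(\mathcal{F}'|_{U})$ is an isometric $C_{0}(U)$‑module isomorphism, hence induced by a bundle isomorphism $T^{U}\colon\mathcal{F}|_{U}\to\mathcal{F}'|_{U}$ by functoriality of the correspondence in \cite{dupre}; these agree on overlaps, glue to $T\colon\mathcal{F}\to\mathcal{F}'$, which is multiplicative and $*$‑preserving since both Fell structures are transported from $C$, and by construction $\iota'\circ T_{*}=\iota$.

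The main obstacle I expect is the ``local'' input: verifying that each $C_{U}$ is a full, nondegenerate --- hence convex --- Banach $C_{0}(U)$‑module, and establishing the restriction compatibility $C_{V}=[C_{U}\rho_{\gamma}(C_{0}(s(V)))]$ that makes the local bundles glue; once this is in place, the gluing into $\mathcal{F}$, the transport of the multiplication and involution, and the construction of $\iota$ are essentially formal, the only further geometric ingredient being the fact, special to \'etale $G$, that convolution respects the inverse semigroup $\mathfrak{G}$ of bisections.
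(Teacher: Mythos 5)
Your proposal follows essentially the same route as the paper: Lemma \ref{lemma:fell-sections} supplies exactly the facts you isolate (the module structure and the restriction compatibility $C_{U}=[C_{V}C_{0}(U)]$ for $U\subseteq V$, the inclusions $C_{V}C_{U}\subseteq C_{VU}$ and $(C_{U})^{*}=C_{U^{-1}}$, and convexity/continuity of each $C_{U}$), after which the bundle is obtained from the correspondence in \cite{dupre} and $\iota$ is defined by summing the local isomorphisms, just as you describe. The one imprecision is your remark that convexity is automatic for a closed subspace of a $C^{*}$-algebra; in the paper it is deduced from the $C^{*}$-identity together with the fact that $C_{U^{-1}U}$ is a continuous $C_{0}(s(U))$-algebra (Lemma \ref{lemma:fell-sections} iii)--iv)), which is precisely the ``local input'' you flag as the main obstacle.
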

The proof requires some preliminaries. First, note that for
all $c \in C$, $f \in C_{0}(G^{0})$,
\begin{align*}
\delta(c\rho_{\gamma}(f)) = \delta(c)\rho_{(\gamma \rt
  \alpha)}(f) = \delta(c)(1 \btensor \rho_{\alpha}(f)) =
\delta(c)(1 \btensor r^{*}(f)).
\end{align*}
\begin{lemma} \label{lemma:fell-sections} Let
  $U,V \in \mathfrak{G}$.
  \begin{enumerate}
  \item $c \cdot f=\rho_{\gamma}(r_{U*}(f))c$
    for each $c \in C_{U}$ and $f \in C_{0}(U)$. 
  \item $C_{V}C_{U} \subseteq C_{VU}$,
    $(C_{U})^{*}=C_{U^{-1}}$, and $C_{U} = [C_{V} C_{0}(U)]
    \subseteq C_{V}$ if $U \subseteq V$.
  \item $C_{s(U)}$ is a continuous $C_{0}(s(U))$-algebra.
  \item $C_{U}$ is a convex and continuous Banach
    $C_{0}(U)$-module.
  \end{enumerate}
\end{lemma}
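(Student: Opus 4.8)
## Proof plan for Lemma \ref{lemma:fell-sections}

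The plan is to prove the four assertions in order, using only the defining identity
$\delta(c\rho_{\gamma}(f)) = \delta(c)(1 \btensor r^{*}(f))$ recorded just before the lemma, together with the coaction identity $(\delta \ast \Id)\circ\delta = (\Id \ast \hDelta)\circ\delta$, the formulas \eqref{eq:groupoid-leg-ha} for $\hDelta$ and for the left regular representation $L$ on $C^{*}_{r}(G)$, and the fact that $G$ is \'etale so that $\rho_{\gamma}(C_{0}(G^{0}))$ and $L(C_{0}(U))$ behave like ``partial isometry times function'' pieces. Throughout, for $U \in \mathfrak{G}$ I will freely use that $L(C_{0}(U))^{*}L(C_{0}(U)) = L(C_{0}(s_{U*}^{-1}\cdot))$-type relations hold, more precisely that $[L(g)^{*}L(g')] = r^{*}(\overline{s_{U*}(g)}s_{U*}(g'))$-like identities coming from \eqref{eq:groupoid-leg-ha} with the counting Haar system; these are the \'etale analogue of the ``bisection'' calculus.

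For \textbf{i)}, I start from $c \in C_{U}$, so $\delta(c)\kgamma{1} \subseteq [\kgamma{1}L(C_{0}(U))]$. Multiplying by $\rho_{\gamma}(r_{U*}(f))$ on the left and using that $\rho_{\gamma}(r_{U*}(f)) \in \mathcal{L}(K_{\gamma})$ commutes past $\kgamma{1}$ correctly, I compare $\rho_{\gamma}(r_{U*}(f))c$ with $c\cdot f = c\rho_{\gamma}(s_{U*}(f))$ by applying $\delta$ to both: $\delta(c\rho_{\gamma}(s_{U*}(f))) = \delta(c)(1\btensor r^{*}s_{U*}(f))$, whereas $\delta(\rho_{\gamma}(r_{U*}(f))c)$, upon evaluating against $\kgamma{1}$ and using that on $[\kgamma{1}L(C_{0}(U))]$ the operator $1 \btensor r^{*}s_{U*}(f)$ agrees with left multiplication by $r^{*}r_{U*}(f)$ pulled through $L(C_{0}(U))$ (here the bijectivity of $r_{U},s_{U}$ on $U$ is what makes $r^{*}s_{U*}(f)$ and $L$-conjugation by $r^{*}r_{U*}(f)$ coincide on sections supported in $U$), gives the same element. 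Since $\delta$ is injective, i) follows.

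For \textbf{ii)}: the inclusion $C_{V}C_{U} \subseteq C_{VU}$ follows from $\delta(cd)\kgamma{1} = \delta(c)\delta(d)\kgamma{1} \subseteq \delta(c)[\kgamma{1}L(C_{0}(U))] = [\delta(c)\kgamma{1}]L(C_{0}(U)) \subseteq [\kgamma{1}L(C_{0}(V))L(C_{0}(U))] \subseteq [\kgamma{1}L(C_{0}(VU))]$, where the last step uses that $L(C_{0}(V))L(C_{0}(U)) \subseteq L(C_{0}(VU))$ for bisections (a direct check from \eqref{eq:fell-algebra}/\eqref{eq:groupoid-leg-ha}); I also need $cd \in [C\rho_{\gamma}(C_{0}(s(VU)))]$, which comes from i) applied to $d$ together with $[C\rho_{\gamma}(C_{0}(s(U)))]C \subseteq \ldots$ bookkeeping. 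The adjoint identity $(C_{U})^{*} = C_{U^{-1}}$ follows by taking adjoints in the defining condition, using $\delta$ a $*$-homomorphism, $L(C_{0}(U))^{*} = L(C_{0}(U^{-1}))$ up to the $D$-factor (which is trivial here, $D\equiv 1$ for \'etale with counting measures), and $\kgamma{1}^{*}$-manipulations. For $U \subseteq V$: the inclusion $[C_{V}C_{0}(U)] \subseteq C_{U}$ is clear from the module structure and i); the reverse $C_{U} \subseteq [C_{V}C_{0}(U)]$ uses that $c \in C_{U}$ satisfies $c = c\cdot(s_{U*}^{-1}$ of an approximate unit of $C_{0}(U))$, i.e. $c \in [C_{U}C_{0}(U)] \subseteq [C_{V}C_{0}(U)]$, after first noting $C_{U} \subseteq C_{V}$ directly from $C_{0}(U)\subseteq C_{0}(V)$.

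For \textbf{iii)} and \textbf{iv)}: $C_{s(U)}$ is a $C_{0}(s(U))$-algebra via $\rho_{\gamma}$ restricted — it is a $C_{0}(s(U))$-subalgebra of $C$ (using $\rho_{\gamma}(C_{0}(s(U)))$ central in $M(C)$ by Lemma \ref{lemma:groupoid-cx-g}) — and it is \emph{continuous} because $C$ is a continuous $C_{0}(G^{0})$-algebra (Lemma \ref{lemma:groupoid-cx-g} i) applied to the admissible algebra $C^{\gamma}_{K}$, which holds here since any injective coaction of $C^*_r(G)$ on $C^\gamma_K$ forces $[\rho_\gamma(\frakB)C]=C$ — actually this needs a short separate remark, see below), and a $C_{0}(X)$-subalgebra of a continuous $C_{0}(X)$-algebra cut out by an ideal-supported condition is continuous. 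Finally iv): I identify $C_{U}$ with $C_{s(U)}$ as Banach spaces via the map $c \mapsto c$ composed with the identification $C_{0}(U) \cong C_{0}(s(U))$ through $s_{U*}$ — by i), $C_U$ as a $C_0(U)$-module is isomorphic (via $r_U,s_U$) to $C_{s(U)}$ as a $C_0(s(U))$-module, hence $C_{U}$ inherits convexity (in the sense of \cite{dupre}: the norm function $u \mapsto \|c(u)\|$ behaves correctly under partitions of unity) and continuity from iii).

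\textbf{Main obstacle.} The delicate point is \textbf{i)} and the precise bookkeeping of how $1 \btensor r^{*}(f)$, the module action $c\cdot f = c\rho_\gamma(s_{U*}f)$, and left multiplication by $\rho_\gamma(r_{U*}f)$ interact on the subspace $[\kgamma{1}L(C_0(U))]$ — this is exactly where \'etaleness (bisections) enters and where one must be careful that $r_{U*}$ vs $s_{U*}$ are used consistently. A secondary nuisance is justifying that $C_U$ is \emph{closed} and that the various $[\,\cdot\,]$-spans are genuinely equal rather than merely contained; and one should record once and for all (perhaps implicitly from the running hypotheses of the section, or from $\delta$ being a morphism of $C^*$-$\frakb$-algebras) that $[\rho_\gamma(\frakB)C] = C$ so that Lemma \ref{lemma:groupoid-cx-g} applies to give continuity of $C$ as a $C_0(G^0)$-algebra.
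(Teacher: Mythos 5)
Your parts i) and ii) follow the paper's own route. Part i) rests, exactly as you describe, on the commutation relation $L(g)r^{*}(s_{U*}(f))=r^{*}(r_{U*}(f))L(g)$ for $g\in C_{0}(U)$ (the bisection calculus), applied inside $[\kgamma{1}L(C_{0}(U))]$, followed by injectivity of $\delta$; part ii) is the same chain of inclusions, with the adjoint case handled by sandwiching $\delta(C_{U})^{*}$ between $\kgamma{1}$ and $\bgamma{1}$ and using $\delta(C_{U}^{*})\kgamma{1}\subseteq[\kgamma{1}C^{*}_{r}(G)]$, and with $C_{U}=[C_{U}C_{0}(U)]$ via an approximate unit for the case $U\subseteq V$.

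The gap is in iii). You propose to get continuity of $C_{G^{0}}$ from continuity of $C$ itself via Lemma \ref{lemma:groupoid-cx-g}~i). But that lemma requires $C^{\gamma}_{K}$ to be an \emph{admissible} $C^{*}$-$\frakb$-algebra, i.e.\ $[\rho_{\gamma}(C_{0}(G^{0}))C]=C$ and $[C\gamma]=\gamma$, whereas the standing hypothesis here is only that $\delta$ is an injective coaction (Lemma \ref{lemma:fell-sections} feeds into Propositions \ref{proposition:fell-construction} and \ref{proposition:etale-expectation}, both stated for injective coactions, so admissibility cannot be imported); your own remark flags $[\rho_{\gamma}(\frakB)C]=C$ but does not supply it, nor $[C\gamma]=\gamma$. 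The paper sidesteps $C$ entirely: since $\delta(C_{G^{0}})\kgamma{1}\subseteq\kgamma{1}$ and $\delta(c\cdot f)|\eta\rangle_{1}=\delta(c)|\eta\rangle_{1}r^{*}(f)$, the assignment $c\mapsto\delta(c)|_{\kgamma{1}}$ is a \emph{faithful field of representations} of $C_{G^{0}}$ on the Hilbert $C_{0}(G^{0})$-module $\kgamma{1}$ in the sense of \cite[Theorem 3.3]{blanchard} (faithful by injectivity of $\delta$ and $[\kgamma{1}H]=K\btensor H$), and Blanchard's theorem gives continuity of $C_{G^{0}}$, hence of $C_{s(U)}=[C_{G^{0}}C_{0}(s(U))]$. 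This representation on $\kgamma{1}$ is the one genuinely new input of the lemma and is missing from your plan. Relatedly, in iv) the claimed identification of $C_{U}$ with $C_{s(U)}$ ``as Banach modules via $c\mapsto c$'' is not available (in the Fell-bundle picture these spaces have different fibres); the correct reduction is the $C^{*}$-module one: expand $\|c\cdot f+c'\cdot f'\|^{2}$ using the involution so that the result lands in $C_{U^{-1}U}=C_{s(U)}$, which is convex and continuous by iii), and use $\|c_{u}\|^{2}=\|(c^{*}c)_{u^{-1}u}\|$ for continuity of the norm function.
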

\begin{proof}
  i) Let $c,f$ as above.  Since $L(g)r^{*}(s_{U*}(f)) =
  r^{*}(r_{U*}(f))L(g)$ for all $g \in C_{0}(U)$, we have
  $\delta(c \cdot f) = \delta(c)(1 \btensor
  r^{*}(s_{U*}(f))) = (1 \btensor r^{*}(r_{U*}(f)))
  \delta(c) = \delta(\rho_{\gamma}(r_{U*}(f))c)$ and by
  injectivity of $\delta$ also $c\cdot
  f=\rho_{\gamma}(r_{U*}(f))c$.

  ii)  Clearly, $\delta(C_{V}C_{U})\kgamma{1}\subseteq
    \kgamma{1}L(C_{0}(VU))$. Using i) twice, we find
  \begin{align*}
    C_{V}C_{U} &\subseteq
    [C_{V}\rho_{\gamma}(C_{0}(s(V))C_{0}(r(U)))C_{U}] \\ &=
    [C_{V}\rho_{\gamma}(C_{0}(s(V) \cap r(U))C_{U}]
    \subseteq [C\rho_{\gamma}(C_{0}(s(VU)))].
  \end{align*}
  Consequently, $C_{V}C_{U} \subseteq C_{VU}$. By i) again,
  we have $(C_{U})^{*} =
  [\rho_{\gamma}(C_{0}(r(U)))C_{U}]^{*} \subseteq
  [C\rho_{\gamma}(C_{0}(s(U^{-1})))]$, and using the
  relation $\delta(C_{U}^{*}) \kgamma{1} \subseteq
  [\kgamma{1}C^{*}_{r}(G)]$, we obtain
  \begin{align*}
    \delta(C_{U}^{*})\kgamma{1} \subseteq
    [\kgamma{1}\bgamma{1}\delta(C_{U})^{*}\kgamma{1}]
    \subseteq [\kgamma{1}L(C_{0}(U))^{*}
    \bgamma{1}\kgamma{1}] = [\kgamma{1}L(C_{0}(U^{-1}))].
  \end{align*}
  If $U \subseteq V$, then $C_{U}
  \subseteq [C_{V} C_{0}(U)] \subseteq
  C_{V}$, and  $C_{V} C_{0}(U) \subseteq
  C_{U}$ because
  \begin{align*}
    \delta(C_{V} C_{0}(U))\kgamma{1} &=
    \delta(C_{V})\kgamma{1} r^{*}(C_{0}(s(U))) \\
    &\subseteq [\kgamma{1}L(C_{0}(V))r^{*}(C_{0}(s(U)))] =
    [\kgamma{1}L(C_{0}(U))].
  \end{align*}
  
  iii) By ii), $C_{s(U)}$ is a $C^{*}$-algebra.  Consider
  $\kgamma{1}$ as a Hilbert $C^{*}$-module over
  $r^{*}(C_{0}(G^{0})) \cong C_{0}(G^{0})$. Since
  $\delta(C_{G^{0}})\kgamma{1} \subseteq \kgamma{1}$ and
  $\delta(c \cdot f)|\eta\rangle_{1} =
  \delta(c)|\eta\rangle_{1} r^{*}(f)$ for all $c \in
  C_{G^{0}}$, $f \in C_{0}(G^{0})$, $\eta \in \gamma$, the
  formula $c \cdot
  |\eta\rangle_{1}:=\delta(c)|\eta\rangle_{1}$ defines a
  faithful field of representations $C_{G^{0}} \to \mathcal{
    L}(\kgamma{1})$ in the sense of \cite[Theorem
  3.3]{blanchard}. Consequently, $C_{G^{0}}$ is a continuous
  $C_{0}(G^{0})$-algebra and $C_{s(U)}$  a continuous
  $C_{0}(s(U))$-algebra.
 
  iv) Let $c,c' \in C_{U}$ and $f,f' \in C_{0}(U)$ such that
  $0\leq f,f'$ and $f+f'\leq 1$. Then $\|c \cdot f+c' \cdot
  f'\|^{2} = \| c^{*}c \cdot g^{2} + c^{*}c'\cdot gg' +
  c'{}^{*}c \cdot g'g + c'{}^{*}c' \cdot g'{}^{2}\|$, where
  $g=s_{U*}(f)$, $g'=s_{U*}(f')$. Since
  $g^{2}+gg''+g'g+g'{}^{2}\leq 1$ and
  $c^{*}c,c'{}^{*}c',c^{*}c',c'{}^{*}c' \in C_{U^{-1}U}$,
  which is a continuous $C_{0}(s(U))$-algebra and hence a
  convex Banach $C_{0}(s(U))$-module, we get
  $\|cf+c'f'\|^{2} \leq \max\{\|c\|,\|c'\|\}^{2}$.  Finally,
  the norm $\|c_{u}\|^{2} =
  \|(c^{*}c)_{u^{-1}u}\|$
  depends continuously on $u \in U$ because $C_{U^{-1}U}$ is
  a continuous $C_{0}(s(U))$-algebra.
\end{proof}

\begin{proof}[Proof of Proposition  \ref{proposition:fell-construction}]
  Using Lemma \ref{lemma:fell-sections} and \cite{dupre},
  one easily verifies that there exists a continuous Fell
  bundle $\mathcal{ F}$ on $G$ with an isometric isomorphism
  $\iota_{U} \colon \Gamma_{0}(\mathcal{ F}|_{U}) \to C_{U}$
  of Banach $C_{0}(U)$-modules for each $U \in \mathfrak{G}$
  such that for all $U,V \in \mathfrak{G}$, the following
  properties hold. First, the map $\Gamma_{0}(\mathcal{
    F}|_{U}) \hookrightarrow \Gamma_{0}(\mathcal{ F}|_{V})
  \xrightarrow{\iota_{V}} C_{V}$ is equal to
  $\Gamma_{0}(\mathcal{ F}|_{U}) \xrightarrow{\iota_{U}}
  C_{U} \hookrightarrow C_{V}$ if $U\subseteq V$, and
  second, $\iota_{U}(f)^{*}=\iota_{U^{-1}}(f^{*})$,
  $\iota_{UV}(f g) = \iota_{U}(f)\iota_{V}(g)$ for all $f
  \in \Gamma_{0}(\mathcal{ F}|_{U})$, $g \in
  \Gamma_{0}(\mathcal{ F}|_{V}) $.   Define $\iota \colon
  \Gamma_{f}(\mathcal{ F}) \to C$ as follows.  Given $a
  =\sum_{i} a_{i} \in \Gamma_{f}(\mathcal{ F})$, where
  $a_{i} \in \Gamma_{0}({\cal F}|_{U_{i}})$ and $U_{i}
  \in\mathfrak{G}$, let $\iota(a) = \sum_{i}
  \iota_{U_{i}}(a_{i})$.  Using the preceding two properties
  of $\iota$, one easily verifies that $\iota$
  is well-defined and a $*$-homo\-morphism.
\end{proof}
Denote by $p_{0} \colon \Gamma_{f}({\cal F}) \to
\Gamma_{0}({\cal F}^{0})$ the restriction.
\begin{proposition}\label{proposition:etale-expectation}
  There exists a faithful conditional expectation $p \colon
  [\iota(\Gamma_{f}({\cal F}))] \to C_{G^{0}}$ such that $p
  \circ \iota = \iota_{G^{0}} \circ p_{0}$.
\end{proposition}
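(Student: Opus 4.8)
The plan is to realise $p$ as a compression of the coaction $\delta$ onto the ``$G^{0}$-part'' of the relative tensor product, using that $G$ is \'etale to produce the required projection.

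Since $G$ is Hausdorff and \'etale, $G^{0}$ is clopen in $G$, so $\mathbf{1}_{G^{0}} \in C_{b}(G) = M(C_{0}(G))$ and multiplication by it is a projection $P \in \mathcal{L}(H)$ onto $L^{2}(G^{0},\mu) = \frakK \subseteq H = L^{2}(G,\nu)$. As $P$ commutes with all multiplication operators it is a morphism of the $C^{*}$-$\frakb$-module $H_{\beta}$ (indeed $Pj(f) = j(\mathbf{1}_{G^{0}}f) \in \beta$ for $f \in C_{c}(G)$), so $\Id \btensor P \in \mathcal{L}(K \rtensor{\gamma}{\frakb}{\beta} H)$ is a well-defined projection, and the identifications of the preliminaries provide a canonical isometry $W \colon K \xrightarrow{\cong} (\Id \btensor P)(K \rtensor{\gamma}{\frakb}{\beta} H)$. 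Define a completely positive contraction $p \colon C \to \mathcal{L}(K)$ by $p(c) := W^{*}\delta(c)W = W^{*}(\Id \btensor P)\delta(c)(\Id \btensor P)W$. Morally $p = (\Id_{C} \ast E_{G}) \circ \delta$, where $E_{G} \colon C^{*}_{r}(G) \to C_{0}(G^{0})$, $x \mapsto PxP$, is the canonical faithful conditional expectation (restriction to units); one either invokes this or argues directly with the compression.

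Next I would check the identity $p \circ \iota = \iota_{G^{0}} \circ p_{0}$ on $\Gamma_{f}(\mathcal{F})$, for which it suffices to treat $a \in \Gamma_{0}(\mathcal{F}|_{U})$ with $U \in \mathfrak{G}$ and $c := \iota_{U}(a) \in C_{U}$. By definition of $C_{U}$ one has $\delta(c)\kgamma{1} \subseteq [\kgamma{1}L(C_{0}(U))]$; combining this with $\delta(c\rho_{\gamma}(f)) = \delta(c)(1 \btensor r^{*}(f))$ and Lemma~\ref{lemma:fell-sections}\,(i), and using that $D \equiv 1$ on $G^{0}$ (since $D$ is a homomorphism) so that $E_{G}(L(g))$ is multiplication by $g|_{G^{0}}$ for $g \in C_{c}(U)$, a direct computation identifies $p(c) = W^{*}(\Id\btensor P)\delta(c)(\Id\btensor P)W$ with the operator $\iota_{G^{0}}(a|_{G^{0}}) = \iota_{G^{0}}(p_{0}(a)) \in C_{G^{0}} \subseteq \mathcal{L}(K)$. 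Hence $p$ maps $\iota(\Gamma_{f}(\mathcal{F}))$, and so $B := [\iota(\Gamma_{f}(\mathcal{F}))]$, into $C_{G^{0}}$, and $p$ is the identity on the dense subset $\iota_{G^{0}}(\Gamma_{0}(\mathcal{F}^{0})) \subseteq C_{G^{0}}$, hence on $C_{G^{0}}$. Thus $p|_{B} \colon B \to C_{G^{0}}$ is an idempotent linear contraction onto $C_{G^{0}}$, so by Tomiyama's theorem it is a conditional expectation satisfying $p \circ \iota = \iota_{G^{0}} \circ p_{0}$.

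It remains to prove $p$ faithful. If $p(b^{*}b) = 0$ for some $b \in B$, then $(\Id \btensor P)\delta(b)W = 0$, i.e.\ $(\Id\btensor P)\delta(b)(\Id\btensor P) = 0$; since $\delta$ is injective, one must show that $\delta(b)$ is recovered from its ``$G^{0}$-corner'', which amounts to faithfulness of $E_{G}$ on $C^{*}_{r}(G)$ (classical for \'etale groupoids, cf.\ \cite{renault}) transported through $\delta$ --- equivalently, that the reduced inner product $\langle a, a'\rangle_{\mathcal{F}} = p_{0}(a^{*}a')$ on $\Gamma^{2}(\mathcal{F},\lambda^{-1})$, which $\iota_{G^{0}}$ identifies with $(\iota(a),\iota(a')) \mapsto p(\iota(a)^{*}\iota(a'))$, is non-degenerate on all of $B$. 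I expect this to be the main obstacle: it is the point at which injectivity of $\delta$ and the \'etale structure are genuinely used, since it expresses that $B$ is ``reduced enough'' for the restriction expectation to lose no information (and it may require a short additional remark comparing the norm on $B$ with the reduced norm of $\mathcal{F}$, or a lemma on faithfulness for fiber products). The remaining points --- the module identifications underlying $W$, the appeal to Tomiyama, and the bookkeeping in $p \circ \iota = \iota_{G^{0}} \circ p_{0}$ --- are routine.
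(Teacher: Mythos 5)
Your construction of $p$ is the paper's construction in different notation: under the identifications of the preliminaries, your isometry $W$ is exactly $|j(\chi_{G^{0}})\rangle_{2}$, where $\chi_{G^{0}}$ is the characteristic function of the clopen set $G^{0}$ (note $j(\chi_{G^{0}})^{*}j(\chi_{G^{0}})=1$ because $\lambda$ is the counting measure), so $p(c)=\langle j(\chi_{G^{0}})|_{2}\,\delta(c)\,|j(\chi_{G^{0}})\rangle_{2}$. The verification of $p\circ\iota=\iota_{G^{0}}\circ p_{0}$ also runs as you sketch: the paper observes that Lemma \ref{lemma:fell-eq} iii) still holds for $\xi=\xi'=\chi_{G^{0}}$ and that $h_{\chi_{G^{0}},\chi_{G^{0}}}=\chi_{G^{0}}$, which gives $p|_{C_{G^{0}}}=\Id$ and $p|_{C_{U}}=0$ whenever $U\cap G^{0}=\emptyset$, and then concludes by a partition-of-unity argument using that $G^{0}$ is open and closed. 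Up to this point the proposal is correct and matches the paper.

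The faithfulness step, which you yourself flag as the main obstacle, is a genuine gap, and the missing idea is a \emph{second} compression, this time in the first leg. For $\eta\in\gamma$ one has $\langle\eta|_{1}\,\delta(c^{*}c)\,|\eta\rangle_{1}\in C^{*}_{r}(G)$, because $\delta(C)\kgamma{1}\subseteq[\kgamma{1}C^{*}_{r}(G)]$; moreover $|\eta\rangle_{1}j(\chi_{G^{0}})=|j(\chi_{G^{0}})\rangle_{2}\,\eta$ as maps $\frakK\to K\btensor H$, whence $\eta^{*}\,p(c^{*}c)\,\eta=q\bigl(\langle\eta|_{1}\delta(c^{*}c)|\eta\rangle_{1}\bigr)$ with $q(a)=j(\chi_{G^{0}})^{*}a\,j(\chi_{G^{0}})$. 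The map $q$ is faithful on $C^{*}_{r}(G)$ because $[C^{*}_{r}(G)'\,j(\chi_{G^{0}})\frakK]=H$, which one checks using the right-regular representation --- this is precisely the classical faithfulness of the restriction expectation for \'etale groupoids that you invoke. Consequently $p(c^{*}c)=0$ forces $\langle\eta|_{1}\delta(c^{*}c)|\eta\rangle_{1}=0$, hence $\delta(c)|\eta\rangle_{1}=0$ for all $\eta\in\gamma$, hence $\delta(c)=0$ and $c=0$ by injectivity of $\delta$. Without this passage through $\langle\eta|_{1}\cdot|\eta\rangle_{1}$ your plan does not close: faithfulness of $E_{G}$ on $C^{*}_{r}(G)$ does not directly yield faithfulness of $(\Id\ast E_{G})\circ\delta$ on $[\iota(\Gamma_{f}({\cal F}))]$, since $\delta(c^{*}c)$ is not an element of $1\btensor C^{*}_{r}(G)$; the first-leg compression is exactly the device that reduces the statement to one about $C^{*}_{r}(G)$, where the \'etale structure can be used.
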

In the following lemma, $f
h_{\xi,\xi'}$ denotes the pointwise product of functions
$f,h_{\xi,\xi'} \in C_{c}(G)$, where $h_{\xi,\xi'}$ was
defined in \eqref{eq:hxixi}.
\begin{lemma} \label{lemma:fell-eq}
  Let $\xi,\xi' \in C_{c}(G)$, $c \in C$, $f \in C_{c}(G)$.
  \begin{enumerate}
  \item $\langle \eta|_{1}\delta(\langle
    j(\xi)|_{2}\delta(c)|j(\xi')\rangle_{2})|\eta'\rangle_{1}
    = \langle j(\xi)|_{2}\Delta(\langle
    \eta|_{1}\delta(c)|\eta'\rangle_{1})|j(\xi')\rangle_{2}$
    for all $\eta,\eta' \in \gamma$.
  \item $\langle j(\xi)|_{2} \Delta(L(f))|j(\xi')\rangle_{2} =
    L(f h_{\xi,\xi'})$.
  \item $ \langle j(\xi)|_{2} \delta(c \cdot
    f)|j(\xi')\rangle_{2} = c  \cdot (f h_{\xi,\xi'})$ if $c
    \in C_{U}$ and $f \in C_{0}(U)$,  where $U \in
    \mathfrak{G}$.
  \end{enumerate}
\end{lemma}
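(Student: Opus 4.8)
The plan is to obtain (i) from coassociativity of $\delta$, (ii) by a direct computation in the function model of Section~\ref{section:kac-groupoid}, and (iii) by combining the first two with the description of $C_U$.

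For (i), set $\delta^{(2)}:=(\delta\ast\Id)\circ\delta=(\Id\ast\Delta)\circ\delta$, regarded as a map from $C$ into the operators on $K\rtensor{\gamma}{\frakb}{\beta}H\rtensor{\alpha}{\frakb}{\beta}H$, whose three factors I label $1$ ($K$), $2$, $3$ ($H$). From the $(\Id\ast\Delta)$-description, slicing leg~$1$ of $\delta^{(2)}(c)$ with $\eta,\eta'\in\gamma$ — which commutes with $\Delta$ because $\Delta$ acts only on legs~$2,3$ — yields $\Delta\bigl(\langle\eta|_1\delta(c)|\eta'\rangle_1\bigr)$ on legs~$2,3$; from the $(\delta\ast\Id)$-description, slicing leg~$3$ with $j(\xi),j(\xi')\in\alpha=\beta$ — which commutes with the outer $\delta$ because that acts only on legs~$1,2$ — yields $\delta\bigl(\langle j(\xi)|_2\delta(c)|j(\xi')\rangle_2\bigr)$ on legs~$1,2$. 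Applying $\langle\eta|_1(\,\cdot\,)|\eta'\rangle_1$ and $\langle j(\xi)|_3(\,\cdot\,)|j(\xi')\rangle_3$ to $\delta^{(2)}(c)$ in the two orders — slices on disjoint legs commute — then produces the two sides of (i). The one point requiring care is that a slice of a leg not acted on by an $\ast$-factor commutes with that factor; this is precisely the content of the functoriality of the fiber product, e.g.\ Lemma~\ref{lemma:fp-c-morphism}~iv) and its analogue for $\,\ast\,$, and this leg-bookkeeping is the main obstacle in the whole proof.

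For (ii), it suffices to take $\xi,\xi'\in C_c(G)$ and pass to norm limits. With the formula for $\Delta(L(f))$ from \eqref{eq:groupoid-leg-ha} and the coordinate descriptions $(|j(\xi')\rangle_2\omega)(x,y)=\omega(x)\xi'(y)$ and $(\langle j(\xi)|_2\Theta)(x)=\int_{G^{r(x)}}\overline{\xi(y)}\,\Theta(x,y)\intd\lambda^{r(x)}(y)$ of the slice maps, one computes for $\omega\in H$ and $x\in G$
\[
  \bigl(\langle j(\xi)|_2\Delta(L(f))|j(\xi')\rangle_2\,\omega\bigr)(x)=\int_{G^{r(x)}}f(z)D^{-1/2}(z)\,\omega(z^{-1}x)\Bigl(\int_{G^{r(z)}}\overline{\xi(y)}\,\xi'(z^{-1}y)\intd\lambda^{r(z)}(y)\Bigr)\intd\lambda^{r(x)}(z),
\]
where the inner integral is $h_{\xi,\xi'}(z)$ by \eqref{eq:hxixi}; comparing with the formula for $L$ gives $L(fh_{\xi,\xi'})$.

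For (iii), put $c'':=\langle j(\xi)|_2\delta(c\cdot f)|j(\xi')\rangle_2\in C$. Since $\delta$ is injective and $[\kgamma{1}H]=K\rtensor{\gamma}{\frakb}{\beta}H$, it is enough to prove $\langle\eta|_1\delta(c'')|\eta'\rangle_1=\langle\eta|_1\delta\bigl(c\cdot(fh_{\xi,\xi'})\bigr)|\eta'\rangle_1$ for all $\eta,\eta'\in\gamma$. For $c\in C_U$ one has $\delta(c)\kgamma{1}\subseteq[\kgamma{1}L(C_0(U))]$, so (using $r^*(g')L(g)=L((g'\circ r)g)$ and that $L$ restricts to an isometry on $C_0(U)$) there is a unique $g_0\in C_0(U)$ with $\langle\eta|_1\delta(c)|\eta'\rangle_1=L(g_0)$; since $\delta(c\cdot f)=\delta\bigl(c\rho_\gamma(s_{U*}f)\bigr)=\delta(c)(1\btensor r^*(s_{U*}f))$ and $(1\btensor r^*(g))|\eta'\rangle_1=|\eta'\rangle_1r^*(g)$, the identity $L(g_0)r^*(s_{U*}f)=L(g_0f)$ (the computation behind Lemma~\ref{lemma:fell-sections}~i), using $(s_{U*}f)\circ s_U=f$ on $U$) gives $\langle\eta|_1\delta(c\cdot f)|\eta'\rangle_1=L(g_0f)$, and with $f$ replaced by $fh_{\xi,\xi'}\in C_0(U)$ it gives $\langle\eta|_1\delta\bigl(c\cdot(fh_{\xi,\xi'})\bigr)|\eta'\rangle_1=L(g_0fh_{\xi,\xi'})$. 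Finally, by (i) and then (ii),
\[
  \langle\eta|_1\delta(c'')|\eta'\rangle_1=\langle j(\xi)|_2\Delta\bigl(\langle\eta|_1\delta(c\cdot f)|\eta'\rangle_1\bigr)|j(\xi')\rangle_2=\langle j(\xi)|_2\Delta(L(g_0f))|j(\xi')\rangle_2=L(g_0fh_{\xi,\xi'}),
\]
which agrees with the expression above, so $c''=c\cdot(fh_{\xi,\xi'})$ and (iii) follows.
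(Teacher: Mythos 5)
Your proof is correct and follows essentially the same route as the paper: (i) by slicing the coassociativity identity $(\delta\ast\Id)\circ\delta=(\Id\ast\Delta)\circ\delta$ in the two orders, (ii) by the explicit integral computation (the paper instead quotes Lemma \ref{lemma:fell-eq-1}, whose proof is this same computation), and (iii) by reducing to slices $\langle\eta|_{1}(\cdot)|\eta'\rangle_{1}$, writing $\langle\eta|_{1}\delta(c)|\eta'\rangle_{1}=L(g_{0})$ with $g_{0}\in C_{0}(U)$, and invoking injectivity of $\delta$. The only cosmetic difference is that the paper absorbs $f$ into the vector $\xi''=r^{*}(s_{U*}(f))\xi'$ whereas you keep it attached to $c$ via the identity $L(g_{0})r^{*}(s_{U*}(g))=L(g_{0}g)$; the two bookkeeping choices lead to the same comparison of kernels.
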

\begin{proof}
  i) Let $d=\langle
  j(\xi)|_{2}\delta(c)|j(\xi')\rangle_{2}$. Then
  $\delta(d)=\langle j(\xi)|_{3} (\delta \ast
  \Id)(\delta(c)) |j(\xi')\rangle_{3} = \langle j(\xi)|_{3}
  (\Id \ast \Delta)(\delta(c)) |j(\xi')\rangle_{3}$ and
   $ \langle \eta|_{1} \delta(d)|\eta'\rangle_{1} =
  \langle j(\xi)|_{2} \Delta(\langle
  \eta|_{1}\delta(c)|\eta'\rangle_{1}) |j(\xi')\rangle_{2}$.

  ii) This is a special case of Lemma \ref{lemma:fell-eq-1}.

  iii) Let $\eta,\eta' \in \gamma$. Since $c \in C_{U}$, we
  have $\langle\eta|_{1}\delta(c)|\eta'\rangle_{1} =L(g)$
  for some $g \in C_{0}(U)$.  Let $\xi'' =
  r^{*}(s_{U*}(f))\xi$ and denote by $d_{l},d_{r} \in C$ the
  left and the right hand side of the equation in iii),
  respectively.  Then $d_{l} = \langle
  j(\xi)|_{2}\delta(c)|j(\xi'')\rangle_{2}$, and by 
  i) and ii),
  \begin{align*}
    \langle \eta|_{1}\delta(d_{l}) |\eta'\rangle_{1} &=
    \langle j(\xi)|_{2}
    \Delta(\langle\eta|_{1}\delta(c)|\eta'\rangle_{1})
    |j(\xi'')\rangle_{2} = \langle
    j(\xi)|_{2}L(g)|j(\xi'')\rangle_{2} = L(gh_{\xi,\xi''}), \\
    \langle
    \eta|_{1}\delta(d_{r})|\eta'\rangle_{1}
    &= \langle\eta|_{1}\delta(c)|\eta'\rangle_{1}
    r^{*}(s_{U*}(fh_{\xi,\xi'})) =
    L(g)L(s_{U*}(fh_{\xi,\xi'})).
\end{align*}
We can conclude that $\langle
\eta|_{1}\delta(d_{l})|\eta'\rangle_{1}=\langle
\eta|_{1}\delta(d_{r})|\eta'\rangle_{1}$ because for all
$x\in G$,
\begin{align*}
  (gh_{\xi,\xi''})(x) =g(x)
  \int_{G^{r(x)}}\overline{\xi(y)}f(x)\xi'(x^{-1}y)
  \intd\lambda^{r(x)}(y)
  = g(x) (s_{U*}(fh_{\xi,\xi'}))(s(x)).
\end{align*}
Since $\eta,\eta' \in \gamma$ were arbitrary and $\delta$ is
injective, we must have $d_{l}=d_{r}$. 
\end{proof}
\begin{proof}[Proof of Proposition \ref{proposition:etale-expectation}]
  Given a subset $U \subseteq G$, denote by $\chi_{U}$ its
  characteristic function.  Using the same formulas as for
  elements of $C_{c}(G)$, we can define a map $j(\xi) \colon
  \frakK \to H$ and the function $h_{\xi,\xi'}$ for the
  characteristic function $\xi=\xi'=\chi_{G^{0}}$ of $G^{0}
  \subseteq G$, and then Lemma \ref{lemma:fell-eq} still
  holds. Define $p \colon C \to C$ by $c \mapsto \langle
  j(\chi_{G^{0}})|_{2} \delta(c)
  |j(\chi_{G^{0}})\rangle_{2}$.  Then $\|p\| \leq \|
  j(\chi_{G^{0}})\|^{2} = 1$, and the relation
  $h_{\chi_{G^{0}},\chi_{G^{0}}} = \chi_{G^{0}}$ and Lemma
  \ref{lemma:fell-eq} imply that $p|_{C_{G^{0}}} = \Id$ and
  $p|_{C_{U}} = 0$ whenever $U \in \mathfrak{G}$ and $U \cap
  G^{0} = \emptyset$.  Using a partition of unity argument
  and the fact that $G^{0} \subseteq G$ is open and closed,
  we can conclude that $p \circ \iota =\iota_{G^{0}} \circ
  p_{0}$. 

  It remains to show that $p$ is faithful.  Using the
  right-regular representation of $G$, one easily verifies
  that $[C^{*}_{r}(G)'j(\chi_{G^{0}})\frakK]=H$.  Therefore,
  the map $q \colon C^{*}_{r}(G) \to {\cal L}(\frakK)$, $a
  \mapsto j(\chi_{G^{0}})^{*}aj(\chi_{G^{0}})$, is faithful
  in the sense that $q(a^{*}a) \neq 0$ whenever $a \neq
  0$. If $c \in [\iota(\Gamma_{f}({\cal F}))]$ and $p(c^{*}c)=0$, then
  $\eta^{*}p(c^{*}c)\eta = q(\langle
  \eta^{*}|_{1}\delta(c^{*}c)|\eta\rangle_{1}) = 0$ and
  hence $\langle \eta^{*}|_{1}\delta(c^{*}c)|\eta\rangle_{1}
  = 0$ and $\delta(c)|\eta\rangle_{1}=0$ for all $\eta \in
  \gamma$, whence $\delta(c)=0$ and $c=0$ by injectivity of
  $\delta$.
\end{proof}
 Proposition
  \ref{proposition:etale-expectation} and \cite[Fact
  3.11]{kumjian} imply:
  \begin{corollary}
  $\iota$ extends to an embedding $
  C^{*}_{r}({\cal F}) \to C$. \qed
\end{corollary}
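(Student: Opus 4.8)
The plan is to invoke the standard characterization of the reduced Fell-bundle $C^{*}$-algebra through its canonical conditional expectation, in the form of \cite[Fact 3.11]{kumjian}: a $*$-homomorphism from a dense $*$-subalgebra of sections of $\mathcal{F}$ into a $C^{*}$-algebra which is injective on $\Gamma_{0}(\mathcal{F}^{0})$ and intertwines the section-restriction map onto $\Gamma_{0}(\mathcal{F}^{0})$ with a faithful conditional expectation onto the closure of the image of $\Gamma_{0}(\mathcal{F}^{0})$ extends automatically to an isomorphism of $C^{*}_{r}(\mathcal{F})$ onto the closure of its image. First I would record that $\Gamma_{f}(\mathcal{F})$ is a dense $*$-subalgebra of $C^{*}_{r}(\mathcal{F})$, since $\Gamma_{c}(\mathcal{F}) \subseteq \Gamma_{f}(\mathcal{F}) \subseteq C^{*}_{r}(\mathcal{F})$ as $*$-algebras and $\Gamma_{c}(\mathcal{F})$ is dense by construction, and that the canonical faithful conditional expectation $E_{r}\colon C^{*}_{r}(\mathcal{F}) \to \Gamma_{0}(\mathcal{F}^{0})$ restricts on $\Gamma_{f}(\mathcal{F})$ to the restriction map $p_{0}$.

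Next I would assemble the hypotheses of \cite[Fact 3.11]{kumjian} for the $*$-homomorphism $\iota\colon \Gamma_{f}(\mathcal{F}) \to C$ of Proposition \ref{proposition:fell-construction}, with ambient $C^{*}$-algebra $A := [\iota(\Gamma_{f}(\mathcal{F}))] \subseteq C$. By Proposition \ref{proposition:fell-construction}, $\iota$ restricts to an isometric isomorphism $\iota_{G^{0}}\colon \Gamma_{0}(\mathcal{F}^{0}) \to C_{G^{0}}$; in particular $\iota|_{\Gamma_{0}(\mathcal{F}^{0})}$ is injective and $\overline{\iota(\Gamma_{0}(\mathcal{F}^{0}))} = C_{G^{0}}$. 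Proposition \ref{proposition:etale-expectation} supplies the faithful conditional expectation $p\colon A \to C_{G^{0}}$ with $p \circ \iota = \iota_{G^{0}} \circ p_{0}$. Transporting along the isometric identification $\iota_{G^{0}}\colon \Gamma_{0}(\mathcal{F}^{0}) \mycong C_{G^{0}}$, this says precisely that $\iota$ intertwines $p_{0} = E_{r}|_{\Gamma_{f}(\mathcal{F})}$ with the faithful conditional expectation $p$ of $A$, which is exactly the compatibility required by Kumjian's result.

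With these data in place, \cite[Fact 3.11]{kumjian} gives that $\iota$ is isometric for the reduced norm on $\Gamma_{f}(\mathcal{F})$ and extends to an isomorphism $C^{*}_{r}(\mathcal{F}) \mycong A = [\iota(\Gamma_{f}(\mathcal{F}))]$; composing with the inclusion $A \hookrightarrow C$ yields the asserted embedding $C^{*}_{r}(\mathcal{F}) \to C$ (isometric, hence injective). The only step that needs genuine care — and the reason this is stated as a corollary rather than an immediate remark — is the identification $E_{r}|_{\Gamma_{f}(\mathcal{F})} = p_{0}$: one must verify that for $a \in \Gamma_{f}(\mathcal{F})$, written as a finite sum of sections supported on bisections $U \in \mathfrak{G}$, the reduced expectation annihilates the summands with $U \cap G^{0} = \emptyset$ and is the identity on the $G^{0}$-supported part. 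This uses that $G$ is \'etale (so $\mathfrak{G}$ covers $G$ and $G^{0}$ is open and closed in $G$), and it parallels the computation $p|_{C_{U}} = 0$ for $U \cap G^{0} = \emptyset$ already carried out in the proof of Proposition \ref{proposition:etale-expectation}.
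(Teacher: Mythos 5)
Your proposal is correct and follows exactly the paper's route: the paper derives the corollary by combining Proposition \ref{proposition:etale-expectation} (the faithful conditional expectation $p$ with $p \circ \iota = \iota_{G^{0}} \circ p_{0}$) with \cite[Fact 3.11]{kumjian}, which is precisely the argument you spell out. Your additional remarks on identifying $E_{r}|_{\Gamma_{f}(\mathcal{F})}$ with $p_{0}$ are a reasonable elaboration of details the paper leaves implicit.
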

We denote the extension above by $\iota$ again.
 \begin{proposition}\label{proposition:fell-iso}
   If $\delta$ is fine, then $\iota \colon C^{*}_{r}({\cal
     F}) \to C$ is a $*$-isomorphism.
 \end{proposition}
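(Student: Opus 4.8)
We already know from the corollary preceding the statement that $\iota\colon C^*_r(\mathcal F)\to C$ is an (injective) embedding, since a fine coaction is in particular injective. It remains to prove surjectivity; as $\iota$ restricts to isometric isomorphisms $\Gamma_0(\mathcal F|_U)\xrightarrow{\cong}C_U$ for $U\in\mathfrak G$ and $\Gamma_c(\mathcal F)\subseteq\Gamma_f(\mathcal F)\subseteq C^*_r(\mathcal F)$ is dense, this amounts to showing $C=\big[\sum_{U\in\mathfrak G}C_U\big]$. Fineness enters first through
\[
  C=[\rho_\gamma(\frakBo)C]=[\bbeta{2}\kbeta{2}C]=[\bbeta{2}\delta(C)\kbeta{2}],
\]
using $[\rho_\gamma(\frakBo)C]=C$ and $[\beta^*\beta]=\frakBo$ for the first two equalities and right-fullness $[\delta(C)\kbeta{2}]=[\kbeta{2}C]$ for the last. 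Since $j(C_c(G))$ is dense in $\beta$ and, by a partition of unity, every function in $C_c(G)$ is a finite sum of functions supported in bisections, continuity of $(\xi,\xi')\mapsto\langle j(\xi)|_2\delta(c)|j(\xi')\rangle_2$ reduces the claim to showing $d:=\langle j(\xi)|_2\delta(c)|j(\xi')\rangle_2\in\big[\sum_U C_U\big]$ for all $c\in C$ and all $\xi\in C_c(V)$, $\xi'\in C_c(V')$ with $V,V'\in\mathfrak G$.

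The plan is to prove $d\in C_W$ for the bisection $W:=V(V')^{-1}\in\mathfrak G$. For the condition $\delta(d)\kgamma{1}\subseteq[\kgamma{1}L(C_0(W))]$: by coassociativity, as in the proof of Lemma~\ref{lemma:fell-eq}~i), $\delta(d)=\langle j(\xi)|_3\,\delta^{(2)}(c)\,|j(\xi')\rangle_3$ with $\delta^{(2)}=(\Id\ast\Delta)\circ\delta$; since $\delta(c)\kgamma{1}\subseteq[\kgamma{1}C^*_r(G)]$, a leg-notation computation (unwinding the fiber-product morphism $\Id\ast\Delta$, as in \cite{timmermann:cpmu-hopf}) gives $\delta^{(2)}(c)\kgamma{1}\subseteq[\kgamma{1}\Delta(C^*_r(G))]$; slicing the third leg and using the identity $\langle j(\xi)|_3\Delta(L(g))|j(\xi')\rangle_3=L(g\,h_{\xi,\xi'})$ from Lemma~\ref{lemma:fell-eq}~ii) then yields $\delta(d)\kgamma{1}\subseteq[\kgamma{1}L(C_c(G)h_{\xi,\xi'})]$. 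Since $h_{\xi,\xi'}$ has compact support inside $W$ --- because $y\in\supp\xi\subseteq V$ and $z^{-1}y\in\supp\xi'\subseteq V'$ force $z\in V(V')^{-1}$ --- this gives $\delta(d)\kgamma{1}\subseteq[\kgamma{1}L(C_0(W))]$. For the condition $d\in[C\rho_\gamma(C_0(s(W)))]$: from $(1\btensor r^*(g))\kgamma{1}=\kgamma{1}r^*(g)$ and $L(f)r^*(g)=L(f\cdot(g\circ s))$, which vanishes for $f\in C_0(W)$ whenever $g\in C_0(G^0)$ vanishes on $s(W)$, one gets $\delta(d\rho_\gamma(g))\kgamma{1}=\delta(d)\kgamma{1}r^*(g)=0$ for all such $g$; since $[\kgamma{1}H]=K\rtensor{\gamma}{\frakb}{\beta}H$ and $\delta$ is injective, $d\rho_\gamma(g)=0$. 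Combining this with $d\in C=[\rho_\gamma(\frakBo)C]$ and the fact that $\supp h_{\xi,\xi'}$ is a compact subset of the open bisection $W$ (so that the ``$\rho_\gamma$-support'' of $d$ is a compact subset of $s(W)$) gives $d\in[C\rho_\gamma(C_0(s(W)))]$. Hence $d\in C_W$, and the proposition follows.

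The step I expect to be the main obstacle is the precise bookkeeping around the support conditions together with the leg-calculus. Extracting $\delta^{(2)}(c)\kgamma{1}\subseteq[\kgamma{1}\Delta(C^*_r(G))]$ requires working through the definition of the morphism $\Id\ast\Delta$ and the coassociativity of $\delta$ via the identifications of the triple relative tensor product (recall the fiber product is not associative); and placing $d$ in $C_W$ rather than merely ``close to it'' hinges on $h_{\xi,\xi'}$ being \emph{compactly} supported in $W$, so that the closure of $s(\{h_{\xi,\xi'}\neq0\})$ stays inside the open set $s(W)$ and no boundary term is lost. The density statements, the support computation for $h_{\xi,\xi'}$, and the passage from the slices to all of $C$ are routine.
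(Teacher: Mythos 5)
Your proof is correct and follows essentially the same route as the paper: both reduce to showing $C=\big[\sum_{U\in\mathfrak G}C_U\big]$, use fineness and \'etaleness to write $C$ as the closed span of slices $\langle j(\xi)|_{2}\delta(c)|j(\xi')\rangle_{2}$ with $\xi,\xi'$ supported in bisections, and place each such slice in $C_{VV'^{-1}}$ via the identities of Lemma~\ref{lemma:fell-eq} and the compact support of $h_{\xi,\xi'}$ in $VV'^{-1}$. The only cosmetic difference is in verifying $d\in[C\rho_{\gamma}(C_{0}(s(U)))]$: the paper exhibits $g\in C_{0}(U)$ with $h_{\xi,\xi'}g=h_{\xi,\xi'}$ and shows $d=d\rho_{\gamma}(s_{U*}(g))$ directly, whereas you argue via the annihilator of $C_{0}(s(U))$ --- both hinge on the same compact-containment observation you flagged.
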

\begin{proof}
  We only need to show that $C$ is equal to the linear span
  of all $C_{U}$, where $U \in \mathfrak{G}$. Consider an
  element $d \in C$ of the form $d= \langle j(\xi)|_{2}
  \delta(c) |j(\xi')\rangle_{2}$, where $c \in C, \xi \in
  C_{c}(V),\xi'\in C_{c}(V') \text{ for some } V,V' \in
  \mathfrak{G}$.  Since $G$ is \'etale and $\delta$ is
  fine, the closed linear span of all elements of the form
  like $d$ is equal to $ [\balpha{2}\delta(C)\kalpha{2}] =
  [\balpha{2}\kalpha{2}C] = C$.  We show that $d \in C_{U}$,
  where $U=VV'{}^{-1} \in \mathfrak{G}$, and then the claim
  follows.  Let $\eta,\eta' \in \gamma$. By Lemma
  \ref{lemma:fell-eq},
  \begin{align*}
    \langle \eta|_{1} \delta(d)|\eta'\rangle_{1} &
      \in \langle
      j(\xi)|_{2}\Delta(C^{*}_{r}(G))|j(\xi')\rangle_{2} \subseteq
      [L(C_{c}(G)h_{\xi,\xi'})] \subseteq L(C_{0}(U)).
    \end{align*}
    Using the relation $\delta(d)\kgamma{1} \subseteq
    [\kgamma{1}C^{*}_{r}(G)]$, we find $\delta(d)\kgamma{1}
    \subseteq [\kgamma{1}\bgamma{1} \delta(d)\kgamma{1}]
    \subseteq [\kgamma{1}L(C_{0}(U))]$.  Moreover, since
    $h_{\xi,\xi'} \in C_{c}(U)$, we can choose $g \in
    C_{0}(U)$ with $h_{\xi,\xi'}g=h_{\xi,\xi'}$. Then
    $L(fh_{\xi,\xi'})r^{*}(s_{U*}(g))=L(fh_{\xi,\xi'})$ for each $f
    \in C_{0}(U)$, and hence $\langle
    \eta|_{1}\delta(d\rho_{\gamma}(s_{U*}(g)))|\eta'\rangle_{1} =
    \langle \eta|_{1}\delta(d)|\eta'\rangle_{1}r^{*}(s_{U*}(g)) =
    \langle \eta|_{1}\delta(d)|\eta'\rangle_{1}$. Since
    $\delta$ is injective, we can conclude
    $d=d\rho_{\gamma}(s_{U*}(g)) \in C\rho_{\gamma}(C_{0}(s(U)))$
    and finally $d \in C_{U}$.
\end{proof}
 \begin{proposition}\label{proposition:fell-admissible}
   If $\delta$ is fine, then ${\cal F}$ is admissible.
 \end{proposition}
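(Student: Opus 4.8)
The plan is to unwind the definition: $\mathcal{F}$ is admissible exactly when the $C_{0}(G^{0})$-algebra $\Gamma_{0}(\mathcal{F}^{0})$ is continuous and satisfies $\mathcal{F}_{u}=\Gamma_{0}(\mathcal{F}^{0})_{u}\neq 0$ for every $u\in G^{0}$. I would treat these two properties separately; write $\mathcal{C}=C^{\gamma}_{K}$ and $I_{u}=C_{0}(G^{0}\setminus\{u\})$. Continuity needs no fineness. Since $G^{0}\in\mathfrak{G}$ and $s_{G^{0}}=r_{G^{0}}=\Id$, the Banach $C_{0}(G^{0})$-module structure on $C_{G^{0}}$ occurring in Proposition~\ref{proposition:fell-construction} is $c\cdot f=c\rho_{\gamma}(f)$, which coincides with the $C_{0}(G^{0})$-algebra structure of $C_{G^{0}}$ from Lemma~\ref{lemma:fell-sections}(iii); hence the isometric isomorphism $\iota_{G^{0}}\colon\Gamma_{0}(\mathcal{F}^{0})=\Gamma_{0}(\mathcal{F}|_{G^{0}})\to C_{G^{0}}$ of that proposition is an isomorphism of $C_{0}(G^{0})$-algebras. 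As $C_{G^{0}}$ is a continuous $C_{0}(G^{0})$-algebra by Lemma~\ref{lemma:fell-sections}(iii), so is $\Gamma_{0}(\mathcal{F}^{0})$.

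For the fibres I would use fineness. By Proposition~\ref{proposition:fell-iso}, $\iota\colon C^{*}_{r}(\mathcal{F})\to C$ is an isomorphism, and by Proposition~\ref{proposition:etale-expectation} there is a faithful conditional expectation $p\colon C\to C_{G^{0}}$, so in particular $C_{G^{0}}=p(C)$. Recall from the proof of Lemma~\ref{lemma:fell-sections}(iii) that, since $L(f)=r^{*}(f)$ and $|\eta\rangle_{1}r^{*}(f)=|\eta f\rangle_{1}$ for $f\in C_{0}(G^{0})$, the formula $c\cdot|\eta\rangle_{1}:=\delta(c)|\eta\rangle_{1}$ defines a faithful field of representations of $C_{G^{0}}$ on the Hilbert $C_{0}(G^{0})$-module $\kgamma{1}\cong\gamma$; by \cite[Theorem~3.3]{blanchard} the fibre $(C_{G^{0}})_{u}$ is the image of $C_{G^{0}}$ in $\mathcal{L}(\gamma_{u})$. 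Since $K_{\gamma}$ is a $C^{*}$-$\frakb$-module, $[\gamma^{*}\gamma]=\frakB=C_{0}(G^{0})$, so $\gamma_{u}\neq 0$ for every $u$. It therefore suffices to show the field is nondegenerate, $[\delta(C_{G^{0}})\kgamma{1}]=\kgamma{1}$: then $C_{G^{0}}$ acts nondegenerately on the nonzero module $\gamma_{u}$ and hence $(C_{G^{0}})_{u}\neq 0$, i.e. $\mathcal{F}_{u}\neq 0$.

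Nondegeneracy I would obtain from the following ingredients: $\rho_{\gamma}(\frakBo)C_{G^{0}}\subseteq C_{G^{0}}$ and $p\bigl(\rho_{\gamma}(f)c\bigr)=\rho_{\gamma}(f)p(c)$, so $\rho_{\gamma}(C_{0}(G^{0}))\subseteq M(C_{G^{0}})$ and $[\rho_{\gamma}(C_{0}(G^{0}))C_{G^{0}}]=C_{G^{0}}$; the identity $\delta(\rho_{\gamma}(f))|\eta\rangle_{1}=|\eta f\rangle_{1}$, giving $[\delta(M(C_{G^{0}}))\kgamma{1}]\supseteq\bigl[\,|\gamma C_{0}(G^{0})\rangle_{1}\bigr]=\kgamma{1}$ because $[\gamma C_{0}(G^{0})]=[\gamma\frakB]=\gamma$; and an approximate-unit argument (if $(u_{\lambda})$ is an approximate unit of $C_{G^{0}}$ then $\rho_{\gamma}(f)u_{\lambda}\in C_{G^{0}}$ tends to $\rho_{\gamma}(f)$ strictly) to descend from $M(C_{G^{0}})$ to $C_{G^{0}}$. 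Combining these gives $[\delta(C_{G^{0}})\kgamma{1}]=\kgamma{1}$, completing the proof.

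The main obstacle I anticipate is exactly this last descent: $\rho_{\gamma}(f)$ lies in $M(C_{G^{0}})$, not in $C_{G^{0}}$, and $\delta$ of an approximate unit converges to the identity only strictly/strongly a priori whereas norm control on $\gamma$ is wanted, so it may be necessary to invoke left-fullness of $\delta$ here (as available in the ambient category $\bfcoact^{a}_{C^{*}_{r}(G)}$). A more self-contained alternative for the fibre statement: if $\mathcal{F}_{u_{0}}=0$, then $\mathcal{F}_{x}=0$ whenever $r(x)=u_{0}$ or $s(x)=u_{0}$ (a Hilbert module over the zero $C^{*}$-algebra vanishes), so every $a\in\Gamma_{c}(\mathcal{F})$ is supported away from $u_{0}$; a partition-of-unity argument gives $a=a\cdot g$ with $g\in C_{c}(G^{0}\setminus\{u_{0}\})$, hence $\iota(a)=\iota(a)\rho_{\gamma}(g)$, and passing to closed spans $C=[C\rho_{\gamma}(I_{u_{0}})]=[\rho_{\gamma}(I_{u_{0}})C]$; using $[C\gamma]=\gamma$ (from $\delta$ being a morphism of $C^{*}$-$\frakb$-algebras) this forces $\gamma=[\gamma I_{u_{0}}]$ and so $[\gamma^{*}\gamma]\subseteq I_{u_{0}}\subsetneq C_{0}(G^{0})$, contradicting that $K_{\gamma}$ is a $C^{*}$-$\frakb$-module.
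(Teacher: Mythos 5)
Your skeleton matches the paper's: continuity of $\Gamma_{0}(\mathcal{F}^{0})\cong C_{G^{0}}$ comes from Lemma~\ref{lemma:fell-sections}~iii) exactly as you say, and the fibre statement is attacked by assuming $\mathcal{F}_{u}=0$ and deriving $C=[C\rho_{\gamma}(I_{u})]$ with $I_{u}=C_{0}(G^{0}\setminus\{u\})$. The gap is in the last step, where a contradiction must be extracted from $C=[C\rho_{\gamma}(I_{u})]$, and neither of your two routes closes it. In the first route you need $[\delta(C_{G^{0}})\kgamma{1}]=\kgamma{1}$; as you yourself note, the approximate-unit descent from $M(C_{G^{0}})$ to $C_{G^{0}}$ only gives strict convergence, and the fix you propose --- invoking left-fullness --- is not available: the hypothesis is only that $\delta$ is \emph{fine} (injective, a morphism, right-full, $[\rho_{\gamma}(\frakBo)C]=C$), left-fullness is a separate condition, and the statement that very fine coactions of $C^{*}_{r}(G)$ are left-full is proved in this paper only \emph{after} the Fell-bundle construction, as a corollary of Proposition~\ref{proposition:fell-coaction-epsilon}, so using it here would be circular. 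In the second route you invoke $[C\gamma]=\gamma$ ``from $\delta$ being a morphism of $C^{*}$-$\frakb$-algebras''; that justification is wrong. A (semi-)morphism in this paper is a condition on the intertwiner space $\mathcal{L}^{\delta}_{(s)}$, not on $[C\gamma]$; the identity $[C\gamma]=\gamma$ is part of \emph{admissibility} of $C^{\gamma}_{K}$ in the sense of Section~\ref{section:actions}, which is not assumed for the coactions treated here. (It is precisely because $[\gamma^{*}C\gamma]=[\gamma^{*}\gamma]$ is unavailable that the paper cannot literally repeat the argument of Lemma~\ref{lemma:groupoid-cx-g}~i), although it advertises the proof as ``similar''.)

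The paper's way around this is to push the degeneracy through $\delta$ into $C^{*}_{r}(G)$ instead of working with $\gamma$ or $C_{G^{0}}$ directly. From $C=[C\rho_{\gamma}(I_{u})]$ and $\delta(c\rho_{\gamma}(f))=\delta(c)(1\btensor r^{*}(f))$, together with $[\bgamma{1}\kgamma{1}]=r^{*}([\gamma^{*}\gamma])=r^{*}(C_{0}(G^{0}))$ and the equality $[\delta(C)\kgamma{1}C^{*}_{r}(G)]=[\kgamma{1}C^{*}_{r}(G)]$, it deduces $[r^{*}(C_{0}(G^{0}))C^{*}_{r}(G)]=[\bgamma{1}\delta(C)\kgamma{1}C^{*}_{r}(G)]=[\bgamma{1}\kgamma{1}r^{*}(I_{u})C^{*}_{r}(G)]=[r^{*}(I_{u})C^{*}_{r}(G)]$ inside $\mathcal{L}(H)$. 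Compressing by $j(\chi_{G^{0}})$ (legitimate because $G^{0}$ is clopen in the \'etale groupoid $G$, as in the proof of Proposition~\ref{proposition:etale-expectation}) yields $C_{0}(G^{0})=[j(\chi_{G^{0}})^{*}r^{*}(C_{0}(G^{0}))C^{*}_{r}(G)j(\chi_{G^{0}})]=I_{u}$, a contradiction. So keep your first step, but replace both of your concluding arguments by this transfer to $C^{*}_{r}(G)$.
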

 \begin{proof}
   The proof is similar to the proof of Lemma
   \ref{lemma:groupoid-cx-g} i).  By
   \ref{lemma:fell-sections} iii), $\Gamma_{0}({\cal F}^{0})
   \cong C_{G^{0}}$ is a continuous $C_{0}(G^{0})$-algebra.
   Let $u \in G^{0}$, denote by $I_{u} \subset C_{0}(G^{0})$
   the ideal of all functions vanishing at $u$, and assume
   that ${\cal F}_{u} = 0$. Then $\Gamma_{0}({\cal
     F}^{0})=[\Gamma_{0}({\cal F}^{0})I_{u}]$ and
   $[C^{*}_{r}({\cal F})]=[C^{*}_{r}({\cal
     F})\Gamma_{0}({\cal F}^{0})] = [C^{*}_{r}({\cal
     F})I_{u}]$, whence $C=[C\rho_{\gamma}(I_{u})]$. Define
   $j(\chi_{G^{0}})$ as in the proof of Proposition
   \ref{proposition:etale-expectation}. Then
   $[\delta(C)\kgamma{1}C^{*}_{r}(G)]=[\kgamma{1}C^{*}_{r}(G)]$
   and
   \begin{align*}
     [r^{*}(C_{0}(G^{0}))C^{*}_{r}(G)]=
     [\bgamma{1}\kgamma{1}C^{*}_{r}(G)] &=
     [\bgamma{1}\delta(C I_{u})\kgamma{1}C^{*}_{r}(G)] \\ &=
     [\bgamma{1}\kgamma{1}r^{*}(I_{u})C^{*}_{r}(G)] =
     [r^{*}(I_{u})C^{*}_{r}(G)],
   \end{align*}
   whence $[j(\chi_{G^{0}})^{*}C^{*}_{r}(G)j(\chi_{G^{0}})]=I_{u} \neq
   C_{0}(G^{0})$, a contradiction.
 \end{proof}
The construction of the Fell bundle is functorial with
respect to  the
following class of morphisms.
\begin{definition}
  A morphism $\rho$ of coactions
  $(C^{\gamma}_{K},\delta_{C})$ and
  $(D^{\epsilon}_{L},\delta_{D})$ of $C^{*}_{r}(G)$ is {\em
    strongly nondegenerate} if $[\rho(C)D_{G^{0}}]=D$.
\end{definition}
\begin{proposition} \label{proposition:fell-morphism-coact} Let
  $\pi$ be a strongly nondegenerate morphism of fine
  coactions $(C^{\gamma}_{K},\delta_{C})$,
  $(D^{\epsilon}_{L},\delta_{D})$ with associated Fell
  bundles $\mathcal{ F}$, $\mathcal{ G}$ and
  $*$-homomorphisms $\iota_{\mathcal{F}}$, $\iota_{{\cal
      G}}$. Then there exists a unique morphism $T$ from
  ${\cal F}$ to ${\cal G}$ such that $\iota_{{\cal G}} \circ
  T_{*} = \pi \circ \iota_{\mathcal{F}}$.
\end{proposition}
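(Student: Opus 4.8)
Since both coactions are fine, Propositions~\ref{proposition:fell-iso} and~\ref{proposition:fell-admissible} tell us that the $*$-homomorphisms $\iota_{\mathcal{F}}\colon C^{*}_{r}(\mathcal{F})\to C$ and $\iota_{\mathcal{G}}\colon C^{*}_{r}(\mathcal{G})\to D$ are isomorphisms which restrict to isometric isomorphisms $\Gamma_{0}(\mathcal{F}|_{U})\mycong C_{U}$ and $\Gamma_{0}(\mathcal{G}|_{U})\mycong D_{U}$ of Banach $C_{0}(U)$-modules for every $U\in\mathfrak{G}$. The plan is therefore to show that $\pi$ is compatible with the $\mathfrak{G}$-gradings $(C_{U})_{U}$ and $(D_{U})_{U}$ of $C$ and $D$, and then to read off a fibrewise map $T\colon\mathcal{F}\to\mathcal{M}(\mathcal{G})$ from the action of $\pi$ on the subspaces $C_{U}$.

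First I would record two facts about $\pi$: being a morphism of $C^{*}$-$\frakb$-algebras it is $C_{0}(G^{0})$-linear with respect to $\rho_{\gamma}$ and $\rho_{\epsilon}$ (Lemma~\ref{lemma:groupoid-cx-g}), and, since it is equivariant and nondegenerate and $\delta_{D}$ is nondegenerate on fine coactions, one has $(\pi\ast\Id)\circ\delta_{C}=\delta_{D}\circ\pi$ as $*$-homomorphisms into the multiplier algebra of $D\fib{\epsilon}{\frakb}{\beta}C^{*}_{r}(G)$. Using these, I would prove for each $U\in\mathfrak{G}$ that $\pi(C_{U})\,D_{s(U)}\subseteq D_{U}$ and $\pi(C_{U})^{*}D_{U}\subseteq D_{s(U)}$. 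For the first inclusion: given $c\in C_{U}$ and $d\in D_{s(U)}$, the element $\pi(c)d$ lies in $[D\rho_{\epsilon}(C_{0}(s(U)))]$ by $C_{0}(G^{0})$-linearity, and $\delta_{D}(\pi(c)d)\,|\epsilon\rangle_{1}=(\pi\ast\Id)(\delta_{C}(c))\,\delta_{D}(d)\,|\epsilon\rangle_{1}$; since the intertwiners defining $\pi\ast\Id$ carry $\gamma$ into $\epsilon$ and $\delta_{C}(c)\kgamma{1}\subseteq[\kgamma{1}L(C_{0}(U))]$, this set is contained in $[|\epsilon\rangle_{1}L(C_{0}(U))]$, whence $\pi(c)d\in D_{U}$. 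The adjoint inclusion follows by applying the first one to $c^{*}\in C_{U^{-1}}$ and using $U^{-1}U=s(U)$.

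Now, for $x\in G$ choose $U\in\mathfrak{G}$ with $x\in U$, and use the identifications $\mathcal{F}_{x}\cong(C_{U})_{x}$ and $\mathcal{G}_{x}\cong(D_{U})_{x}$ coming from Proposition~\ref{proposition:fell-construction}, where $(\,\cdot\,)_{x}$ denotes the fibre at $x$ of the respective Banach $C_{0}(U)$-module. By the previous paragraph and $C_{0}(G^{0})$-linearity, left multiplication by $\pi(c)$ for $c\in C_{U}$ descends to a well-defined operator $T_{x}([c]_{x})\in\mathcal{L}(\mathcal{G}_{s(x)},\mathcal{G}_{x})=\mathcal{M}(\mathcal{G})_{x}$, whose adjoint is implemented by $\pi(c)^{*}$. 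One checks that $T_{x}$ does not depend on the choice of $U\ni x$ (using $C_{U}C_{0}(U')=C_{U\cap U'}$ from Lemma~\ref{lemma:fell-sections}), that it is linear, and that the resulting map $T\colon\mathcal{F}\to\mathcal{M}(\mathcal{G})$ is multiplicative and $*$-preserving from the corresponding properties of $\pi$. Strict continuity of $T$ holds because on each $\Gamma_{0}(\mathcal{F}|_{U})$ it is the map $a\mapsto T\circ a$ characterised by $\iota_{U}^{\mathcal{G}}(T\circ a)=\pi(\iota_{U}^{\mathcal{F}}(a))$, and $\pi|_{C_{U}}$ is norm-continuous. Finally, the fibrewise nondegeneracy $\mathcal{G}_{u}=[T(\mathcal{F}_{u})\mathcal{G}_{s(u)}]$ for $u\in G^{0}$ is exactly the strong nondegeneracy $[\pi(C)D_{G^{0}}]=D$ localised at $u$ by multiplying with $\rho_{\epsilon}(C_{0}(G^{0}))$. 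Hence $T$ is a morphism $\mathcal{F}\to\mathcal{G}$ in the sense of Definition~\ref{definition:fell-morphism}; by construction $\iota_{U}^{\mathcal{G}}(T_{*}a)=\pi(\iota_{U}^{\mathcal{F}}(a))$ for $a\in\Gamma_{0}(\mathcal{F}|_{U})$, so $\iota_{\mathcal{G}}\circ T_{*}=\pi\circ\iota_{\mathcal{F}}$ on $\Gamma_{f}(\mathcal{F})$ and, passing to completions (Proposition~\ref{proposition:fell-morphism}), on $C^{*}_{r}(\mathcal{F})$. Uniqueness is clear: $T_{*}=\iota_{\mathcal{G}}^{-1}\circ\pi\circ\iota_{\mathcal{F}}$ is forced, and $T$ is determined by $T_{*}$ because $\{a(x)\mid a\in\Gamma_{c}(\mathcal{F})\}=\mathcal{F}_{x}$ for every $x\in G$.

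The main obstacle is the second paragraph --- establishing that $\pi$ transports each $C_{U}$ into the multiplier analogue of $D_{U}$. This forces one to push the equivariance of $\pi$ and the intertwining property of the semi-morphism through the fibre-product description of the coactions while keeping track of the slice maps $\kgamma{1}$ and $|\epsilon\rangle_{1}$ against the subalgebras $L(C_{0}(U))\subseteq C^{*}_{r}(G)$; once the gradings are known to be respected, the remaining verifications are routine applications of Propositions~\ref{proposition:fell-construction} and~\ref{proposition:fell-morphism}.
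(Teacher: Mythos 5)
Your proposal is correct and follows essentially the same route as the paper: both proofs establish that $\pi$ respects the grading by the bisections (the paper shows $\pi(C_{U})D_{V}\subseteq D_{UV}$ for arbitrary $V\in\mathfrak{G}$ and descends the bilinear maps $S_{U,V}$ to pairs of fibres, while you take $V=s(U)$ and invoke the bijection $\mathcal{M}(\mathcal{G})_{x}\cong\mathcal{L}(\mathcal{G}_{s(x)},\mathcal{G}_{x})$ --- a cosmetic difference), and both derive condition iii) of Definition~\ref{definition:fell-morphism} from strong nondegeneracy together with \'etaleness.
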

\begin{proof}
Let $U,V \in \mathfrak{G}$.  Then  $\pi(C_{U})D_{V} \subseteq
  D_{UV}$ because
  \begin{align*}
    \delta_{D}(\pi(C_{U})D_{V}) |\epsilon\rangle_{1} &=
    ((\pi \ast
    \Id)(\delta_{C}(C_{U})))\delta_{D}(D_{V})|\epsilon\rangle_{1}
    \\ & \subseteq ((\pi \ast \Id)(\delta_{C}(C_{U})))
    |\epsilon\rangle_{1}L(C_{0}(V)) \\ & \subseteq
    |\epsilon\rangle_{1}L(C_{0}(U))L(C_{0}(V)) =
    |\epsilon\rangle_{1} L(C_{0}(UV)) \\ \text{and } 
    \pi(C_{U})D_{V}
    &\subseteq [\pi(C\rho_{\gamma}(C_{0}(s(U)))) D_{V}] 
     \subseteq
    [\pi(C)D\rho_{\epsilon}(C_{0}(s(UV)))],
\end{align*}
where the last inclusion follows similarly as in the proof
of Lemma \ref{lemma:fell-sections} ii).  Define a map
$S_{U,V} \colon \Gamma_{0}(\mathcal { F}|_{U}) \times
\Gamma_{0}(\mathcal { G}|_{V}) \to \Gamma_{0}(\mathcal{
  G}|_{UV})$ by $(f,g) \mapsto \iota_{\mathcal{
    G}}^{-1}(\pi(\iota_{\mathcal { F}}(f))\iota_{\mathcal{
    G}}(g))$, let $(x,y) \in (U \times V) \cap \GsrG$, and
denote by $I_{x} \subseteq \Gamma_{0}(\mathcal { F}|_{U})$,
$I_{y} \subseteq \Gamma_{0}(\mathcal { G}|_{V})$, $I_{xy}
\subseteq \Gamma_{0}(\mathcal { G}|_{UV})$ the subspaces of
all sections vanishing at $x,y$, and $xy$,
respectively. Using Lemma \ref{lemma:fell-sections} i), one
easily verifies that $S_{U,V}$ maps $I_{x} \times
\Gamma_{0}(\mathcal { G}|_{V})$ and $\Gamma_{0}(\mathcal{
  F}|_{U}) \times I_{y}$ into $I_{xy}$. Hence, there exists
a unique map $S_{x,y} \colon \mathcal{ F}_{x} \times
\mathcal{ G}_{y} \to \mathcal{ G}_{xy}$ such that
$S_{x,y}(f(x),g(y)) = (S_{U,V}(f,g))(xy)$ for all $f \in
\Gamma_{0}(\mathcal { F}|_{U})$, $g \in \Gamma_{0}(\mathcal{
  G}|_{V})$, and this map depends on $(x,y)$ but not on
$(U,V)$.  For each $x \in G$ and $c \in \mathcal{ F}_{x}$,
define $T(c) \colon \mathcal{ G}|_{G^{s(x)}} \to \mathcal{
  G}|_{G^{r(x)}}$ by $T(c)d = S_{x,y}(c,d)$ for each $y \in
G^{s(x)}$, $d \in \mathcal { G}_{y}$. One easily checks that
then $T$ is a continuous map from $\mathcal{ F}$ to
$\mathcal{ M(G)}$ which satisfies conditions i) and ii) of
Definition \ref{definition:fell-morphism}, and that the
representation $\tilde \pi :=\iota_{{\cal G}}^{-1} \circ \pi
\circ \iota_{{\cal F}} \colon C^{*}_{r}({\cal F}) \to
M(C^{*}_{r}({\cal G}))$ satisfies $\tilde \pi(f)g = (T \circ
f)g$ for all $f \in \Gamma_{c}({\cal F})$, $g \in
\Gamma_{c}({\cal G})$. We show that $T$ also satisfies
condition iii) of Definition
\ref{definition:fell-morphism}. Since $\pi$ is strongly
nondegenerate, $D=[\pi(C)D_{G^{0}}]$, that is,
$C^{*}_{r}({\cal G})=[\tilde \pi(C^{*}_{r}({\cal
  F}))\Gamma_{0}({\cal G}^{0})]$ and hence $\Gamma^{2}({\cal
  G},\lambda^{-1})=[\tilde \pi(C^{*}_{r}({\cal
  F}))\Gamma_{0}({\cal G}^{0})]$. In particular,
${\cal G}_{x}=[T({\cal F}_{x}){\cal G}_{s(x)}]$ for each $x
\in G$ because $G_{s(x)}$ is discrete.
\end{proof}

\paragraph{The unit and counit of the adjunction}
Denote by $\bfcoact_{C^{*}_{r}(G)}^{as}$ the category of
very fine left-full coactions of $C^{*}_{r}(G)$ with 
all strongly nondegenerate morphisms.  Then the functor
$\bfFA \colon \bffell_{G}^{a} \to
\bfcoact^{a}_{C^{*}_{r}(G)}$ constructed in the preceding
section actually takes values in
$\bfcoact_{C^{*}_{r}(G)}^{as}$:
\begin{lemma}
  Let $T$ be a morphism of admissible Fell bundles ${\cal
    F},{\cal G}$ on $G$. Then the morphism $\bfFA T$ from
  $\bfFA {\cal F}$ to $\bfFA {\cal G}$ is strongly
  nondegenerate.
\end{lemma}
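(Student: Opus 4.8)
The plan is to unwind the definition and reduce to two facts: that $D_{G^{0}}$ contains $\pi_{\mathcal{G}}(\Gamma_{0}(\mathcal{G}^{0}))$, and that $\tilde T_{*}$ acts on $\pi_{\mathcal{F}}(\Gamma_{c}(\mathcal{F}))$ as $\pi_{\mathcal{G}}\circ T_{*}$ together with the density statement of Proposition \ref{proposition:fell-morphism}. Recall that $\bfFA T=\tilde T_{*}$ is (by Proposition \ref{proposition:fell-morphism} and the construction preceding it, with the rôles of $\mathcal{F}$ and $\mathcal{G}$ interchanged) the semi-morphism from $\pi_{\mathcal{F}}(C^{*}_{r}(\mathcal{F}))^{\gamma_{\mathcal{F}}}_{K_{\mathcal{F}}}$ into $M\big(\pi_{\mathcal{G}}(C^{*}_{r}(\mathcal{G}))\big)^{\gamma_{\mathcal{G}}}_{K_{\mathcal{G}}}$ with $\tilde T_{*}(\pi_{\mathcal{F}}(a))=\pi_{\mathcal{G}}(T_{*}(a))$ for $a\in\Gamma_{c}(\mathcal{F})$, where $\pi_{\mathcal{G}}$ also denotes the canonical extension to $M(C^{*}_{r}(\mathcal{G}))\supseteq\Gamma_{c}(\mathcal{M}(\mathcal{G}))$. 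Writing $D:=\pi_{\mathcal{G}}(C^{*}_{r}(\mathcal{G}))$, strong nondegeneracy of $\bfFA T$ means $[\tilde T_{*}(\pi_{\mathcal{F}}(C^{*}_{r}(\mathcal{F})))\,D_{G^{0}}]=D$, where $D_{G^{0}}$ is the fibre at $U=G^{0}\in\mathfrak{G}$ of the coaction $\delta_{\mathcal{G}}$. The inclusion ``$\subseteq$'' is automatic (as $D$ is an ideal in $M(D)$), so it suffices to exhibit a dense supply of elements of $D$ in the left-hand side.

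For the first fact I would show $\pi_{\mathcal{G}}(\Gamma_{0}(\mathcal{G}^{0}))\subseteq D_{G^{0}}$. Since $G$ is étale and Hausdorff, $G^{0}$ is clopen, so a section in $\Gamma_{c}(\mathcal{G}^{0})$ extends by $0$ to a section in $\Gamma_{c}(\mathcal{G})\subseteq C^{*}_{r}(\mathcal{G})$, and $\Gamma_{0}(\mathcal{G}^{0})$ becomes a $C^{*}$-subalgebra. For $g\in\Gamma_{c}(\mathcal{G}^{0})$, Lemma \ref{lemma:groupoid-algebra-phi} gives $\pi_{\mathcal{G}}(g)\rho_{\gamma_{\mathcal{G}}}(C_{0}(G^{0}))\ni\pi_{\mathcal{G}}(g)$, so $\pi_{\mathcal{G}}(g)\in[D\rho_{\gamma_{\mathcal{G}}}(C_{0}(G^{0}))]$. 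By Lemma \ref{lemma:fell-delta-formula}, $\big(\delta_{\mathcal{G}}(\pi_{\mathcal{G}}(g))_{\phi}d\big)(x,y)=g(r(x))\,d(x,y)$ for $d\in\Gamma_{c}(\mathcal{G}^{2}_{r,r})$, $(x,y)\in\GrrG$, because $\supp g\subseteq G^{0}$ forces $z=r(x)$ in the integral over $G^{r(x)}$ and $D^{-1/2}(r(x))=1$. As $r(x)=r(y)$ on $\GrrG$, this yields $\delta_{\mathcal{G}}(\pi_{\mathcal{G}}(g))\,|j_{\phi}(c)\rangle_{1}=|j_{\phi}(gc)\rangle_{1}$ for each $c\in\Gamma_{c}(\mathcal{G})$, with $gc\in\Gamma_{c}(\mathcal{G})$ the pointwise product. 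Finally $L(C_{0}(G^{0}))=r^{*}(C_{0}(G^{0}))=\rho_{\alpha}(C_{0}(G^{0}))$ in the étale case, and since $\rho_{\alpha}$ acts by pointwise multiplication along $r$ every compactly supported section of $\mathcal{G}$ is recovered by such a multiplication, so $[\,|\gamma_{\mathcal{G}}\rangle_{1}L(C_{0}(G^{0}))\,]=|\gamma_{\mathcal{G}}\rangle_{1}$. Hence $\delta_{\mathcal{G}}(\pi_{\mathcal{G}}(g))\,|\gamma_{\mathcal{G}}\rangle_{1}\subseteq|\gamma_{\mathcal{G}}\rangle_{1}=[\,|\gamma_{\mathcal{G}}\rangle_{1}L(C_{0}(G^{0}))\,]$, so $\pi_{\mathcal{G}}(g)\in D_{G^{0}}$, and taking norm closures $\pi_{\mathcal{G}}(\Gamma_{0}(\mathcal{G}^{0}))\subseteq D_{G^{0}}$. (Alternatively, this identification can be read off from Proposition \ref{proposition:fell-construction} applied to $\delta_{\mathcal{G}}$.)

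For the second fact, note that for $a\in\Gamma_{c}(\mathcal{F})$ and $g\in\Gamma_{c}(\mathcal{G}^{0})$ the convolution $T_{*}(a)\ast g$ in $M(C^{*}_{r}(\mathcal{G}))$ (Lemma \ref{lemma:fell-multiplier-sections}) collapses, since $G$ is étale with counting Haar systems, to the pointwise product $x\mapsto T(a(x))g(s(x))$, an element of $\Gamma_{c}(\mathcal{G})$. Therefore
\[
\tilde T_{*}(\pi_{\mathcal{F}}(a))\cdot\pi_{\mathcal{G}}(g)=\pi_{\mathcal{G}}(T_{*}(a))\cdot\pi_{\mathcal{G}}(g)=\pi_{\mathcal{G}}\big(T_{*}(a)g\big).
\]
By Proposition \ref{proposition:fell-morphism} the sections $T_{*}(a)g$ with $a\in\Gamma_{c}(\mathcal{F})$, $g\in\Gamma_{c}(\mathcal{G}^{0})$ are linearly dense in $\Gamma_{c}(\mathcal{G})$ for the inductive limit topology; since the canonical map $\Gamma_{c}(\mathcal{G})\to C^{*}_{r}(\mathcal{G})$ is continuous for that topology, the elements $\pi_{\mathcal{G}}(T_{*}(a)g)$ are norm-dense in $D$. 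Combining this with the previous paragraph,
\[
[\tilde T_{*}(\pi_{\mathcal{F}}(C^{*}_{r}(\mathcal{F})))\,D_{G^{0}}]\ \supseteq\ [\{\pi_{\mathcal{G}}(T_{*}(a)g):a\in\Gamma_{c}(\mathcal{F}),\,g\in\Gamma_{c}(\mathcal{G}^{0})\}]\ =\ D,
\]
and with the trivial reverse inclusion this proves that $\bfFA T$ is strongly nondegenerate.

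The step I expect to be the main obstacle is the first one, identifying enough of $D_{G^{0}}$: it requires keeping track of the $D^{-1/2}$-factors in the formula for $\delta_{\mathcal{G}}$ (which vanish because they are evaluated on units) and verifying the identity $[\,|\gamma_{\mathcal{G}}\rangle_{1}L(C_{0}(G^{0}))\,]=|\gamma_{\mathcal{G}}\rangle_{1}$, i.e. that multiplication by $L(C_{0}(G^{0}))=r^{*}(C_{0}(G^{0}))$ recovers all of $|\gamma_{\mathcal{G}}\rangle_{1}$. Everything else is routine given Proposition \ref{proposition:fell-morphism} and the étale structure of $G$.
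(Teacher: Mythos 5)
Your proof is correct and follows the same route as the paper, whose entire proof is ``Immediate from Proposition \ref{proposition:fell-morphism} ii)'': the substance is exactly the density of $T_{*}(\Gamma_{c}(\mathcal{F}))\Gamma_{c}(\mathcal{G}^{0})$ in $\Gamma_{c}(\mathcal{G})$. Your first paragraph merely makes explicit the inclusion $\pi_{\mathcal{G}}(\Gamma_{0}(\mathcal{G}^{0}))\subseteq D_{G^{0}}$, which the paper leaves implicit here and establishes (for arbitrary $U\in\mathfrak{G}$) in Proposition \ref{proposition:fell-coaction-eta}.
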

\begin{proof}
  Immediate from Proposition \ref{proposition:fell-morphism} ii).
\end{proof}
The constructions in Proposition
\ref{proposition:fell-construction} and
\ref{proposition:fell-morphism-coact} yield a functor $\bfGA
\colon \bfcoact_{C^{*}_{r}(G)}^{as} \to \bffell^{a}_{G}$.  We
now obtain an embedding
$(\bfFA,\bfGA,\check\eta,\check\epsilon)$ of $\bffell_{G}^{a}$
into $\bfcoact^{as}_{C^{*}_{r}(G)}$ as a full and coreflective
subcategory.
\begin{proposition} \label{proposition:fell-coaction-eta}
  Let $\mathcal { F}$ be an admissible Fell bundle,
  $(\pi_{\mathcal{ F}}(C^{*}_{r}(\mathcal{ F}))_{K_{{\cal
        F}}}^{\gamma_{{\cal F}}},\delta_{\mathcal {
      F}})=\bfFA {\cal F}$ the associated fine coaction, and
  $\mathcal { G}=\bfGA\bfFA {\cal F}$ and $\iota_{\mathcal {
      G}} \colon C^{*}_{r}(\mathcal { G}) \to \pi_{\mathcal
    { F}}(C^{*}_{r}(\mathcal { F}))$ the Fell bundle and the
  $*$-homomorphism associated to this coaction as
  above. Then there exists a unique isomorphism
  $\check\eta_{\mathcal { F}} \colon \mathcal { F} \to
  \mathcal { G}$ such that $\iota_{\mathcal { G}}\circ
(\check\eta_{\mathcal { F}})_{*} = \pi_{\mathcal {
      F}}$.
\end{proposition}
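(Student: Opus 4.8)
The plan is to build the isomorphism $\check\eta_{\mathcal{F}} \colon \mathcal{F} \to \mathcal{G}$ fibrewise from the universal property of $\mathcal{G}$ established in Proposition~\ref{proposition:fell-construction}, and to use the uniqueness clause of that proposition to pin it down. Concretely, $\mathcal{G} = \bfGA\bfFA\mathcal{F}$ comes equipped with a $*$-homomorphism $\iota_{\mathcal{G}} \colon \Gamma_{f}(\mathcal{G}) \to \pi_{\mathcal{F}}(C^{*}_{r}(\mathcal{F}))$ which restricts to isometric isomorphisms $\iota_{\mathcal{G},U} \colon \Gamma_{0}(\mathcal{G}|_{U}) \to \pi_{\mathcal{F}}(C^{*}_{r}(\mathcal{F}))_{U}$ of Banach $C_{0}(U)$-modules for each $U \in \mathfrak{G}$, where $\pi_{\mathcal{F}}(C^{*}_{r}(\mathcal{F}))_{U}$ is the subspace of $\pi_{\mathcal{F}}(C^{*}_{r}(\mathcal{F}))$ defined via the coaction $\delta_{\mathcal{F}}$. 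On the other hand, $\pi_{\mathcal{F}}$ itself restricts, for each such $U$, to a map $\Gamma_{0}(\mathcal{F}|_{U}) \to \pi_{\mathcal{F}}(C^{*}_{r}(\mathcal{F}))$, and the first step is to identify its image with $\pi_{\mathcal{F}}(C^{*}_{r}(\mathcal{F}))_{U}$ and to check it is an isometric isomorphism of Banach $C_{0}(U)$-modules; this makes the pair $(\mathcal{F}, \pi_{\mathcal{F}})$ a candidate for the pair $(\mathcal{F}',\iota')$ in the uniqueness statement of Proposition~\ref{proposition:fell-construction}, applied to the coaction $\delta_{\mathcal{F}} = \bfFA\mathcal{F}$.

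First I would compute, for $U \in \mathfrak{G}$ and $a \in \Gamma_{0}(\mathcal{F}|_{U})$, the image $\delta_{\mathcal{F}}(\pi_{\mathcal{F}}(a))\kgamma{1}$. Using Lemma~\ref{lemma:fell-delta-formula} (or rather a version of it for sections supported on a single bisection) together with the formula for the comultiplication $\Delta$ on $C^{*}_{r}(G)$ from \eqref{eq:groupoid-leg-ha}, one sees that a section supported in $U \in \mathfrak{G}$ gets sent, under $\delta_{\mathcal{F}}$, into $[\kgamma{1}L(C_{0}(U))]$; and conversely that $\pi_{\mathcal{F}}(a) \in [\pi_{\mathcal{F}}(C^{*}_{r}(\mathcal{F}))\rho_{\gamma_{\mathcal{F}}}(C_{0}(s(U)))]$, since $a = a\cdot(s_{U}^{*}f)$ for suitable $f$. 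This shows $\pi_{\mathcal{F}}(\Gamma_{0}(\mathcal{F}|_{U})) \subseteq \pi_{\mathcal{F}}(C^{*}_{r}(\mathcal{F}))_{U}$. For the reverse inclusion, the key is that $\delta_{\mathcal{F}}$ is fine (it is very fine by Theorem~\ref{theorem:groupoid-bundle-coaction}), so by the argument in the proof of Proposition~\ref{proposition:fell-iso} every element of $\pi_{\mathcal{F}}(C^{*}_{r}(\mathcal{F}))_{U}$ lies in the span of such images; combined with the fact that $\pi_{\mathcal{F}}$ is injective (Lemma~\ref{lemma:groupoid-algebra}) and isometric on $C^{*}_{r}(\mathcal{F})$, this gives the required isometric isomorphism of Banach $C_{0}(U)$-modules. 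The Banach-module structure matches because $\pi_{\mathcal{F}}(af) = \pi_{\mathcal{F}}(a)\rho_{\gamma_{\mathcal{F}}}(s_{U*}(f))$, which is exactly the module action on $\pi_{\mathcal{F}}(C^{*}_{r}(\mathcal{F}))_{U}$.

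Once $(\mathcal{F}, \pi_{\mathcal{F}})$ is recognized as a pair as in Proposition~\ref{proposition:fell-construction} for the coaction $\delta_{\mathcal{F}}$, and $(\mathcal{G}, \iota_{\mathcal{G}})$ is another such pair by construction of $\bfGA$, the uniqueness clause of that proposition yields an isomorphism $\check\eta_{\mathcal{F}} \colon \mathcal{F} \to \mathcal{G}$ of Fell bundles with $\iota_{\mathcal{G}} \circ (\check\eta_{\mathcal{F}})_{*} = \pi_{\mathcal{F}}$, and this property determines $\check\eta_{\mathcal{F}}$ uniquely because $\iota_{\mathcal{G}}$ is injective, $\pi_{\mathcal{F}}$ is injective, and $\{a(x) \mid a \in \Gamma_{f}(\mathcal{F})\} = \mathcal{F}_{x}$ for each $x \in G$ (so a morphism of Fell bundles is determined by the induced map on sections). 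I would then remark that naturality of $\check\eta$ follows by the standard argument: given a morphism $T$ of admissible Fell bundles, both $(\check\eta_{\mathcal{G}})_{*}\circ T_{*}$ and $(\bfGA\bfFA T)_{*}\circ(\check\eta_{\mathcal{F}})_{*}$ are compatible with the $*$-homomorphisms $\pi_{\mathcal{F}}$, $\pi_{\mathcal{G}}$ via Proposition~\ref{proposition:fell-morphism-coact} and Proposition~\ref{proposition:fell-coaction-morphism}, hence coincide.

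The main obstacle I anticipate is the reverse inclusion $\pi_{\mathcal{F}}(C^{*}_{r}(\mathcal{F}))_{U} \subseteq \pi_{\mathcal{F}}(\Gamma_{0}(\mathcal{F}|_{U}))$: one must genuinely use fineness of $\delta_{\mathcal{F}}$ together with \'etaleness to show that the constraint "$\delta_{\mathcal{F}}(c)\kgamma{1} \subseteq [\kgamma{1}L(C_{0}(U))]$" forces $c$ (a priori only in $[\pi_{\mathcal{F}}(C^{*}_{r}(\mathcal{F}))\rho_{\gamma_{\mathcal{F}}}(C_{0}(s(U)))]$) to actually be $\pi_{\mathcal{F}}$ of a section supported in $U$. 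This is essentially a localization argument: cover $U$ by small bisections, use a partition of unity subordinate to the corresponding open cover of $s(U)$, and reduce to sections supported on arbitrarily small bisections, where the correspondence between $C_{U}$ and $\Gamma_{0}(\mathcal{F}|_{U})$ is transparent from the Banach-bundle construction of \cite{dupre}. The remaining verifications (that the module actions match, that $\check\eta_{\mathcal{F}}$ is multiplicative and $*$-preserving) are routine given the fibrewise structure.
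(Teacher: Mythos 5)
Your overall strategy coincides with the paper's: both reduce the proposition to the identification $C_{U}=\pi_{\mathcal{F}}(\Gamma_{0}(\mathcal{F}|_{U}))$ for every $U\in\mathfrak{G}$ and then invoke the uniqueness clause of Proposition \ref{proposition:fell-construction} to produce $\check\eta_{\mathcal{F}}$. The difference is in how the hard inclusion $C_{U}\subseteq\pi_{\mathcal{F}}(\Gamma_{0}(\mathcal{F}|_{U}))$ is obtained. You correctly flag it as the main obstacle but propose a cover-by-bisections and partition-of-unity localization, which would still require you to verify that the locally produced data patch together into an honest section of $\mathcal{F}|_{U}$. The paper sidesteps this with a slice-map computation: since $[\{h_{\xi,\xi'}\mid\xi\in C_{c}(r(U)),\ \xi'\in C_{c}(U)\}]=C_{0}(U)$ (with $h_{\xi,\xi'}$ as in \eqref{eq:hxixi}) and $C_{U}=[C_{U}\cdot C_{0}(U)]$, Lemma \ref{lemma:fell-eq} iii) yields $C_{U}=[\langle j(C_{c}(r(U)))|_{2}\,\delta(\pi_{\mathcal{F}}(\Gamma_{c}(\mathcal{F})))\,|j(C_{c}(U))\rangle_{2}]$, and Lemma \ref{lemma:fell-eq-1} identifies each slice $\langle j(\xi)|_{2}\delta(\pi_{\mathcal{F}}(f))|j(\xi')\rangle_{2}$ with $\pi_{\mathcal{F}}(f h_{\xi,\xi'})$, which visibly lies in $\pi_{\mathcal{F}}(\Gamma_{0}(\mathcal{F}|_{U}))$; this gives both inclusions at once, and it is the route you should take to make your sketch precise. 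The remaining points you list (matching of the $C_{0}(U)$-module structures, multiplicativity and $*$-compatibility of $\check\eta_{\mathcal{F}}$ using that $G$ is \'etale, and uniqueness from injectivity of $\iota_{\mathcal{G}}$ together with fibrewise surjectivity of evaluation) agree with the paper; the naturality remark you add is not part of this proposition but is consistent with how it is used afterwards.
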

\begin{proof}
Let $(C^{\gamma}_{K},\delta)=(\pi_{\mathcal{
      F}}(C^{*}_{r}(\mathcal{ F}))_{K_{{\cal
        F}}}^{\gamma_{{\cal F}}},\delta_{\mathcal { F}})$
  and $U \in \mathfrak{ G}$. We show that
  $C_{U}=\pi_{\mathcal { F}}(\Gamma_{0}(\mathcal{ {\cal
      F}}|_{ U}))$.  Note that $[\{ h_{\xi,\xi'} \mid \xi
  \in C_{c}(r(U)), \xi' \in C_{c}(U)\}] = C_{0}(U)$, where
  the functions $h_{\xi,\xi'}$ were defined in
  \eqref{eq:hxixi}.  Using Lemma \ref{lemma:fell-eq}, we
  can conclude
  \begin{align*} 
    C_{U} = [\langle j(C_{c}(r(U))) |_{2} \delta(\pi_{\mathcal{
        F}}(\Gamma_{c}(\mathcal { F})) )|j(C_{c}(U))
    \rangle_{2}].
  \end{align*}
By Lemma 
  \ref{lemma:fell-eq-1}, we have  for all $\xi \in C_{c}(r(U))$, $f
  \in \Gamma_{c}(\mathcal { F})$, $\xi' \in C_{c}(U)$,  
  \begin{align*}
    \langle j(\xi)|_{2}\delta(\pi_{\mathcal{
        F}}(f))|j(\xi')\rangle_{2} = \pi_{{\mathcal { F}}}(f 
    h_{\xi,\xi'}) \in \pi_{\mathcal { F}}(\Gamma_{0}(\mathcal{
      F}|_{U})),
  \end{align*}
  where $f h_{\xi,\xi'}$ denotes the pointwise product.
  Consequently, $C_{U} = \pi_{\mathcal {
      F}}(\Gamma_{0}(\mathcal { F}|_{U}))$.  Since $U \in
  \mathfrak{G}$ was arbitrary, we can conclude that there
  exists an isomorphism $\check \eta_{{\cal F}} \colon
  \mathcal { F} \to \mathcal { G}$ of Banach bundles such
  that $\iota_{\mathcal { G} } \circ (\check \eta_{{\cal
      F}})_{*} = \pi_{\mathcal { F}} \colon \Gamma_{c}({\cal
    F}) \to C$. Using the fact that $(\check \eta_{{\cal
      F}})_{*}$ is a $*$-homomorphism and that $G$ is
  \'etale, one easily concludes that $\check \eta_{{\cal
      F}}$ is an isomorphism of Fell bundles.
\end{proof} 

\begin{proposition} \label{proposition:fell-coaction-epsilon}
  Let $({\cal C},\delta)$ be a very fine coaction of
  $C^{*}_{r}(G)$, where ${\cal C}=C^{\gamma}_{K}$, and let
  $\mathcal { F}, \iota\colon C^{*}_{r}(\mathcal { F}) \to
  C$ be the associated Fell bundle and $*$-isomorphism. Then
  there exists a unique strongly nondegenerate morphism
  $\check\epsilon_{({\cal C},\delta)}$ from $(\pi_{\mathcal {
      F}}(C^{*}_{r}(\mathcal { F}))^{\gamma_{\mathcal{
        F}}}_{K_{\mathcal { F}}},\delta_{\mathcal { F}})$ to
  $({\cal C},\delta)$ such that $\check\epsilon_{({\cal
      C},\delta)} \circ \pi_{\mathcal { F}} = \iota$.
\end{proposition}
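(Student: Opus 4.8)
The plan is to set $\check\epsilon_{({\cal C},\delta)}:=\iota\circ\pi_{\cal F}^{-1}$, where $\iota\colon C^{*}_{r}({\cal F})\to C$ is the $*$-isomorphism of Proposition~\ref{proposition:fell-iso} and $\pi_{\cal F}$ is the defining representation of the coaction $\bfFA{\cal F}$, which is faithful by Lemma~\ref{lemma:groupoid-algebra}. This is a well-defined $*$-isomorphism of $\pi_{\cal F}(C^{*}_{r}({\cal F}))$ onto $C$, and it is the only candidate: any morphism $\rho$ with $\rho\circ\pi_{\cal F}=\iota$ must agree with $\check\epsilon_{({\cal C},\delta)}$ on $\pi_{\cal F}(C^{*}_{r}({\cal F}))$, which is the entire source algebra, so uniqueness is automatic. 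What remains is to check that $\check\epsilon:=\check\epsilon_{({\cal C},\delta)}$ is a strongly nondegenerate, equivariant semi-morphism of $C^{*}$-$\frakb$-algebras. Strong nondegeneracy is immediate: by the construction of $\bfGA$ (Proposition~\ref{proposition:fell-construction}) the degree-$G^{0}$ part of the reconstructed Fell bundle is $C_{G^{0}}=\iota(\Gamma_{0}({\cal F}^{0}))$, hence $[\check\epsilon(\pi_{\cal F}(C^{*}_{r}({\cal F})))\,C_{G^{0}}]=[\iota(C^{*}_{r}({\cal F})\,\Gamma_{0}({\cal F}^{0}))]=C$, because ${\cal F}$ is admissible (Proposition~\ref{proposition:fell-admissible}) and so $\Gamma_{0}({\cal F}^{0})$ acts nondegenerately (Lemma~\ref{lemma:gamma0}).

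For the semi-morphism property I would imitate the argument constructing the counit in the proof of Theorem~\ref{theorem:groupoid-cx}. First, $(C,\rho_{\gamma})$ is a $C_{0}(G^{0})$-algebra, exactly as in Lemma~\ref{lemma:groupoid-cx-g}~i), using that $\delta$ is fine. The conditional expectation $p\colon C\to C_{G^{0}}$ of Proposition~\ref{proposition:etale-expectation}, together with the isometric isomorphism $\iota_{G^{0}}$, identifies the Hilbert $\Gamma_{0}({\cal F}^{0})$-module $\Gamma^{2}({\cal F},\lambda^{-1})$ with the GNS module of $p$, compatibly with the $C^{*}_{r}({\cal F})\cong C$-actions; consequently, for each weight $\phi\in{\cal W}(\Gamma_{0}({\cal F}^{0}))$ used in the construction of $\bfFA{\cal F}$, the space $K_{\phi}\cong\Gamma^{2}({\cal F},\lambda^{-1})\tr_{\phi}\frakK$ (the identification employed in the proof of Lemma~\ref{lemma:groupoid-algebra-phi}) is identified with the KSGNS space of the completely positive map $\tilde\phi:=\phi\circ\iota_{G^{0}}^{-1}\circ p\colon C\to C_{0}(G^{0})$, in a way carrying $\pi_{\phi}$ to left multiplication by $\iota(\cdot)$. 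One then shows that $\tilde\phi$ is implemented by a vector $\eta\in\gamma$, i.e.\ $\tilde\phi(c)=\eta^{*}c\eta$ for all $c\in C$; granting this, one gets an isometry $W_{\phi}\colon K_{\phi}\to K$, $c\tr\zeta\mapsto c\eta\zeta$, with $W_{\phi}\pi_{\phi}(a)=\iota(a)W_{\phi}=\check\epsilon(\pi_{\cal F}(a))W_{\phi}$, and a routine unwinding of the definitions of $j_{\phi}$, $\hat j_{\phi}$ and $\iota$ over the open bisections shows $W_{\phi}\,j_{\phi}(\Gamma^{2}({\cal F},\lambda;\phi))\subseteq\gamma$, so that $W_{\phi}$ (extended by zero) is an $\check\epsilon$-intertwining semi-morphism of $C^{*}$-$\frakb$-modules from $(K_{\cal F},\gamma_{\cal F})$ to $(K,\gamma)$. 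Since the $\phi$ separate the points of $\Gamma_{0}({\cal F}^{0})$ (Lemma~\ref{lemma:cx-weights}), $C_{G^{0}}$ acts faithfully on $\gamma$ (Lemma~\ref{lemma:fell-sections}~iii)), and $C$ is the closed span of the reconstruction elements $\langle j(\xi)|_{2}\delta(c)|j(\xi')\rangle_{2}$ (proof of Proposition~\ref{proposition:fell-iso}), the operators $W_{\phi}$ turn out to be numerous enough to give $[\bigcup_{\phi}W_{\phi}\gamma_{\cal F}]=\gamma$, which is precisely the semi-morphism condition for $\check\epsilon$.

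For equivariance, $(\check\epsilon\ast\Id)\circ\delta_{\cal F}=\delta\circ\check\epsilon$, I would pass by density and continuity to arguments $a\in\Gamma_{c}({\cal F})$ and compare $\delta_{\cal F}(\pi_{\cal F}(a))$, for which the explicit formula of Lemma~\ref{lemma:fell-delta-formula} (equivalently $\delta_{\cal F}(\pi_{\cal F}(a))=X(\pi_{\cal F}(a)\botensor\Id)X^{*}$, from Lemma~\ref{lemma:rep-coaction} and Proposition~\ref{proposition:groupoid-bundle-rep}), with $\delta(\iota(a))$, where $\iota(a)$ lies in the span of the $C_{U}$ and the action of $\delta$ on the $C_{U}$ is controlled by the coassociativity relation of Lemma~\ref{lemma:fell-eq}~i); evaluating both against the vectors $W_{\phi}\,j_{\phi}(\cdot)$ of the previous step, the two agree because $W_{\phi}$ was built to intertwine $\pi_{\cal F}$ with $\iota$. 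Together with strong nondegeneracy and the uniqueness observed above, this establishes the proposition. The main obstacle is the middle step: proving that the completely positive maps $\tilde\phi$ coming from the weights on $\Gamma_{0}({\cal F}^{0})$ are indeed implemented by vectors of $\gamma$, and that the resulting family $\{W_{\phi}\}$ recovers all of $\gamma$. This is exactly where one uses that $({\cal C},\delta)$ is (very) fine --- so that $p$ is faithful, $C_{G^{0}}\cong\Gamma_{0}({\cal F}^{0})$ acts faithfully on $\gamma$, and $C$ (hence $\gamma$) is generated by reconstruction data --- and it will require the same careful bookkeeping between the $\lambda$- and $\lambda^{-1}$-versions of the Fell-bundle $L^{2}$-spaces that was already needed in Section~\ref{section:fell}.
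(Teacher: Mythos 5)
Your skeleton matches the paper's: define $\check\epsilon_{({\cal C},\delta)}=\iota\circ\pi_{\cal F}^{-1}$, note uniqueness and strong nondegeneracy are immediate, and reduce everything to producing enough intertwiners out of $(K_{\cal F},\gamma_{\cal F})$. But the step you yourself flag as the main obstacle is, as stated, not just unproven — it is false in general, and the paper's proof is organized precisely to avoid it. You want each $\tilde\phi=\phi\circ\iota_{G^{0}}^{-1}\circ p$ to be implemented as $\tilde\phi(c)=\eta^{*}c\eta$ for some $\eta\in\gamma$. Since $\tilde\phi$ factors through the conditional expectation $p$ of Proposition \ref{proposition:etale-expectation}, such an $\eta$ would have to satisfy $\eta^{*}C_{U}\eta=0$ for every $U\in\mathfrak{G}$ with $U\cap G^{0}=\emptyset$; a generic vector state on $C$ does not factor through $p$. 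Already for $G$ a group and $\delta=\Delta$ acting on $C=C^{*}_{r}(G)$ one has $\eta^{*}L(g)\eta=\langle\eta|L(g)\eta\rangle$, which does not vanish for $g$ supported off the unit unless $\eta$ is a very special (trace-like) vector, and there is no reason such special vectors span enough of $\gamma$ to give $[\bigcup_{\phi}W_{\phi}\gamma_{\cal F}]=\gamma$.

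The paper's device is to work one leg up: it uses vectors $\omega=|\eta\rangle_{1}j(\xi)\in\gamma\rt\alpha$ with $\xi\in C_{c}(U)$ and the weights $f\mapsto\omega^{*}\delta(\iota(f))\omega$ on $\Gamma_{0}({\cal F}^{0})$. It is the slice by $j(\xi)$ in the \emph{second} leg, applied \emph{after} $\delta$, that produces the factorization through $p$ (this is exactly Lemma \ref{lemma:fell-eq} iii), via $\supp h_{\xi,\xi}\subseteq G^{0}$), so the corresponding maps $S_{\omega}\colon K_{\phi}\to K\rtensor{\gamma}{\frakb}{\beta}H$ really are isometries, intertwine $\pi_{\phi}$ with $\delta(\iota(\cdot))$, and carry $j_{\phi}(\Gamma_{c}({\cal F}))$ into $\gamma\rt\alpha$. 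This exhibits $\delta\circ\check\epsilon$ as a morphism into $(\,(K\rtensor{\gamma}{\frakb}{\beta}H)_{\gamma\rt\alpha},\delta(C))$, using left-fullness to see that $[\delta(\iota(\Gamma_{c}({\cal F})))(\gamma\rt\alpha)]=\gamma\rt\alpha$; only at the very end does one invoke the \emph{very fine} hypothesis — that $\delta^{-1}$ is itself a morphism of $C^{*}$-$\frakb$-algebras — to descend to a morphism into $(K_{\gamma},C)$. Your plan never uses very-fineness in this way and tries to land in $(K,\gamma)$ directly, which is where it breaks. If you replace your $W_{\phi}$ by the paper's $S_{\omega}$ and add the descent via $\delta^{-1}$, the rest of your outline (including the equivariance computation from Lemma \ref{lemma:fell-eq}) goes through.
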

\begin{lemma}
  Let $U \in \mathfrak{G}$, $\xi \in C_{c}(U)$, $\eta \in
  \gamma$, and $\omega = |\eta\rangle_{1}j(\xi) \in \gamma
  \rt \alpha \subseteq \mathcal{ L}(\frakK,\rHrange)$.
  \begin{enumerate}
  \item There exists a $C_{0}(G^{0})$-weight $\phi \colon
    \Gamma_{0}({\cal F}^{0}) \to C_{0}(G^{0})
    \subseteq \mathcal{ L}(\frakK)$, $f \mapsto
    \omega^{*}\delta(\iota(f))\omega$.
  \item There exists a unique isometry $S_{\omega} \colon
    K_{\phi} = \Gamma^{2}(\mathcal { F},\nu;\phi) \to
    \rHrange$ such that $S_{\omega}\hat j_{ \phi}(f) =
    \delta(\iota(f))\omega$ for all $f \in
    \Gamma_{c}(\mathcal { F})$. Furthermore,
    $S_{\omega}\pi_{\phi}(f)= \delta(\iota(f))S_{\omega}$
    for all $f \in \Gamma_{c}(\mathcal{ F})$.
  \item $S_{\omega}j_{\phi}(\Gamma_{c}(\mathcal { F}))
    \subseteq \gamma \rt \alpha$.
  \end{enumerate}
\end{lemma}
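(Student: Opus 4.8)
The plan is to treat the three parts in order; throughout write $\omega=|\eta\rangle_{1}j(\xi)$ with $\xi\in C_{c}(U)$, $U\in\mathfrak{G}$. For (i): since $\iota(f)=\iota_{G^{0}}(f)\in C_{G^{0}}$, the defining relation of $C_{G^{0}}$ gives $\delta(\iota(f))|\eta\rangle_{1}\in[|\gamma\rangle_{1}r^{*}(C_{0}(G^{0}))]$, and, using $\langle\eta|_{1}|\eta'\rangle_{1}=\rho_{\beta}(\eta^{*}\eta')=r^{*}(\eta^{*}\eta')$, this yields $\langle\eta|_{1}\delta(\iota(f))|\eta\rangle_{1}=r^{*}(g_{f})$ for some $g_{f}\in C_{0}(G^{0})$ depending linearly on $f$. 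As $r^{*}(g)j(\xi)=j(\xi)g$ on $\frakK$, we get $\phi(f)=\omega^{*}\delta(\iota(f))\omega=\langle\xi|\xi\rangle\,g_{f}\in C_{0}(G^{0})$. Positivity is immediate from $\phi(f^{*}f)=(\delta(\iota(f))\omega)^{*}\delta(\iota(f))\omega\ge 0$, and $C_{0}(G^{0})$-linearity follows from $\iota(fh)=\iota(f)\rho_{\gamma}(h)$, $\delta(c\rho_{\gamma}(h))=\delta(c)(1\btensor r^{*}(h))$ and $(1\btensor r^{*}(h))\omega=\omega h$; thus $\phi\in\mathcal{W}$.

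For (ii), observe first that $[\hat j_{\phi}(\Gamma_{c}(\mathcal F))\frakK]$ is dense in $K_{\phi}$ — apply Lemma~\ref{lemma:submodule} to $\{c\,(f\circ s):c\in\Gamma_{c}(\mathcal F),\,f\in C_{c}(G^{0})\}$. Hence it suffices to define $S_{\omega}$ on this subspace by $\hat j_{\phi}(f)\zeta\mapsto\delta(\iota(f))\omega\zeta$ and, for well-definedness and isometry, to verify
\[ \omega^{*}\delta(\iota(f)^{*}\iota(f'))\omega=\hat j_{\phi}(f)^{*}\hat j_{\phi}(f')=\langle f|f'\rangle_{\Gamma^{2}(\mathcal F,\lambda^{-1};\phi)}\qquad(f,f'\in\Gamma_{c}(\mathcal F)). \]
Evaluating the right-hand side fibrewise gives $\phi(p_{0}(f^{*}f'))$, where $p_{0}$ is restriction to $G^{0}$ and $f^{*}f'$ is the convolution product; by Proposition~\ref{proposition:etale-expectation} this equals $\omega^{*}\delta(p(\iota(f)^{*}\iota(f')))\omega$. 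So the identity reduces to $\omega^{*}\delta(c-p(c))\omega=0$ with $c=\iota(f)^{*}\iota(f')$.

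By Proposition~\ref{proposition:fell-iso}, $C$ is the closed span of the subspaces $C_{V}$; since $G^{0}$ is clopen and $C_{V'}\subseteq C_{V}$ for $V'\subseteq V$, the element $c-p(c)$ lies in the closed span of those $C_{W}$ with $W\cap G^{0}=\emptyset$ (using $p|_{C_{W}}=0$ for such $W$). By linearity and norm-continuity it is therefore enough to prove $\omega^{*}\delta(c')\omega=0$ for $c'\in C_{W}$, $W\cap G^{0}=\emptyset$. For this, the defining relation of $C_{W}$ and the identity $r^{*}(h)L(g)=L((h\circ r)g)$ give $\langle\eta|_{1}\delta(c')|\eta\rangle_{1}\in[L(C_{0}(W))]$, so $\omega^{*}\delta(c')\omega\in[j(\xi)^{*}L(C_{0}(W))j(\xi)]$, and a short direct computation shows $j(\xi)^{*}L(g)j(\xi)=0$ whenever $g\in C_{c}(W)$, $\xi\in C_{c}(U)$ with $U$ a bisection and $W\cap G^{0}=\emptyset$, because the constraint $x,z^{-1}x\in\supp\xi\subseteq U$ forces $z\in UU^{-1}\subseteq G^{0}$, disjoint from $W\ni z$. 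With the identity established, $S_{\omega}$ extends to an isometry $K_{\phi}\to\rHrange$, and $S_{\omega}\pi_{\phi}(f)=\delta(\iota(f))S_{\omega}$ follows on the dense subspace from $\pi_{\phi}(f)\hat j_{\phi}(d)=\hat j_{\phi}(fd)$ and $\iota(fd)=\iota(f)\iota(d)$.

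For (iii), a partition-of-unity argument reduces to $c\in\Gamma_{c}(\mathcal F|_{U})$, $U\in\mathfrak{G}$. For such $c$ one checks the reparametrisation identity $j_{\phi}(c)=\hat j_{\phi}(c)\,W_{U}$, where $W_{U}\in\mathcal L(\frakK)$ is the partial isometry $(W_{U}\zeta)(v)=\chi_{s(U)}(v)\,\zeta(\sigma_{U}(v))\,D^{1/2}(s_{U}^{-1}(v))$ with $\sigma_{U}=r_{U}\circ s_{U}^{-1}$ (boundedness of $W_{U}$ is precisely the statement that $D$ is the Radon–Nikodym cocycle along the bisection $U$). Then $S_{\omega}j_{\phi}(c)=S_{\omega}\hat j_{\phi}(c)W_{U}=\delta(\iota_{U}(c))\,\omega\,W_{U}$; since $\iota_{U}(c)\in C_{U}$ we have $\delta(\iota_{U}(c))|\eta\rangle_{1}\in[|\gamma\rangle_{1}L(C_{0}(U))]$, and a second short computation gives $L(g)j(\xi)W_{U}=j(\xi')\in\alpha$ for $g\in C_{c}(U)$, with $\xi'(x)=g(z_{U}(x))\xi(z_{U}(x)^{-1}x)$ and $z_{U}(x)=r_{U}^{-1}(r(x))$. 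Hence $S_{\omega}j_{\phi}(c)\in[|\gamma\rangle_{1}\alpha]=\gamma\rt\alpha$. The main obstacle is the displayed identity in (ii): matching the $C_{0}(G^{0})$-valued inner product of $\Gamma^{2}(\mathcal F,\lambda^{-1};\phi)$ with $\omega^{*}\delta(\iota(\cdot))\omega$ forces one to combine the conditional expectation $p$, the isomorphism $\iota\colon C^{*}_{r}(\mathcal F)\cong C$, and the étale-specific vanishing $j(\xi)^{*}L(g)j(\xi)=0$; the two bisection computations underlying (iii) are routine but rely on $\lambda$ being the counting measure.
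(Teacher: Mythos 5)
Your proof is correct. Parts (i) and (iii) follow the paper's own argument almost verbatim: in (iii) your operator $W_U$ is exactly the paper's reparametrised vector $\zeta'$ (defined by $\zeta'(s(x))=\zeta(r(x))D^{1/2}(x)$) packaged as a partial isometry, and your identity $L(g)j(\xi)W_U=j(\xi')$ is the paper's displayed computation $(L(f')j(\xi)\zeta')(xy)=f'(x)\zeta(r(x))\xi(y)=j(L(f')\xi)\zeta(xy)$. The only genuine divergence is in the isometry identity of (ii): the paper obtains $\omega^*\delta(\iota(g))\omega=\phi(p_0(g))$ in one line from Lemma \ref{lemma:fell-eq} iii) together with $\supp h_{\xi,\xi}\subseteq UU^{-1}\subseteq G^{0}$, whereas you route it through the conditional expectation $p$ of Proposition \ref{proposition:etale-expectation} and a direct verification that $j(\xi)^{*}L(g)j(\xi)=0$ whenever $\supp g\cap G^{0}=\emptyset$. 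The two mechanisms are the same fact in different clothing (for a bisection $U$ one has $UU^{-1}\subseteq G^{0}$), but your version is slightly more self-contained, using only what is already packaged inside Proposition \ref{proposition:etale-expectation}, at the cost of the extra bookkeeping of splitting $\iota(g)-p(\iota(g))$ into pieces lying in spaces $C_{W}$ with $W\cap G^{0}=\emptyset$ --- which works because $G^{0}$ is clopen, so every bisection splits accordingly and the sum is finite for $g=f^{*}f'\in\Gamma_{c}(\mathcal{F})$. Both routes yield $\hat j_{\phi}(f)^{*}\hat j_{\phi}(f')=\phi(p_{0}(f^{*}f'))=\omega^{*}\delta(\iota(f))^{*}\delta(\iota(f'))\omega$, which is all that is needed for $S_{\omega}$ to be well defined and isometric on the dense subspace $\hat j_{\phi}(\Gamma_{c}(\mathcal{F}))\frakK$.
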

\begin{proof}
  i) First, note that $\omega^{*}\delta(C_{G^{0}})\omega
  \subseteq
  [\alpha^{*}\bgamma{1}\kgamma{1}L(C_{0}(G^{0}))\alpha] =
  [\alpha^{*}\bgamma{1}\kgamma{1}\alpha] = C_{0}(G^{0})
  \subseteq \mathcal{ L}(\frakK)$. Second, observe that for
  all $c \in C_{G^{0}}$, $f\in C_{0}(G^{0})$,
  \begin{align*}
    \phi(cf)  &=
    j(\xi)^{*}\langle\eta|_{1} \delta(c f)
    |\eta\rangle_{1}j(\xi) \\ &=    j(\xi)^{*}\langle\eta|_{1}
    \delta(c) |\eta\rangle_{1} r^{*}(f)j(\xi) = j(\xi)^{*}
    \langle \eta|_{1}\delta(c)|\eta\rangle_{1} j(\xi)f = \phi(c)f.
  \end{align*}

  ii) As before, denote by $p_{0} \colon \Gamma_{f}(\mathcal
  { F}) \to \Gamma_{0}(\mathcal { F}^{0})$ the restriction.
  Let $U \in \mathfrak{G}$, $f,f' \in \Gamma_{c}(\mathcal {
    F})$, and $g=f^{*}f'$. Using the relation $\supp
  h_{\xi,\xi} \subseteq G^{0}$ and Lemma
  \ref{lemma:fell-eq}, we find
  \begin{align*}
    \omega^{*} \delta(\iota(f))^{*}\delta(\iota(f'))\omega
    &= \eta^{*} \langle
    j(\xi)|_{2}\delta(\iota(g))|j(\xi)\rangle_{2}
    \eta^{*}  \\ &=
    \eta^{*}  \iota(g \cdot  h_{\xi,\xi}) \eta \\ &=
    \omega^{*}\delta(\iota(p_{0}(g)))\omega^{*} =
    \phi(p_{0}(g)) =  \langle
    f|f'\rangle_{\Gamma^{2}(\mathcal { F},\lambda^{-1},\phi)}.
  \end{align*}
  The existence of $S_{\omega}$ follows. Finally, $
  S_{\omega}\pi_{\phi}(f) = \delta(\iota(f))
  S_{\omega}$ because $ S_{\omega}\pi_{\phi}(f) \hat
  j_{\phi}(g) = S_{\omega}\hat j_{\phi}(fg) =
  \delta(\iota(fg))\omega =
  \delta(\iota(f)) S_{\omega}\hat
  j_{\phi}(g) $ for all $f,g \in \Gamma_{c}(\mathcal{ F})$.

  iii) Let $V \in \mathfrak{G}, f \in \Gamma_{c}({\cal
    F}|_{V}), \zeta \in C_{c}(G^{0})$, and define $\zeta'
  \in L^{2}(G^{0},\mu)$ by
  $\zeta'(s(x))=\zeta(r(x))D^{1/2}(x)$ for all $x \in V$ and
  $\zeta'(y)=0$ for all $y \in G^{0} \setminus s(V)$. Then
  $(j_{\phi}(f)\zeta)(x) = f(x)\zeta(r(x)) = (\hat
  j_{\phi}(f)\zeta')(x)$ for all $x \in G$ and therefore
  \begin{align*}
    S_{\omega}j_{\phi}(f)\zeta = S_{\omega}\hat
    j_{\phi}(f)\zeta' = \delta(\iota(f))\omega \zeta' =
    \delta(\iota(f))|\eta\rangle_{1}j(\xi) \zeta'.
  \end{align*}
  Since $f\in \Gamma_{c}(\mathcal { F}|_{V})$, there exist
  $f' \in L(C_{0}(V))$, $\eta' \in \gamma$ such that
  $\delta(\iota(f))|\eta\rangle_{1} =
  |\eta'\rangle_{1}L(f')$. Now,
  \begin{align*}
    S_{\omega}j_{\phi}(f)\zeta =
    \delta(\iota(f))|\eta\rangle_{1} j(\xi)\zeta' =
    |\eta'\rangle_{1}L(f')j(\xi)\zeta' = |\eta'\rangle
    j(L(f')\xi)\zeta
  \end{align*}
  because $(L(f')j(\xi)\zeta')(z)=0$ for $z \not\in VU$ and
  \begin{align*}
    (L(f')j(\xi)\zeta')(xy) &=
    D^{-1/2}(x)f'(x)\xi(y)\zeta'(r(y)) =
    f'(x)\zeta(r(x))\xi(y) = j(L(f')\xi)\zeta(xy)
  \end{align*}
  for all $(x,y) \in (V \times U) \cap \GsrG$. Thus,
  $S_{\omega}j_{\phi}(f)\zeta \in \gamma \rt
  \alpha$. The claim follows.
\end{proof}
\begin{proof}[Proof of Proposition \ref{proposition:fell-coaction-epsilon}]
  Since $\pi_{\mathcal { F}}$ is injective, we can define
  $\check\epsilon=\check\epsilon_{({\cal C},\delta)}:=\iota
  \circ \pi_{\mathcal { F}}^{-1}$.  We show that $\delta
  \circ \check\epsilon$ is a morphism from $\pi_{\mathcal {
      F}}(C^{*}_{r}(\mathcal { F}))^{\gamma_{\mathcal{
        F}}}_{K_{\mathcal { F}}}$ to $\delta(C)^{\gamma \rt
    \alpha}_{\rHrange}$.  For each $C_{0}(G^{0})$-weight
  $\phi$ on $\Gamma_{0}(\mathcal { F}^{0})$, denote by
  $p_{\phi}\colon K_{\mathcal { F}} \to K_{\phi}$ the
  canonical projection.  Let $\mathcal{ S} \subseteq
  \mathcal{ L}(K_{\mathcal { F}},\rHrange)$ be the closed
  linear span of all operators of the form
  $S_{\omega}p_{\phi}$, where $U,\xi,\eta,\omega,\phi$ are
  as in the lemma above.  Then
  $Sa=\delta(\check\epsilon(a))$ for each $S \in \mathcal{
    S}$, $a \in \pi_{{\cal F}}(C^{*}_{r}(\mathcal { F}))$,
  and $[\mathcal{ S}\gamma_{\mathcal { F}}] =
  [\delta(\iota(\Gamma_{c}(\mathcal { F})))(\gamma \rt
  \alpha)] = \gamma \rt \alpha$. The claim follows. Since
  $\delta$ is an isomorphism from ${\cal C}$ to
  $\delta(C)^{\gamma \rt \alpha}_{\rHrange}$, we can
  conclude that $\check\epsilon$ is a morphism from
  $\pi_{\mathcal { F}}(C^{*}_{r}(\mathcal{
    F}))^{\gamma_{\mathcal { F}}}_{K_{\mathcal { F}}}$ to
  ${\cal C}$.  The relation $(\check\epsilon \ast \Id) \circ
  \delta= \delta \circ \check\epsilon$ follows from the fact
  that
  \begin{align*}
    \langle j(\xi)|_{2} \delta(\check\epsilon(g)\cdot
    f)|j(\xi')\rangle_{2} &= \check\epsilon(g) \cdot (f
    h_{\xi,\xi'}) \\ &= \check\epsilon(g \cdot (f h_{\xi,\xi'})) =
    \check\epsilon\big( \langle j(\xi)|_{2} \delta(\pi_{\mathcal {
        F}}(g)) |j(\xi')\rangle_{2}\big)
  \end{align*}
  for all $U \in \mathfrak{G}$, $g \in \Gamma_{c}(\mathcal {
    F}|_{U})$, $f \in C_{c}(U)$, $\xi,\xi' \in C_{c}(G)$ by
  Lemma \ref{lemma:fell-eq}.
\end{proof}
\begin{corollary}
  Every very fine coaction of $C^{*}_{r}(G)$ is left-full.
\end{corollary}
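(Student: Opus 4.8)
The plan is to exhibit every very fine coaction of $C^{*}_{r}(G)$ as (isomorphic to) a coaction of the form $\bfFA\mathcal{F}$, which is left-full by Theorem~\ref{theorem:groupoid-bundle-coaction}, and then to transport this property along the comparison morphism of Proposition~\ref{proposition:fell-coaction-epsilon}. So let $(\mathcal{C},\delta)$ be a very fine coaction of $C^{*}_{r}(G)$ with $\mathcal{C}=C^{\gamma}_{K}$, and set $A:=C^{*}_{r}(G)$. Being fine and in particular injective, it gives rise, by Propositions~\ref{proposition:fell-construction} and \ref{proposition:fell-iso}, to an admissible Fell bundle $\mathcal{F}$ on $G$ (Proposition~\ref{proposition:fell-admissible}) and a $*$-isomorphism $\iota\colon C^{*}_{r}(\mathcal{F})\to C$; write $\bfFA\mathcal{F}=(\pi_{\mathcal{F}}(C^{*}_{r}(\mathcal{F}))^{\gamma_{\mathcal{F}}}_{K_{\mathcal{F}}},\delta_{\mathcal{F}})$ for the associated coaction. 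By Proposition~\ref{proposition:fell-coaction-epsilon}, $\check\epsilon:=\iota\circ\pi_{\mathcal{F}}^{-1}$ is a morphism of $C^{*}$-$\frakb$-algebras from $\pi_{\mathcal{F}}(C^{*}_{r}(\mathcal{F}))^{\gamma_{\mathcal{F}}}_{K_{\mathcal{F}}}$ to $\mathcal{C}$, and it is equivariant, $(\check\epsilon\ast\Id)\circ\delta_{\mathcal{F}}=\delta\circ\check\epsilon$; since $\iota$ is a $*$-isomorphism and $\pi_{\mathcal{F}}$ is faithful (Lemma~\ref{lemma:groupoid-algebra}), $\check\epsilon$ is moreover bijective.

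Next I would rewrite both sides of the left-fullness condition in terms of $\bfFA\mathcal{F}$. Put $\mathcal{W}:=\mathcal{L}^{\check\epsilon}((K_{\mathcal{F}})_{\gamma_{\mathcal{F}}},K_{\gamma})$ and $\mathcal{W}\btensor\Id_{H}:=\{W\btensor\Id_{H}\mid W\in\mathcal{W}\}$, where each $W\btensor\Id_{H}$ is the induced operator $K_{\mathcal{F}}\rtensor{\gamma_{\mathcal{F}}}{\frakb}{\beta}H\to K\rtensor{\gamma}{\frakb}{\beta}H$. Since $\check\epsilon$ is a morphism of $C^{*}$-$\frakb$-algebras, $[\mathcal{W}\gamma_{\mathcal{F}}]=\gamma$, and hence, by functoriality of the relative tensor product, $\kgamma{1}=[(\mathcal{W}\btensor\Id_{H})|\gamma_{\mathcal{F}}\rangle_{1}]$. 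On the other hand, Lemma~\ref{lemma:fp-c-morphism}, applied with $\pi=\check\epsilon$ (and $H$ carrying its $\beta$-module structure, corresponding to $\psi=\Id_{A}$), yields $(\check\epsilon\ast\Id)(x)(W\btensor\Id_{H})=(W\btensor\Id_{H})x$ for all $W\in\mathcal{W}$ and all $x$ in $\pi_{\mathcal{F}}(C^{*}_{r}(\mathcal{F}))\fib{\gamma_{\mathcal{F}}}{\frakb}{\beta}A$, in particular for $x\in\delta_{\mathcal{F}}(\pi_{\mathcal{F}}(C^{*}_{r}(\mathcal{F})))$. Combining these observations with the equivariance of $\check\epsilon$ (and $\delta(C)=(\check\epsilon\ast\Id)(\delta_{\mathcal{F}}(\pi_{\mathcal{F}}(C^{*}_{r}(\mathcal{F})))))$ and with left-fullness of $\bfFA\mathcal{F}$ (Theorem~\ref{theorem:groupoid-bundle-coaction}), I would compute
\begin{align*}
  [\delta(C)\kgamma{1}A]
    &= \big[(\check\epsilon\ast\Id)\big(\delta_{\mathcal{F}}(\pi_{\mathcal{F}}(C^{*}_{r}(\mathcal{F})))\big)(\mathcal{W}\btensor\Id_{H})|\gamma_{\mathcal{F}}\rangle_{1}A\big] \\
    &= \big[(\mathcal{W}\btensor\Id_{H})\,\delta_{\mathcal{F}}(\pi_{\mathcal{F}}(C^{*}_{r}(\mathcal{F})))\,|\gamma_{\mathcal{F}}\rangle_{1}A\big] \\
    &= \big[(\mathcal{W}\btensor\Id_{H})\,|\gamma_{\mathcal{F}}\rangle_{1}A\big]
     = [\kgamma{1}A],
\end{align*}
which is exactly the assertion that $(\mathcal{C},\delta)$ is left-full. (The same computation incidentally exhibits $\check\epsilon$ as an isomorphism of coactions.)

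The step I expect to be delicate is the passage from the first to the second line: one must check that the module intertwiners $W\btensor\Id_{H}$ really lie among the operators $R$ for which the identity $(\check\epsilon\ast\Id)(x)R=Rx$ is available, and --- because $\delta_{\mathcal{F}}$ takes values in $\pi_{\mathcal{F}}(C^{*}_{r}(\mathcal{F}))\fib{\gamma_{\mathcal{F}}}{\frakb}{\beta}A$ rather than in the smaller fiber product $\pi_{\mathcal{F}}(C^{*}_{r}(\mathcal{F}))\fibre{\gamma_{\mathcal{F}}}{\frakb}{\beta}A$ --- that the identity remains valid there. Both points are supplied by parts ii) and iii) of Lemma~\ref{lemma:fp-c-morphism}, whose intertwining holds on the full commutant $X=(I^{*}I)'$, and which contains $\pi_{\mathcal{F}}(C^{*}_{r}(\mathcal{F}))\fib{\gamma_{\mathcal{F}}}{\frakb}{\beta}A$. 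Everything else --- the identification of $\kgamma{1}$ above and the rearrangement of the closed linear spans --- is routine.
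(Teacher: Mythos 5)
Your argument is correct and follows essentially the same route as the paper's own proof: you pull the coaction back along the counit $\check\epsilon_{({\cal C},\delta)}$ of Proposition \ref{proposition:fell-coaction-epsilon}, use the intertwiner space (your $\mathcal{W}$ is the paper's $I$) to write $\kgamma{1}$ as $[(\mathcal{W}\btensor\Id)|\gamma_{{\cal F}}\rangle_{1}]$, and then transport left-fullness of $\delta_{{\cal F}}$ from Theorem \ref{theorem:groupoid-bundle-coaction} through the identical three-step closed-span computation. Your extra remark justifying the commutation of $\mathcal{W}\btensor\Id$ past $(\check\epsilon\ast\Id)$ via Lemma \ref{lemma:fp-c-morphism} only makes explicit a step the paper leaves implicit.
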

\begin{proof}
  Let $({\cal C},\delta)$ be a very fine coaction of
  $C^{*}_{r}(G)$, let $\check\epsilon_{({\cal C},\delta)}$
  and $(\pi_{\mathcal { F}}(C^{*}_{r}(\mathcal {
    F}))^{\gamma_{\mathcal{ F}}}_{K_{\mathcal {
        F}}},\delta_{\mathcal { F}})$  as above, and let
  $I:=\{ T \in {\cal L}_{s}((K_{{\cal F}},\gamma_{{\cal
      F}}),(K,\gamma)) \mid Tx=\check\epsilon_{({\cal
      C},\delta)}(x)T$ for all $x \in \pi_{{\cal
      F}}(C^{*}_{r}({\cal F}))\}$.  Then
  $\gamma=[I\gamma_{{\cal F}}]$ because
  $\check\epsilon_{({\cal C},\delta)}$ is a morphism, and
  since $\delta_{{\cal F}}$ is left-full,
  \begin{align*} { [\delta(C)\kgamma{1}C^{*}_{r}(G)]} &{=
      [(\check\epsilon_{({\cal C},\delta)} \ast
      \Id)(\delta_{{\cal F}}( \pi_{{\cal F}}(C^{*}_{r}({\cal
        F})))) (I \btensor \Id)
      |\gamma_{{\cal F}}\rangle_{1}C^{*}_{r}(G)]} \\
    &= [(I \btensor \Id)\delta_{{\cal F}}(\pi_{{\cal
          F}}(C^{*}_{r}({\cal F})))|\gamma_{{\cal
          F}}\rangle_{1}C^{*}_{r}(G)] \\ & = [(I \btensor
      \Id)|\gamma_{{\cal F}}\rangle_{1}C^{*}_{r}(G)] =
      [\kgamma{1}C^{*}_{r}(G)]. \qedhere
  \end{align*}
\end{proof}
\begin{theorem}
  $(\bfFA,\bfGA,\check\eta,\check\epsilon)$ is an embedding of
  $\bffell^{a}_{G}$ into $\bfcoact_{C^{*}_{r}(G)}^{as}$ as a
  full and coreflective subcategory.
\end{theorem}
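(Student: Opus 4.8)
The plan is to mimic the proofs of Theorems \ref{theorem:groupoid-cx} and \ref{theorem:groupoid-actions}. We already have the functor $\bfFA$, Propositions \ref{proposition:fell-construction} and \ref{proposition:fell-morphism-coact} provide the candidate functor $\bfGA$, and Propositions \ref{proposition:fell-coaction-eta} and \ref{proposition:fell-coaction-epsilon} provide the candidate unit $\check\eta$ and counit $\check\epsilon$. What remains is: to check that $\bfGA$ is a faithful functor, that $\check\eta$ and $\check\epsilon$ are natural, and that the two triangle identities hold; then \cite[\S IV.2 Theorem 2]{maclane} and \cite[\S IV.3 Theorem 1]{maclane} finish the argument.

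First I would observe that $\bfFA$ indeed takes values in $\bfcoact^{as}_{C^{*}_{r}(G)}$: its objects are very fine and left-full by Theorem \ref{theorem:groupoid-bundle-coaction}, and its morphisms are strongly nondegenerate by the lemma preceding the statement, while faithfulness of $\bfFA$ has already been shown. Next, Propositions \ref{proposition:fell-construction} and \ref{proposition:fell-morphism-coact} define $\bfGA$ on objects and morphisms; that it respects identities and composition is immediate from the uniqueness clause of Proposition \ref{proposition:fell-morphism-coact} and the injectivity of $\iota$, and $\bfGA$ is faithful because for a strongly nondegenerate morphism $\rho$ of very fine (hence fine) coactions the relation $\iota_{\cal G}\circ(\bfGA\rho)_*=\rho\circ\iota_{\cal F}$ of Proposition \ref{proposition:fell-morphism-coact}, together with the fact that $\iota_{\cal F}$ and $\iota_{\cal G}$ are $*$-isomorphisms for fine coactions (Proposition \ref{proposition:fell-iso}), recovers $\rho$ from $\bfGA\rho$.

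The substantive part is the verification of naturality and of the triangle identities, and here one works throughout on the dense $*$-subalgebras of compactly supported sections, using that a morphism of Fell bundles is determined by its action on sections and that the maps $\iota$ are injective. For naturality of $\check\eta$ along $T\colon{\cal F}\to{\cal G}$ one chases the defining relations $\iota_{\bfGA\bfFA{\cal F}}\circ(\check\eta_{\cal F})_*=\pi_{\cal F}$, $\iota_{\bfGA\bfFA{\cal G}}\circ(\bfGA\bfFA T)_*=\bfFA T\circ\iota_{\bfGA\bfFA{\cal F}}$ (Proposition \ref{proposition:fell-morphism-coact} applied to $\bfFA T$), and $\bfFA T\circ\pi_{\cal F}=\pi_{\cal G}\circ T_*$ to obtain $\iota_{\bfGA\bfFA{\cal G}}\circ(\bfGA\bfFA T\circ\check\eta_{\cal F})_*=\pi_{\cal G}\circ T_*=\iota_{\bfGA\bfFA{\cal G}}\circ(\check\eta_{\cal G}\circ T)_*$, whence $\bfGA\bfFA T\circ\check\eta_{\cal F}=\check\eta_{\cal G}\circ T$; naturality of $\check\epsilon$ along a strongly nondegenerate $\rho$ follows the same way from $\check\epsilon\circ\pi_{\cal F}=\iota_{\cal F}$ and Proposition \ref{proposition:fell-morphism-coact}. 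The first triangle identity $\check\epsilon_{\bfFA{\cal F}}\circ\bfFA\check\eta_{\cal F}=\Id_{\bfFA{\cal F}}$ holds because, evaluated on $\pi_{\cal F}(a)$, the composite equals $\iota_{\bfGA\bfFA{\cal F}}((\check\eta_{\cal F})_*(a))=\pi_{\cal F}(a)$ by Proposition \ref{proposition:fell-coaction-eta}; the second, $\bfGA\check\epsilon_{({\cal C},\delta)}\circ\check\eta_{\bfGA({\cal C},\delta)}=\Id_{\bfGA({\cal C},\delta)}$, follows the same way using in addition $\iota_{\cal F}\circ(\bfGA\check\epsilon_{({\cal C},\delta)})_*=\check\epsilon_{({\cal C},\delta)}\circ\iota_{\bfGA\bfFA{\cal F}}$ and $\check\epsilon_{({\cal C},\delta)}\circ\pi_{\cal F}=\iota_{\cal F}$, plus injectivity of $\iota_{\cal F}$.

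With $\check\eta$, $\check\epsilon$ natural and the triangle identities in hand, \cite[\S IV.2 Theorem 2]{maclane} shows $\bfFA$ is left adjoint to $\bfGA$ with unit $\check\eta$ and counit $\check\epsilon$; since $\check\eta$ is a natural isomorphism, $\bfFA$ is full and faithful by \cite[\S IV.3 Theorem 1]{maclane}, and combined with the faithfulness of $\bfGA$ this is exactly the claim. I expect the main obstacle to be purely organizational: keeping straight, in the naturality and triangle-identity computations, the three parallel descriptions --- morphisms of Fell bundles via their action on sections in $\Gamma_{c}$, morphisms of coactions via $*$-homomorphisms of the completed algebras, and the connecting maps $\iota$, $\pi_{\cal F}$, $(\check\eta_{\cal F})_*$, $\check\epsilon$ --- and confirming at each step that one stays within the subcategory of strongly nondegenerate morphisms; none of the individual steps is hard.
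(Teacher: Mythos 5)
Your proposal is correct and follows essentially the same route as the paper: the paper's own proof simply states that one verifies faithfulness of $\bfGA$ and naturality of $\check\eta$ and $\check\epsilon$, and then invokes \cite[\S IV.3 Theorem 1]{maclane} exactly as you do (with the adjunction itself established via the triangle identities as in the proof of Theorem \ref{theorem:groupoid-cx}). Your write-up merely makes explicit the diagram chases through $\iota$, $\pi_{\cal F}$, and the uniqueness clauses of Propositions \ref{proposition:fell-construction} and \ref{proposition:fell-morphism-coact} that the paper leaves to the reader.
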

\begin{proof}
  One easily verifies that $\bfGA$ is faithful and that the
  families $(\check\eta_{\mathcal{ F}})_{\mathcal { F}}$ and
  $(\check\epsilon_{({\cal C},\delta)})_{({\cal C},\delta)}$
  are natural transformations as desired. Since $\check\eta$
  is a natural isomorphism, $\bfFA$ is full and faithful
  \cite[IV.3 Theorem 1]{maclane}.
\end{proof}

\def\cprime{$'$}


\begin{thebibliography}{10}

\bibitem{baaj:10}
S.~Baaj.
\newblock Repr\'esentation r\'eguli\`ere du groupe quantique des d\'eplacements
  de {W}oronowicz.
\newblock {\em Ast\'erisque}, (232):11--48, 1995.
\newblock Recent advances in operator algebras (Orl\'eans, 1992).

\bibitem{baaj:1}
S.~Baaj and G.~Skandalis.
\newblock {$C\sp \ast$}-alg\`ebres de {H}opf et th\'eorie de {K}asparov
  \'equivariante.
\newblock {\em $K$-Theory}, 2(6):683--721, 1989.

\bibitem{baaj:2}
S.~Baaj and G.~Skandalis.
\newblock Unitaires multiplicatifs et dualit\'e pour les produits crois\'es de
  {$C\sp *$}-alg\`ebres.
\newblock {\em Ann. Sci. \'Ecole Norm. Sup. (4)}, 26(4):425--488, 1993.

\bibitem{blanchard:champs}
{\'E}.~Blanchard.
\newblock Repr\'esentations de champs de {$C\sp *$}-alg\`ebres.
\newblock {\em C. R. Acad. Sci. Paris S\'er. I Math.}, 314(12):911--914, 1992.

\bibitem{blanchard:tensor}
E.~Blanchard.
\newblock Tensor products of {$C(X)$}-algebras over {$C(X)$}.
\newblock {\em Ast\'erisque}, (232):81--92, 1995.
\newblock Recent advances in operator algebras (Orl{\'e}ans, 1992).

\bibitem{blanchard}
{\'E}.~Blanchard.
\newblock D\'eformations de {$C\sp *$}-alg\`ebres de {H}opf.
\newblock {\em Bull. Soc. Math. France}, 124(1):141--215, 1996.

\bibitem{bohm:actions}
G.~B{\"o}hm.
\newblock Galois theory for {H}opf algebroids.
\newblock {\em Ann. Univ. Ferrara Sez. VII (N.S.)}, 51:233--262, 2005.

\bibitem{dupre}
M.~J. Dupr{\'e} and R.~M. Gillette.
\newblock {\em Banach bundles, {B}anach modules and automorphisms of
  {$C\sp{\ast} $}-algebras}, volume~92 of {\em Research Notes in Mathematics}.
\newblock Pitman (Advanced Publishing Program), Boston, MA, 1983.

\bibitem{echter}
S.~Echterhoff, S.~Kaliszewski, J.~Quigg, and I.~Raeburn.
\newblock A categorical approach to imprimitivity theorems for {$C\sp
  *$}-dynamical systems.
\newblock {\em Mem. Amer. Math. Soc.}, 180(850):viii+169, 2006.

\bibitem{enock:action}
M.~Enock.
\newblock Measured quantum groupoids in action.
\newblock arXiv:0710.5364, October 2007.

\bibitem{enock:2}
M.~Enock and J.-M. Vallin.
\newblock Inclusions of von {N}eumann algebras and quantum groupoids. {II}.
\newblock {\em J. Funct. Anal.}, 178(1):156--225, 2000.

\bibitem{fell:rep2}
J.~M.~G. Fell and R.~S. Doran.
\newblock {\em Representations of {$\sp *$}-algebras, locally compact groups,
  and {B}anach {$\sp *$}-algebraic bundles. {V}ol. 2}, volume 126 of {\em Pure
  and Applied Mathematics}.
\newblock Academic Press Inc., Boston, MA, 1988.
\newblock Banach ${\sp{*}}$-algebraic bundles, induced representations, and the
  generalized Mackey analysis.

\bibitem{kadison:actions}
L.~Kadison.
\newblock Hopf algebroids and {$H$}-separable extensions.
\newblock {\em Proc. Amer. Math. Soc.}, 131(10):2993--3002 (electronic), 2003.

\bibitem{kasparov:invent}
G.~G. Kasparov.
\newblock Equivariant {$KK$}-theory and the {N}ovikov conjecture.
\newblock {\em Invent. Math.}, 91(1):147--201, 1988.

\bibitem{kumjian}
A.~Kumjian.
\newblock Fell bundles over groupoids.
\newblock {\em Proc. Amer. Math. Soc.}, 126(4):1115--1125, 1998.

\bibitem{lance}
E.~C. Lance.
\newblock {\em Hilbert {$C\sp *$}-modules. A toolkit for operator algebraists},
  volume 210 of {\em London Mathematical Society Lecture Note Series}.
\newblock Cambridge University Press, Cambridge, 1995.

\bibitem{legall}
P.-Y. Le~Gall.
\newblock {\em Th\'eorie de {K}asparov \'equivariante et groupo\"\i des.}
\newblock PhD thesis, Universite Paris VII, 1994.

\bibitem{maclane}
S.~Mac~Lane.
\newblock {\em Categories for the working mathematician}, volume~5 of {\em
  Graduate Texts in Mathematics}.
\newblock Springer-Verlag, New York, second edition, 1998.

\bibitem{paterson}
A.~L.~T. Paterson.
\newblock {\em Groupoids, inverse semigroups, and their operator algebras},
  volume 170 of {\em Progress in Mathematics}.
\newblock Birkh\"auser Boston Inc., Boston, MA, 1999.

\bibitem{paulsen}
V.~Paulsen.
\newblock {\em Completely bounded maps and operator algebras}, volume~78 of
  {\em Cambridge Studies in Advanced Mathematics}.
\newblock Cambridge University Press, Cambridge, 2002.

\bibitem{quigg}
J.~C. Quigg.
\newblock Discrete {$C\sp *$}-coactions and {$C\sp *$}-algebraic bundles.
\newblock {\em J. Austral. Math. Soc. Ser. A}, 60(2):204--221, 1996.

\bibitem{renault}
J.~Renault.
\newblock {\em A groupoid approach to {$C\sp{\ast} $}-algebras}, volume 793 of
  {\em Lecture Notes in Mathematics}.
\newblock Springer, Berlin, 1980.

\bibitem{williams:fell}
J.~Renault.
\newblock The {F}ourier algebra of a measured groupoid and its multipliers.
\newblock {\em J. Funct. Anal.}, 145(2):455--490, 1997.

\bibitem{kornel:weak-hopf}
K.~Szlach{\'a}nyi.
\newblock Weak {H}opf algebras.
\newblock In {\em Operator algebras and quantum field theory ({R}ome, 1996)},
  pages 621--632. Int. Press, Cambridge, MA, 1997.

\bibitem{kornel:galois}
K.~Szlach{\'a}nyi.
\newblock Galois actions by finite quantum groupoids.
\newblock In {\em Locally compact quantum groups and groupoids ({S}trasbourg,
  2002)}, volume~2 of {\em IRMA Lect. Math. Theor. Phys.}, pages 105--125. de
  Gruyter, Berlin, 2003.

\bibitem{timmermann:leiden}
T.~Timmermann.
\newblock A definition of compact {$C^*$}-quantum groupoids.
\newblock In {\em Operator {S}tructures and {D}ynamical {S}ystems, Leiden
  2007}, Contemporary Mathematics. American Mathematical Society, Providence,
  RI.

\bibitem{timmermann:fiber}
T.~Timmermann.
\newblock The relative tensor product and a fiber product in the setting of
  {$C^*$}-algebras.
\newblock arXiv:0907.4846.

\bibitem{timmermann:cpmu-hopf}
T.~Timmermann.
\newblock {$C^*$}-pseudo-multiplicative unitaries and {H}opf {$C^*$}-bimodules.
\newblock arXiv:0908.1850, August 2009.

\bibitem{vallin:actions-finite}
J.-M. Vallin.
\newblock Actions and coactions of finite quantum groupoids on von {N}eumann
  algebras, extensions of the matched pair procedure.
\newblock {\em J. Algebra}, 314(2):789--816, 2007.

\end{thebibliography}
\end{document}